\newenvironment{altenumerate}
   {\begin{list}
      {\textup{(\theenumi)} }
      {\usecounter{enumi}
       \setlength{\labelwidth}{0pt}
       \setlength{\labelsep}{2pt}
       \setlength{\leftmargin}{0pt}
       \setlength{\itemsep}{\the\smallskipamount}
       \renewcommand{\theenumi}{\roman{enumi}}
      }}
   {\end{list}}
\newtheorem{lem}{Lemma}[section]
\newtheorem{lemma}[lem]{Lemma}
\newtheorem{definition}[lem]{Definition}
\newtheorem{cor}[lem]{Corollary}
\newtheorem{corollary}[lem]{Corollary}
\newtheorem{thm}[lem]{Theorem}
\newtheorem{theorem}[lem]{Theorem}
\newtheorem{prop}[lem]{Proposition}
\newtheorem{proposition}[lem]{Proposition}
\newtheorem{hypothesis}[lem]{Hypothesis}
\theoremstyle{remark}
\newtheorem{rem}[lem]{Remark}
\newtheorem{remark}[lem]{Remark}
\newtheorem{remarks}[lem]{Remarks}
\newtheorem{example}[lem]{Example}
\newtheorem{question}[lem]{Question}
\DeclareMathOperator{\Hom}{Hom}
\DeclareMathOperator{\coker}{coker}
\DeclareMathOperator{\diag}{diag}
\DeclareMathOperator{\Spa}{Spa}
\DeclareMathOperator{\Spec}{Spec}
\DeclareMathOperator{\Gal}{Gal}
\DeclareMathOperator{\End}{End}
\DeclareMathOperator{\Map}{Map}
\DeclareMathOperator{\Frob}{Frob}
\DeclareMathOperator{\Ann}{Ann}
\DeclareMathOperator{\twoinjlim}{{2-}\!\varinjlim}
\def\ad{\mathrm{ad}}
\def\et{\mathrm{\acute{e}t}}
\def\fet{\mathrm{f\acute{e}t}}
\def\cont{\mathrm{cont}}
\def\cycl{\mathrm{cycl}}
\def\LT{\mathrm{LT}}
\def\Dr{\mathrm{Dr}}
\def\GH{\mathrm{GH}}
\def\HT{\mathrm{HT}}
\def\reg{\mathrm{reg}}
\def\patch{\mathrm{patch}}
\def\comp{\mathrm{comp}}
\newcommand{\cO}{\mathcal{O}}
\newcommand{\cM}{\mathcal{M}}
\newcommand{\N}{\mathbb{N}}
\newcommand{\Z}{\mathbb{Z}}
\newcommand{\F}{\mathbb{F}}
\newcommand{\Q}{\mathbb{Q}}
\newcommand{\R}{\mathbb{R}}
\newcommand{\A}{\mathbb{A}}
\newcommand{\C}{\mathbb{C}}
\newcommand{\G}{\mathbb{G}}
\newcommand{\T}{\mathbb{T}}
\newcommand{\bS}{\mathbb{bS}}
\newcommand{\mm}{\mathfrak{m}}
\newcommand{\frap}{\mathfrak{p}}
\newcommand{\tr}{\operatorname*{tr}}
\newcommand{\GL}{\mathrm{GL}}
\newcommand{\SL}{\mathrm{SL}}
\newcommand{\PGL}{\mathrm{PGL}}
\newcommand{\Res}{\mathrm{Res}}
\newcommand{\Sh}{\mathrm{Sh}}
\newcommand{\proj}{\mathrm{proj}}
\begin{document}
\title{On the $p$-adic cohomology of the Lubin-Tate tower}
\author{Peter Scholze}
\begin{abstract}
We prove a finiteness result for the $p$-adic cohomology of the Lubin-Tate tower. For any $n\geq 1$ and $p$-adic field $F$, this provides a canonical functor from admissible $p$-adic representations of $\GL_n(F)$ towards admissible $p$-adic representations of $\Gal_F\times D^\times$, where $\Gal_F$ is the absolute Galois group of $F$, and $D/F$ is the central division algebra of invariant $1/n$.

Moreover, we verify a local-global-compatibility statement for this correspondence, and compatibility with the patching construction of Caraiani-Emerton-Gee-Geraghty-Paskunas-Shin.
\end{abstract}

\date{\today}
\maketitle
\tableofcontents
\pagebreak

\section{Introduction}

The goal of this paper is to provide further evidence for the existence of a $p$-adic local Langlands correspondence, as was first envisioned by Breuil, \cite{Breuil}, and established for $\GL_2(\Q_p)$ by Colmez, \cite{Colmez}, Paskunas, \cite{Paskunas}, and others. So far, little is known beyond $\GL_2(\Q_p)$, and work of Breuil-Paskunas, \cite{BreuilPaskunas}, shows that already for $\GL_2(F)$, $F\neq \Q_p$, the situation is very difficult. There is recent work of Caraiani-Emerton-Gee-Geraghty-Paskunas-Shin, \cite{CEGGPS}, that construct {\it some} $p$-adic $\GL_n(F)$-representation starting from an $n$-dimensional representation of the absolute Galois group of a $p$-adic field $F$, for general $n$ and $F$. Their construction is based on the patching construction of Taylor-Wiles, and is thus global in nature. Unfortunately, it is not clear that their construction gives a representation independent of the global situation.

In this paper, we work in the opposite direction. Namely, starting from a $p$-adic $\GL_n(F)$-representation $\pi$, we produce a representation $F(\pi)$ of the absolute Galois group $\Gal_F$, for any $n$ and $F$, in a purely local way. Corollary \ref{PatchingMainThm} ensures that (for $n=2$), composing the patching construction with our functor gives back the original Galois representation.

Actually, $F(\pi)$ also carries an admissible $D^\times$-action, where $D/F$ is the central division algebra of invariant $1/n$. Thus, simultaneously, this indicates the existence of a $p$-adic Jacquet-Langlands correspondence relating $p$-adic $\GL_n(F)$ and $D^\times$-representations. Such a correspondence is not known already for $\GL_2(\Q_p)$, and its formalization remains mysterious, as the $D^\times$-representations are necessarily (modulo $p$) of infinite length. However, we do not pursue these questions here.

Let us now describe our results in more detail. Let $n\geq 1$ be an integer and $F/\Q_p$ a finite extension. Let $\cO\subset F$ be the ring of integers, $\varpi\in \cO$ a uniformizer, and let $q$ be the cardinality of the residue field of $F$, which we identify with $\F_q$. Fix an algebraically closed extension $k$ of $\F_q$, e.g. $\overline{\F}_q$. Let $\breve{F}=F\otimes_{W(\F_q)} W(k)$ be the completion of the unramified extension of $F$ with residue field $k$. Let $\breve{\cO}\subset \breve{F}$ be the ring of integers.

In this situation, one has the Lubin-Tate tower $(\cM_{\LT,K})_{K\subset \GL_n(F)}$, which is a tower of smooth rigid-analytic varieties $\cM_{\LT,K}$ over $\breve{F}$ parametrized by compact open subgroups $K$ of $\GL_n(F)$, with finite \'etale transition maps. There is a compatible continuous action of $D^\times$ on all $\cM_{\LT,K}$, as well as an action of $\GL_n(F)$ on the tower, i.e. $g\in \GL_n(F)$ induces an isomorphism between $\cM_{\LT,K}$ and $\cM_{\LT,g^{-1} K g}$. There is the Gross-Hopkins period map, \cite{GrossHopkins},
\[
\pi_\GH: \cM_{\LT,K}\to \mathbb{P}^{n-1}_{\breve{F}}\ ,
\]
compatible for varying $K$, which is an \'etale covering map of rigid-analytic varieties with fibres $\GL_n(F)/K$. It is also $D^\times$-equivariant if the right-hand side is correctly identified with the Brauer-Severi variety for $D/F$ (which splits over $\breve{F}$). Moreover, there is a Weil descent datum on $\cM_{\LT,K}$, under which $\pi_\GH$ is equivariant for the above identification of $\mathbb{P}^{n-1}_{\breve{F}}$ with the Brauer-Severi variety of $D/F$.

It was first observed by Weinstein, cf. \cite{ScholzeWeinstein}, that the inverse limit
\[
\cM_{\LT,\infty} = \varprojlim_{K\subset \GL_n(F)} \cM_{\LT,K}
\]
exists as a perfectoid space. The induced map
\[
\pi_\GH: \cM_{\LT,\infty}\to \mathbb{P}^{n-1}_{\breve{F}}
\]
is in a suitable sense a $\GL_n(F)$-torsor; however, it takes a little bit of effort to make this statement precise, and we do not do so here. However, for any smooth $\GL_n(F)$-representation $\pi$ on an $\F_p$-vector space,\footnote{One can also handle more general base rings, and we do so in the paper.} one can construct a Weil-equivariant sheaf $\mathcal{F}_\pi$ on the \'etale site of the rigid space $\mathbb{P}^{n-1}_{\breve{F}}$. Our main theorem is the following.

\begin{thm} Let $\pi$ be an admissible smooth $\GL_n(F)$-representation on an $\F_p$-vector space. The cohomology group
\[
H^i_\et(\mathbb{P}^{n-1}_C,\mathcal{F}_\pi)
\]
is independent of the choice of an algebraically closed complete extension $C$ of $\breve{F}$, and vanishes for $i>2(n-1)$. For all $i\geq 0$,
\[
H^i_\et(\mathbb{P}^{n-1}_{\C_p},\mathcal{F}_\pi)
\]
is an admissible $D^\times$-representation, and the action of the Weil group $W_F$ extends continuously to an action of the absolute Galois group $\Gal_F$ of $F$.
\end{thm}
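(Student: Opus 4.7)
The plan is to exploit the pro-\'etale $\GL_n(F)$-torsor structure $\pi_\GH\colon \cM_{\LT,\infty}\to \mathbb{P}^{n-1}_{\breve{F}}$ together with the filtration of an admissible smooth representation by its compact-open invariants, in order to reduce the entire statement to known finiteness and base-change results for \'etale cohomology of proper smooth rigid-analytic varieties with $\F_p$-coefficients.

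\emph{Step 1: Reduction to finite level.} Since $\pi$ is admissible and smooth, $\pi=\varinjlim_K \pi^K$ as $K$ ranges over compact open subgroups of $\GL_n(F)$, with each $\pi^K$ a finite-dimensional $\F_p$-vector space. Functoriality of the construction $\pi\mapsto \mathcal{F}_\pi$ should identify $\mathcal{F}_\pi=\varinjlim_K \mathcal{F}_{\pi^K}$, where $\mathcal{F}_{\pi^K}$ is a constructible $\F_p$-sheaf on $\mathbb{P}^{n-1}_{\breve{F},\et}$ built by descent from the finite-dimensional $K$-representation $\pi^K$ along the \'etale map $\cM_{\LT,K}\to \mathbb{P}^{n-1}_{\breve{F}}$. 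Because $\mathbb{P}^{n-1}_C$ is quasi-compact and quasi-separated, \'etale cohomology with torsion coefficients commutes with filtered colimits, so
\[
H^i_\et(\mathbb{P}^{n-1}_C,\mathcal{F}_\pi)=\varinjlim_K H^i_\et(\mathbb{P}^{n-1}_C,\mathcal{F}_{\pi^K}).
\]

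\emph{Step 2: Vanishing and independence of $C$.} Each $H^i_\et(\mathbb{P}^{n-1}_C,\mathcal{F}_{\pi^K})$ is finite-dimensional by the finiteness theorem for constructible $\F_p$-sheaves on proper smooth rigid-analytic varieties, and vanishes for $i>2(n-1)$ by the general bound that $\F_p$-cohomology on a smooth proper rigid-analytic variety of dimension $d$ has cohomological dimension at most $2d$. Passing to the colimit gives the desired vanishing. Similarly, the base-change invariance of $H^i_\et(X_C,\F_p)$ for proper smooth rigid-analytic $X$ under algebraically closed extensions $C/\breve{F}$ yields independence of $C$ at finite level, and then by colimit for $\mathcal{F}_\pi$.

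\emph{Step 3: Admissibility and Galois action.} This is the real content. The $D^\times$-action on $\mathcal{F}_\pi$ arises from its action on $\cM_{\LT,\infty}$ and, importantly, does \emph{not} preserve the individual finite-level pieces $\mathcal{F}_{\pi^K}$, so admissibility cannot be read off directly from Step~1. To show that for every open $U\subset \cO_D^\times$ the invariants $H^i_\et(\mathbb{P}^{n-1}_{\C_p},\mathcal{F}_\pi)^U$ are finite-dimensional, the natural route is to invoke the Faltings/Fargues isomorphism between the Lubin-Tate and Drinfeld towers at infinite level: this swaps the roles of $\GL_n(F)$ and $D^\times$, turning the $U$-level structure into a finite-level cover on the Drinfeld side, whose cohomology with the relevant coefficient sheaf is finite-dimensional by the same finiteness results as in Step~2. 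The continuous extension of the $W_F$-action to $\Gal_F$ is then formal: the Weil descent datum gives a continuous $W_F$-action, each $U$-invariant subspace is finite-dimensional, and on such a subspace the action of $I_F\cap W_F$ factors through a finite quotient and extends uniquely and continuously to all of $I_F$.

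The main obstacle is Step~3, i.e.\ the admissibility of the $D^\times$-action: it is the only step that is not a soft consequence of general rigid-analytic cohomological machinery and genuinely requires a structural input about the Lubin-Tate tower beyond finite-level geometry.
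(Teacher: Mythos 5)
There is a genuine gap, and it sits in Steps 1--2. The map $\cM_{\LT,K}\to \mathbb{P}^{n-1}_{\breve F}$ is an \'etale covering map with \emph{infinite} fibres $\GL_n(F)/K$, not a finite \'etale map, so no descent along it produces a constructible sheaf from $\pi^K$; moreover $\pi^K$ is not $\GL_n(F)$-stable, so it does not even define a subsheaf of $\mathcal{F}_\pi$ via the torsor construction (the smallest $\GL_n(F)$-stable subrepresentation containing $\pi^K$ is in general infinite-dimensional). Consequently $\mathcal{F}_\pi$ is \emph{not} a filtered colimit of constructible sheaves on $\mathbb{P}^{n-1}$, and the finiteness, base-change and cohomological-dimension theorems for constructible $\F_p$-sheaves on proper rigid spaces simply do not apply. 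This is precisely the pitfall flagged in the paper's introductory remark: $\F_p$-cohomology of the non-proper spaces $\cM_{\LT,K,C}$ (already of a closed disc $\mathbb{B}_C$) is infinite-dimensional and depends on $C$, so any argument that effectively computes the cohomology of $\mathcal{F}_\pi$ from finite levels of the Lubin--Tate tower must fail. The correct finite-level decomposition $\mathcal{F}_\pi|_{\cM_{\LT,0}}=\varinjlim_H \mathcal{F}_{\pi^H}$ (with $H\subset\GL_n(\cO)$) lives over $\cM_{\LT,0}$, which is not proper; properness is only available for the target $\mathbb{P}^{n-1}$ of $\pi_{\GH}$, where the sheaf is of infinite rank.

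Your Step 3 does identify the essential structural input (the Faltings--Fargues duality of the two towers), and the paper indeed uses it to descend a cover $V_\infty\subset\cM_{\LT,\infty}$ with its $K$-action, $K\subset D^\times$, to a smooth finite-level space $V_K\subset\cM_{\Dr,K}$. But those Drinfeld-side spaces are again not proper, so one cannot ``apply the same finiteness results as in Step 2'' there either. What the paper actually proves locally is only that the \emph{image} of $H^i(V_K,\cO^+/p)\to H^i(U_K,\cO^+/p)$ is almost finitely generated for strict inclusions $U\subset V$; admissibility is then assembled globally on $\mathbb{P}^{n-1}$ (this is where properness enters) via \v{C}ech spectral sequences, a resolution of $\pi|_{\GL_n(\cO)}$ by copies of the regular representation $\pi^{\reg}=C^0(\GL_n(\cO),\F_p)$ together with the identification $R\alpha_*\cO^+_{V_\infty/K}/p\cong\mathcal{F}_{\pi^{\reg}}\otimes\cO^+/p$, the Artin--Schreier/$\cO^{\flat-}$ argument converting almost finiteness of $\cO^+/p$-cohomology into finiteness of $\F_p$-cohomology, and a Hochschild--Serre spectral sequence for $K$. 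Independence of $C$ and the vanishing bounds also come out of this almost-mathematics comparison (and, for the bound $i>2(n-1)$, from the cohomological dimension $n-1$ of $|\mathbb{P}^{n-1}_C|$ plus almost vanishing of $R^i\lambda_*(\mathcal{F}_\pi\otimes\cO^+/p)$ for $i>n-1$), not from finite-level base change. Your treatment of the extension of the $W_F$-action to $\Gal_F$ is correct and matches the paper.
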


The proof of this theorem follows closely the proof of finiteness of $\F_p$-cohomology of proper (smooth) rigid spaces, \cite{ScholzePAdicHodge}. In particular, it depends crucially on properness of $\mathbb{P}^{n-1}$, or more precisely, on properness of the image of $\pi_\GH$. Unfortunately, it turns out that the Lubin-Tate case is essentially (up to products and changing the center) the only example of a Rapoport-Zink space with surjective period map. We refer to the Appendix by M. Rapoport for further discussion of this point. Thus, the methods of this paper do not shed light on other groups.

\begin{rem} Intuitively, $H^\ast_\et(\mathbb{P}^{n-1}_C,\mathcal{F}_\pi)$ is the $\pi$-isotypic component of the cohomology of the Lubin-Tate tower, but the formulation is different for several reasons. First, the (usual or compactly supported) cohomology groups of $\cM_{\LT,0,C}$ or $\cM_{\LT,\infty,C}$ itself are not well-behaved, e.g. not admissible and not invariant under change of $C$, cf. work of Chojecki, \cite{Chojecki}. Using lifts of Artin-Schreier covers one can check that already $H^1_\et(\mathbb{B}_C,\F_p)$ is infinite-dimensional and depends on $C$, where $\mathbb{B}_C$ denotes the closed unit disc over $C$. Second, taking the $\pi$-isotypic component is not an exact operation for $\F_p$-representations.
\end{rem}

For the local-global-compatibility results, we have decided to work only with $\GL_2$, as this leads to many technical simplifications; it is to be expected that many arguments can be adapted to $\GL_n$ if one uses Harris-Taylor type Shimura varieties, \cite{HarrisTaylor}. Fix a totally real field $F$ and a place $\frap$ dividing $p$ such that $F_\frap$ is the $p$-adic field considered previously. Moreover, fix an infinite place $\infty_F$ of $F$. Let $D_0$ be a division algebra over $F$ which is split at $\frap$ and is ramified at all infinite places. Let $G = D_0^\times$ be the algebraic group of units in $D_0$. Let $D$ be the inner form of $G$ which is split at $\infty_F$ and ramified at $\frap$ (and unchanged at all other places), and denote by $D^\times$ the algebraic group of units of $D$. Fix a compact open subgroup $U^\frap\subset G(\A_{F,f}^\frap)\cong D^\times(\A_{F,f}^\frap)$. For each $K\subset \GL_2(F_\frap)\cong G(F_\frap)$, one has the space of algebraic automorphic forms
\[
S(K U^\frap,\F_p) = C^0(G(F)\backslash G(\A_{F,f}) / K U^\frap,\Q_p/\Z_p)\ ,
\]
as well as the cohomology
\[
H^1(\Sh_{K^\prime U^\frap,\C},\Q_p/\Z_p)
\]
of the Shimura curve $\Sh_{K^\prime U^\frap}/F$ for $D/F$, for varying $K^\prime\subset D_\frap^\times = D^\times(F_\frap)$. These $H^0$-, resp. $H^1$-, groups are respectively the middle cohomology groups of the relevant Shimura varieties. Let
\[
\pi = \varinjlim_K S(K U^\frap,\Q_p/\Z_p)
\]
and
\[
\rho = \varinjlim_{K^\prime} H^1(\Sh_{K^\prime U^\frap,\C},\Q_p/\Z_p)\ ,
\]
which are admissible $\GL_2(F_\frap)$-, resp. $D^\times_\frap$-representations over $\Z_p$. Moreover, $\rho$ carries a representation of $\Gal_F$, and thus of $\Gal_{F_\frap}$. The following theorem is an easy consequence of Cerednik's $p$-adic uniformization, cf. \cite{Cerednik}, \cite{DrinfeldUnif}, \cite{RapoportZinkBook}, \cite{BoutotZink}, along with the duality isomorphism between the Lubin-Tate and the Drinfeld tower, cf. \cite{FaltingsTwoTowers}, \cite{FarguesTwoTowers}, \cite{ScholzeWeinstein}.

\begin{thm} There is a canonical $\Gal_{F_\frap}\times D^\times_\frap$-equivariant isomorphism
\[
H^1_\et(\mathbb{P}^1_{\C_p},\mathcal{F}_\pi) \cong \rho\ .
\]
\end{thm}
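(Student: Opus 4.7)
The plan is to combine three ingredients: (a) the Cerednik-Drinfeld $p$-adic uniformization of the Shimura curve $\Sh_{K'U^\frap}$, (b) the Faltings-Fargues-Scholze-Weinstein duality isomorphism of perfectoid spaces $\cM_{\LT,\infty} \cong \cM_{\Dr,\infty}$, equivariant for $\GL_2(F_\frap) \times D^\times_\frap \times W_{F_\frap}$, and (c) the interpretation of $H^\ast_\et(\mathbb{P}^1_{\C_p},\mathcal{F}_\pi)$ as a ``$\pi$-isotypic part'' of the cohomology of $\cM_{\LT,\infty,\C_p}$ arising from the $\GL_2(F_\frap)$-torsor structure of $\pi_\GH$.

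First, Cerednik-Drinfeld provides, for each compact open $K' \subset D^\times_\frap$, a canonical $D^\times_\frap$-equivariant isomorphism of rigid analytic spaces over $\breve{F}_\frap$,
\[
\Sh_{K'U^\frap, \breve{F}_\frap}^\an \cong G(F) \backslash \bigl(\cM_{\Dr, K'} \times G(\A_{F,f}^\frap)/U^\frap\bigr),
\]
compatibly with the natural Weil descent data. Here $G(F)$ embeds diagonally into $G(\A_{F,f})$ and acts on $\cM_{\Dr,K'}$ through the identification $G(F_\frap) = \GL_2(F_\frap)$. Taking the inverse limit over $K'$ and invoking the tower duality, we rewrite the infinite-level Shimura curve (at $\frap$) as
\[
\varprojlim_{K'} \Sh_{K'U^\frap, \breve{F}_\frap}^\an \cong G(F) \backslash \bigl(\cM_{\LT,\infty} \times G(\A_{F,f}^\frap)/U^\frap\bigr),
\]
with $D^\times_\frap$ acting through its action on $\cM_{\LT,\infty}$.

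Next, base-change to $\C_p$, take $H^1_\et(-,\Q_p/\Z_p)$, and then the colimit over $K'$. The left-hand side yields $\rho$ by definition. For the right-hand side, project to $\cM_{\LT,\infty,\C_p}/\GL_2(F_\frap) \cong \mathbb{P}^1_{\C_p}$, using that $\pi_\GH$ is (in the appropriate pro-\'etale sense) a $\GL_2(F_\frap)$-torsor. At finite level $K \subset \GL_2(F_\frap)$, a local trivialization identifies the fiber over a point of $\mathbb{P}^1_{\C_p}$ with the coset space $G(F) \backslash G(\A_{F,f})/KU^\frap$; the associated \'etale local system on $\mathbb{P}^1_{\C_p}$ is therefore the $\GL_2(F_\frap)$-representation of $\Q_p/\Z_p$-valued functions on this coset space, which is exactly $\pi^K = S(KU^\frap,\Q_p/\Z_p)$. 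Passing to the limit over $K$ (equivalently, viewing the pushed-forward constant sheaf directly on the infinite-level quotient) recovers the sheaf $\mathcal{F}_\pi$, so the right-hand side computes $H^1_\et(\mathbb{P}^1_{\C_p},\mathcal{F}_\pi)$. The $D^\times_\frap$-equivariance is tautological, and the Galois equivariance arises from the Weil descent data, extended from $W_{F_\frap}$ to $\Gal_{F_\frap}$ by the main theorem.

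The main obstacle is the commutation of $H^1_\et$ with the limit over $K'$ and the colimit over $K$, which rests on the admissibility/finiteness result established earlier in the paper; without it, the equality of the two formulations would only hold at finite level with some control of denominators. A secondary but largely formal point is to verify that the Weil descent datum coming from Cerednik-Drinfeld on the Drinfeld side matches the natural one on $\cM_{\LT,\infty}$ under the tower duality, so that the Galois action computed via the uniformization coincides with the one on $H^1_\et(\mathbb{P}^1_{\C_p},\mathcal{F}_\pi)$; this compatibility is part of the construction of the duality isomorphism in the references cited above.
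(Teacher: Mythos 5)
Your proposal is correct and follows essentially the same route as the paper: Cerednik uniformization passed to infinite level, the Lubin--Tate/Drinfeld duality, and the identification of the pushforward of the constant sheaf $\Q_p/\Z_p$ along the period map from the uniformized infinite-level Shimura curve to $\mathbb{P}^1_{\C_p}$ with $\mathcal{F}_{\pi}$ (the paper phrases this last step on the Drinfeld side via the Hodge--Tate period map $\pi_\HT$, which is exactly your $\pi_\GH$ under the duality isomorphism). Two small corrections: the commutation of $H^1_\et$ with the inverse limit over $K^\prime$ does not rest on the admissibility/finiteness theorem but on the $\sim\varprojlim$ formalism for torsion \'etale cohomology of perfectoid spaces, and the one non-formal verification in the pushforward step is that the higher direct images $R^j\pi_{\HT\ast}^{\Sh}\Q_p/\Z_p$ vanish for $j>0$ (so that $H^1$ upstairs equals $H^1$ of $\mathcal{F}_\pi$ downstairs), which the paper checks on stalks using that the geometric fibres are profinite sets of copies of the point.
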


This is a form of a $p$-adic local-global-compatibility result, and we deduce the following more precise results. Fix an absolutely irreducible (odd) $2$-dimensional representation $\overline{\sigma}$ of $\Gal_F$ over a finite extension $\F_q$ of $\F_p$; this gives rise to a maximal ideal $\mm$ of the abstract Hecke algebra $\T$ (coming from unramified places), and we assume that the localization $\pi_\mm\neq 0$, i.e. $\overline{\sigma}$ is modular. There is a corresponding Hecke algebra $\T(U^\frap)_\mm$ (a complete local noetherian ring with residue field $\F_q$) acting faithfully on $\pi_\mm$. There is a $2$-dimensional Galois representation
\[
\sigma_\mm: \Gal_F\to \GL_2(\T(U^\frap)_\mm)
\]
characterized by the Eichler-Shimura relations expressing the characteristic polynomials of Frobenius elements in terms of Hecke operators. The next result says that one can recover $\sigma_\mm|_{\Gal_{F_\frap}}$ from $\pi_\mm$.

\begin{thm} There is a $\T(U^\frap)_\mm[\Gal_{F_\frap}\times D^\times_\frap]$-equivariant isomorphism
\[
H^1_\et(\mathbb{P}^1_{\C_p},\mathcal{F}_{\pi_\mm}) \cong \sigma_\mm|_{\Gal_{F_\frap}}\otimes_{\T(U^\frap)_\mm} \rho_\mm[\sigma_\mm]\ ,
\]
for some faithful $\T(U^\frap)_\mm[D^\times_\frap]$-module $\rho_\mm[\sigma_\mm]$ carrying the trivial $\Gal_{F_\frap}$-action. If $\overline{\sigma}|_{\Gal_{F_\frap}}$ is irreducible, this determines the $\T(U^\frap)_\mm[\Gal_{F_\frap}]$-representation $\sigma_\mm|_{\Gal_{F_\frap}}$ uniquely.
\end{thm}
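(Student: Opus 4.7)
The plan is to combine the previous theorem (the Cerednik-type identification $H^1_\et(\mathbb{P}^1_{\C_p},\mathcal{F}_\pi)\cong \rho$) with a Carayol-style factorization of the cohomology of the Shimura curve, and then invoke Schur's lemma for the final uniqueness claim.

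First I would verify that the functor $\pi\mapsto H^1_\et(\mathbb{P}^1_{\C_p},\mathcal{F}_\pi)$ is compatible with localization at $\mm$. Since $\mm\subset\T$ is the maximal ideal attached to $\overline{\sigma}$, and $\T$ acts on $\pi$ via Hecke operators at unramified primes (which factor through the $\GL_2(F_\frap)$-action there), $\pi_\mm$ is a $\GL_2(F_\frap)$-stable direct summand of $\pi$. Exactness of $\pi\mapsto \mathcal{F}_\pi$ then yields an isomorphism
\[
H^1_\et(\mathbb{P}^1_{\C_p},\mathcal{F}_{\pi_\mm})\cong \rho_\mm
\]
of $\T(U^\frap)_\mm[\Gal_{F_\frap}\times D^\times_\frap]$-modules directly from the previous theorem.

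Second I would factor $\rho_\mm$ using the global Galois representation. Set
\[
\rho_\mm[\sigma_\mm] := \Hom_{\T(U^\frap)_\mm[\Gal_F]}(\sigma_\mm,\rho_\mm),
\]
a $\T(U^\frap)_\mm[D^\times_\frap]$-module with trivial $\Gal_F$-action. The evaluation map
\[
\sigma_\mm\otimes_{\T(U^\frap)_\mm}\rho_\mm[\sigma_\mm]\longrightarrow \rho_\mm
\]
is $\T(U^\frap)_\mm[\Gal_F\times D^\times_\frap]$-equivariant. At each characteristic-zero point of $\Spec \T(U^\frap)_\mm$, the Eichler-Shimura relations together with Jacquet-Langlands identify the corresponding fibre of $\rho_\mm$ with a direct sum of copies of $\sigma_\mm$; absolute irreducibility of $\overline{\sigma}$ then lets Carayol's lemma propagate this factorization integrally over the complete local ring $\T(U^\frap)_\mm$, so the evaluation map is an isomorphism. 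Restriction of the $\Gal_F$-action to $\Gal_{F_\frap}$ gives the decomposition in the statement, with the trivial $\Gal_{F_\frap}$-action on $\rho_\mm[\sigma_\mm]$ by construction. Faithfulness of $\rho_\mm[\sigma_\mm]$ as a $\T(U^\frap)_\mm$-module then reduces, via the tensor decomposition, to faithfulness of $\T(U^\frap)_\mm$ on $\pi_\mm$ and the compatibility of Jacquet-Langlands with Hecke operators at unramified primes.

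For the uniqueness assertion, assume $\overline{\sigma}|_{\Gal_{F_\frap}}$ is irreducible; Nakayama applied to $\End_{\Gal_{F_\frap}}(\sigma_\mm|_{\Gal_{F_\frap}})$ then shows that $\sigma_\mm|_{\Gal_{F_\frap}}$ is absolutely irreducible over $\T(U^\frap)_\mm$, so its $\T(U^\frap)_\mm$-linear endomorphisms are scalar. Combined with faithfulness of $\rho_\mm[\sigma_\mm]$, this lets one reconstruct $\sigma_\mm|_{\Gal_{F_\frap}}$ from the tensor product up to $\T(U^\frap)_\mm$-linear isomorphism, by a standard Schur-lemma argument over a complete local ring. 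The main obstacle I anticipate is the integral Carayol factorization in step two: one must ensure no nontrivial extension classes appear over $\T(U^\frap)_\mm$ (as opposed to its generic fibre) and that the $D^\times_\frap$-action is truly carried by the multiplicity space $\rho_\mm[\sigma_\mm]$ rather than entangled with the Galois action. This is standard from the deformation-theoretic viewpoint but requires care in the torsion-coefficient setting used here.
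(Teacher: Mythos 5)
Your strategy---localize the Cerednik/Hodge--Tate identification $H^1_\et(\mathbb{P}^1_{\C_p},\mathcal{F}_{\pi_{U^\frap}})\cong\rho_{U^\frap}$ at $\mm$, show $\rho_\mm$ is $\sigma_\mm$-typic \`a la Carayol, and recover $\sigma_\mm|_{\Gal_{F_\frap}}$ by a Schur-type argument---is the route the paper takes. The two points you flag as delicate are handled there as follows: the integral propagation of the Carayol factorization is Proposition \ref{TypicSubmodule} (any $R[G]$-submodule of a $\sigma_R$-typic module is again $\sigma_R$-typic when $\overline{\sigma}_R$ is absolutely irreducible), applied to the torsion-free module $H^1(\Sh_{KU^\frap,\overline{F}},\Z_p)_\mm$ sitting inside its rational cohomology, which is $\sigma$-typic by Carayol; this is Theorem \ref{H1Typic} and Proposition \ref{CompH1Typic}, and it disposes of the extension-class worry. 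The uniqueness step is Lemma \ref{TypicDeterminesRepr}, which argues via determinants and reduction modulo annihilator ideals $I_m=\Ann(m)$ rather than via $\End_{\Gal_{F_\frap}}$; your Nakayama/Schur sketch is in the same spirit but the determinant formulation is what actually lets one work over the non-reduced, possibly non-flat ring $\T(U^\frap)_\mm$.

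The genuine gap is your treatment of the Hecke algebra $\T(U^\frap)_\mm$. In the paper this ring is defined on the $D^\times$-side, as $\varprojlim_K\T(KU^\frap)_\mm$ with $\T(KU^\frap)$ the image of $\T$ in $\End(H^1(\Sh_{KU^\frap,\C},\Z))$; consequently its faithfulness on $\rho_\mm$, and hence on $\rho_\mm[\sigma_\mm]$ via Proposition \ref{TypicEquiv}, is essentially definitional, and your proposed reduction of faithfulness to ``faithfulness of $\T(U^\frap)_\mm$ on $\pi_\mm$'' runs in the wrong (and harder) direction. More seriously, nothing in your argument shows that $\T(U^\frap)_\mm$ acts on $\pi_\mm$ at all, nor that $\rho_\mm\neq 0$ under the standing hypothesis $\pi_\mm\neq 0$; if $\rho_\mm$ were zero the theorem would be vacuous and the faithfulness claim false. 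Classical Jacquet--Langlands only transfers automorphic representations that are discrete series at $\frap$, so a priori a torsion Hecke eigensystem in $\pi_\mm$ need not be seen on the Shimura-curve side. The paper closes this with the cuspidal types of Proposition \ref{BasicTypes} and Corollary \ref{ConstructionTypes}: every mod-$p^m$ class in $\pi_\mm$ is congruent to a class coming from a form that is cuspidal at $\frap$, hence transfers to $D^\times$ and lies in the image of $\T(U^\frap)_\mm$. Some such congruence argument is an indispensable ingredient and is missing from your proposal.
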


Moreover, there is a version for the $\mm$-torsion.

\begin{thm} The $2$-dimensional $\Gal_{F_\frap}$-representation $\overline{\sigma}|_{\Gal_{F_\frap}}$ is determined by the admissible $\GL_2(F_\frap)$-representation $\pi[\mm]$. More precisely, $\overline{\sigma}|_{\Gal_{F_\frap}}$ can be read off from the $\Gal_{F_\frap}$-representation
\[
H^1_\et(\mathbb{P}^1_{\C_p},\mathcal{F}_{\pi[\mm]})\ ,
\]
which is an infinite-dimensional admissible $\Gal_{F_\frap}\times D^\times_\frap$-representation. Any indecomposable $\Gal_{F_\frap}$-subrepresentation of $H^1_\et(\mathbb{P}^1_{\C_p},\mathcal{F}_{\pi[\mm]})$ is of dimension $\leq 2$, and
$\overline{\sigma}|_{\Gal_{F_\frap}}$ is determined in the following way.
\begin{altenumerate}
\item[{\rm Case (i)}] If there is a $2$-dimensional indecomposable $\Gal_{F_\frap}$-representation
\[
\sigma^\prime\subset H^1_\et(\mathbb{P}^1_{\C_p},\mathcal{F}_{\pi[\mm]})\ ,
\]
then $\overline{\sigma}|_{\Gal_{F_\frap}} = \sigma^\prime$.
\item[{\rm Case (ii)}] Otherwise, $H^1_\et(\mathbb{P}^1_{\C_p},\mathcal{F}_{\pi[\mm]})$ is a direct sum of characters of $\Gal_{F_\frap}$, and at most two different characters $\chi_1,\chi_2$ of $\Gal_{F_\frap}$ appear; if only one appears, let $\chi_2=\chi_1$ be the only character appearing. Then $\overline{\sigma}|_{\Gal_{F_\frap}} = \chi_1\oplus \chi_2$.
\end{altenumerate}
\end{thm}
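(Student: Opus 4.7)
The plan is to derive this theorem from the preceding one by passing to $\mm$-torsion. Setting $T=\T(U^\frap)_\mm$, the preceding theorem gives a canonical $\Gal_{F_\frap}\times D^\times_\frap$-equivariant isomorphism
\[
H^1_\et(\mathbb{P}^1_{\C_p},\mathcal{F}_{\pi_\mm}) \cong \sigma_\mm|_{\Gal_{F_\frap}}\otimes_T \rho_\mm[\sigma_\mm].
\]
Since $\sigma_\mm$ is a free $T$-module of rank two, the $\mm$-torsion of the right-hand side is canonically $\overline{\sigma}|_{\Gal_{F_\frap}}\otimes_{\F_q} V$ as a $\Gal_{F_\frap}\times D^\times_\frap$-representation, where $V:=\rho_\mm[\sigma_\mm][\mm]$ is an $\F_q$-vector space with trivial Galois action and admissible $D^\times_\frap$-action, and which is infinite-dimensional (accounting for the infinite-dimensionality asserted in the theorem).

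The core technical step is to upgrade this to an isomorphism
\[
H^1_\et(\mathbb{P}^1_{\C_p},\mathcal{F}_{\pi[\mm]}) \cong \overline{\sigma}|_{\Gal_{F_\frap}}\otimes_{\F_q} V.
\]
The exactness of $\pi\mapsto\mathcal{F}_\pi$ applied to the short exact sequence $0\to\pi[\mm]\to\pi_\mm\to\pi_\mm/\pi[\mm]\to 0$ yields a long exact sequence in cohomology; since $\mathcal{F}_{\pi[\mm]}$ is killed by $\mm$, there is an induced map $\alpha\colon H^1_\et(\mathcal{F}_{\pi[\mm]})\to H^1_\et(\mathcal{F}_{\pi_\mm})[\mm]$. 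Showing that $\alpha$ is bijective reduces to vanishing of its kernel (a quotient of $H^0_\et(\mathcal{F}_{\pi_\mm/\pi[\mm]})$) and to control of its cokernel (inside $H^1_\et(\mathcal{F}_{\pi_\mm/\pi[\mm]})$). Under the standing irreducibility of $\overline{\sigma}$, the representation $\pi_\mm$ admits no finite-dimensional $\GL_2(F_\frap)$-subquotients, which forces the relevant $H^0$-terms to vanish; for the cokernel one exploits the filtration $\pi_\mm=\varinjlim_n \pi[\mm^n]$, using inductively that each $\pi[\mm^n]/\pi[\mm^{n-1}]$ embeds into a finite direct sum of copies of $\pi[\mm]$.

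Granting this identification, the statements of the theorem follow from elementary representation theory over $\F_q$. Writing $W:=\overline{\sigma}|_{\Gal_{F_\frap}}$, the tensor product $W\otimes_{\F_q} V$ is a direct sum of copies of $W$, so by Krull--Schmidt every indecomposable Galois subrepresentation has dimension at most $\dim W=2$. In Case (i), a $2$-dimensional indecomposable subrepresentation $\sigma'\subset W\otimes_{\F_q} V$ must be isomorphic to $W$ by a standard projection argument (some projection of $\sigma'$ to a $W$-summand is not contained in the socle of $W$, else $\sigma'$ would lie in a semisimple subrepresentation, contradicting its indecomposability; that projection is then surjective, hence an isomorphism by dimension count), and in particular $W$ itself must be indecomposable. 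In Case (ii), the absence of any such $\sigma'$ forces $W$ to decompose as $\chi_1\oplus\chi_2$ (otherwise, for any nonzero $v\in V$, $W\otimes \F_q v$ would itself furnish one), and the characters of $\Gal_{F_\frap}$ appearing in $H^1_\et(\mathcal{F}_{\pi[\mm]})$ are precisely the Jordan--H\"older constituents of $W$, thus determining $W$.

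The principal obstacle is the bijectivity of $\alpha$, and in particular its surjectivity: identifying the image with the full $\mm$-torsion on the right-hand side requires making the inductive argument along the filtration by $\pi[\mm^n]$ uniform in $n$ and compatible with passing to the colimit, while simultaneously keeping track of both the $\Gal_{F_\frap}$- and the $D^\times_\frap$-structures.
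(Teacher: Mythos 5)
Your overall strategy --- exploit the $\sigma$-typic structure of $\rho_\mm$ coming from the previous theorem, identify $H^1_\et(\mathbb{P}^1_{\C_p},\mathcal{F}_{\pi[\mm]})$ inside $\overline{\sigma}|_{\Gal_{F_\frap}}\otimes_{\F_q}V$, and finish with elementary representation theory over $\F_q$ --- is the paper's strategy, and your final Krull--Schmidt/projection arguments are fine. But the technical core is not carried out, and you have flagged part of this yourself. You try to prove that $\alpha\colon H^1_\et(\mathcal{F}_{\pi[\mm]})\to H^1_\et(\mathcal{F}_{\pi_\mm})[\mm]$ is \emph{bijective}, and you concede that surjectivity is unresolved. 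The paper never proves (or needs) bijectivity: it proves injectivity of $H^1_\et(\mathcal{F}_{\pi_\mm[I]})\to H^1_\et(\mathcal{F}_{\pi_\mm})[I]$ together with the statement that the reduced-norm-one subgroup $(\cO_D^\times)_1\subset D^\times_\frap$ acts trivially on the cokernel. This is obtained by bounding the cokernel by an $H^0$-term of the form $H^0_\et(\mathbb{P}^1_{\C_p},\mathcal{F}_{\overline{\pi}})$ and invoking Proposition \ref{ComputeH0} (the computation of $H^0$ via connected components of the Lubin--Tate tower), not by an induction along the filtration by $\pi[\mm^n]$. Combined with a central-character argument this makes the cokernel finite-dimensional, and finite-dimensionality is all that the endgame requires: a $\Gal_{F_\frap}$-map $\overline{\sigma}\otimes V\to C$ with $C$ finite-dimensional kills $\overline{\sigma}\otimes v$ for all $v$ in a finite-codimension subspace of $V$, so (given $\dim V=\infty$) a full copy of $\overline{\sigma}|_{\Gal_{F_\frap}}$ survives inside $H^1_\et(\mathcal{F}_{\pi[\mm]})$ in Case (i), and both characters appear in Case (ii). You should replace the bijectivity claim by this weaker, provable statement; as written, your proof has a hole exactly where you put "the principal obstacle."

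The second gap is that you assert, parenthetically and without argument, that $V=\rho_\mm[\sigma][\mm]$ is infinite-dimensional. This is a genuine theorem, not a formality: the paper assumes finite-dimensionality, deduces that the $D^\times_\frap$-components of the automorphic representations contributing to $H^1(\Sh_{KU^\frap,\C},\Z_p)_\mm$ have dimension bounded independently of $K$, and contradicts this by producing, via cuspidal types, automorphic representations whose local component at $\frap$ is arbitrarily ramified and hence of arbitrarily large dimension. Without this input, both the infinite-dimensionality claim in the theorem and the step in Case (i)/(ii) described above (which needs $\dim V=\infty$ to beat the finite-dimensional cokernel) are unjustified.
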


{\bf Acknowledgments.} The results of this paper were found during the conference in honour of Henri Carayol and Jean-Pierre Wintenberger in Strasbourg in January 2014, and the author wishes to thank Arthur-C\'esar le Bras for useful discussions during that conference. The compatibility with patching was proved following questions of Ana Caraiani, whom the author wishes to thank. The results of this paper were first announced in February 2014 (during a snowstorm in Princeton). The author wants to apologize for the long delay in the preparation of this paper. Moreover, he wants to thank Christophe Breuil, Frank Calegari, Przemyslaw Chojecki, Pierre Colmez, Matt Emerton, Toby Gee, David Geraghty, Michael Harris, Eugen Hellmann, Vytas Paskunas, Sug Woo Shin, Richard Taylor and Jack Thorne for helpful discussions, and Judith Ludwig and Michael Rapoport for the careful reading of the manuscript. This work was done while the author was a Clay Research Fellow.

\section{Some equivariant sites}

In the proof of our main result, we need to consider cohomology groups of some objects like $\mathbb{P}^{n-1}/K$ for a compact open subgroup $K\subset D^\times$. There are several possible definitions of these cohomology groups. One might define them in terms of the simplicial adic space $(\mathbb{P}^{n-1}\times EK)/K$ with terms $\mathbb{P}^{n-1}\times K^i$, or in terms of some stacky diamond $(\mathbb{P}^{n-1})^\diamond/K$, using diamonds as in \cite{SWDiamonds}. The technically simplest solution seems to be to directly define a site $(\mathbb{P}^{n-1}/K)_\et$ that gives rise to these cohomology groups.     

In the following, let $X$ be either a locally noetherian analytic adic space, in the sense that $X$ is locally of the form $\Spa(A,A^+)$ for some strongly noetherian Tate ring $A$ and a ring of integral elements $A^+\subset A$, or a perfectoid space.\footnote{Everything works for analytic adic spaces which have a well-behaved \'etale site.} If $X$ is a perfectoid space, all affinoid subsets below are assumed to be of the form $\Spa(A,A^+)$, where $A$ is perfectoid. For simplicity, we will spell out only the case of locally noetherian analytic adic spaces.

\begin{definition} Let $G$ be a locally profinite group. An action of $G$ on $X$ is said to be continuous if $X$ admits a cover by open affinoid $\Spa(A,A^+)\subset X$ stabilized by open subgroups $H\subset G$ such that the action morphism $H\times A\to A$ is continuous.
\end{definition}

\begin{lem} Assume that a locally profinite group $G$ acts continuously on the locally noetherian analytic adic space $X$. For any quasicompact open subset $U\subset X$, the stabilizer $G_U\subset G$ of $U$ in $G$ is open. If $U=\Spa(A,A^+)$ is affinoid, then the action morphism $G_U\times A\to A$ is continuous.
\end{lem}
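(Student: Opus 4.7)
The plan is to prove (1) by a cover-and-intersect argument at the level of rational subdomains, and then deduce (2) by combining the continuity on an open subgroup of $G_U$ with the fact that each individual $g \in G_U$ already acts by a continuous ring automorphism of $A$.

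For (1), fix $x \in U$. The continuity hypothesis provides an affinoid $\Spa(A_x, A_x^+) \ni x$ stabilized by some open subgroup $H_x \subset G$ with $H_x \times A_x \to A_x$ continuous. I would choose a rational subdomain $V_x = \Spa(B_x, B_x^+) \subset U \cap \Spa(A_x, A_x^+)$ containing $x$, cut out by a tuple $(f_1,\ldots,f_n;g)$ in $A_x$. Since small perturbations of such a tuple define the same rational subdomain, joint continuity of the $H_x$-action on $A_x$ yields an open subgroup $H_x' \subset H_x$ with $h \cdot V_x = V_x$ for all $h \in H_x'$. Quasicompactness of $U$ lets me extract a finite subcover $V_{x_1},\ldots,V_{x_m}$, and then $H := \bigcap_{i=1}^m H_{x_i}'$ is an open subgroup of $G$ with $h\cdot U = \bigcup_i h\cdot V_{x_i} = U$ for every $h \in H$. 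Thus $H \subset G_U$, and in particular $G_U$ is open.

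For (2), suppose $U = \Spa(A,A^+)$. Any $g \in G_U$ acts on $U$ as an adic space and therefore as a continuous topological ring automorphism of $(A,A^+)$, giving continuity of the individual map $A \to A$. To get joint continuity, I would first establish it on the open subgroup $H$ constructed in (1). Each $H$ acts continuously on the rational localization $B_{x_i}$ (continuity propagates from $A_{x_i}$ to the rational localization, using that $H$ stabilizes $V_{x_i}$), and Tate acyclicity for the finite cover $\{V_{x_i}\}$ gives a topological equalizer
\[
0 \longrightarrow A \longrightarrow \prod_i B_{x_i} \longrightarrow \prod_{i,j} B_{x_i x_j},
\]
so $A$ carries the subspace topology from $\prod_i B_{x_i}$. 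Joint continuity of $H \times A \to A$ follows from joint continuity of each $H \times B_{x_i} \to B_{x_i}$ via this embedding. Finally, for arbitrary $g_0 \in G_U$, the coset $g_0 H$ is an open neighborhood of $g_0$ in $G_U$, and writing $(g_0 h)(a) = g_0(h(a))$ reduces joint continuity at $(g_0,a_0)$ to joint continuity of $H \times A \to A$ at $(1,a_0)$ together with continuity of the single automorphism $g_0 \colon A \to A$.

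The main obstacle I expect is the openness statement for stabilizers of rational subdomains—this combines the "small perturbation preserves the rational subdomain" principle from Huber's framework with joint (as opposed to separate) continuity of the action. A secondary technical point is verifying that the Tate–Huber equalizer sequence for a finite rational cover is an equalizer in \emph{topological} rings, so that the subspace topology on $A \subset \prod_i B_{x_i}$ is the correct one; without this, joint continuity on each $B_{x_i}$ would not automatically descend to $A$.
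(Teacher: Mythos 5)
Your proposal is correct and follows essentially the same route as the paper: openness of stabilizers of rational subdomains via small perturbations of the defining tuple, a finite cover of $U$ with intersection of stabilizers, a topological embedding $A\hookrightarrow \prod_i B_{x_i}$ (the paper phrases this as $A$ being a closed subspace of the product of the rings of an affinoid cover, which is the same Tate-acyclicity point) to descend joint continuity, and finally the coset argument to pass from the open subgroup to all of $G_U$. The one place where you underestimate the work is the parenthetical claim that continuity ``propagates from $A_{x_i}$ to the rational localization'': this is the computational heart of the paper's proof, which reduces to Weierstrass-type subsets $\{|f_i|\le 1\}$ (where continuity is immediate) and Laurent-type subsets $\{|1|\le |g|\}$ (where one writes $h(g)=g-a_h$ with $a_h\in\varpi A_0$ for $h$ in a small open subgroup and expands $h(g)^{-1}=g^{-1}(1+a_hg^{-1}+a_h^2g^{-2}+\cdots)$, using that $g^{-1}$ is power-bounded on such a subset so that the series converges and varies continuously with $h$), whereas the perturbation principle and the topological equalizer that you single out as the main obstacles are comparatively soft.
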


\begin{proof} First, we check that there is a basis of affinoid open subsets $\Spa(A,A^+)\subset X$ which have an open stabilizer $H$ in $G$, and for which the action morphism on $A$ is continuous. It is enough to check that this property passes to rational subsets. Fix a ring of definition $A_0\subset A$ and a pseudouniformizer $\varpi\in A_0$, i.e. a topologically nilpotent unit of $A$. If $U\subset \Spa(A,A^+)$ is the rational subset defined by
\[
U=\{\forall i=1,\ldots,n: |f_i(x)|\leq |g(x)|\neq 0\}
\]
for some $f_1,\ldots,f_n,g\in A$ such that the ideal $(f_1,\ldots,f_n)$ is all of $A$, then $V = \varpi(A^+f_1+\ldots+A^+f_n)\subset A$ is an open neighborhood of $0$ such that for all $f_i^\prime\in f_i+V$, $g^\prime\in g+V$, the rational subset defined by $f_1^\prime,\ldots,f_n^\prime,g^\prime$ agrees with $U$. From this and the continuity of the action morphism $H\times A\to A$, it follows that the stabilizer $H_U$ of $U$ in $H$ is open.

To check that the action of $H_U$ on $\cO_X(U)$ is continuous, we deal with two cases separately. First, assume that all $f_i=1$. Then $\cO_X(U)$ is the completion of $A[1/g]$ with respect to the topology making $\varpi^m A_0[1/g]$ a basis of open neighborhoods of $0$. The action of $h\in H_U$ sends $g^{-1}$ to $h(g)^{-1} = g^{-1}(1 + a_h g^{-1} + a_h^2 g^{-2} +\ldots)$ in case $h(g) = g-a_h$ for some element $a_h\in \varpi A_0$; this happens in an open subgroup $h\in H^\prime\subset H_U$. Moreover, $a_h$ varies continuously with $h$, which implies that also
\[
h(g)^{-1} = g^{-1}(1 + a_h g^{-1} + a_h^2 g^{-2} +\ldots)\in \cO_X(U)
\]
varies continuously with $h$. Going through the definitions, this implies that the action of $H^\prime$ on $\cO_X(U)$ is continuous, and as $H^\prime\subset H_U$ is open, this implies the same for the action of $H_U$ on $\cO_X(U)$.

Now assume that $g=1$. In that case, $\cO_X(U)$ is the completion of $A$ with respect to the topology making $\varpi^m A_0[f_1,\ldots,f_n]$ a basis of open neighborhoods of $0$. In this case, continuity is immediately verified. In general, as $A$ is Tate, any rational subset is a rational subset of the second form inside a rational subset of the first form, verifying continuity of $H_U\times \cO_X(U)\to \cO_X(U)$.

Thus, any quasicompact open $U\subset X$ is covered by finitely many $U_i\subset U$ whose stabilizer $G_i\subset G$ is open. The intersection $\cap_i G_i\subset G$ is still open and stabilizes $U$, proving the first claim. For the second claim, if $U=\Spa(A,A^+)$, one can choose all $U_i=\Spa(A_i,A_i^+)\subset U$ affinoid such that the action of $G_i$ on $A_i$ is continuous. Then the action of $\cap_i G_i$ on the closed subspace $A\subset \prod_i A_i$ is continuous, giving the result.
\end{proof}

We will also need a result about extending group actions to finite \'etale covers.

\begin{lem} Let $X=\Spa(A,A^+)$ with $A$ a strongly noetherian Tate ring. Let $G$ be a profinite group acting continuously on $X$, and let $B$ be a finite \'etale $A$-algebra, $B^+\subset B$ the integral closure of $A^+$, and $Y=\Spa(B,B^+)$. Assume that there is some closed subgroup $H_0\subset G$ such that the $H_0$-action on $X$ lifts to an $H_0$-action on $Y$, and fix such an action. Then there is an open subgroup $H\subset G$ containing $H_0$ and a continouous action of $H$ on $Y$ compatible with the action of $X$, and the $H_0$-action on $Y$. Given two such continuous actions of open subgroups $H$, $H^\prime$ on $Y$, there is an open subgroup $H^{\prime\prime}\subset H\cap H^\prime$ containing $H_0$ on which they agree.
\end{lem}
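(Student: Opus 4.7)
The plan is to exploit the rigidity of finite étale $A$-algebras under small continuous perturbations of $A$: the formally étale property of $B$ provides a canonical lift of any $g \in G$ close to the identity to a $g$-semilinear automorphism of $B$. Combining these canonical lifts with the given $H_0$-action then yields an $H$-action on an open subgroup containing $H_0$, and the uniqueness statement follows from finiteness of $\mathrm{Aut}_A(B)$.

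For existence, by the preceding lemma applied to $U = X$, the $G$-action on $A$ is continuous. After possibly refining to a Zariski cover of $\Spec A$ by standard étale opens (and using the first lemma again to make the cover $G$-stable), one can write each piece of $B$ as $A[T_1, \ldots, T_n]/(f_1, \ldots, f_n)$ with unit Jacobian, with canonical generators $\bar T_i$ satisfying $f_j(\bar T) = 0$. The twisted polynomials $g(f_j)$ vary continuously with $g \in G$, so for $g$ in a sufficiently small open normal subgroup $U \subset G$ (which exists since $G$ is profinite), Hensel's lemma for the strongly noetherian Tate ring $A$ produces unique elements $\bar T_i^{(g)} \in B$ close to $\bar T_i$ with $g(f_j)(\bar T^{(g)}) = 0$. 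These assemble into a canonical $g$-semilinear ring automorphism $\tilde g \colon B \to B$, with $\tilde e = \mathrm{id}$ and $\widetilde{vw} = \tilde v \circ \tilde w$ for $v, w \in U$ (both by Hensel uniqueness), depending continuously on $g \in U$. Now set $H := H_0 \cdot U$, which is an open subgroup of $G$ containing $H_0$ (using normality of $U$). For $h = h_0 u$ with $h_0 \in H_0$ and $u \in U$, define $\tilde h := h_0 \circ \tilde u$, using the given $H_0$-lift for $h_0$. Well-definedness: if $h = h_0 u = h_0^\prime u^\prime$, then $v := h_0^{-1} h_0^\prime = u (u^\prime)^{-1} \in H_0 \cap U$ is a near-identity element of $H_0$, and the uniqueness clause of Hensel's lemma forces the given $H_0$-action of $v$ on $B$ to coincide with $\tilde v$, after which multiplicativity of canonical lifts yields the equality $h_0 \circ \tilde u = h_0^\prime \circ \tilde{u^\prime}$. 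Multiplicativity of $h \mapsto \tilde h$ on $H$ is checked by rewriting $h_1 h_2 = (h_0^{(1)} h_0^{(2)}) \cdot ((h_0^{(2)})^{-1} u^{(1)} h_0^{(2)}) u^{(2)}$ via normality of $U$, and invoking continuity of the given $H_0$-action to verify that the conjugate canonical lift equals the canonical lift of the conjugate element of $U$. Continuity of the resulting $H$-action on $B$ follows from continuity of the Hensel construction together with continuity of the $H_0$-action.

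For uniqueness, given two continuous extensions $\phi, \phi^\prime$ to open subgroups $H, H^\prime$ of $G$ that agree on $H_0$, the map $h \mapsto \phi(h) \phi^\prime(h)^{-1}$ takes values in $\mathrm{Aut}_A(B)$, which is finite because $B/A$ is finite étale over the quasicompact $X$. Continuity of both actions makes this map locally constant, so its kernel is an open subgroup $H^{\prime\prime} \subset H \cap H^\prime$ containing $H_0$, as required. The main obstacle throughout is the well-definedness and homomorphism check in the existence construction; both reduce to the uniqueness clause of Hensel's lemma applied to the intersection $H_0 \cap U$, combined with continuity of the given $H_0$-action to transport the canonical lift under conjugation inside $U$.
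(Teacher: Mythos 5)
Your argument is correct in outline but follows a genuinely different route from the paper. The paper translates a continuous $H$-action on $B$ into an isomorphism of finite \'etale algebras $B\otimes_{A,m}C^0(H,A)\cong B\otimes_{A,p}C^0(H,A)$ over the ring of continuous functions $C^0(H,A)$ satisfying a cocycle condition, and then applies the Elkik--Gabber--Ramero approximation theorem to the filtered system $R_i=C^0(H_i,A)$, whose completed colimit is $C^0(H_0,A)$; existence, compatibility with the given $H_0$-action, and the group law all come out of that single spreading-out statement. You instead inline a proof of the special case of that theorem you need, via standard \'etale presentations and Newton/Hensel approximation in the complete Tate ring $B$; your uniqueness argument via finiteness of $\Aut_A(B)$ and local constancy is also different from (and arguably cleaner than) simply quoting the essential uniqueness in the approximation theorem. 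What your approach buys is elementarity; what it costs is bookkeeping, and two points deserve more care than you give them. First, the identities $\phi_v=\tilde v$ for $v\in H_0\cap U$ and $\phi_{h_0}^{-1}\circ\tilde u\circ\phi_{h_0}=\widetilde{h_0^{-1}uh_0}$ both require that the relevant elements stay within the Hensel uniqueness radius \emph{uniformly} as $h_0$ ranges over all of $H_0$; this uses compactness of the closed subgroup $H_0$ together with continuity of the given $H_0$-action to get a uniform modulus of continuity for the $\phi_{h_0}$, and then a corresponding shrinking of $U$ --- you gesture at this but do not carry it out. Second, the reduction to monogenic presentations forces you to glue canonical lifts over a $G$-stable rational cover, which works only because the canonical lift admits a presentation-independent characterization (the unique semilinear lift congruent to the identity modulo a high power of the pseudouniformizer on a ring of definition); this should be said explicitly. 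Both gaps are fillable, so I regard the proof as essentially complete.
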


Note that in particular, one can apply the lemma in the case that $H_0$ is trivial, or in the case $G=H_0\times G_0$ of two commuting actions.

\begin{proof} Everything can be translated into actions on $A$ resp. $B$. Let $C^0(G,A)$ be the ring of continuous functions $G\to A$ with pointwise addition and multiplication; this is again a complete Tate ring, intuitively corresponding to the space $X\times G$. There is a natural map $m: A\to C^0(G,A)$ sending $f\in A$ to the map $g\mapsto g(f)$; this corresponds to the action map $X\times G\to X$. There is also the diagonal embedding $p: A\to C^0(G,A)$ corresponding to the projection $X\times G\to X$.

One checks that giving a continuous action of $H$ on $B$ is equivalent to giving an isomorphism of finite \'etale $C^0(H,A)$-algebras
\[
B\otimes_{A,m} C^0(H,A)\cong B\otimes_{A,p} C^0(H,A)
\]
satisfying the obvious cocycle condition over $C^0(H\times H,A)$. Now recall the following result of Elkik, \cite{Elkik}, and Gabber-Ramero, \cite[Proposition 5.4.53]{GabberRamero}, cf. also \cite[Lemma 7.5 (i)]{ScholzePerfectoid}.

\begin{thm}\label{ElkikGabberRamero} Let $R_i$ be a filtered inductive system of complete Tate rings with compatible rings of definition $R_{i,0}\subset R_i$. Pick a pseudouniformizer $\varpi\in R_{i,0}$ for some $i$, which we assume is minimal, thus giving compatible pseudouniformizers $\varpi\in R_{i,0}$ for all $i$. Let $R_0$ be the $\varpi$-adic completion of $\varinjlim_i R_{i,0}$, and $R=R_0[\varpi^{-1}]$. Then
\[
R_\fet\cong \twoinjlim_i (R_i)_\fet\ .
\]
\end{thm}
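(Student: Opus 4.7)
The plan is to establish an equivalence of categories between $R_\fet$ and $\twoinjlim_i (R_i)_\fet$ by separately proving essential surjectivity and full faithfulness. In both cases I reduce to an approximation statement: finite \'etale data over $R$ are described by finitely many elements satisfying finitely many polynomial relations, and the $\varpi$-adic completion defining $R_0$ should allow any such data to be approximated by data over some $R_{i,0}$.

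For essential surjectivity, let $S$ be a finite \'etale $R$-algebra. After choosing a presentation and clearing powers of $\varpi$, I write $S = R\langle X_1,\ldots,X_n\rangle/(P_1,\ldots,P_n)$ with $P_k \in R_0\langle X_1,\ldots,X_n\rangle$ and with the Jacobian $J = \det(\partial P_i/\partial X_j)$ invertible in $S$. Since $R_0$ is the $\varpi$-adic completion of $\varinjlim_i R_{i,0}$, each coefficient can be approximated modulo $\varpi^N$ by an element of some $R_{i,0}$; thus for $i$ large and any prescribed $N$, I obtain $\tilde P_k \in R_{i,0}\langle X_1,\ldots,X_n\rangle$ with $\tilde P_k \equiv P_k \bmod \varpi^N$. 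The decisive input is Elkik's lifting theorem: if $N$ is chosen large enough in terms of (a bound for) $J$, the approximate presentation $(\tilde P_k)$ can be corrected inside $R_0$ to an actual \'etale presentation isomorphic to $S$, and in fact this correction can already be performed over $R_i$ for sufficiently large $i$. One then checks that the base change of $\tilde S = R_i\langle X\rangle/(\tilde P_k)$ to $R$ recovers $S$ up to canonical isomorphism.

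For full faithfulness, a morphism $\phi: S_1 \to S_2$ of base-changed finite \'etale $R_i$-algebras over $R$ is determined by the images of finitely many generators, subject to finitely many polynomial relations. Approximating those images in $S_2 \otimes_{R_i} R_j$ for $j$ large gives approximate solutions to the defining system of $S_1$, and the \'etaleness of $S_1$ (invertibility of the relevant Jacobian) again allows Elkik's argument to promote an approximation to a genuine morphism $\tilde\phi$ over $R_j$; base-changing to $R$ recovers $\phi$. Faithfulness is obtained by the same token: two morphisms over $R_j$ becoming equal over $R$ differ by a system of elements which is topologically nilpotent in $S_2 \otimes_{R_i} R$, and the rigidity of \'etale morphisms (the diagonal of an \'etale map is open and closed) forces the difference to vanish already at some finite stage.

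The main obstacle is that this approximation must be carried out in a setting where the rings of integral elements $R_{i,0}$ need not be Noetherian, so Elkik's classical theorem does not apply verbatim; the required generalization is precisely the content of Gabber--Ramero \cite[Proposition 5.4.53]{GabberRamero} (reformulated in the Tate setting in \cite[Lemma 7.5 (i)]{ScholzePerfectoid}). Once that input is granted, all the preceding reductions are formal, and the compatibility of Jacobian invertibility with $\varpi$-adic approximation ensures that the descended algebras and morphisms are indeed finite \'etale.
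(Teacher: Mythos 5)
The paper offers no proof of this statement: it is recalled as a known theorem of Elkik and Gabber--Ramero, with a pointer to \cite[Lemma 7.5 (i)]{ScholzePerfectoid}, which is essentially this exact assertion. So there is nothing internal to compare your argument against. Your sketch of the standard proof is reasonable in outline, but as written it is close to circular: the ``decisive input'' you defer to Gabber--Ramero \cite[Proposition 5.4.53]{GabberRamero} (equivalently \cite[Lemma 7.5 (i)]{ScholzePerfectoid}) is not merely a lemma used in the proof --- in the latter formulation it \emph{is} the theorem, so your argument reduces the statement to its own source rather than proving it. If you want a genuine proof, the clean decomposition is: (a) for the \emph{uncompleted} colimit $R' = (\varinjlim_i R_{i,0})[\varpi^{-1}]$ one has $R'_\fet\cong \twoinjlim_i (R_i)_\fet$ by pure finite-presentation arguments (finite \'etale algebras, morphisms between them, and the idempotents witnessing \'etaleness involve only finitely many elements and relations, hence descend to a finite stage); and (b) $R_\fet\cong R'_\fet$, i.e.\ invariance of the finite \'etale site under $\varpi$-adic completion of a ring of definition. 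Step (b) is where all the analytic content lies and is the actual Elkik/Gabber--Ramero theorem; your sketch conflates (a) and (b).

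Two further places where your sketch glosses over real work. First, a finite \'etale $R$-algebra need not admit a global presentation $R\langle X_1,\ldots,X_n\rangle/(P_1,\ldots,P_n)$ with square invertible Jacobian; the robust data to approximate are the finite projective module $S$ together with its multiplication tensor and the separability idempotent $e\in S\otimes_R S$, whose defining identities are again finitely many polynomial relations. Second, in the faithfulness step, equality of two morphisms is governed by an idempotent (the graph), and one should invoke rigidity of idempotents under $\varpi$-adically small perturbations rather than topological nilpotence of the difference, since the transition maps $S_2\otimes_{R_i}R_j\to S_2\otimes_{R_i}R$ need not be injective. Both points are standard, but they are precisely the content of the cited results, not formal consequences of them.
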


Applying this to the system $R_i = C^0(H_i,A)$ for a basis of open subgroups $H_i\subset G$ containing $H_0$, with $R_{i,0} = C^0(H_i,A_0)$, we get $R=C^0(H_0,A)$ as the completed direct limit. As we are given an isomorphism
\[
B\otimes_{A,m} C^0(H_0,A)\cong B\otimes_{A,p} C^0(H_0,A)
\]
of finite \'etale $C^0(H_0,A)$-algebras satisfying the cocycle condition, the theorem of Elkik-Gabber-Ramero shows that this spreads in an essentially unique way into an isomorphism
\[
B\otimes_{A,m} C^0(H,A)\cong B\otimes_{A,p} C^0(H,A)
\]
for an open subgroup $H\times G$ containing $H_0$. Moreover, the cocycle condition is satisfied for $H$ sufficiently small, by applying the same reasoning for the system of the $C^0(H_i\times H_i,A)$.
\end{proof}

This implies the same result for \'etale maps.

\begin{cor}\label{ResidualAction} Let $X$ be a locally noetherian analytic adic space equipped with a continuous action by a profinite group $G$. Let $Y\to X$ be an \'etale map, and assume that $Y$ is qcqs, and carries a compatible action of a closed subgroup $H_0\subset G$. Then there is an open subgroup $H$ of $G$ containing $H_0$ which acts continuously on $Y$ extending the $H_0$-action, compatibly with the action on $X$, and two such actions agree after shrinking $H$. Any morphism $Y\to Y^\prime$ of qcqs \'etale adic spaces equipped with $H_0$-actions over $X$ is equivariant for the $H$-action if $H$ is small enough.
\end{cor}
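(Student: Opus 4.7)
The strategy is to reduce the general étale case to the finite étale case of the previous Lemma via the local structure of étale morphisms of analytic adic spaces: any étale morphism factors, Zariski-locally on the source, as an open immersion of affinoids into a finite étale cover of an affinoid open of the base.

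Since $Y$ is qcqs, I cover it by finitely many affinoid opens $V_1, \ldots, V_r$ equipped with factorizations $V_i \hookrightarrow \widetilde{V}_i \to U_i$ where $U_i \subset X$ is affinoid open, $\widetilde{V}_i \to U_i$ is finite étale, and $V_i \hookrightarrow \widetilde{V}_i$ is an open immersion. The stabilizer lemma above gives that each $G_{U_i} \subset G$ is open, so $K = \bigcap_i G_{U_i}$ is open as well. The cover $\{V_i\}$ is not $H_0$-invariant a priori, but since $H_0$ is profinite, $H_0 \cap K$ has finite index in $H_0$; enlarging the cover by $h$-translates for $h$ in a finite system of coset representatives of $H_0/(H_0 \cap K)$ yields a finite refined cover, permuted as a family by $H_0 \cap K$, with each member still admitting a finite étale chart over $X$.

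For each refined piece $V_i$, I apply the previous Lemma to the finite étale cover $\widetilde{V}_i \to U_i$, taking as the starting closed subgroup the $H_0 \cap K$-stabilizer of $V_i$ in $H_0$: this produces an open subgroup of $G$ acting continuously on $\widetilde{V}_i$ compatibly with the action on $U_i$. Restricting to $V_i$ gives a local continuous action extending the $H_0$-action. The uniqueness clause of the previous Lemma patches these local actions on the overlaps $V_i \cap V_j$, and intersecting finitely many open subgroups yields a single open $H \subset G$ containing $H_0$ and acting continuously on $Y$ extending the $H_0$-action. The statement about morphisms $Y \to Y'$ follows by applying the just-proved main statement to the graph $\Gamma_f \subset Y \times_X Y'$, which is clopen (hence étale and qcqs over $X$, via the étaleness of $f$) and carries a natural $H_0$-action.

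The main technical obstacle is extending the $H_0 \cap K$-action from the open $V_i$ to the larger finite étale cover $\widetilde{V}_i$ that appears in the chart, since the open immersion $V_i \hookrightarrow \widetilde{V}_i$ is not rigid enough in general to force automatic extension. The cleanest workaround, which I expect is what is actually required, is to bypass this step entirely and instead reprove the previous Lemma for general étale (not merely finite étale) covers, by substituting the étale analog of the Elkik--Gabber--Ramero theorem (Theorem~\ref{ElkikGabberRamero}) for its finite étale version; the proof structure is identical, and the corollary then reduces immediately to a covering-and-patching argument without ever needing to extend actions from $V_i$ to $\widetilde{V}_i$.
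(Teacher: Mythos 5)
Your proposal follows the same route as the paper: reduce to the two building blocks (stabilizers of quasicompact opens, and the finite \'etale lemma) via the local structure of \'etale maps, then glue over a finite cover using the uniqueness clause, and handle morphisms through the graph. The paper's own proof is essentially the two-line version of this reduction and, notably, does not explicitly address the point you isolate as the ``main technical obstacle'': the given $H_0$-action lives only on the open piece $V_i$ and not on the finite \'etale chart $\widetilde V_i$, so the lemma cannot be applied verbatim with a nontrivial closed subgroup. Your diagnosis is accurate, and your proposed resolution is the right one: upgrade Theorem~\ref{ElkikGabberRamero} from finite \'etale algebras to qcqs \'etale adic spaces over the completed colimit (proved by the same reduction to rational subsets and finite \'etale covers, now applied to the descent datum over $C^0(H_0,A)$ rather than to $Y\to X$), after which the lemma and its uniqueness clause hold verbatim for qcqs \'etale $Y$ over affinoid $X$, and the corollary follows by the covering-and-patching you describe. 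This strengthening is not foreign to the paper --- it is exactly the statement invoked in the proof of Proposition~\ref{QuotientPerfectoid}, where the identification of $2$-colimits of qcqs \'etale sites is ``reduced to quasicompact open embeddings and finite \'etale covers individually.'' Two minor points: the cover of (the image of $Y$ in) $X$ by affinoids stabilized by open subgroups, and its $H_0$-saturation, is needed since $X$ is not assumed affinoid, and you handle this correctly; and in the morphism step the graph $\Gamma_f\subset Y\times_X Y'$ is only guaranteed to be \emph{open} (a section of an \'etale map) rather than clopen unless $Y'\to X$ is separated, but open and quasicompact is all the stabilizer lemma requires, so the argument is unaffected.
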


\begin{proof} We have already verified this result for finite \'etale maps and open subsets. In general $Y\to X$ has an open cover by finitely many subsets which are compositions of open subsets and finite \'etale maps. Thus, we can get such actions over a quasicompact open cover $\{Y_i\to Y\}$; to glue them to all of $Y$, we need to make them compatible on $Y_i\times_Y Y_j$. As $Y$ is quasiseparated, the fibre products $Y_i\times_Y Y_j$ are quasicompact. Using that any two actions of open subgroups $H$, $H^\prime$ on $Y_i\times_Y Y_j$ agree on an open neighborhood then gives the action on $Y$. Similarly, one checks that this action is equivariant for morphisms.
\end{proof}

Now we can define the equivariant \'etale site.

\begin{definition} Let $X$ be a locally noetherian analytic adic space with a continuous action by a locally profinite group $G$. Let $(X/G)_\et$ be the site whose objects are (locally noetherian analytic) adic spaces $Y$ equipped with a continuous action of $G$, and a $G$-equivariant \'etale morphism $Y\to X$. Morphisms are $G$-equivariant maps over $X$, and a family of morphisms $\{f_i: Y_i\to Y\}$ is a cover if $|Y| = \bigcup_i f_i(|Y_i|)$.

Let $(X/G)_\et^\sim$ denote the associated topos.
\end{definition}

It is directly verified that $(X/G)_\et$ has good properties, e.g. all finite limits exist. If $G$ is profinite, there is also a good notion of quasicompact and quasiseparated objects.

\begin{lem} Let $X$ be a locally noetherian analytic adic space with a continuous action by a profinite group $G$.
\begin{altenumerate}
\item[{\rm (i)}] An object $Y\in (X/G)_\et$ is quasicompact if and only if $|Y|$ is quasicompact.
\item[{\rm (ii)}] An object $Y\in (X/G)_\et$ is quasiseparated if and only if $|Y|$ is quasiseparated.
\item[{\rm (iii)}] A morphism $f: Y\to Y^\prime$ in $(X/G)_\et$ is quasiseparated (resp. quasicompact) if and only if $|f|: |Y|\to |Y^\prime|$ is quasiseparated (resp. quasicompact).
\item[{\rm (iv)}] Consider the set of $U\in (X/G)_\et$ for which $U$ is affinoid; this forms a basis for the topology consisting of qcqs objects which are stable under fibre products.
\item[{\rm (v)}] The site $(X/G)_\et$ is algebraic, in particular locally coherent.
\end{altenumerate}
\end{lem}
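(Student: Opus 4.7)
The plan is to reduce all five assertions to topological statements about $|Y|$ by means of an \emph{orbit trick}. Since $G$ is profinite and, by Lemma 2.2, the stabilizer $G_V\subset G$ of any quasicompact open $V\subset Y$ is open, the index $[G:G_V]$ is finite. Hence $G\cdot V=\bigcup_{g\in G/G_V}gV$ is a $G$-stable quasicompact open subset of $Y$, while the disjoint union $\widetilde V=\bigsqcup_{g\in G/G_V}gV$, equipped with its natural $G$-action and the $G$-equivariant étale map $\widetilde V\to Y$, is an object of $(X/G)_\et$ whose underlying adic space is qcqs (and affinoid whenever $V$ is).

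For (i), if $|Y|$ is quasicompact then $Y$ is site-quasicompact, since any covering family $\{Y_i\to Y\}$ has open images in $|Y|$ and hence admits a finite subcover. Conversely, if $Y$ is site-quasicompact, cover $|Y|$ by quasicompact opens $V_\alpha$, form the site-cover $\{\widetilde V_\alpha\to Y\}$, extract a finite subcover by hypothesis, and note that each image $G\cdot V_{\alpha_i}\subset|Y|$ is a finite union of quasicompact opens, so $|Y|$ itself is a finite union of quasicompact opens. Assertion (ii) is proved by the same orbit trick applied to fibre products $\widetilde V_1\times_Y\widetilde V_2$: site-quasiseparatedness of $Y$ translates to quasicompactness of the intersection of any two quasicompact opens of $|Y|$, and conversely. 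Part (iii) then follows from (i) and (ii) by base change along a map $f\colon Y\to Y'$, since both properties of $f$ can be checked after pulling back along the $\widetilde V$'s and are local on the target.

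For (iv), I take as basis the objects $\widetilde V\to Y$ arising from affinoid opens $V\subset Y$; each underlying adic space is a finite disjoint union of affinoids, hence affinoid and qcqs. They form a basis of the topology because every point of every object of $(X/G)_\et$ has an affinoid open neighborhood $V$, and such a point lies in the image of $\widetilde V\to Y$. Stability under fibre products is obtained by decomposing $\widetilde V_1\times_Y\widetilde V_2$ as a finite disjoint union of intersections $g_1V_1\cap g_2V_2$, refined if necessary by further affinoid rational subdomains, in order to keep us inside the basis. Finally, (v) is then a formal consequence: a site with a basis of qcqs objects stable under fibre products is algebraic (hence locally coherent) by the standard criterion from SGA 4.

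The chief technical subtlety lies in (iv): the intersection of two affinoid opens in a general locally noetherian analytic adic space need not be affinoid, only qcqs, so ``basis of affinoid objects'' must really be interpreted as the basis generated by the $\widetilde V$ under finite fibre products, with the intersections of affinoids refined by affinoid rational subdomains. Once this bookkeeping is carried out, everything else is a routine application of the orbit trick together with standard facts about (locally) spectral spaces.
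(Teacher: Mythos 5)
Your proof is correct and follows essentially the same route as the paper: your $\widetilde V=\bigsqcup_{g\in G/G_V}gV$ is exactly the paper's induced object $V\times_H G$ (non-equivariantly isomorphic to $V\times G/H$), which the paper likewise uses both to produce the affinoid basis and to pass back and forth between site-theoretic and topological quasicompactness. The only difference is cosmetic: in (iv) the fibre products are taken within the family of affinoid objects (where étale affinoids over an affinoid have affinoid fibre product), so the refinement by rational subdomains you flag as the ``chief technical subtlety'' is not actually needed.
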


\begin{proof} As \'etale maps are open, it follows that if $|Y|$ is quasicompact, then so is $Y\in (X/G)_\et$: If $\{f_i: Y_i\to Y\}$ is a cover, so that $|Y| = \bigcup_i f_i(|Y_i|)$, then finitely many of the open subsets $f_i(|Y_i|)$ already cover, giving a finite subcover in $(X/G)_\et$.

Next, we show that the set of affinoid $U\in (X/G)_\et$ forms a basis for the topology. For any $Y\in (X/G)_\et$, pick an open affinoid subset $V\subset Y$. This is stabilized by some open subgroup $H\subset G$, and then $U=V\times_H G$ is an affinoid space (as it is non-equivariantly isomorphic to $V\times G/H$). One gets a $G$-equivariant map $U\to Y$, and these cover $Y$. Obviously, the set of affinoid $U\in (X/G)_\et$ is stable under fibre products, proving (iv).

Now let $Y\in (X/G)_\et$ be quasicompact. Then we can cover $Y$ by finitely many affinoid $U_i\in (X/G)_\et$. The resulting surjection from a quasicompact space $\bigsqcup_i |U_i|$ to $|Y|$ shows that $|Y|$ is quasicompact, proving (i). All other properties are readily established.
\end{proof}

Moreover, we need the following property.

\begin{prop}\label{ResidualActionSites} Let $X$ be a qcqs locally noetherian analytic adic space with a continuous action by a profinite group $G$. The association mapping $Y\in (X/G)_\et$ to $Y\in X_\et$ defines a morphism of sites $X_\et\to (X/G)_\et$ under which $X_\et^\sim$ is a projective limit of the fibred topos $((X/H)_\et^\sim)_H$, where $(X/H)_\et^\sim$ is considered as a fibred topos over the category of open subgroups $H\subset G$ in an obvious way. More generally, whenever $H_0\subset G$ is a closed subgroup, $(X/H_0)_\et^\sim$ is a projective limit of the fibred topos $(X/H)_\et^\sim$ for $H\supset H_0$ open subgroups of $G$. In particular, for any sheaf $\mathcal{F}\in (X/G)_\et^\sim$,
\[
H^i((X/H_0)_\et,\mathcal{F}) = \varinjlim_{H_0\subset H\subset G} H^i((X/H)_\et,\mathcal{F})\ ,
\]
where we write $\mathcal{F}$ also for its pullback to $(X/H_0)_\et$, resp. $(X/H)_\et$.
\end{prop}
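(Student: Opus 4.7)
The plan is to realize $(X/H_0)_\et$ as a $2$-colimit of the sites $(X/H)_\et$ for open subgroups $H\supset H_0$, invoke the standard correspondence between such filtered $2$-colimits of coherent sites and cofiltered $2$-limits of coherent topoi, and then read off the cohomology formula from this limit description.

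For any open $H\supset H_0$, the forgetful functor $u_H\colon (X/H)_\et\to(X/H_0)_\et$, sending a $G$-equivariant \'etale $Y\to X$ with its continuous $H$-action to the same \'etale map with the action restricted to $H_0$, is continuous, preserves finite limits, and preserves coverings, since covers are defined at the level of underlying topological spaces. It thus induces a morphism of topoi $(X/H_0)_\et^\sim\to(X/H)_\et^\sim$, and these are compatible as $H$ varies. The case $H_0=\{e\}$ yields the morphism $X_\et\to(X/G)_\et$ of the first assertion, since $(X/\{e\})_\et=X_\et$ by inspection.

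To show that the resulting cone exhibits $(X/H_0)_\et^\sim$ as the $2$-limit, I would verify the standard criterion: (a) every object of $(X/H_0)_\et$ admits a cover by objects in the image of some $u_H$; (b) any two parallel morphisms $u_H(V)\rightrightarrows u_H(W)$ that agree in $(X/H_0)_\et$ become equal after pullback to $(X/H')_\et$ for some open $H_0\subset H'\subset H$; (c) coverings descend through the system. The qcqs affinoid objects form a basis for $(X/H_0)_\et$ by part (iv) of the preceding lemma, and for each such qcqs $U\in(X/H_0)_\et$ Corollary \ref{ResidualAction} produces an open $H\supset H_0$ acting continuously on $U$ over $X$ and extending the given $H_0$-action, giving (a). The uniqueness clause of the same corollary delivers (b) after further shrinking $H$, and (c) is automatic. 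The principal technical point, rather than a conceptual difficulty, is bookkeeping: each gluing, morphism, and cocycle requires a fresh spreading-out, but at each stage only finitely many conditions need to be spread (by qcqs-ness and stability of the basis under fibre products), so the intersection of the resulting open subgroups of $G$ remains open.

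The cohomology identity then follows from the general principle that in a cofiltered $2$-limit of coherent topoi with qcqs transition maps, the cohomology of a pulled-back sheaf commutes with the limit. Concretely, since qcqs objects form a basis stable under fibre products in each $(X/H)_\et$, cohomology can be computed via \v{C}ech theory on qcqs hypercovers; by the spreading-out argument above, every such hypercover on $(X/H_0)_\et$ — together with any \v{C}ech cocycle or cobounding datum — descends to $(X/H)_\et$ for $H$ sufficiently small, yielding both surjectivity and injectivity of the natural map
\[
\varinjlim_{H_0\subset H\subset G} H^i((X/H)_\et,\mathcal{F})\to H^i((X/H_0)_\et,\mathcal{F})\,.
\]
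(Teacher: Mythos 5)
Your proposal is correct and follows essentially the same route as the paper: restrict to the basis of qcqs objects, use Corollary \ref{ResidualAction} to identify $(X/H_0)_\et^{\mathrm{qcqs}}$ with the filtered $2$-colimit of the $(X/H)_\et^{\mathrm{qcqs}}$, and then invoke the SGA~4~VI machinery on projective limits of (coherent) fibred topoi to obtain both the limit description and the commutation of cohomology with the colimit. The paper simply cites SGA~4~VI Th\'eor\`eme~8.2.3 where you spell out the verification of the criterion and sketch the \v{C}ech argument, but the content is the same.
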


\begin{proof} We can replace $(X/G)_\et$ by the site $(X/G)_\et^{\mathrm{qcqs}}$ of qcqs $Y\in (X/G)_\et$, which gives rise to the same topos. Then, by Lemma \ref{ResidualAction} and the previous identification of qcqs objects, we have an identification of categories
\[
(X/H_0)_\et^{\mathrm{qcqs}}\cong \twoinjlim_{H_0\subset H\subset G} (X/H)_\et^{\mathrm{qcqs}}\ ,
\]
where $H$ runs over open subgroups. Moreover, $(X/H_0)_\et^{\mathrm{qcqs}}$ is equipped with the weakest induced topology. Thus, by SGA 4 VI Th\'eor\`eme 8.2.3, we get the result.
\end{proof}

It is useful to combine this result with the observation that $(X/H)_\et\to (X/G)_\et$ is a slice, if $H\subset G$ is open.

\begin{prop}\label{SliceTopoi} Let $X$ be a locally noetherian analytic adic space with a continuous action by a locally profinite group $G$. Let $H\subset G$ be an open subgroup, and consider $X\times_H G\in (X/G)_\et$. Then the functor $U\mapsto U\times_H G$ induces an equivalence between $(X/H)_\et$ and the slice site $(X/G)_\et / (X\times_H G)$.
\end{prop}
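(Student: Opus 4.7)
The plan is to exhibit a quasi-inverse $\Psi$ to $\Phi: U\mapsto U\times_H G$ and then check that both functors are continuous for the étale topologies. The key geometric input is that since $H\subset G$ is open, the coset space $G/H$ is discrete, so non-equivariantly
\[
X\times_H G = \bigsqcup_{gH\in G/H} X_{gH},
\]
a disjoint union of copies of $X$ indexed by cosets, with $G$ permuting the copies via its left action on $G/H$. In particular, $H$ stabilizes the component $X_{eH}$ corresponding to the identity coset, acting on it via the given continuous $H$-action on $X$.

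Given this, I would define the quasi-inverse by
\[
\Psi(V \to X\times_H G) \;:=\; V \times_{X\times_H G} X_{eH}.
\]
The map $U:=\Psi(V)\to X_{eH}\to X$ is étale as a base change, and because $H$ stabilizes the component $X_{eH}$, the $G$-action on $V$ restricts to an $H$-action on $U$. Continuity of this $H$-action is immediate from continuity of the $G$-action on $V$ together with the openness of $H$ in $G$; alternatively, one can invoke Corollary \ref{ResidualAction} applied to $H\subset G$ to extract the residual $H$-equivariant structure on $U$.

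Showing $\Phi$ and $\Psi$ are mutually inverse is then a direct check: $\Psi(\Phi(U)) = (U\times_H G)\times_{X\times_H G} X_{eH} = U$ by construction; conversely, for $V\to X\times_H G$, pulling back the decomposition of $X\times_H G$ gives $V=\bigsqcup_{gH} V_{gH}$, and $G$-equivariance forces $V_{gH}=g\cdot V_{eH}=g\cdot U$, so $V\cong U\times_H G$ canonically and compatibly with all data over $X$. For the topology, covers in $(X/H)_\et$ are detected on underlying topological spaces, and since $|U\times_H G|=\bigsqcup_{G/H}|U|$, a family $\{U_i\to U\}$ is surjective on topological spaces if and only if $\{U_i\times_H G\to U\times_H G\}$ is, so covers correspond.

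The main obstacle — really the only bookkeeping concern — is the continuity and extensibility of the residual $H$-action on the pulled-back space $U$ and the verification that all isomorphisms produced are $G$- or $H$-equivariant as maps of adic spaces (not just of underlying sets). Both points are handled by the openness of $H$ in $G$ together with the formalism developed in Corollary \ref{ResidualAction} and the earlier lemmas, so the argument is essentially formal once the decomposition of $X\times_H G$ into cosets of copies of $X$ is in hand.
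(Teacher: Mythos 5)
Your proposal is correct and follows exactly the paper's own argument: the quasi-inverse is the fibre over the identity component $X=X\times_H H\subset X\times_H G$, the two functors are checked to be mutually inverse via the coset decomposition, and covers correspond because they are detected on underlying topological spaces. You merely spell out details the paper leaves as "clearly inverse."
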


\begin{proof} It is enough to prove that one gets an equivalence of categories $(X/H)_\et\cong (X/G)_\et / (X\times_H G)$, as the notion of covers corresponds. The inverse functor is given by sending a $G$-equivariant map $U\to X\times_H G$ to the fibre over $X = X\times_H H\to X\times_H G$, and the functors are clearly inverse.
\end{proof}

Assume now that $X$ lives over $\Spa(K,\cO_K)$ for some nonarchimedean field $K$; fix a pseudouniformizer $\varpi\in \cO_K$. Let $\cO_X^+/\varpi$ be the sheaf on $X_\et$ which is the sheafication of $U\mapsto \cO_X^+(U)/\varpi$.

\begin{lem}\label{DefOXmodp} Let $X$ be a locally noetherian analytic adic space over $\Spa(K,\cO_K)$ with a continuous action by a locally profinite group $G$ (compatible with the trivial action on $\Spa(K,\cO_K)$). The association $\cO_{X/G}^+/\varpi$ mapping $U\in (X/G)_\et$ to $((\cO_X^+/\varpi)(U))^G$ is a sheaf on $(X/G)_\et$. The pullback of $\cO_{X/G}^+/\varpi$ to $X_\et$ is equal to $\cO_X^+/\varpi$.
\end{lem}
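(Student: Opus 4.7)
My plan is to prove the two assertions in turn. The sheaf axiom follows by reducing to the étale sheaf axiom for $\cO_X^+/\varpi$ on $X_\et$ and then applying the exact functor of $G$-invariants. For the identification of the pullback to $X_\et$, I will use the two structural results already established: the slice description of Proposition \ref{SliceTopoi} at finite level, and the projective limit description of Proposition \ref{ResidualActionSites} applied to the closed subgroup $H_0 = \{1\}$.

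In detail: given a cover $\{f_i : Y_i \to Y\}$ in $(X/G)_\et$, forgetting the $G$-equivariance gives an étale cover in $X_\et$, hence the sheaf sequence
\[
0 \to (\cO_X^+/\varpi)(Y) \to \prod_i (\cO_X^+/\varpi)(Y_i) \rightrightarrows \prod_{i,j} (\cO_X^+/\varpi)(Y_i \times_Y Y_j)
\]
is exact. Each term carries a natural $G$-action, since $Y$, $Y_i$, $Y_i\times_Y Y_j$ all lie in $(X/G)_\et$, and the arrows are $G$-equivariant. Since $G$-invariants is left exact and commutes with products of $G$-modules on which $G$ acts factor-wise, passing to $G$-invariants yields the sheaf axiom for $\cO_{X/G}^+/\varpi$.

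For the pullback statement, I first compute at intermediate level. Given an open subgroup $H \subset G$, Proposition \ref{SliceTopoi} identifies $(X/H)_\et$ with the slice $(X/G)_\et/(X\times_H G)$, and restriction of $\cO_{X/G}^+/\varpi$ along this equivalence sends $U \in (X/H)_\et$ to $((\cO_X^+/\varpi)(U \times_H G))^G$. Non-equivariantly, $U \times_H G$ is the disjoint union of $|G/H|$ copies of $U$ permuted transitively by the $G$-action, so any $G$-invariant section of $\cO_X^+/\varpi$ on $U\times_H G$ is determined by its restriction to the identity copy, which is an $H$-invariant element of $(\cO_X^+/\varpi)(U)$. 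Hence the restriction is $\cO_{X/H}^+/\varpi$.

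Applying Proposition \ref{ResidualActionSites} with $H_0 = \{1\}$ (closed in the profinite group, or more generally locally profinite case after reducing to a profinite neighborhood), the topos $X_\et^\sim$ is the projective limit of the $(X/H)_\et^\sim$ over open $H \subset G$. Thus the pullback of $\cO_{X/G}^+/\varpi$ to $X_\et$ is the sheafification of the presheaf $U \mapsto \varinjlim_{H} ((\cO_X^+/\varpi)(U))^H$ on qcqs $U$, where $H$ ranges over open subgroups acting compatibly on $U$ (such $H$ exist by Corollary \ref{ResidualAction}). To finish, I need to see that this colimit recovers $\cO_X^+/\varpi$. On an affinoid $U = \Spa(A,A^+)$ with continuous action of some open $H \subset G$ on $A$, the subgroup $\varpi A^+ \subset A$ is open (multiplication by $\varpi$ is a homeomorphism of $A$, and $A^+$ is open), so for each $s \in A^+$ the stabilizer in $H$ of the class $\bar s \in A^+/\varpi$ is the preimage of the open set $s+\varpi A^+$ under the continuous orbit map $h\mapsto hs$, hence open. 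Consequently $\varinjlim_{H'}(A^+/\varpi)^{H'} = A^+/\varpi$, which after sheafification yields $\cO_X^+/\varpi$. The main obstacle I anticipate is the bookkeeping for the sheafification step, since $(\cO_X^+/\varpi)(U)$ need not equal $\cO_X^+(U)/\varpi$; but since the two candidate sheaves agree on the basis of affinoids up to the sheafification process, their associated sheaves coincide.
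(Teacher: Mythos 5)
Your proposal is correct and follows essentially the same route as the paper: the sheaf axiom by taking $G$-invariants in the sheaf sequence for $\cO_X^+/\varpi$, and the pullback by first identifying the restriction to $(X/H)_\et$ for open $H$ and then invoking Proposition \ref{ResidualActionSites} together with the observation that, by continuity of the action, every section of $\cO_X^+/\varpi$ over a qcqs object is invariant under some open subgroup. Your version merely spells out the details (the slice computation via Proposition \ref{SliceTopoi} and the openness of stabilizers of classes in $A^+/\varpi$) that the paper leaves implicit.
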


We warn the reader that there is no sheaf $\cO_{X/G}^+$ in general whose reduction modulo $\varpi$ is $\cO_{X/G}^+/\varpi$, so that we are doing some abuse of notation here. The problem is that $\cO_X^+$ may not have enough sections invariant under $G$, but continuity of the action of $G$ implies that there are many sections which are invariant modulo $\varpi$.

\begin{proof} The sheaf property of $\cO_{X/G}^+/\varpi$ follows by taking $G$-invariants in the sheaf property of $\cO_X^+/\varpi$. To check the pullback, one first checks from the definition that the definition of $\cO_{X/G}^+/\varpi$ is compatible with pullback to $(X/H)_\et$ for an open subgroup $H\subset G$, and then one uses Proposition \ref{ResidualActionSites} along with the observation that by continuity of the $G$-action, any section of $(\cO_X^+/\varpi)(U)$ is invariant under some open $H\subset G$ if $U$ is qcqs.
\end{proof}

We will need the following ``conservativity" property.

\begin{lem}\label{Conservativity} Let $X$ be a locally noetherian analytic adic space with a continuous action by a locally profinite group $G$. Then a pointed sheaf $\mathcal{F}$ on $(X/G)_\et$ is trivial if and only if its pullback to $X_\et$ is trivial.
\end{lem}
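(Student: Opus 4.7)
The ``only if'' direction is immediate, since pullback preserves the terminal pointed sheaf. For the converse, suppose $f^*\mathcal{F}$ is trivial on $X_\et$, where $f$ denotes the morphism of sites of Proposition~\ref{ResidualActionSites}; the plan is to show $\mathcal{F}(Y) = \{*\}$ for every $Y \in (X/G)_\et$. By the sheaf axiom this can be verified after covers, so by part (iv) of the previous lemma I may restrict attention to the affinoid objects of the form $Y = V \times_H G$, with $V$ affinoid and $H \subset G$ a compact open (hence profinite) subgroup stabilising $V$ and acting continuously on its coordinate ring. Iterating Proposition~\ref{SliceTopoi}, the slice $(X/G)_\et / Y$ is equivalent to the site $(V/H)_\et$ of $H$-equivariant \'etale adic spaces over $V$; under this equivalence $s \in \mathcal{F}(Y)$ corresponds to $s' \in \mathcal{F}|_{(V/H)_\et}(V)$, and the pullback of $\mathcal{F}|_{(V/H)_\et}$ to $V_\et$ agrees with the restriction of $f^*\mathcal{F}$ to $V$, so it is trivial too.

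I would then apply Proposition~\ref{ResidualActionSites} to the profinite group $H$ acting on $V$ with $H_0 = \{1\}$, obtaining
\[
\{*\} \;=\; \Gamma\bigl(V_\et,\,(f^*\mathcal{F})|_V\bigr) \;=\; \varinjlim_{H'\subset H \text{ open}} \mathcal{F}|_{(V/H')_\et}(V)\,.
\]
Hence there exists an open subgroup $H' \subset H$ such that $s'$ becomes the basepoint in $\mathcal{F}|_{(V/H')_\et}(V)$; by the sheaf axiom this is realised by an $H'$-equivariant \'etale cover $\{W_i \to V\}_i$, with each $W_i$ qcqs, on which the pullback of $s'$ equals the basepoint.

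The final step is to promote this $H'$-cover to an $H$-equivariant cover. I would set $\tilde W_i := W_i \times_{H'} H$, which is a finite disjoint union of $[H:H']$ translates of $W_i$ (finite because $H$ is profinite and $H'$ is open), carrying a canonical $H$-equivariant \'etale map to $V$. Since already the images of the $W_i$ cover $|V|$, the family $\{\tilde W_i \to V\}$ is a cover in $(V/H)_\et$. To check that $s'|_{\tilde W_i} = *$ in $\mathcal{F}|_{(V/H)_\et}(\tilde W_i)$, I would further descend to an open subgroup $H'' \subset H'$ that is normal in $H$ (available by profiniteness of $H$): in $(V/H'')_\et$ the space $\tilde W_i$ splits as an honest $H''$-equivariant disjoint union of translates of $W_i$, and on each translate the pullback of $s'$ is the basepoint by applying the same direct-limit argument to the translated cover (using the $H$-equivariance of the triviality hypothesis on $f^*\mathcal{F}$ to transport the trivialization from $W_i$ to its $h$-translates). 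The sheaf axiom in $(V/H)_\et$ then forces $s' = *$, hence $s = *$.

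The main obstacle is this cover-promotion step: covers in $(V/H)_\et$ must be $H$-equivariant, while the trivializing cover provided by the hypothesis is only $H'$-equivariant. The key ingredients for resolving it are Corollary~\ref{ResidualAction} (to extend actions onto \'etale covers), the profiniteness of $H$ (to keep the induction from $W_i$ to $\tilde W_i$ finite), and the $H$-invariance of the triviality of $f^*\mathcal{F}$ (to spread the trivialization across all $H$-translates inside $\tilde W_i$).
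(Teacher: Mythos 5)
Your argument is correct in substance and follows the same skeleton as the paper (reduce to a qcqs object with a profinite group acting, apply the colimit formula of Proposition~\ref{ResidualActionSites} for $H^0$, then use the induced object $-\times_{H'}H$), but the step you single out as the ``main obstacle'' is not actually there, and your treatment of it is both the least rigorous part of the write-up and entirely avoidable. The point is that the formula
\[
H^0(V_\et,\mathcal{F})=\varinjlim_{H'\subset H}H^0((V/H')_\et,\mathcal{F})
\]
holds on the nose for the qcqs object $V$ (no sheafification is involved), and by Proposition~\ref{SliceTopoi} one has $H^0((V/H')_\et,\mathcal{F})=\mathcal{F}(V\times_{H'}H)$. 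So ``$s'$ becomes the basepoint in $\mathcal{F}|_{(V/H')_\et}(V)$'' is already an honest equality of elements of $\mathcal{F}(V\times_{H'}H)$ --- it is not merely ``realised on some $H'$-equivariant cover $\{W_i\to V\}$.'' Since $V\times_{H'}H\to V$ is itself a cover in $(V/H)_\et$ (it is surjective on topological spaces), the injectivity half of the sheaf axiom for this single cover gives $s'=*$ immediately. This is exactly the paper's two-line argument, applied there directly to an arbitrary section $s\in\mathcal{F}(U)$ with $U$ qcqs, without even passing to affinoid basis objects. Your detour through the cover $\{W_i\}$, the normal subgroup $H''$, and the transport of the trivialization across $h$-translates is therefore unnecessary; moreover, as written, the transport step is under-justified (the clean way to see that $s'|_{\tilde W_i}=*$ is again the slice equivalence, which identifies $\mathcal{F}(\tilde W_i)$ with $\mathcal{F}|_{(V/H')_\et}(W_i)$, rather than any equivariance of the hypothesis on $f^*\mathcal{F}$). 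None of this makes the proof wrong --- taking $\{W_i\}$ to be the trivial cover collapses your argument to the paper's --- but you should recognize that no ``cover-promotion'' problem arises in the first place.
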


\begin{proof} Assume that the pullback of $\mathcal{F}$ to $X_\et$ is trivial, and let $s\in \mathcal{F}(U)$, $U\in (X/G)_\et$, be a section. Assume first that $s$ becomes trivial after pullback to $(X/H)_\et$ for some open $H\subset G$. Then $s$ becomes trivial over $U\in (X/H)_\et$, which corresponds to $U\times_H G\in (X/G)_\et$, which is a cover $U\times_H G\to U$ of $U$ in $(X/G)_\et$, so that $s$ is already trivial in $(X/G)_\et$.

Thus, it is enough to check that $s$ becomes trivial after pullback to $(X/H)_\et$ for some open $H\subset G$. In particular, we may assume that $G$ is profinite, and then that $U$ is qcqs. By assumption the pullback of $s$ to $U\in X_\et$ is trivial. On the other hand, by Proposition \ref{ResidualActionSites}, we have
\[
H^0(U_\et,\mathcal{F}) = \varinjlim_{H\subset G} H^0((U/H)_\et,\mathcal{F})\ ,
\]
so $s$ becomes trivial on $U\in (X/H)_\et$ for some open $H\subset G$, finishing the proof.
\end{proof}

Now assume that $X$ is a locally noetherian adic space over $\Spa(\Q_p,\Z_p)$, and that $(X_H)_H\to X$ is a $G$-torsor for some profinite group $G$, in the sense that for all open normal subgroups $H\subset G$, $X_H\to X$ is a finite \'etale $G/H$-torsor, compatibly in $H$. Moreover, assume that there is a perfectoid space $X_\infty\to X$ such that
\[
X_\infty\sim \varprojlim_H X_H
\]
in the sense that there is a covering of $X_\infty$ by affinoid perfectoid $U_\infty = \Spa(R_\infty,R_\infty^+)$ coming as pullback of affinoid $U_H = \Spa(R_H,R_H^+)\subset X_H$ for all sufficiently small $H$, with $R_\infty^+$ the $p$-adic completion of $\varinjlim_H R_H^+$. In that case, one has $|X_\infty|\cong \varprojlim_H |X_H|$. (Cf. \cite[Definition 2.4.1]{ScholzeWeinstein}.)

Note that there is a natural morphism of sites $(X_\infty/G)_\et\to X_\et$, as any \'etale $U\to X_\et$ pulls back to an \'etale $U_\infty\to X_\infty$ equipped with a continuous action of $G$. Assume moreover that a locally profinite group $J$ acts continuously and compatibly on $X$ and all $X_H$, commuting with the $G$-action. Then $J$ acts continuously on $X_\infty$.

\begin{prop}\label{QuotientPerfectoid} The natural morphism $(X_\infty/G\times J)_\et\to (X/J)_\et$ is an equivalence of sites.
\end{prop}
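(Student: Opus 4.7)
The plan is to construct an inverse to the pullback functor $(X/J)_\et \to (X_\infty/G\times J)_\et$, $U\mapsto U\times_X X_\infty$, by factoring the equivalence through the finite levels $X_H$. For each open normal subgroup $H\subset G$, the map $X_H\to X$ is a finite \'etale $G/H$-torsor compatibly with the $J$-action, so classical descent along finite \'etale torsors yields an equivalence of sites
\[
(X/J)_\et \;\cong\; (X_H/((G/H)\times J))_\et\,,
\]
the inverse sending $V\to X_H$ carrying its $(G/H)\times J$-action to $V/(G/H)\to X$ with the induced $J$-action. It therefore suffices to identify $(X_\infty/G\times J)_\et$ with the filtered $2$-colimit of the sites $(X_H/((G/H)\times J))_\et$ as $H$ runs over open normal subgroups of $G$.

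Given a qcqs $G\times J$-equivariant \'etale map $Y\to X_\infty$, the first step is to descend the underlying \'etale map to a qcqs \'etale $Y_H\to X_H$ for some $H$, using Theorem~\ref{ElkikGabberRamero} applied to the filtered system of affinoid rings $\cO_{X_H}(V_H)$ underlying a presentation $X_\infty\sim \varprojlim_H X_H$ by affinoid perfectoid pieces. The second step is to descend the action. The action of $G$ on $X_\infty$ factors at finite level through $G/H$ on $X_H$, so once one writes $Y=Y_H\times_{X_H}X_\infty$ the action of $g\in G$ on $Y$ is forced to come from the action of its image in $G/H$ on $Y_H$ together with the base change $X_H\to X_\infty$; continuity of the original $G$-action on $Y$ combined with the uniqueness clause of Theorem~\ref{ElkikGabberRamero} shows that such a factorization exists after further shrinking $H$. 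For the $J$-action, I would apply Corollary~\ref{ResidualAction} over $X_H$ with its continuous $(G/H)\times K$-action (for a compact open $K\subset J$ stabilizing $V_H$), extending the finite-level action compatibly. The uniqueness parts of Theorem~\ref{ElkikGabberRamero} and Corollary~\ref{ResidualAction} ensure that different choices of $H$ agree on a further common refinement, so the inverse is well defined into the $2$-colimit.

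For the topology, covers in $(X_\infty/G\times J)_\et$ are jointly surjective families of qcqs $G\times J$-equivariant \'etale maps. Since $|X_\infty|\cong \varprojlim_H |X_H|$, such a jointly surjective family is already jointly surjective at some finite level $X_H$, matching covers in $(X_H/((G/H)\times J))_\et$, hence in $(X/J)_\et$ under the torsor equivalence. This shows covers are both preserved and reflected, and combined with essential surjectivity and full faithfulness from the descent arguments above, the pullback functor is an equivalence of sites.

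The main obstacle is the group-action descent. The underlying \'etale descent (Step 1) is immediate from Theorem~\ref{ElkikGabberRamero}, but showing that the continuous $G\times J$-action spreads canonically to a continuous $(G/H)\times J$-action on $Y_H$ — in particular, that the $G$-action descends through a finite quotient — requires carefully combining the torsor structure (which forces $H$-invariance on $X_H$) with the uniqueness of the \'etale descent and with Corollary~\ref{ResidualAction} for the $J$-part. The independence of the choice of $H$ is a further bookkeeping step that must be verified to conclude that the inverse functor lands canonically in the $2$-colimit.
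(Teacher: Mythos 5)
Your proposal is correct and follows essentially the same route as the paper: reduce to qcqs objects, use Theorem~\ref{ElkikGabberRamero} together with $|X_\infty|\cong\varprojlim_H|X_H|$ to realize $(X_\infty/J)_\et^{\mathrm{qcqs}}$ as a $2$-colimit over finite levels, show (via the uniqueness clauses, i.e.\ the mechanism of Corollary~\ref{ResidualAction}) that the given $G\times J$-action on $Y$ and the one induced by the identification $Y=Y_H\times_{X_H}X_\infty$ agree after shrinking $H$ so that the action descends to a $G/H\times J$-action on $Y_H$, and finish by finite \'etale descent along the $G/H$-torsor $X_H\to X$. The point you flag as the main obstacle is indeed the crux, and your resolution matches the paper's.
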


\begin{proof} Let us sketch the argument. It is enough to check that there is an equivalence of categories $(X_\infty/G\times J)_\et\cong (X/J)_\et$, as the notions of covers correspond. For this, we can replace $J$ by an open subgroup, in particular we can assume that $J$ is compact. Also we can argue locally on $X$, and assume that $X$, and thus all $X_H$, $X_\infty$, are qcqs.

Now it is enough to prove $(X_\infty/G\times J)_\et^{\mathrm{qcqs}}\cong (X/J)_\et^{\mathrm{qcqs}}$ by covering general objects by qcqs objects. We claim that there is an equivalence of categories
\[
(X_\infty/J)_\et^{\mathrm{qcqs}} = \twoinjlim_{H\subset G} (X_H/J)_\et^{\mathrm{qcqs}}\ .
\]
As usual, such a statement can be reduced to quasicompact open embeddings and finite \'etale covers individually. For quasicompact open embeddings, it follows from the identification $|X_\infty|\cong \varprojlim_H |X_H|$ of topological spaces. For finite \'etale covers, it follows from the theorem of Elkik-Gabber-Ramero, Theorem \ref{ElkikGabberRamero}, along with the assumption $X_\infty\sim \varprojlim_H X_H$.

Now, if $U\to X_\infty$ is \'etale and qcqs and admits a compatible continuous $G\times J$-action, then $J$-equivariantly $U\to X_\infty$ comes via pullback from some \'etale qcqs $U_H\to X_H$ for $H$ small enough. Then the identification $U=U_H\times_{X_H} X_\infty\to X_\infty$ endows $U$ with a second $H\times J$-action, agreeing on $J$. As in the proof of Proposition \ref{ResidualActionSites}, we have an equivalence of categories
\[
(X_\infty/J)_\et^{\mathrm{qcqs}}\cong \twoinjlim_H (X_\infty/H\times J)_\et^{\mathrm{qcqs}}\ ,
\]
which shows that the two $H\times J$-actions on $U$ are compatible after shrinking $H$. This gives an $H\times J$-equivariant identification $U=U_H\times_{X_H} X_\infty$, and then the $G\times J$-action on $U$ endows $U_H$ with a $G/H\times J$-action. By finite \'etale descent, this descends $U_H$ to $U_0\to X$, a qcqs \'etale $J$-equivariant map. One checks that this gives the desired equivalence of categories.
\end{proof}

\begin{lem}\label{IdentOXmodp} Under the identification of topoi $(X_\infty/G)_\et^\sim\cong X_\et^\sim$, the sheaves $\cO_{X_\infty/G}^+/p$ and $\cO_X^+/p$ correspond.
\end{lem}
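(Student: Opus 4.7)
The plan is to construct a canonical morphism $\varphi\colon \cO_X^+/p \to \cO_{X_\infty/G}^+/p$ using the equivalence of Proposition \ref{QuotientPerfectoid} (applied with $J$ trivial), and then verify that it is an isomorphism by pulling back to $X_{\infty,\et}$ and invoking the conservativity statement of Lemma \ref{Conservativity} applied to the perfectoid space $X_\infty$ and the profinite group $G$.

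For the construction, let $U \to X$ be an object of $X_\et$; its image in $(X_\infty/G)_\et$ under the equivalence of Proposition \ref{QuotientPerfectoid} is $U_\infty := U \times_X X_\infty$, equipped with the canonical $G$-action inherited from $X_\infty$. The structure morphism $\cO_X^+(U) \to \cO_{X_\infty}^+(U_\infty)$ is $G$-equivariant when the source carries the trivial $G$-action, hence its reduction modulo $p$ factors through the $G$-invariants $\bigl((\cO_{X_\infty}^+/p)(U_\infty)\bigr)^G$. Sheafifying yields the desired morphism $\varphi$ of sheaves on $X_\et \cong (X_\infty/G)_\et$.

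Next, I would show that both sides pull back to $\cO_{X_\infty}^+/p$ on $X_{\infty,\et}$ and that $\varphi$ pulls back to the identity map. For the target, this is exactly the second assertion of Lemma \ref{DefOXmodp}. For the source, the composite morphism of topoi
\[
X_{\infty,\et}^\sim \longrightarrow (X_\infty/G)_\et^\sim \xrightarrow{\ \sim\ } X_\et^\sim
\]
is, by unfolding the construction of the equivalence in Proposition \ref{QuotientPerfectoid}, induced by the map of adic spaces $X_\infty \to X$. Pulling back $\cO_X^+/p$ along this map yields $\cO_{X_\infty}^+/p$ by the assumption $X_\infty \sim \varprojlim_H X_H$: locally on affinoid perfectoid $\Spa(R_\infty,R_\infty^+) \subset X_\infty$ arising as the pullback of $\Spa(R_H,R_H^+) \subset X_H$ with $R_\infty^+$ the $p$-adic completion of $\varinjlim_H R_H^+$, one has $R_\infty^+/p = \varinjlim_H R_H^+/p$, which agrees with the computation of the pullback of $\cO_X^+/p$ through the tower. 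Under these identifications, $\varphi$ pulls back to the identity morphism of $\cO_{X_\infty}^+/p$.

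The kernel and cokernel of $\varphi$ are then abelian, hence pointed, sheaves on $(X_\infty/G)_\et$ whose pullbacks to $X_{\infty,\et}$ are trivial; by Lemma \ref{Conservativity} (applied to $X_\infty$ and $G$), they must themselves be trivial, so that $\varphi$ is an isomorphism. The main subtlety is the identification in the second step, namely the claim that the composite morphism of topoi above is the one induced by $X_\infty \to X$, which requires tracing through the construction of the equivalence in Proposition \ref{QuotientPerfectoid}; the remaining steps are essentially formal.
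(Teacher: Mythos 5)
Your proposal is correct and follows essentially the same route as the paper: reduce via the conservativity Lemma \ref{Conservativity} to checking after pullback to $(X_\infty)_\et$, identify the pullback of $\cO_{X_\infty/G}^+/p$ with $\cO_{X_\infty}^+/p$ via Lemma \ref{DefOXmodp}, and identify the pullback of $\cO_X^+/p$ using the local structure $X_\infty\sim\varprojlim_H X_H$ (the paper additionally cites \cite[Theorem 7.17]{ScholzePerfectoid} for the last computation on the \'etale site). Your explicit construction of the comparison morphism $\varphi$ and the application of conservativity to its kernel and cokernel merely spell out what the paper leaves implicit.
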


\begin{proof} By Lemma \ref{Conservativity}, this can be checked after pullback to $(X_\infty)_\et$. By Lemma \ref{DefOXmodp}, the sheaf $\cO_{X_\infty/G}^+/p$ pulls back to $\cO_{X_\infty}^+/p$. The same can be verified for $\cO_X^+/p$ by using the local structure of $X_\infty\sim \varprojlim_H X_H$ and \cite[Theorem 7.17]{ScholzePerfectoid} to compute the pullback along $(X_\infty)_\et\to X_\et$.
\end{proof}

\section{Finiteness}

Let us use the notation from the introduction, so $n\geq 1$ is an integer and $F/\Q_p$ a finite extension with ring of integers $\cO\subset F$ and $\varpi\in \cO$. Let $q$ be the cardinality of the residue field of $F$, which we identify with $\F_q$. Fix an algebraically closed extension $k$ of $\F_q$, e.g. $\overline{\F}_q$. Let $\breve{F}=F\otimes_{W(\F_q)} W(k)$ be the completion of the unramified extension of $F$ with residue field $k$. Let $\breve{\cO}\subset \breve{F}$ be the ring of integers.

In this situation, one has the Lubin-Tate tower $(\cM_{\LT,K})_{K\subset \GL_n(F)}$, which is a tower of smooth rigid-analytic varieties $\cM_{\LT,K}$ over $\breve{F}$ parametrized by compact open subgroups $K$ of $\GL_n(F)$, with finite \'etale transition maps, cf. \cite{GrossHopkins}. There is a compatible continuous action of $D^\times$ on all $\cM_{\LT,K}$, as well as an action of $\GL_n(F)$ on the tower, i.e. $g\in \GL_n(F)$ induces an isomorphism between $\cM_{\LT,K}$ and $\cM_{\LT,g^{-1} K g}$. There is the Gross-Hopkins period map, \cite{GrossHopkins},
\[
\pi_\GH: \cM_{\LT,K}\to \mathbb{P}^{n-1}_{\breve{F}}\ ,
\]
compatible for varying $K$, which is an \'etale covering map of rigid-analytic varieties with fibres $\GL_n(F)/K$. It is also $D^\times$-equivariant if the right-hand side is correctly identified with the Brauer-Severi variety for $D/F$ (which splits over $\breve{F}$). Moreover, there is a Weil descent datum on $\cM_{\LT,K}$ (cf. \cite[3.48]{RapoportZinkBook}), under which $\pi_\GH$ is equivariant, under this identification of $\mathbb{P}^{n-1}_{\breve{F}}$ with the Brauer-Severi variety for $D/F$.

Moreover, denote by $\cM_{\LT,\infty}$ over $\breve{F}$ (which lives over the completion of the maximal abelian extension of $F$, which is a perfectoid field) the perfectoid space constructed in \cite{ScholzeWeinstein}, so that
\[
\cM_{\LT,\infty}\sim \varprojlim_K \cM_{\LT,K}\ .
\]

Fix an admissible $\F_p$-representation $\pi$ of $\GL_n(F)$. We want to construct a sheaf $\mathcal{F}_\pi$ on $(\mathbb{P}^{n-1}_{\breve{F}}/D^\times)_\et$, which is also equivariant for the Weil descent datum. The idea is to descend the trivial sheaf $\pi$ along the map
\[
\pi_\GH: \cM_{\LT,\infty}\to \mathbb{P}^{n-1}_{\breve{F}}\ ,
\]
which can be considered as a $\GL_n(F)$-torsor.

\begin{prop}\label{DefFPi} The association mapping a $D^\times$-equivariant \'etale map $U\to \mathbb{P}^{n-1}_{\breve{F}}$ to the $\F_p$-vector space
\[
\Map_{\cont,\GL_n(F)\times D^\times}(|U\times_{\mathbb{P}^{n-1}_{\breve{F}}} \cM_{\LT,\infty}|,\pi)
\]
of continuous $\GL_n(F)\times D^\times$-equivariant maps defines a Weil-equivariant sheaf $\mathcal{F}_\pi$ on $(\mathbb{P}^{n-1}_{\breve{F}}/D^\times)_\et$. The association $\pi\mapsto \mathcal{F}_\pi$ is exact, and all geometric fibres of $\mathcal{F}_\pi$ are isomorphic to $\pi$, i.e. for any $\overline{x}=\Spa(C,C^+)\to \mathbb{P}^{n-1}_{\breve{F}}$ with $C/\breve{F}$ complete algebraically closed and $C^+\subset C$ an open bounded valuation subring, the pullback of $\mathcal{F}_\pi$ to $\bar{x}$,
\[
\mathcal{F}_{\pi,\bar{x}} = \varinjlim_{\bar{x}\in U\in (\mathbb{P}^{n-1}_{\breve{F}}/D^\times)_\et} \mathcal{F}_\pi(U)\ ,
\]
is isomorphic to $\pi$; the isomorphism is canonical after fixing a lift of $\bar{x}$ to $\cM_{\LT,\infty}$.
\end{prop}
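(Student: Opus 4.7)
The sheaf $\mathcal{F}_\pi$ is to be constructed by equivariant descent of the constant sheaf $\pi$ along the $\GL_n(F)$-torsor $\pi_\GH:\cM_{\LT,\infty}\to\mathbb{P}^{n-1}_{\breve{F}}$. First I verify that the formula defines a sheaf: a cover $\{U_i\to U\}$ in $(\mathbb{P}^{n-1}_{\breve{F}}/D^\times)_\et$ pulls back under $\pi_\GH$ to a jointly surjective open cover $\{|\tilde{U}_i|\to|\tilde{U}|\}$ of topological spaces, where $\tilde{U}:=U\times_{\mathbb{P}^{n-1}_{\breve{F}}}\cM_{\LT,\infty}$, and continuous (hence locally constant) maps to the discrete target $\pi$ descend uniquely along such covers, whence the same for $(\GL_n(F)\times D^\times)$-equivariant ones. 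The Weil descent datum on $\mathcal{F}_\pi$ is inherited from that on $\cM_{\LT,\infty}$ by functoriality of the formula.

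\textbf{Geometric fibres.} Fix $\tilde{x}:\bar{x}\to\cM_{\LT,\infty}$ and consider evaluation $\mathrm{ev}_{\tilde{x}}:\mathcal{F}_{\pi,\bar{x}}\to\pi$, $[s]\mapsto s(\tilde{x})$. The crucial local input, used in both injectivity and surjectivity, is that for any open compact $K_0\subset\GL_n(F)$ there is a small affinoid neighborhood $V$ of $\bar{x}$ in $\mathbb{P}^{n-1}_{\breve{F}}$ above which $\pi_\GH:\cM_{\LT,K_0}\to\mathbb{P}^{n-1}_{\breve{F}}$ (which is étale) splits as a disjoint union of sheets indexed by $\GL_n(F)/K_0$, each mapping isomorphically to $V$. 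For injectivity: if $s(\tilde{x})=0$, then $s$ vanishes on the sheet through $\tilde{x}_{K_0}$ after passing to its $K_0$-cover in $\cM_{\LT,\infty}$ (by continuity in a smaller $V'\subset V$), and by $\GL_n(F)$-equivariance on every other sheet, hence on all of $V'\times_{\mathbb{P}^{n-1}_{\breve{F}}}\cM_{\LT,\infty}$. For surjectivity: given $v\in\pi$ fixed by $K_0$, the formula $s(g\cdot y):=g\cdot v$ is well-defined $\GL_n(F)$-equivariantly on the preimage of $V$ (disjointness of sheets ensures well-definedness) and satisfies $s(\tilde{x})=v$. In both steps one then obtains a $D^\times$-equivariant object by taking a smaller $V'$ stabilized by an open compact $H_0\subset D^\times$ and inducing to $D^\times$ via Proposition \ref{SliceTopoi}.

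\textbf{Exactness.} Left exactness of $\pi\mapsto\mathcal{F}_\pi$ is immediate from the Hom-like formula. For right exactness, Lemma \ref{Conservativity} (with $G=D^\times$, $X=\mathbb{P}^{n-1}_{\breve{F}}$) reduces surjectivity to a check on the pullback to $(\mathbb{P}^{n-1}_{\breve{F}})_\et$, which has enough ordinary geometric points; an identical argument (with the $D^\times$-equivariance step omitted) identifies these pullback stalks with $\pi_i$, so the stalk sequence associated to a short exact $0\to\pi_1\to\pi_2\to\pi_3\to 0$ is the original exact sequence of abelian groups. The main obstacle throughout is the local trivialization of the torsor $\pi_\GH$ described above, which crucially uses the étaleness (not merely surjectivity) of $\pi_\GH$; the $D^\times$-equivariance management is handled routinely by induction along open compact subgroups.
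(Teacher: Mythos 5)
Your construction of the sheaf, the reduction of exactness to stalks via Lemma \ref{Conservativity}, and the strategy of computing the stalk by evaluation at a fixed lift $\tilde x$ all match the paper. The gap is in your ``crucial local input'': it is false that for an arbitrary compact open $K_0\subset \GL_n(F)$ the map $\cM_{\LT,K_0}\to \mathbb{P}^{n-1}_{\breve F}$ splits, over a small open affinoid $V\subset \mathbb{P}^{n-1}_{\breve F}$, into sheets indexed by $\GL_n(F)/K_0$ each mapping isomorphically to $V$. Such a splitting would force every geometric fibre point of $\cM_{\LT,K_0}$ over a classical point of $V$ to be rational over that point's residue field (an \'etale map with exactly one geometric point in each fibre is an open immersion). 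But for $K_0\subset 1+\varpi M_n(\cO)$, say, the Drinfeld level structures generate ramified extensions of $\breve F$: already the determinant direction shows that the geometrically connected components of $\cM_{\LT,K_0}$ are defined only over a ramified (Lubin--Tate) abelian extension $\breve F_{K_0}\supsetneq\breve F$, so $\cM_{\LT,K_0}$ has no $\breve F$-rational points at all and in particular admits no section over any nonempty open of $\mathbb{P}^{n-1}_{\breve F}$. Equivalently, the finite \'etale transition maps $\cM_{\LT,K_0}\to\cM_{\LT,0}$ do not split after shrinking in the analytic topology. Only the level-$\GL_n(\cO)$ map has local sections (Gross--Hopkins), and that does not suffice for your surjectivity step, where $v\in\pi$ is fixed only by a small $K_0$; both your injectivity and surjectivity arguments propagate along this non-existent sheet decomposition.

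The repair is to use that the stalk is a colimit over \emph{\'etale} neighborhoods of the geometric point $\bar x$, not over opens of $\mathbb{P}^{n-1}_{\breve F}$: since $\bar x$ lifts to $\cM_{\LT,m}=\cM_{\LT,1+\varpi^m M_n(\cO)}$ for every $m$, one may take a cofinal system of connected affinoid \'etale neighborhoods $U_i$ of $\bar x$ with $U_i\to\mathbb{P}^{n-1}_{\breve F}$ factoring through $\cM_{\LT,m}$. The torsor trivializes only after base change to infinite level in the other factor: for such $U_i$ there is a $\GL_n(F)$-equivariant continuous surjection $|U_i\times_{\mathbb{P}^{n-1}_{\breve F}}\cM_{\LT,\infty}|\to \GL_n(F)/(1+\varpi^m M_n(\cO))$, and composing with the orbit map of $v$ gives surjectivity of evaluation at $\tilde x$. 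For injectivity no splitting is needed at all: a continuous map to the discrete set $\pi$ is constant on connected components, $\GL_n(F)$ acts transitively on the connected components of $|U_i\times_{\mathbb{P}^{n-1}_{\breve F}}\cM_{\LT,\infty}|$ because $U_i$ is connected, and equivariance then determines $s$ from $s(\tilde x)$. The rest of your argument (sheaf axiom, Weil equivariance, reduction of the $D^\times$-equivariance to open subgroups, exactness from the stalk computation) is fine.
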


\begin{proof} As \'etale covers induce (by definition) surjections on topological spaces, and are open, it follows that $\mathcal{F}_\pi$ is a sheaf; Weil equivariance follows from Weil equivariance of all other objects involved. By exactness of pullback and Lemma \ref{Conservativity}, exactness of $\pi\mapsto \mathcal{F}_\pi$ can be checked after pullback to $(\mathbb{P}^{n-1}_{\breve{F}})_\et$. The pullback of $\mathcal{F}_\pi$ to $(\mathbb{P}^{n-1}_{\breve{F}})_\et$ is the sheaf assigning to an \'etale $U\to \mathbb{P}^{n-1}_{\breve{F}}$ the set of continuous $\GL_n(F)$-equivariant maps
\[
|U\times_{\mathbb{P}^{n-1}_{\breve{F}}} \cM_{\LT,\infty}|\to \pi\ ,
\]
as if $U$ is qcqs, any such map is automatically equivariant for some open $H\subset D^\times$; here, we use Proposition \ref{ResidualActionSites} to compute the pullback of $\mathcal{F}_\pi$. To check exactness over $(\mathbb{P}^{n-1}_{\breve{F}})_\et$, we check at geometric points; it is enough to prove that the stalk of $\mathcal{F}_\pi$ on any geometric point is equal to $\pi$. Thus, fix some geometric point $\bar{x}=\Spa(C,C^+)\to \mathbb{P}^{n-1}_{\breve{F}}$, and let $\{U_i\to X\}$ be the cofiltered inverse system of affinoid \'etale neighborhoods of $\bar{x}$; we may assume that they are all connected. Then
\[
\mathcal{F}_{\pi,\bar{x}} = \varinjlim_i \Map_{\cont,\GL_n(F)}(|U_i\times_{\mathbb{P}^{n-1}_{\breve{F}}} \cM_{\LT,\infty}|,\pi)\ .
\]
Observe that as $U_i$ is connected, $\GL_n(F)$ acts transitively on the connected components of $|U_i\times_{\mathbb{P}^{n-1}_{\breve{F}}} \cM_{\LT,\infty}|$. It follows that the map $\mathcal{F}_{\pi,\bar{x}}\to \pi$ given by evaluation at a fixed point of $\bar{x}\times_{\mathbb{P}^{n-1}_{\breve{F}}} \cM_{\LT,\infty}$ is injective. To check surjectivity, note that by smoothness of $\pi$, any element $f\in \pi$ is invariant under some open subgroup $1+\varpi^m M_n(\cO)$ of $\GL_n(F)$. On the other hand, for any $m$, one can choose $U_i$ so that $U_i\to \mathbb{P}^{n-1}_{\breve{F}}$ factors over
\[
U_i\to \cM_{\LT,m}=\cM_{\LT,1+\varpi^m M_n(\cO)}\ .
\]
In that case, there is a $\GL_n(F)$-equivariant continuous surjection
\[
|U_i\times_{\mathbb{P}^{n-1}_{\breve{F}}} \cM_{\LT,\infty}|\to \GL_n(F)/(1+\varpi^m M_n(\cO))\ .
\]
Composing with the action map $\GL_n(F)/(1+\varpi^m M_n(\cO))\to \pi$ given by acting on $f$ then shows surjectivity of $\mathcal{F}_{\pi,\bar{x}}\to \pi$.
\end{proof}

Let $C/\breve{F}$ be an algebraically closed complete extension with ring of integers $\cO_C$. By a subscript $_C$, we denote the base change to $\Spa(C,\cO_C)$. The goal of this section is to prove the following theorem.

\begin{thm}\label{Finiteness} For any $i\geq 0$, the $D^\times$-representation $H^i_\et(\mathbb{P}^{n-1}_C,\mathcal{F}_\pi)$ is admissible, and vanishes for $i>2(n-1)$. If $\pi$ is injective as $\GL_n(\cO)$-representation, then it vanishes for $i>n-1$. Moreover, the natural map
\[
H^i_\et(\mathbb{P}^{n-1}_C,\mathcal{F}_\pi)\otimes_{\F_p} \cO_C/p\to H^i_\et(\mathbb{P}^{n-1}_C,\mathcal{F}_\pi\otimes \cO_X^+/p)
\]
is an almost isomorphism, and $H^i_\et(\mathbb{P}^{n-1}_C,\mathcal{F}_\pi)$ is independent of $C$ (i.e., the natural map for an inclusion $C\hookrightarrow C^\prime$ is an isomorphism).
\end{thm}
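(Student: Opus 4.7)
The plan is to adapt the strategy from \cite{ScholzePAdicHodge} for the finiteness of $\F_p$-cohomology of proper smooth rigid spaces to the present equivariant coefficient setting. The heart of the argument is a primitive comparison theorem: I expect the natural map
\[
H^i_\et(\mathbb{P}^{n-1}_C,\mathcal{F}_\pi)\otimes_{\F_p} \cO_C/p\to H^i_\et(\mathbb{P}^{n-1}_C,\mathcal{F}_\pi\otimes \cO_X^+/p)
\]
to be an almost isomorphism with almost finitely generated target, from which all the stated consequences---admissibility as a $D^\times$-representation, the vanishing bounds, and independence of $C$---should follow formally.

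First, by Proposition \ref{QuotientPerfectoid} applied with $G=\GL_n(F)$, $J=D^\times$, $X_\infty=\cM_{\LT,\infty}$, there is an equivalence of sites
\[
(\cM_{\LT,\infty}/\GL_n(F)\times D^\times)_\et\cong (\mathbb{P}^{n-1}_{\breve{F}}/D^\times)_\et\ ,
\]
matching mod-$p$ structure sheaves by Lemma \ref{IdentOXmodp}. Under this equivalence, $\mathcal{F}_\pi$ corresponds to the constant sheaf $\pi$ carrying its natural $\GL_n(F)\times D^\times$-action, essentially by the definition in Proposition \ref{DefFPi}. So both sides of the comparison map can be computed on the equivariant site of the perfectoid space $\cM_{\LT,\infty,C}$, where Faltings-style almost purity becomes available.

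To establish the almost isomorphism, I would use exactness of $\pi\mapsto\mathcal{F}_\pi$ (Proposition \ref{DefFPi}) and the colimit $\pi=\varinjlim_K \pi^K$ over open compact $K\subset \GL_n(F)$ to reduce to the case of finite-dimensional $\pi^K$, using admissibility. For such finite-dimensional $\pi$, the sheaf $\mathcal{F}_\pi$ at finite level is controlled by the classical primitive comparison of \cite{ScholzePAdicHodge} on the proper rigid space $\mathbb{P}^{n-1}$, together with the \'etale descent along $\cM_{\LT,K}\to \mathbb{P}^{n-1}$ and the equivariant descent formalism from Section 2. Admissibility as a $D^\times$-representation then follows by taking $H$-invariants for open compact $H\subset D^\times$: by Propositions \ref{SliceTopoi} and \ref{ResidualActionSites}, this amounts to cohomology on $(\mathbb{P}^{n-1}_C/H)_\et$, which is finitely generated over $\cO_C/p$ by the almost isomorphism combined with properness. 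The vanishing for $i>2(n-1)$ is the classical cohomological dimension of a proper smooth rigid variety of dimension $n-1$ with torsion coefficients. For $i>n-1$ in the $\GL_n(\cO)$-injective case, I would cover $\mathbb{P}^{n-1}$ by its $n$ standard affinoid balls and argue that injectivity of $\pi|_{\GL_n(\cO)}$ forces acyclicity of $\mathcal{F}_\pi$ on each ball, so that a \v{C}ech computation gives the sharper bound.

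Independence of $C$ then follows from the almost isomorphism combined with base-change for $\cO^+/p$-cohomology of proper smooth rigid spaces as in \cite{ScholzePAdicHodge}. The main obstacle I foresee is the primitive comparison step itself: although reduction to finite level is formal once exactness of $\pi\mapsto\mathcal{F}_\pi$ and the equivariant descent are available, controlling the passage to the limit precisely enough to extract admissibility---and not merely some weaker finiteness---requires combining properness of $\mathbb{P}^{n-1}$ with the fine perfectoid structure of $\cM_{\LT,\infty}$ and the equivariant site formalism of Section 2, in particular Theorem \ref{ElkikGabberRamero}.
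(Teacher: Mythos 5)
Your overall framework---follow the primitive comparison strategy of \cite{ScholzePAdicHodge}, deduce admissibility, vanishing and independence of $C$ from an almost isomorphism with almost finitely generated $\cO^+/p$-cohomology---matches the paper's. But the proposal is missing the single idea that makes the key finiteness step work, and you correctly flag this step as your "main obstacle" without resolving it. The paper's route is: (a) reduce an arbitrary admissible $\pi_0$ to the \emph{regular} representation $\pi^\reg=C^0(\GL_n(\cO),\F_p)$ via a resolution $0\to\pi_0\to(\pi^\reg)^{n_1}\to\cdots$ coming from Lazard's theorem and the anti-equivalence with finitely generated Iwasawa modules; (b) observe that $\mathcal{F}_{\pi^\reg}\otimes\cO^+/p$ is $R\alpha_\ast\cO^+/p$ for the projection from infinite level, so its cohomology on $(V/K)_\et$ equals $H^i((V_{K})_\et,\cO^+/p)$ where $V_K$ is the descent of $V_\infty$ to the \emph{finite-level Drinfeld space} $\cM_{\Dr,K}$ via the duality isomorphism $\cM_{\LT,\infty}\cong\cM_{\Dr,\infty}$; (c) apply the local almost-finiteness result for smooth affinoids (Corollary \ref{AlmFinGenImage}) to $U_K\subset V_K$. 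The Lubin--Tate/Drinfeld duality is indispensable here: it is what turns the $D^\times$-level-$K$ equivariant cohomology into ordinary $\cO^+/p$-cohomology of a quasicompact \emph{smooth} rigid space, which is the only setting where the almost-finiteness machinery of \cite{ScholzePAdicHodge} applies. Your proposal never invokes the Drinfeld tower, and without it there is no way to obtain almost finite generation over $\cO_C/p$ of $H^i((\mathbb{P}^{n-1}_C/K)_\et,\mathcal{F}_\pi\otimes\cO^+/p)$ for $K\subset D^\times$, hence no admissibility.

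Two further steps as written would fail. First, the reduction to "finite-dimensional $\pi^K$ over compact open $K\subset\GL_n(F)$" is ill-posed: $\pi^K$ is not a $\GL_n(F)$-subrepresentation, so $\mathcal{F}_{\pi^K}$ does not exist on $(\mathbb{P}^{n-1}_{\breve F}/D^\times)_\et$; the decomposition $\pi=\varinjlim\pi^H$ is only available after pullback to $\cM_{\LT,0}$, where the sheaf depends only on $\pi|_{\GL_n(\cO)}$ (and even then, a filtered colimit of finite-dimensional cohomologies is not admissible, which is why the paper reduces to the infinite-dimensional $\pi^\reg$ instead). Relatedly, Proposition \ref{QuotientPerfectoid} applies to \emph{profinite} torsors, so it identifies $(\cM_{\LT,\infty}/\GL_n(\cO)\times\cdots)_\et$ with a site over $\cM_{\LT,0}$, not $(\cM_{\LT,\infty}/\GL_n(F)\times D^\times)_\et$ with a site over $\mathbb{P}^{n-1}$. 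Second, your argument for the bound $i>n-1$ via "acyclicity of $\mathcal{F}_\pi$ on each standard affinoid ball" with $\F_p$-coefficients is false: as the paper's introductory remark notes, already $H^1_\et(\mathbb{B}_C,\F_p)$ is infinite-dimensional, so étale $\F_p$-cohomology of affinoids does not vanish. The correct argument runs through the almost vanishing of $R^if_\ast\cO^+/p$ for $i>0$, using that $|\cM_{\LT,0,C}|$ is covered by affinoids whose preimages in $\cM_{\LT,\infty,C}$ are affinoid perfectoid, combined with the cohomological dimension $n-1$ of the topological space $|\mathbb{P}^{n-1}_C|$ and the already-established almost isomorphism to transfer the bound back to $\F_p$-cohomology.
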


The almost isomorphism with $\cO_X^+/p$-cohomology is an analogue of a result of Faltings, \cite[\S 3, Theorem 8]{FaltingsAlmostEtale}, cf. also \cite[Theorem 1.3]{ScholzePAdicHodge}. One may hope that it allows one to understand the $p$-adic Hodge-theoretic properties of the Galois representations appearing in $H^i_\et(\mathbb{P}^{n-1}_C,\mathcal{F}_\pi)$ for Banach space representations $\pi$, following e.g. \cite{ScholzePAdicHodge}.

Our proof of Theorem \ref{Finiteness} follows closely the proof \cite[Theorem 1.3]{ScholzePAdicHodge}. It starts by proving finiteness of the $\cO_X^+/p$-twisted cohomology groups, and there it starts with a local finiteness result. Let us first recall the form of this result in \cite[Lemma 5.6 (ii)]{ScholzePAdicHodge}. For the formulation, we need two definitions.

\begin{definition} Let $V$ be a smooth affinoid adic space over $\Spa(C,\cO_C)$. A map
\[
V\to \mathbb{T}^n := \Spa(C\langle T_1^{\pm 1},\ldots,T_n^{\pm 1}\rangle,\cO_C\langle T_1^{\pm 1},\ldots,T_n^{\pm 1}\rangle)
\]
is a set of \emph{good coordinates} if it can be written as a composite of finite \'etale maps and rational embeddings.
\end{definition}

\begin{definition} Let $V$ be a separated analytic adic space, and $U\subset V$ a subset. Then $U$ is said to be strictly contained in $V$ if for any maximal point $x=\Spa(K,\cO_K)\in U$ and any open bounded valuation subring $K^+\subset K$, there is a map $\Spa(K,K^+)\to V$ extending $\Spa(K,\cO_K)\to U$.
\end{definition}

\begin{lem}[{\cite[Lemma 5.6 (ii)]{ScholzePAdicHodge}}]\label{AlmFinGenBaseCase} Let $V$ be a smooth affinoid adic space over $\Spa(C,\cO_C)$ with good coordinates, and let $U\subset V$ be a strict rational subset. Then the image of
\[
H^i(V_\et,\cO^+/p)\to H^i(U_\et,\cO^+/p)
\]
is almost finitely generated for all $i\geq 0$.
\end{lem}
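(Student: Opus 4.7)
The plan is to mimic the strategy of \cite{ScholzePAdicHodge}: use the good coordinates to construct an affinoid perfectoid pro-\'etale cover of $V$, compute the $\cO^+/p$-cohomology almost as continuous group cohomology for a $\Z_p(1)^n$-action on a perfectoid Tate algebra, and then use the tilting equivalence to transport the question to characteristic $p$, where Frobenius gives enough control.

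Concretely, first I would set $V_\infty = V\times_{\mathbb{T}^n} \mathbb{T}^n_\infty$, where $\mathbb{T}^n_\infty$ is the perfectoid torus obtained by adjoining all $p$-power roots of the $T_i$. Because $V\to \mathbb{T}^n$ is a composite of rational embeddings and finite \'etale maps, the almost purity theorem ensures that $V_\infty$ is affinoid perfectoid and that $V_\infty\to V$ is a pro-finite-\'etale $\Gamma$-torsor for $\Gamma=\Z_p(1)^n$. By Faltings's almost purity, the map of pro-\'etale sheaves $\cO^+/p\to R\nu_\ast \cO_{V_\infty}^+/p$ is an almost isomorphism, where $\nu:V_{\proet}\to V_\et$; the Cartan--Leray spectral sequence then identifies $H^i(V_\et,\cO^+/p)$ almost with the continuous group cohomology $H^i_\cont(\Gamma,A_\infty^+/p)$ for $A_\infty^+=\cO^+(V_\infty)$, and similarly for $U$ via $B_\infty^+=\cO^+(U_\infty)$. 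The problem thereby reduces to almost finite generation of the image of $H^i_\cont(\Gamma,A_\infty^+/p)\to H^i_\cont(\Gamma,B_\infty^+/p)$.

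To analyze this map I would tilt: up to bounded $p$-torsion and almost isomorphism, cohomology modulo $p$ can be computed on the tilts $V_\infty^\flat$, $U_\infty^\flat$, which are perfectoid affinoids in characteristic $p$. On the tilt the Koszul complex resolving the continuous $\Gamma$-cohomology decomposes, almost, as a direct sum over characters of $\Gamma$ indexed by fractional exponents $a=(a_1,\ldots,a_n)\in \Z[1/p]^n/\Z^n$ of coordinate pieces generated by the monomials $T_1^{a_1}\cdots T_n^{a_n}$, and the restriction map $A_\infty^{\flat+}\to B_\infty^{\flat+}$ respects this decomposition.

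The hard part, and the only place where the \emph{strict} containment $U\subset V$ is used, is to upgrade this qualitative decomposition to almost finite generation of the image. The point is that strictness lets one interpolate $U\Subset V^\prime\Subset V$ whose tilted coordinate rings have the property that restriction from $V^\flat_\infty$ to $(V^\prime_\infty)^\flat$ contracts the weighted pieces indexed by characters of large weight into almost zero contributions (because the pseudouniformizer-adic norm of such monomials on $V^\prime$ is strictly smaller than on $V$, and this contraction factor tends to $0$ as the weight grows). One is thus left with a truncation to a finite set of weights, and the image in $H^i(U_\et,\cO^+/p)$ is almost generated by the untilt of this truncation, which is a finitely generated $\cO_C/p$-module. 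The main technical obstacle is precisely this decay/truncation estimate controlling how restriction to the strict subset kills all but finitely many weighted pieces; everything else is a direct application of the perfectoid formalism developed in \cite{ScholzePerfectoid, ScholzePAdicHodge}.
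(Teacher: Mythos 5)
Your overall skeleton --- pass to the perfectoid cover $V_\infty = V\times_{\mathbb{T}^n}\mathbb{T}^n_\infty$, use almost purity and Cartan--Leray to identify $H^i(V_\et,\cO^+/p)$ almost with $H^i_\cont(\Gamma, A_\infty^+/p)$, and compute the latter by a Koszul complex decomposed into character eigenspaces indexed by $a\in(\Z[1/p]\cap[0,1))^n$ --- is exactly the approach of the cited Lemma 5.6 (ii). But the step you yourself single out as the crux rests on a mechanism that does not work. The weight-$a$ eigenspace of $A_\infty^+/p$ is (almost) the free rank-one module $(A^+/p)\cdot T^a$, where $T_1,\ldots,T_n$ are the \emph{toric} coordinates, which have norm identically $1$ on $V$ and on $U$; the monomials $T^a$ therefore do not contract under restriction to a strict subset, and there is no decay in the weight. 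What actually truncates the sum to finitely many characters is that for $a\neq 0$ the Koszul cohomology of the weight-$a$ piece is annihilated by $\zeta^{a_j}-1$ for any $j$ with $a_j\neq 0$, and $v(\zeta_{p^k}-1)=1/(p^{k-1}(p-1))\to 0$ as the denominator of $a$ grows; hence modulo $p^\epsilon$ only finitely many characters survive, and this uses nothing about $U$ being strictly contained in $V$.

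Conversely, your argument does not address the piece where strictness is genuinely needed: the trivial character. Since $\Gamma$ acts trivially on $A^+$, its contribution to $H^q$ is $A^+/p\otimes\wedge^q((\Z_p^n)^\vee)$, an affinoid algebra mod $p$, which is very far from finitely generated over $\cO_C/p$; so your final claim that the truncation to finitely many weights ``is a finitely generated $\cO_C/p$-module'' is false as stated. The missing ingredient is the mod-$p$ incarnation of Kiehl's completely continuous restriction map: for a strict inclusion $U\subset V$, the image of $A^+(V)/p\to A^+(U)/p$ is almost finitely generated over $\cO_C$, because $A^+(V)$ is topologically generated over $\cO_C$ by finitely many elements whose restrictions to $U$ have spectral norm at most $|p|^\epsilon$ for some $\epsilon>0$, so all but finitely many monomials in them vanish modulo $p$. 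Combining this (applied to each of the finitely many surviving eigenspaces, whose image in $H^q(U)$ is a subquotient of finitely many copies of $\mathrm{im}(A^+(V)/p\to A^+(U)/p)$, together with almost noetherianness of $\cO_C$) with the $(\zeta_{p^k}-1)$-annihilation above yields the correct proof. The tilting step in your outline is an unnecessary detour, as the eigenspace decomposition already exists before tilting.
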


In order to facilitate applications, let us note that this result is true without the ``good coordinates" assumption.

\begin{cor}\label{AlmFinGenImage} Let $V$ be a separated smooth quasicompact adic space over $\Spa(C,\cO_C)$, and let $U\subset V$ be a strict quasicompact open subset. Then the image of
\[
H^i(V_\et,\cO^+/p)\to H^i(U_\et,\cO^+/p)
\]
is almost finitely generated for all $i\geq 0$.
\end{cor}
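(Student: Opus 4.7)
The plan is to reduce the statement to Lemma~\ref{AlmFinGenBaseCase} by a Čech-to-derived-functor spectral sequence argument applied to carefully chosen finite covers of $V$ and $U$. The extra structure (good coordinates, strict rationality) must be maintained on all finite intersections of cover elements, and this is where almost all of the work lies.

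The first step is to exploit smoothness and quasicompactness of $V$ to choose a finite cover by affinoid opens $V_1, \ldots, V_m \subset V$, each admitting good coordinates. Since $V$ is separated, each intersection $V_{i_0 \cdots i_k} = V_{i_0} \cap \cdots \cap V_{i_k}$ is again affinoid, and it inherits good coordinates because a rational subset of a good-coordinate affinoid is itself of good-coordinate type (one simply composes the ambient map with an additional rational embedding). The second step is to use the strict containment $U \subset V$ to produce a compatible cover of an intermediate open $U \subset U'' \subset V$ by strict rational subsets $U_i \subset V_i$ with $U \cap V_i \subset U_i$: each $U \cap V_i$ is quasicompact in $V_i$, so it is contained in a finite union of rational subsets of $V_i$, and the slack built into strict containment of $U$ in $V$ allows one to enlarge these slightly so that the resulting $U_i$ are strictly contained in $V_i$. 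The intersections $U_{i_0 \cdots i_k} := U_{i_0} \cap \cdots \cap U_{i_k}$ are then rational subsets of $V_{i_0 \cdots i_k}$ (rational subsets being stable under finite intersection inside a common affinoid), and the ambient-valuation extensions required for strict containment can be arranged consistently, so that $U_{i_0 \cdots i_k}$ is a strict rational subset of $V_{i_0 \cdots i_k}$.

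With these covers in hand, Lemma~\ref{AlmFinGenBaseCase} applied to each pair $(V_{i_0 \cdots i_k}, U_{i_0 \cdots i_k})$ yields that the image of
\[
H^j(V_{i_0 \cdots i_k, \et}, \cO^+/p) \to H^j(U_{i_0 \cdots i_k, \et}, \cO^+/p)
\]
is almost finitely generated for every $j$. The Čech-to-derived-functor spectral sequences for the covers $\{V_i\}$ of $V$ and $\{U_i\}$ of $U''$ are compatible, so on the $E_2$-page the induced map has almost finitely generated image in each bidegree. Since almost finitely generated modules over $\cO_C/p$ are stable under quotients, finite sums, and extensions, convergence of the spectral sequence shows that the image of $H^i(V_\et, \cO^+/p) \to H^i(U''_\et, \cO^+/p)$ is almost finitely generated. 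Composing with the restriction $H^i(U''_\et, \cO^+/p) \to H^i(U_\et, \cO^+/p)$ yields a quotient of this image, hence the image of $H^i(V_\et, \cO^+/p) \to H^i(U_\et, \cO^+/p)$ is almost finitely generated, which is the desired statement.

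The main obstacle is the construction of the cover in the second step: ensuring that strictness is preserved at every level of the Čech nerve. The cleanest way to handle this is to fix once and for all a two-step strict containment $U \Subset U' \Subset V$ (inserting an intermediate using the slack in the definition), then choose the rational subsets $U_i \subset V_i$ to satisfy $U \cap V_i \subset U_i \subset U' \cap V_i$; this guarantees enough room that both $U_i$ and all finite intersections thereof inherit a strict rational structure in the appropriate intersected ambient affinoid. The rest is essentially formal once the almost mathematics analogues of the usual coherence lemmas for spectral sequences are in place.
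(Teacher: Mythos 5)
Your overall strategy (Čech covers plus Lemma~\ref{AlmFinGenBaseCase}) is the right one, but there are two genuine gaps. The first is the spectral sequence step: knowing that the map of Čech spectral sequences has almost finitely generated image on each $E_2$-term does \emph{not} imply that the image on the abutments is almost finitely generated. For a filtered map $f\colon H\to H'$ one only has $\im(\gr^p f)\subseteq \gr^p(\im f)$, and the inclusion is strict exactly when $f$ raises filtration on some classes (the leading term of $x$ dies in $E'_\infty$ while $f(x)\neq 0$); the excess $\gr^p(\im f)/\im(\gr^p f)$ sits inside $E_\infty'^{p,q}$, which is not almost finitely generated here. This is precisely why the paper does not compare two spectral sequences but a chain of $N\geq i+2$ of them, interpolating $U_j=U_j^{(N)}\subset\cdots\subset U_j^{(1)}=V_j$ by strict rational subsets and invoking \cite[Lemma 5.4]{ScholzePAdicHodge}, which says that after composing enough restriction maps the image on abutments becomes controllable in degrees $\leq i$. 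Your single intermediate $U\Subset U'\Subset V$ (introduced for a different reason) is not enough; the length of the chain must grow with the cohomological degree.

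The second gap is in the construction of the cover. For $U_j\subset V_j$ to be a \emph{strict} inclusion, every maximal point $x\in U_j$ must have \emph{all} of its specializations $\Spa(K,K^+)$ landing in $V_j$, i.e.\ $V_j$ must contain the entire closure $\overline{\{x\}}$, which by strictness of $U\subset V$ and separatedness is the full Riemann--Zariski space of the residue field $K(x)$. An arbitrary affinoid cover $V_1,\dots,V_m$ of $V$ will not have this property, and no amount of shrinking $U_j$ while keeping $x\in U_j$ can repair it: the obstruction lives in $V_j$, not in $U_j$. The existence of an affinoid open $V_x\supseteq\overline{\{x\}}$ is a nontrivial input, which the paper obtains from Temkin's theorem on good germs (using that $\overline{\{x\}}$ is ``affinoid'' in Temkin's sense precisely because of the strictness hypothesis); your phrase ``the slack built into strict containment allows one to enlarge these slightly'' skips over exactly this point. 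A smaller issue in the same vein: for a non-affinoid $V$ the pairwise intersections $V_{i_0}\cap V_{i_1}$ are affinoid by separatedness but need not be \emph{rational} subsets of $V_{i_0}$, so good coordinates are not inherited for free; the paper avoids this by first proving the statement for $V$ affinoid (where all the $V_S$ are rational subsets of the ambient affinoid) and then running the Čech argument a second time with the affinoid case as the new base case.
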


\begin{proof} Our strategy is to compute both sides via compatible Cech spectral sequences associated to coverings of $V$ and $U$ by subsets $V^\prime$, $U^\prime$ to which the previous result applies. By \cite[Lemma 5.4]{ScholzePAdicHodge}, for this strategy to work in cohomological degree $i$, we actually need to run via $N$ spectral sequences, where $N\geq i+2$. Thus, fix some $i$ and $N\geq i+2$.

In a first step, assume that $V$ is affinoid. Below, we will construct a finite index set $J$ along with a cover $U=\bigcup_{j\in J} U_j$ and rational subsets $V_j\subset V$, such that for all $j\in J$, $V_j$ is an affinoid with good coordinates, and $U_j\subset V_j$ is a strict rational subset. Given this data, we can find intermediate strict rational subsets $U_j = U_j^{(N)}\subset \ldots\subset U_j^{(1)} = V_j$. For any subset $S\subset J$, let $U_S^{(k)} = \bigcap_{j\in S} U_j^{(k)}$. Then $V_S := U_S^{(1)}\subset V_j$ is a rational subset, so that $V_S$ has good coordinates; moreover, $U_S^{(k)}\subset V_S$ is a rational subset. This means that Lemma \ref{AlmFinGenBaseCase} applies to $U_S^{(k+1)}\subset U_S^{(k)}$ for all $S\neq\emptyset$, $k=1,\ldots,N-1$. Let $U^{(k)} = \bigcup_{j\in J} U_j^{(k)}\subset V$. For each $k=1,\ldots,N$, there is a Cech spectral sequence
\[
\bigoplus_{S\subset J, |S| = m_1+1} H^{m_2}(U_S^{(k)},\cO^+/p)\Rightarrow H^{m_1+m_2}(U^{(k)},\cO^+/p)\ ,
\]
together with maps between these spectral sequences. Applying Lemma \ref{AlmFinGenBaseCase} and \cite[Lemma 5.4]{ScholzePAdicHodge} gives the result in the case that $V$ is affinoid.

To handle the general case, let us again take a finite index set $J$ along with a cover $U=\bigcup_{j\in J} U_j$ and rational subsets $V_j\subset V$, such that for all $j\in J$, $V_j$ is an affinoid, and $U_j\subset V_j$ is a strict rational subset. Given this data, we can find intermediate strict rational subsets $U_j = U_j^{(N)}\subset \ldots\subset U_j^{(1)} = V_j$. For any subset $S\subset J$, let $U_S^{(k)} = \bigcap_{j\in S} U_j^{(k)}$. Then $V_S := U_S^{(1)}\subset V_j$ is affinoid (as $V$ is separated); moreover, $U_S^{(k)}\subset V_S$ is a strict subset. This means that the affinoid case already handled applies to $U_S^{(k+1)}\subset U_S^{(k)}$ for all $S\neq\emptyset$, $k=1,\ldots,N-1$. Let $U^{(k)} = \bigcup_{j\in J} U_j^{(k)}\subset V$. For each $k=1,\ldots,N$, there is a Cech spectral sequence
\[
\bigoplus_{S\subset J, |S| = m_1+1} H^{m_2}(U_S^{(k)},\cO^+/p)\Rightarrow H^{m_1+m_2}(U^{(k)},\cO^+/p)\ ,
\]
together with maps between these spectral sequences. Applying the affinoid case already handled and \cite[Lemma 5.4]{ScholzePAdicHodge} gives the result.

It remains to construct the cover $U=\bigcup_{j\in J} U_j$ and $V_j\subset V$ such that for all $j\in J$, $V_j$ is an affinoid with good coordinates and $U_j\subset V_j$ is a strict rational subset. This is similar, but easier, than \cite[Lemma 5.3]{ScholzePAdicHodge}. Pick a point maximal point $x\in U$, with closure $\overline{\{x\}}\subset V$ in $V$. We claim that there is an affinoid subset $V_x\subset V$ containing $\overline{\{x\}}$. For this, we use a result of Temkin, \cite[Theorem 3.1]{Temkin}. This requires some translation, as he works with Berkovich spaces there. As $V$ is a qcqs adic space, it is equivalent to a qcqs rigid space, or to a compact Hausdorff strictly $C$-analytic Berkovich space. The question whether the germ $V_x$ is good is precisely the question whether there is an affinoid neighborhood $V_x$ of $\overline{\{x\}}$. The criterion of Temkin answers this question in terms of the closure $\overline{\{x\}}$ in the adic space, cf. \cite[Remark 2.6]{Temkin}. As $V$ is separated, this closure embeds into the Riemann-Zariski space of the completed residue field $K(x)$ at $x$, by the valuative criterion for separatedness, cf. \cite[\S 1.3]{Huber}. On the other hand, by the assumption that $U$ is strictly contained in $V$, $\overline{\{x\}}$ surjects onto the Riemann-Zariski space of $K(x)$. This identifies $\overline{\{x\}}$ with the Riemann-Zariski space of $K(x)$, which is affinoid in the sense of \cite[\S 1]{Temkin}. This verifies the existence of $V_x$. By \cite[Lemma 5.2]{ScholzePAdicHodge}, we may assume that $V_x$ has good coordinates. We may then find a strict rational subset $U_x\subset V_x$ contained in $U$, and containing $U\cap \overline{\{x\}}$. The union of all $U_x$ is equal to $U$; by quasicompacity, we can find a finite subcover $U=\bigcup_{j\in J} U_j$, along with $V_j\subset V$ such that $U_j$ is strictly contained in $V_j$. This produces the desired cover.
\end{proof}

To formulate the local finiteness result in the current setup, recall that one has the Gross-Hopkins period map at level $0$
\[
\pi_\GH: \cM_{\LT,0}\to \mathbb{P}^{n-1}_{\breve{F}}\ ,
\]
which admits local sections as a map of adic spaces. Here and in the following, we write $\cM_{\LT,0} = \cM_{\LT,\GL_n(\cO)}$ for the space at level $0$. Pick some affinoid open subset $V\subset \mathbb{P}^{n-1}_{\breve{F}}$ such that $\cM_{\LT,0}\to \mathbb{P}^{n-1}_{\breve{F}}$ admits a section $V\to \cM_{\LT,0}$ on $V$, which we fix. Recall that $\mathcal{F}_\pi$ is built from the $\GL_n(F)$-torsor $\cM_{\LT,\infty}\to \mathbb{P}^{n-1}_{\breve{F}}$, which factors over a $\GL_n(\cO)$-torsor $\cM_{\LT,\infty}\to \cM_{\LT,0}$. Therefore, the pullback of $\mathcal{F}_\pi$ to $\cM_{\LT,0}$ (and thus to $V$) depends only on the $\GL_n(\cO)$-representation $\pi|_{\GL_n(\cO)}$. More precisely, for any $\GL_n(\cO)$-representation $\pi_0$, one can define the sheaf $\mathcal{F}_{\pi_0}$ on $(\cM_{\LT,0} / D^\times)_\et$ by setting
\[
\mathcal{F}_{\pi_0}(U) = \Map_{\cont,\GL_n(\cO)\times D^\times}(|U\times_{\cM_{\LT,0}} \cM_{\LT,\infty}|,\pi_0)
\]
for $U\in (\cM_{\LT,0} / D^\times)_\et$. The obvious analogue of Proposition \ref{DefFPi} holds true in this setup.

Pick a rational subset $U\subset V$ which is strictly contained in $V$.

\begin{lem}\label{LocalFiniteness} For any $m\geq 0$, there is a compact open $K_0\subset D^\times$ stabilizing $V$, $U$ and the section $V\to \cM_{\LT,0}$ such that for all $K\subset K_0$ and any admissible smooth representation $\pi_0$ of $\GL_n(\cO)$, the image of the natural map
\[
H^i((V_C/K)_\et,\mathcal{F}_{\pi_0}\otimes \cO^+/p)\to H^i((U_C/K)_\et,\mathcal{F}_{\pi_0}\otimes \cO^+/p)
\]
is almost finitely generated for all $i=0,\ldots,m$.
\end{lem}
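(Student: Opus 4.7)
The plan is to reduce the lemma to the non-equivariant statement Corollary \ref{AlmFinGenImage} by using the Lubin-Tate tower to trivialize $\mathcal{F}_{\pi_0}$ at each finite level, then descending equivariantly with the machinery of Section 2. The new difficulties relative to Corollary \ref{AlmFinGenImage} are that $\mathcal{F}_{\pi_0}$ is a filtered colimit of local systems of unbounded rank, and that one must track the $K$-equivariance.

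First I would use the section $V\to \cM_{\LT,0}$ to form the affinoid perfectoid $V_\infty := V\times_{\cM_{\LT,0}} \cM_{\LT,\infty}$ (and similarly $U_\infty$). By Proposition \ref{QuotientPerfectoid}, applied with $G = \GL_n(\cO)$ and $J = K$, there is an equivalence $(V_C/K)_\et\cong (V_{\infty,C}/\GL_n(\cO)\times K)_\et$, under which Lemma \ref{IdentOXmodp} matches the sheaves $\cO^+/p$. Moreover, the pullback of $\mathcal{F}_{\pi_0}$ to $(\cM_{\LT,\infty})_\et$ is the constant sheaf $\underline{\pi_0}$ (by the last statement of Proposition \ref{DefFPi}, since fixing a lift trivializes the fibre), so the left hand side becomes cohomology on $(V_{\infty,C}/\GL_n(\cO)\times K)_\et$ of $\underline{\pi_0}\otimes \cO^+/p$, with $\GL_n(\cO)$ acting through the representation and $K$ acting trivially on the coefficient $\pi_0$.

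Next, filter $\pi_0 = \bigcup_n \pi_0^{K'_n}$ for $K'_n := 1+\varpi^n M_n(\cO)\triangleleft \GL_n(\cO)$; by admissibility each $\pi_0^{K'_n}$ is finite-dimensional. Then $\mathcal{F}_{\pi_0^{K'_n}}$ is a local system of finite-dimensional $\F_p$-vector spaces, trivialized by the finite \'etale $\GL_n(\cO)/K'_n$-Galois cover $V_n := V\times_{\cM_{\LT,0}} \cM_{\LT,K'_n}\to V$. For each $n$, Corollary \ref{AlmFinGenImage} applied to the separated smooth quasicompact $V_n$ and its strict quasicompact open $U_n := U\times_V V_n$ gives that the image of $H^i((V_n)_{C,\et}, \cO^+/p)\to H^i((U_n)_{C,\et}, \cO^+/p)$ is almost finitely generated, hence so is its tensor product with the finite-dimensional $\pi_0^{K'_n}$. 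To descend from $V_n$ to $V$ equivariantly, shrink $K_0$ using Corollary \ref{ResidualAction} so that the $K_0$-action on $V$ lifts to $V_n$ (for $n$ up to a threshold to be determined by $m$), then combine Hochschild-Serre for the finite Galois cover $V_n\to V$ with the colimit identity of Proposition \ref{ResidualActionSites} for the $K$-action. An $N$-term Cech spectral sequence argument with $N\geq m+2$, modelled on the proof of Corollary \ref{AlmFinGenImage}, transfers almost finite generation of the image from $(V_n, U_n)$ to $(V, U)$ at each fixed level $n$.

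The hard part is the uniformity in $n$: a filtered colimit of almost finitely generated modules need not be almost finitely generated, yet $\pi_0$ is such a colimit. The mechanism I would pursue is that the strict containment $U\subset V$ produces, after passage to $V_\infty$, a quantitative contraction between the perfectoid section rings $\cO^+_{V_{\infty,C}}/p$ and $\cO^+_{U_{\infty,C}}/p$ along $\GL_n(\cO)$-translates. Combined with the tempered growth of $\dim \pi_0^{K'_n}$ relative to this contraction, this should force the images arising from levels $n$ beyond a level $n(m)$ depending only on $m$ and the pair $U\subset V$ to be absorbed (up to almost zero) by the contributions from the finitely many levels $n\leq n(m)$. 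Making this bookkeeping precise, in the equivariant $\mathcal{F}_{\pi_0}$-twisted setting, is the analogue of the careful Cech argument in the proof of \cite[Theorem 1.3]{ScholzePAdicHodge} and is the main obstacle.
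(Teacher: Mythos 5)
Your reduction to finite levels of the Lubin--Tate tower runs into exactly the obstacle you name at the end, and that obstacle is fatal to the route you propose: a filtered colimit of almost finitely generated modules is not almost finitely generated, the dimensions $\dim \pi_0^{K_n'}$ and the degrees of the covers $V_n\to V$ both grow without bound, and there is no ``contraction'' along $\GL_n(\cO)$-translates that absorbs the high-level contributions into finitely many low levels. (The Remark in the introduction already signals that cohomology at infinite level on the Lubin--Tate side is genuinely badly behaved --- $H^1_\et(\mathbb{B}_C,\F_p)$ is infinite-dimensional --- so no purely quantitative bookkeeping on that side can succeed.) The missing idea is the duality isomorphism $\cM_{\LT,\infty}\cong \cM_{\Dr,\infty}$ of Faltings and Fargues, together with $\cM_{\Dr,\infty}\sim\varprojlim_K \cM_{\Dr,K}$: after passing to $V_\infty$, the quotient by the compact open $K\subset D^\times$ is, via Proposition \ref{QuotientPerfectoid} applied on the \emph{Drinfeld} side, the \'etale site of a quasicompact separated \emph{smooth} rigid space $V_K\subset \cM_{\Dr,K}$. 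The key computation is then $R\alpha_\ast \cO^+_{V_\infty/K}/p\cong \mathcal{F}_{\pi^\reg}\otimes\cO^+_{V/K}/p$, which identifies $H^i((V_C/K)_\et,\mathcal{F}_{\pi^\reg}\otimes\cO^+/p)$ with $H^i((V_{K,C})_\et,\cO^+/p)$, so that Corollary \ref{AlmFinGenImage} applies \emph{once}, to $U_K\subset V_K$, with no colimit over Lubin--Tate levels to control.

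You are also filtering $\pi_0$ in the wrong direction. The paper first reduces a general admissible $\pi_0$ to the regular representation $\pi^\reg = C^0(\GL_n(\cO),\F_p)$ via a resolution $0\to\pi_0\to(\pi^\reg)^{n_1}\to(\pi^\reg)^{n_2}\to\cdots$, which exists by Lazard's theorem and the anti-equivalence with finitely generated $\F_p[[\GL_n(\cO)]]$-modules; combined with exactness of $\pi_0\mapsto\mathcal{F}_{\pi_0}$ and the $N$-fold filtration of $U\subset V$ by strict rational subsets, this replaces the colimit $\pi_0=\bigcup_n\pi_0^{K_n'}$ (which produces an uncontrollable direct limit in cohomology) by a finite spectral-sequence argument with a single infinite-dimensional but geometrically tame coefficient $\pi^\reg$, precisely the one that the Drinfeld-side identification handles. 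Your opening moves (forming $V_\infty$, invoking Proposition \ref{QuotientPerfectoid} with $G=\GL_n(\cO)$, and shrinking $K_0$ via Corollary \ref{ResidualAction}) are consistent with the paper, but without the two-towers isomorphism the proof does not close.
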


\begin{proof} Let $\pi^\reg = C^0(\GL_n(\cO),\F_p)$ be the regular representation of $\GL_n(\cO)$. There is a resolution
\[
0\to \pi_0\to (\pi^\reg)^{n_1}\to (\pi^\reg)^{n_2}\to \ldots
\]
for some integers $n_i\in \Z$. This follows from the anti-equivalence of admissible smooth $\GL_n(\cO)$-representations and finitely generated $\F_p[[\GL_n(\cO)]]$-modules, and Lazard's theorem that $\F_p[[\GL_n(\cO)]]$ is noetherian, cf. e.g. \cite[Theorem 2.1.2, Equation 2.2.12]{EmertonOrd1}. Using exactness of $\pi_0\mapsto \mathcal{F}_{\pi_0}$, this induces an $E_1$-spectral sequence computing $H^i((V/K)_\et,\mathcal{F}_{\pi_0})$ in terms of $H^i((V/K)_\et,\mathcal{F}_{\pi^\reg})$, and similarly for $U$. Filtering $U\subset V$ by $N$ strict inclusions of rational subsets and using \cite[Lemma 5.4]{ScholzePAdicHodge} reduces the lemma to the case $\pi_0 = \pi^\reg$.\footnote{In this step, we need to take $K$ small enough to also stabilize all intermediate rational subsets. As the number of intermediate rational subsets depends on the cohomological degree $i$, we need to take $K$ dependent on $i$. A more careful argument would certainly avoid this.}

Let $V_\infty\to \cM_{\LT,\infty}$ be the pullback of $V\to \cM_{\LT,0}$; then $K$ still acts on $V_\infty\in (\cM_{\LT,\infty})_\et$. Recall that there is the isomorphism between Lubin-Tate and Drinfeld tower at infinite level
\[
\cM_{\LT,\infty}\cong \cM_{\Dr,\infty}\ ,
\]
cf. \cite{FaltingsTwoTowers}, \cite{FarguesTwoTowers}, which is an isomorphism of perfectoid spaces by \cite[Theorem 7.2.3]{ScholzeWeinstein}. Also recall that the Drinfeld tower is a tower of smooth adic spaces
\[
\cM_{\Dr,K}
\]
over $\Spa(\breve{F},\breve{\cO})$, parametrized by compact open subgroups $K\subset D^\times$. By \cite[Theorem 6.5.4]{ScholzeWeinstein}, one has
\[
\cM_{\Dr,\infty}\sim \varprojlim_K \cM_{\Dr,K}\ .
\]
In particular, by Proposition \ref{QuotientPerfectoid}, one has an equivalence of sites
\[
(\cM_{\Dr,\infty}/K)_\et\cong (\cM_{\Dr,K})_\et\ ,
\]
under which $V_\infty$ with its continuous $K$-action descends to some $V_K\in \cM_{\Dr,K}$. Then $V_K$ is a quasicompact separated smooth adic space over $\Spa(\breve{F},\breve{\cO})$. Applying Proposition \ref{QuotientPerfectoid} to $V_K$ then shows that there is an equivalence of sites
\[
(V_\infty/K)_\et\cong (V_K)_\et\ .
\]

\begin{lem} Let $\alpha: (V_\infty/K)_\et\to (V/K)_\et$ denote the projection. There is a quasi-isomorphism of complexes of sheaves on $(V/K)_\et$
\[
R\alpha_\ast \cO_{V_\infty/K}^+/p\cong \mathcal{F}_{\pi^\reg}\otimes \cO_{V/K}^+/p\ .
\]
\end{lem}

\begin{proof} Let $\alpha_m: (V_m/K)_\et\to (V/K)_\et$ denote the projection from the preimage $V_m\subset \cM_{\LT,m}$, where $\cM_{\LT,m} = \cM_{\LT,1+\varpi^m M_n(\cO)}$. Then $(V_m/K)_\et$ is equivalent to the slice of $(V/K)_\et$ over $V_m\in (V/K)_\et$ with its natural $K$-action. As $V_m$ is finite \'etale over $V$, one has an isomorphism
\[
R\alpha_{m\ast} \cO_{V_m/K}^+/p\cong \mathcal{F}_{C(\GL_n(\cO)/(1+\varpi^mM_n(\cO)),\F_p)}\otimes \cO_{V/K}^+/p\ .
\]
Indeed, this can be checked locally on $(V/K)_\et$, and after pullback to the slice of $(V/K)_\et$ over $V_m$, everything decomposes into a direct sum, as $V_m\times_V V_m\cong V_m\times \GL_n(\cO)/(1+\varpi^m M_n(\cO))$.

Also note that $\cO_{V_\infty/K}^+/p = \alpha^\ast \cO_{V/K}^+/p$, as can be checked after pullback to $(V_\infty)_\et$ by Lemma \ref{Conservativity}, where it follows from $V_\infty\sim \varprojlim_m V_m$. As
\[
\mathcal{F}_{\pi^\reg} = \varinjlim \mathcal{F}_{C(\GL_n(\cO)/(1+\varpi^mM_n(\cO)),\F_p)}\ ,
\]
the statement of the lemma translates into the equality
\[
R\alpha_\ast \alpha^\ast \mathcal{G} = \varinjlim_m R\alpha_{m\ast} \alpha_m^\ast \mathcal{G}
\]
for the sheaf $\mathcal{G} = \cO_{V/K}^+/p$ on $(V/K)_\et$. In fact, this is true for any sheaf $\mathcal{G}$ on $(V/K)_\et$, and follows from SGA 4 VI Corollaire 8.7.5 and the identification of $(V_\infty/K)_\et^\sim$ as a projective limit of the fibred topos
\[
(V_m/K)_\et^\sim\cong (V_\infty/(1+\varpi^mM_n(\cO))\times K)_\et \ ,
\]
cf. Proposition \ref{ResidualActionSites}, Proposition \ref{QuotientPerfectoid}.
\end{proof}

Thus, we can rewrite
\[
H^i((V_C/K)_\et,\mathcal{F}_{\pi^\reg}\otimes \cO_{V_C/K}^+/p) = H^i((V_{\infty,C}/K)_\et,\cO_{V_{\infty,C}/K}^+/p)\ ,
\]
which in turn, by Lemma \ref{IdentOXmodp}, can be rewritten as $H^i((V_{K,C})_\et,\cO_{V_{K,C}}^+/p)$. Similarly, we have
\[
H^i((U_C/K)_\et,\mathcal{F}_{\pi^\reg}\otimes \cO_{U_C/K}^+/p) = H^i((U_{K,C})_\et,\cO_{U_{K,C}}^+/p)\ ,
\]
where $U_K\subset \cM_{\Dr,K}$ is defined like $V_K$, so that $U_K$ is a strict open subset of $V_K$. By Corollary \ref{AlmFinGenImage}, the image of
\[
H^i((V_{K,C})_\et,\cO_{V_{K,C}}^+/p)\to H^i((U_{K,C})_\et,\cO_{U_{K,C}}^+/p)
\]
is almost finitely generated, which is what we wanted to prove.
\end{proof}

\begin{cor}\label{FirstGlobalFiniteness} Fix some $j\geq 0$. Then there is some compact open $K_0\subset D^\times$ such that for all open $K\subset K_0$ and admissible smooth representations $\pi$ of $\GL_n(F)$, the cohomology group
\[
H^i((\mathbb{P}^{n-1}_C/K)_\et,\mathcal{F}_\pi\otimes \cO^+/p)
\]
is almost finitely generated for all $i=0,\ldots,j$.
\end{cor}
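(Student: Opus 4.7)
The plan is to deduce the corollary from the local finiteness result Lemma \ref{LocalFiniteness} by a \v{C}ech argument, exploiting the properness of $\mathbb{P}^{n-1}_{\breve{F}}$. The strategy mirrors the proof of Corollary \ref{AlmFinGenImage}, but is simpler because the ambient space is proper, so we do not have to arrange a comparison between two different spaces $U \subset V$ at the global level.

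Fix $j$ and set $N = j+2$. Since $\pi_\GH: \cM_{\LT,0} \to \mathbb{P}^{n-1}_{\breve{F}}$ admits local sections and $\mathbb{P}^{n-1}_{\breve{F}}$ is quasicompact, I would first choose finitely many affinoid opens $V_1, \ldots, V_r \subset \mathbb{P}^{n-1}_{\breve{F}}$, each equipped with a section $s_k: V_k \to \cM_{\LT,0}$, together with strict rational subsets $U_k \subset V_k$ such that the $\{U_k\}$ still cover $\mathbb{P}^{n-1}_{\breve{F}}$ (find such data locally at each point, then extract a finite subcover by quasicompactness). Interpolate each inclusion $U_k \subset V_k$ by $N$ strict rational inclusions
\[
U_k = U_k^{(N)} \subset U_k^{(N-1)} \subset \cdots \subset U_k^{(1)} = V_k,
\]
and for every nonempty $S \subset \{1, \ldots, r\}$ and every $1 \leq m \leq N$ set $W_S^{(m)} = \bigcap_{k \in S} U_k^{(m)}$. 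Separatedness of $\mathbb{P}^{n-1}_{\breve{F}}$ implies that $W_S^{(1)} = \bigcap_{k\in S} V_k$ is affinoid; each $W_S^{(m)}$ is then a rational subset of it, $W_S^{(m+1)} \subset W_S^{(m)}$ is a strict rational inclusion (strictness passes through finite intersections by the valuative criterion in $\mathbb{P}^{n-1}_{\breve{F}}$), and $W_S^{(m)}$ inherits a section to $\cM_{\LT,0}$ from any $s_k$ with $k \in S$.

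Next, I would apply Lemma \ref{LocalFiniteness} to each pair $W_S^{(m+1)} \subset W_S^{(m)}$ with the cohomological bound $j$, obtaining a compact open subgroup $K_{S,m} \subset D^\times$ stabilizing $W_S^{(m)}$, $W_S^{(m+1)}$ and the chosen section. Setting $K_0 = \bigcap_{S,m} K_{S,m}$, for every open $K \subset K_0$ and every admissible smooth $\GL_n(F)$-representation $\pi$ the restriction map
\[
H^i((W_{S,C}^{(m)}/K)_\et, \mathcal{F}_\pi \otimes \cO^+/p) \longrightarrow H^i((W_{S,C}^{(m+1)}/K)_\et, \mathcal{F}_\pi \otimes \cO^+/p)
\]
has almost finitely generated image for $0 \leq i \leq j$. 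Finally, for each $m = 1, \ldots, N$ the $K$-stable cover $\{U_k^{(m)}\}_k$ of $\mathbb{P}^{n-1}_{\breve{F}}$ yields a \v{C}ech-to-cohomology spectral sequence
\[
E_1^{p,q}(m) = \bigoplus_{|S|=p+1} H^q((W_{S,C}^{(m)}/K)_\et, \mathcal{F}_\pi \otimes \cO^+/p) \ \Longrightarrow\ H^{p+q}((\mathbb{P}^{n-1}_C/K)_\et, \mathcal{F}_\pi \otimes \cO^+/p),
\]
whose abutment is independent of $m$. The transition maps between these spectral sequences for $m$ and $m+1$ have almost finitely generated image on each $E_1$-term by the previous step, so the $N$-fold iteration argument \cite[Lemma 5.4]{ScholzePAdicHodge} with $N = j+2$ forces $H^i((\mathbb{P}^{n-1}_C/K)_\et, \mathcal{F}_\pi \otimes \cO^+/p)$ to be almost finitely generated for $i \leq j$.

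The main obstacle I expect is the geometric bookkeeping: verifying that each $W_S^{(m)}$ is affinoid, that $W_S^{(m+1)} \subset W_S^{(m)}$ is indeed a strict rational inclusion in the sense required by Lemma \ref{LocalFiniteness}, and that everything can be arranged $K_0$-equivariantly for a single compact open subgroup depending only on $j$ (and not on $\pi$). Once this is in place, the argument is a direct transcription of the \v{C}ech mechanism used in the proof of Corollary \ref{AlmFinGenImage}, now applied on the proper space $\mathbb{P}^{n-1}_{\breve{F}}$ with Lemma \ref{LocalFiniteness} as input.
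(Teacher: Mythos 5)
Your proposal is correct and is precisely the argument the paper intends: the paper's proof consists of the single sentence that the corollary ``follows from the local statement by picking sufficiently many open affinoid covers of $\mathbb{P}^{n-1}$ satisfying the hypothesis of Lemma \ref{LocalFiniteness} and using \cite[Lemma 5.4]{ScholzePAdicHodge}, cf.\ proof of \cite[Lemma 5.8]{ScholzePAdicHodge}'', and your write-up is a faithful expansion of exactly that \v{C}ech mechanism, including the point that $K_0$ must stabilize all the finitely many intermediate rational subsets and hence depends on $j$.
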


\begin{proof} This follows from the local statement by picking sufficiently many open affinoid covers of $\mathbb{P}^{n-1}$ satisfying the hypothesis of Lemma \ref{LocalFiniteness} and using \cite[Lemma 5.4]{ScholzePAdicHodge}, cf. proof of \cite[Lemma 5.8]{ScholzePAdicHodge}.
\end{proof}

\begin{cor}\label{SecondGlobalFiniteness} For any compact open subgroup $K\subset D^\times$ and admissible smooth representation $\pi$ of $\GL_n(F)$, the cohomology group
\[
H^i((\mathbb{P}^{n-1}_C/K)_\et,\mathcal{F}_\pi\otimes \cO^+/p)
\]
is almost finitely generated for all $i\geq 0$.
\end{cor}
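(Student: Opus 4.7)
The plan is to deduce this from Corollary \ref{FirstGlobalFiniteness} by a Hochschild-Serre spectral sequence; this simultaneously removes the smallness hypothesis on $K$ and extends the finiteness to all cohomological degrees. Fix $i\geq 0$, and apply Corollary \ref{FirstGlobalFiniteness} with $j=i$ to obtain an open subgroup $K_0\subset D^\times$. Given an arbitrary compact open $K\subset D^\times$, the subgroup $K\cap K_0$ is open in the compact group $K$, hence of finite index; I replace it by the open normal subgroup
\[
K'=\bigcap_{k\in K} k(K\cap K_0)k^{-1}\subset K_0\ ,
\]
which is still of finite index in $K$ and a finite intersection (since $K\cap K_0$ has only finitely many conjugates in $K$). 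By the choice of $K_0$, the groups $H^q((\mathbb{P}^{n-1}_C/K')_\et,\mathcal{F}_\pi\otimes \cO^+/p)$ are almost finitely generated for all $q\leq i$.

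Next, I would set up a Hochschild-Serre spectral sequence
\[
E_2^{p,q}=H^p(K/K',H^q((\mathbb{P}^{n-1}_C/K')_\et,\mathcal{F}_\pi\otimes \cO^+/p))\Rightarrow H^{p+q}((\mathbb{P}^{n-1}_C/K)_\et,\mathcal{F}_\pi\otimes \cO^+/p)\ .
\]
By Proposition \ref{SliceTopoi}, the site $(\mathbb{P}^{n-1}_C/K')_\et$ is the slice of $(\mathbb{P}^{n-1}_C/K)_\et$ over the object $Z=\mathbb{P}^{n-1}_C\times_{K'} K$, and the residual right-multiplication action of $K/K'$ on $Z$ makes $Z$ into a $K/K'$-torsor over the final object of $(\mathbb{P}^{n-1}_C/K)_\et$. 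Equivalently, a sheaf on $(\mathbb{P}^{n-1}_C/K)_\et$ is the same datum as a $K/K'$-equivariant sheaf on $(\mathbb{P}^{n-1}_C/K')_\et$, and global sections on the former factor as global sections on the latter followed by taking $K/K'$-invariants; the spectral sequence displayed above is the Grothendieck spectral sequence for this composition.

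To conclude, observe that $K/K'$ is a finite group, so for any $\cO_C$-module $M$, the group cohomology $H^p(K/K',M)$ is a subquotient of the bar complex $C^p(K/K',M)=M^{|K/K'|^p}$, a finite direct sum of copies of $M$; almost finitely generated $M$ thus yields almost finitely generated $H^p(K/K',M)$. Combined with the previous paragraph, $E_2^{p,q}$ is almost finitely generated for all $p\geq 0$ and $q\leq i$, so the abutment in degree $i$, being an iterated extension of the finitely many subquotients $E_\infty^{p,q}$ with $p+q=i$ (hence $q\leq i$) of such modules, is also almost finitely generated. The only nontrivial point in this argument is the construction of the Hochschild-Serre spectral sequence on the non-standard site $(\mathbb{P}^{n-1}_C/K)_\et$, but this is a purely topos-theoretic consequence of the slice description in Proposition \ref{SliceTopoi}.
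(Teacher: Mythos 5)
Your proposal is correct and is essentially the paper's own argument: fix a cohomological degree, shrink $K$ to an open normal subgroup contained in the $K_0$ supplied by Corollary \ref{FirstGlobalFiniteness}, and propagate almost finite generation through the Hochschild--Serre spectral sequence for the finite quotient $K/K'$ (which the paper justifies, as you do, via the slice description and Cartan--Leray). The only implicit ingredient worth flagging is that subquotients and finite iterated extensions of almost finitely generated $\cO_C/p$-modules are again almost finitely generated, which is the almost noetherian property of $\cO_C$ from \cite[Proposition 2.6]{ScholzePAdicHodge}.
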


\begin{proof} It is enough to prove this for $i\leq j$ for any fixed $j$. Then the previous corollary shows that the statement is true after replacing $K$ by some open normal $K^\prime\subset K$. But $H^i((\mathbb{P}^{n-1}_C/K)_\et,\mathcal{G})$ is computed by a Hochschild-Serre spectral sequence from $H^{i_1}(K/K^\prime,H^{i_2}((\mathbb{P}^{n-1}_C/K^\prime)_\et,\mathcal{G}))$ for any sheaf $\mathcal{G}$, giving the result in general.
\end{proof}

\begin{cor}\label{FinitenessmodK} For any compact open subgroup $K\subset D^\times$ and admissible smooth representation $\pi$ of $\GL_n(F)$, the cohomology group
\[
H^i((\mathbb{P}^{n-1}_C/K)_\et,\mathcal{F}_\pi)
\]
is finite for all $i\geq 0$, and the map
\[
H^i((\mathbb{P}^{n-1}_C/K)_\et,\mathcal{F}_\pi)\otimes \cO_C/p\to H^i((\mathbb{P}^{n-1}_C/K)_\et,\mathcal{F}_\pi\otimes \cO^+/p)
\]
is an almost isomorphism.
\end{cor}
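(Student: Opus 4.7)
The strategy is to follow the argument of \cite[Theorem 5.1]{ScholzePAdicHodge}, deducing both claims simultaneously from the almost finite generation of the $\cO^+/p$-cohomology (Corollary \ref{SecondGlobalFiniteness}) via an Artin--Schreier argument. The input that is now missing is the Frobenius, which does not exist on $(\mathbb{P}^{n-1}/K)_\et$ in characteristic $0$; it will be extracted from the perfectoid cover $\cM_{\LT,\infty}$ via its tilt.

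First, I would set up an almost exact Artin--Schreier sequence. Under the equivalence of topoi of Proposition \ref{QuotientPerfectoid}, sheaves on $(\mathbb{P}^{n-1}_C/K)_\et$ correspond to $\GL_n(F)\times K$-equivariant sheaves on $(\cM_{\LT,\infty,C})_\et$, and the sheaf $\cO^+/p$ is identified with $\cO^+_{\cM_{\LT,\infty,C}}/p$ by Lemma \ref{IdentOXmodp}. On the perfectoid space $\cM_{\LT,\infty,C}$, tilting produces (after passing to the appropriate pro-\'etale refinement) an almost exact sequence
\[
0\to \F_p\to \cO^{+a}/p\xrightarrow{\varphi-1}\cO^{+a}/p\to 0.
\]
Tensoring with the $\F_p$-sheaf $\mathcal{F}_\pi$ (and using that $\mathcal{F}_\pi\otimes(-)$ preserves exactness of $\F_p$-module sheaf sequences), one obtains the almost exact sequence
\[
0\to \mathcal{F}_\pi\to \mathcal{F}_\pi\otimes\cO^{+a}/p\xrightarrow{\varphi-1}\mathcal{F}_\pi\otimes\cO^{+a}/p\to 0.
\]

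Second, I would take the long exact cohomology sequence for this on $(\mathbb{P}^{n-1}_C/K)_\et$. By Corollary \ref{SecondGlobalFiniteness}, each $H^i((\mathbb{P}^{n-1}_C/K)_\et,\mathcal{F}_\pi\otimes \cO^+/p)$ is an almost finitely generated $\cO_C/p$-module. The crucial algebraic input, parallel to \cite[Proposition 2.12]{ScholzePAdicHodge}, is that for such an almost finitely generated $\cO_C/p$-module $M$ equipped with a Frobenius-semilinear endomorphism, both $\ker(\varphi-1)$ and $\coker(\varphi-1)$ are almost finite-dimensional $\F_p$-vector spaces; since the only almost zero $\F_p$-vector space is zero, they are genuinely finite-dimensional. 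The long exact sequence then presents $H^i((\mathbb{P}^{n-1}_C/K)_\et,\mathcal{F}_\pi)$ as an extension of $\ker(\varphi-1)$ on $H^i(\mathcal{F}_\pi\otimes\cO^+/p)$ by $\coker(\varphi-1)$ on $H^{i-1}(\mathcal{F}_\pi\otimes\cO^+/p)$, yielding finiteness.

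Third, for the almost comparison isomorphism, I would tensor the long exact sequence with $\cO_C/p$ over $\F_p$ (which is flat, as $\F_p$ is a field), and compare to the long exact sequence of $\cO^+/p$-cohomology. The key point is that $\varphi-1$ acts almost surjectively on $H^i(\mathcal{F}_\pi\otimes\cO^+/p)\otimes_{\F_p}\cO_C/p$ because the structure theorem for almost finitely generated $\cO_C/p$-modules reduces this to the (almost) surjectivity of $\varphi-1$ on $\cO_C/p$ itself, which holds as $C$ is algebraically closed. A diagram chase on the two long exact sequences then yields the almost isomorphism.

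The main obstacle is the technical passage between the equivariant \'etale site $(\mathbb{P}^{n-1}_C/K)_\et$, on which we have the cohomological control, and the perfectoid model $\cM_{\LT,\infty,C}$, where the Artin--Schreier sequence lives. This is exactly what the formalism of Section 2 (particularly Proposition \ref{QuotientPerfectoid} and Lemma \ref{IdentOXmodp}) was designed to handle, so the translation should go through without serious difficulty, but the Frobenius and its compatibility with the $\GL_n(F)\times K$-action must be tracked carefully through the tilt.
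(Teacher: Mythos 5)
Your overall architecture is the right one: the paper's proof is precisely the untilting/Artin--Schreier argument of \cite[Theorem 5.1]{ScholzePAdicHodge}, applied to the almost finitely generated modules supplied by Corollary \ref{SecondGlobalFiniteness}, and you have correctly identified both the input and the shape of the conclusion. However, the middle of your argument has a genuine gap, in two related places. First, the Artin--Schreier sequence is set up on the wrong sheaf. On $\cO^+/p$ the Frobenius is only the composite surjection $\cO^+/p\twoheadrightarrow \cO^+/p^{1/p}\cong\cO^+/p$, not a bijection, and the paper does not use a sequence $0\to\F_p\to\cO^{+}/p\to\cO^{+}/p\to 0$. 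It instead constructs the characteristic-$p$ sheaf $\cO^{\flat-}$ (the tilt modulo topological nilpotents), presents it as $\varinjlim_{\widetilde{\Phi}^{-1}}\cO^{++}/p$ together with its torsion filtration by the subsheaves $\cO^{\flat-}[(p^\flat)^k]$, and runs Artin--Schreier for $\mathcal{F}_\pi\otimes\cO^{\flat-}$, where $\varphi-1$ is genuinely surjective on $C^\flat/\mm_{C^\flat}$ with kernel exactly $\F_p$.

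Second, and more seriously, your ``crucial algebraic input'' --- that an almost finitely generated $\cO_C/p$-module with a Frobenius-semilinear endomorphism has finite $\ker(\varphi-1)$ and $\coker(\varphi-1)$ --- is not \cite[Lemma 2.12]{ScholzePAdicHodge} and fails at this level of generality: for instance $M=\bigoplus_{n\geq 0}\cO_C/p^{\epsilon/2^n}$ is almost finitely generated, and the shift $\psi(x_0,x_1,\dots)=(x_1^p,x_2^p,\dots)$ is Frobenius-semilinear with $\ker(\psi-1)$ surjecting onto $\cO_C/p^{\epsilon}$, hence infinite-dimensional over $\F_p$. What Lemma 2.12 actually takes as input is the whole system $M_k=H^i((\mathbb{P}^{n-1}_C/K)_\et,\mathcal{F}_\pi\otimes\cO^{\flat-}[(p^\flat)^k])$ together with the short exact sequences relating $\cO^{\flat-}[(p^\flat)^{k_1}]$, $\cO^{\flat-}[(p^\flat)^{k_1+k_2}]$, $\cO^{\flat-}[(p^\flat)^{k_2}]$ and the Frobenius isomorphisms $\cO^{\flat-}[(p^\flat)^k]\cong\cO^{\flat-}[(p^\flat)^{pk}]$; its conclusion is the structural statement $H^i((\mathbb{P}^{n-1}_C/K)_\et,\mathcal{F}_\pi\otimes\cO^+/p)^a\cong(\cO_C^a/p)^r$ \emph{compatibly with the standard Frobenius}. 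It is this rigidity --- not mere almost finite generation plus some semilinear endomorphism --- that gives $H^i(\mathcal{F}_\pi\otimes\cO^{\flat-})\cong(C^\flat/\mm_{C^\flat})^r$ and hence, via Artin--Schreier, both $H^i((\mathbb{P}^{n-1}_C/K)_\et,\mathcal{F}_\pi)\cong\F_p^r$ and the almost comparison isomorphism at once (your separate third step, tensoring the long exact sequence with $\cO_C/p$, is then unnecessary and in any case would need a Frobenius structure that is not available there). To repair the proof, replace your second and third steps by the construction of $\cO^{\flat-}$ and its $(p^\flat)$-torsion filtration and the verification of the hypotheses of \cite[Lemma 2.12]{ScholzePAdicHodge}.
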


\begin{proof} The proof is the same as that of \cite[Theorem 5.1]{ScholzePAdicHodge} (which in turn is modelled on that of \cite[\S 3, Theorem 8]{FaltingsAlmostEtale}). Note that the argument there is written in terms of the pro-\'etale site which we have not introduced for $\mathbb{P}^{n-1}_C/K$. One can rewrite the argument entirely in terms of the \'etale site as follows. On the pro-\'etale site of $\mathbb{P}^{n-1}_C$, one can look at the sheaf $\cO^{\flat -}$ which is the quotient of $\cO^\flat$ by the subsheaf of topologically nilpotent elements. It is a simple exercise to present $\cO^{\flat -}$ as a colimit of the sheaf $\cO^+/p$ along suitable transition maps. Namely, first define
\[
\cO^{++}/p = \cO^+/p\otimes_{\cO_C} \mm_C
\]
(which can be written as a colimit according to $\mm_C = \bigcup p^{1/n} \cO_C$). There is an isomorphism $x\mapsto x^p - p^{(p-1)/p} x$ from $\cO^{++}/p^{1/p}$ to $\cO^{++}/p$ (this can be checked on the pro-\'etale site of $\mathbb{P}^{n-1}_C$); let
\[
\widetilde{\Phi}^{-1}: \cO^{++}/p\cong \cO^{++}/p^{1/p}\cong p^{(p-1)/p} \cO^{++}/p\subset \cO^{++}/p
\]
be the inverse of this isomorphism composed with multiplication by $p^{(p-1)/p}$. Then
\[
\cO^{\flat -} = \varinjlim_{\widetilde{\Phi}^{-1}} \cO^{++}/p\ .
\]
This implies that $\cO^{\flat -}$ comes via pullback from the \'etale site of $\mathbb{P}^{n-1}_C$, and in fact from $(\mathbb{P}^{n-1}_C/D^\times)_\et$. One can check that $\cO^{\flat -}$ is a sheaf of $\cO_C^\flat$-modules; let $\cO^{\flat -}[(p^\flat)^k]\subset \cO^{\flat-}$ denote the subsheaf of elements killed by $(p^\flat)^k$. Then there are short exact sequences
\[
0\to \cO^{\flat -}[(p^\flat)^{k_1}]\to \cO^{\flat-}[(p^\flat)^{k_1+k_2}]\to \cO^{\flat-}[(p^\flat)^{k_2}]\to 0\ ,
\]
as well as Frobenius isomorphisms
\[
\cO^{\flat-}[(p^\flat)^k]\cong \cO^{\flat-}[(p^\flat)^{pk}]\ ,
\]
and $\cO^{\flat -}[p^\flat]\cong \cO^{++}/p$ is almost isomorphic to $\cO^+/p$. All of these statements can be checked on the pro-\'etale site of $\mathbb{P}^{n-1}_C$. This implies that the cohomology groups
\[
M_k = H^i((\mathbb{P}^{n-1}_C/K)_\et,\mathcal{F}_\pi\otimes \cO^{\flat-}[(p^\flat)^k])
\]
satisfy the assumptions of \cite[Lemma 2.12]{ScholzePAdicHodge}. In particular, there is an almost isomorphism
\[
H^i((\mathbb{P}^{n-1}_C/K)_\et,\mathcal{F}_\pi\otimes \cO^+/p)^a\cong (\cO_C^a/p)^r
\]
for some integer $r\geq 0$, compatible with Frobenius. By tensoring with the maximal ideal, this gives an actual isomorphism
\[
H^i((\mathbb{P}^{n-1}_C/K)_\et,\mathcal{F}_\pi\otimes \cO^{++}/p)\cong (\mm_C/p)^r\ ,
\]
compatible with Frobenius, which in turn induces an isomorphism
\[
H^i((\mathbb{P}^{n-1}_C/K)_\et,\mathcal{F}_\pi\otimes \cO^{\flat-})\cong (C^\flat/\mm_{C^\flat})^r
\]
compatible with Frobenius, by passing through the colimit defining $\cO^{\flat -}$. But there is an Artin-Schreier sequence
\[
0\to \mathcal{F}_\pi\to \mathcal{F}_\pi\otimes \cO^{\flat -}\to \mathcal{F}_\pi\otimes \cO^{\flat -}\to 0\ ;
\]
exactness can be checked over the pro-\'etale site of $\mathbb{P}^{n-1}_C$, where $\mathcal{F}_\pi$ is locally trivial, and the result follows from the Artin-Schreier sequence for $\cO^{\flat -}$. Now the long exact cohomology sequence
\[
\ldots\to H^i((\mathbb{P}^{n-1}_C/K)_\et,\mathcal{F}_\pi)\to H^i((\mathbb{P}^{n-1}_C/K)_\et,\mathcal{F}_\pi\otimes \cO^{\flat-})\to H^i((\mathbb{P}^{n-1}_C/K)_\et,\mathcal{F}_\pi\otimes \cO^{\flat-})\to \ldots
\]
implies that
\[
H^i((\mathbb{P}^{n-1}_C/K)_\et,\mathcal{F}_\pi)\cong \F_p^r\ ,
\]
as $\varphi - 1$ is surjective on $C^\flat/\mm_{C^\flat}$ with kernel $\F_p$.
\end{proof}

\begin{lem}\label{HochschildSerre} For any compact open subgroup $K\subset D^\times$ and any sheaf $\mathcal{G}$ on $(\mathbb{P}^{n-1}_C/K)_\et$, there is a Hochschild-Serre spectral sequence
\[
H^{i_1}_\cont(K,H^{i_2}(\mathbb{P}^{n-1}_C,\mathcal{G}))\Rightarrow H^{i_1+i_2}((\mathbb{P}^{n-1}_C/K)_\et,\mathcal{G})\ .
\]
\end{lem}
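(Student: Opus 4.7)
The plan is to approximate the profinite group $K$ by its finite quotients $K/H$ for open normal subgroups $H\subset K$, apply classical Hochschild--Serre for the finite Galois covers $(X/H)_\et \to (X/K)_\et$ (where I write $X=\mathbb{P}^{n-1}_C$), and then pass to the filtered direct limit over $H$.

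For the first step, Proposition \ref{SliceTopoi} identifies $(X/H)_\et$ with the slice site $(X/K)_\et / (X\times_H K)$, so it suffices to show that $X\times_H K\to X$ is a $K/H$-torsor in $(X/K)_\et$. As an $X$-space, $X\times_H K$ decomposes into $|K/H|$ disjoint copies of $X$, permuted by a right $K/H$-action, and this right action commutes with the underlying $K$-action, giving an isomorphism
\[
(X\times_H K)\times_X (X\times_H K) \cong (X\times_H K)\times (K/H)
\]
in $(X/K)_\et$. The usual \v{C}ech-to-derived-functor Hochschild--Serre for finite Galois covers in a topos then produces a spectral sequence
\[
E_2^{p,q}(H) = H^p\bigl(K/H, H^q((X/H)_\et, \mathcal{G})\bigr) \Rightarrow H^{p+q}((X/K)_\et, \mathcal{G}),
\]
functorial in $H$ via inflation in group cohomology and the pullback maps on \'etale cohomology.

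The second step is to take the filtered colimit over the directed system of open normal subgroups $H\subset K$. Since filtered colimits of abelian groups are exact, this yields a spectral sequence converging to the (constant) abutment $H^{p+q}((X/K)_\et, \mathcal{G})$. By Proposition \ref{ResidualActionSites} applied with the trivial closed subgroup $\{e\}\subset K$, one has
\[
\varinjlim_H H^q((X/H)_\et, \mathcal{G}) \cong H^q(X_\et, \alpha^\ast\mathcal{G}),
\]
where $\alpha \colon X_\et\to (X/K)_\et$ is the natural morphism of sites, and this colimit carries a canonical discrete continuous $K$-action. Combining this with the standard identity $H^p_\cont(K, M) = \varinjlim_H H^p(K/H, M^H)$ for continuous discrete $K$-modules $M$, and using that group cohomology of the finite groups $K/H$ commutes with filtered colimits in the coefficients, the $E_2$-page in the limit becomes $H^p_\cont(K, H^q(X_\et, \alpha^\ast\mathcal{G}))$, as required.

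The main technical point will be the passage to the limit, specifically the identification
\[
\varinjlim_H H^p\bigl(K/H, H^q((X/H)_\et, \mathcal{G})\bigr) \cong H^p_\cont\bigl(K, H^q(X_\et, \alpha^\ast\mathcal{G})\bigr).
\]
A priori one has a system of natural maps $H^q((X/H)_\et, \mathcal{G})\to (H^q(X_\et, \alpha^\ast\mathcal{G}))^H$ which need not be isomorphisms for fixed $H$; however, a cofinality argument using Proposition \ref{ResidualActionSites} shows that these maps become isomorphisms in the colimit over a cofinal sequence of smaller and smaller $H$, which is what is needed to reconcile the two presentations of $H^p_\cont(K,-)$.
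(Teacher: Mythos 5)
Your proposal is correct and follows essentially the same route as the paper: the Cartan--Leray/Hochschild--Serre spectral sequence for the $K/H$-torsor $\mathbb{P}^{n-1}_C\times_H K\to \mathbb{P}^{n-1}_C$ in $(\mathbb{P}^{n-1}_C/K)_\et$ via the slice identification of Proposition \ref{SliceTopoi}, followed by a filtered colimit over $H$ using Proposition \ref{ResidualActionSites}. The only difference is that you spell out the double-colimit identification of the limiting $E_2$-page with continuous cohomology, which the paper leaves implicit.
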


Here, all $K$-modules are considered as discrete.

\begin{proof} One gets the spectral sequence as a direct limit over $K^\prime$ of spectral sequences
\[
H^{i_1}(K/K^\prime,H^{i_2}((\mathbb{P}^{n-1}_C/K^\prime)_\et,\mathcal{G}))\Rightarrow H^{i_1+i_2}((\mathbb{P}^{n-1}_C/K)_\et,\mathcal{G})\ ,
\]
which one gets as Cartan-Leray spectral sequences for the covering $\mathbb{P}^{n-1}_C\times_{K^\prime} K\to \mathbb{P}^{n-1}_C$ in $(\mathbb{P}^{n-1}_C/K)_\et$, under the identification of $(\mathbb{P}^{n-1}_C/K^\prime)_\et$ with a slice of $(\mathbb{P}^{n-1}_C/K)_\et$.
\end{proof}

Also recall the following lemma about continuous group cohomology of $p$-adic Lie groups.

\begin{lem}\label{AdmissibleFiniteCohom} Let $G$ be a compact $p$-adic Lie group, and let $\pi$ be an admissible smooth $\F_p$-representation of $G$. Then $H^i_\cont(G,\pi)$ is finite-dimensional for all $i\geq 0$.
\end{lem}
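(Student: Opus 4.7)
The plan is to reduce to a torsion-free pro-$p$ subgroup, and then exploit that the completed group algebra is noetherian of finite global dimension, together with the Pontryagin-duality dictionary for admissible smooth representations.

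First I would choose an open normal subgroup $G_0\subset G$ which is a uniform pro-$p$ group; in particular $G_0$ is torsion-free and $G/G_0$ is finite. The Hochschild-Serre spectral sequence
\[
H^{i_1}(G/G_0, H^{i_2}_\cont(G_0, \pi))\Rightarrow H^{i_1+i_2}_\cont(G, \pi)
\]
reduces the problem to finiteness of the inner terms, since cohomology of a finite group with finite-dimensional coefficients is finite-dimensional. Thus it suffices to treat the case where $G$ itself is a torsion-free compact $p$-adic Lie group of some dimension $d$.

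Next, I would invoke the anti-equivalence between admissible smooth $\F_p$-representations of $G$ and finitely generated $\F_p[[G]]$-modules (the same Lazard-Emerton dictionary used in the proof of Lemma \ref{LocalFiniteness}), so that $\pi^\vee$ is finitely generated over $\F_p[[G]]$. By Lazard's theorem, $\F_p[[G]]$ is a noetherian local ring of finite global dimension equal to $d$; in particular the residue field $\F_p$ admits a finite resolution
\[
0\to \F_p[[G]]^{n_d}\to \cdots\to \F_p[[G]]^{n_0}\to \F_p\to 0
\]
by finitely generated free $\F_p[[G]]$-modules. A standard Pontryagin-duality argument in the category of compact $\F_p[[G]]$-modules then identifies
\[
H^i_\cont(G, \pi)^\vee\cong \Tor^{\F_p[[G]]}_i(\F_p, \pi^\vee)\ .
\]

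Applying the displayed resolution, the right-hand side is computed as the $i$-th homology of a bounded complex whose terms are finite direct sums of copies of $\pi^\vee$, hence consists of finitely generated $\F_p[[G]]$-modules; since every term is annihilated by the augmentation ideal, each $\Tor$-group is in fact a finite-dimensional $\F_p$-vector space. Dualizing gives the claim. In my view the main obstacle is the careful justification of the Tor-duality formula, which requires matching continuous cochain cohomology with a derived functor on compact $\F_p[[G]]$-modules; this is classical in Iwasawa theory (e.g.~following Brumer), but does require unraveling the topologies carefully, especially in passing between the smooth representation $\pi$ and its compact dual $\pi^\vee$.
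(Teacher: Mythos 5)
Your argument is correct and rests on the same ingredients as the paper's (one-line) proof: Pontryagin duality between admissible smooth representations and finitely generated $\F_p[[G]]$-modules, Lazard's noetherianness of $\F_p[[G]]$, and the identification of $H^i_\cont(G,\pi)^\vee$ with $\Tor_i^{\F_p[[G]]}(\F_p,\pi^\vee)$. Two remarks. First, both of your preliminary reductions are superfluous: $\F_p[[G]]$ is already noetherian for an arbitrary compact $p$-adic Lie group $G$ (it is finite over $\F_p[[G_0]]$), and finite global dimension is not needed — once $\pi^\vee$ is finitely generated one can resolve $\pi^\vee$ by finitely generated free modules and tensor with $\F_p$, obtaining a complex of finite-dimensional $\F_p$-vector spaces whose homology computes the $\Tor$ groups. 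Second, in your version the justification ``since every term is annihilated by the augmentation ideal'' is misstated: the terms $(\pi^\vee)^{n_j}$ of your complex are certainly not killed by the augmentation ideal; what is true is that the $\Tor$ groups themselves are (by balancing $\Tor$, or because $\F_p$ is an $\F_p[[G]]$-bimodule killed by the two-sided augmentation ideal), and combined with their finite generation over the noetherian ring $\F_p[[G]]$ this gives finite-dimensionality. Either repair is immediate, so there is no real gap.
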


\begin{proof} Under the identification of continuous group cohomology with the derived functor of $G$-invariants in case $G$ is compact, cf. \cite[Proposition 2.2.6]{EmertonOrd2}, this follows from the anti-equivalence of admissible smooth representations of $G$ with finitely generated $\F_p[[G]]$-modules, cf. \cite[2.2.12]{EmertonOrd1}, and Lazard's theorem that $\F_p[[G]]$ is noetherian, cf. e.g. \cite[Theorem 2.1.2]{EmertonOrd1}.
\end{proof}

\begin{cor} For any admissible smooth representation $\pi$ of $\GL_n(F)$, the cohomology group
\[
H^i_\et(\mathbb{P}^{n-1}_C,\mathcal{F}_\pi)
\]
is an admissible $D^\times$-representation invariant under change of $C$. The map
\[
H^i_\et(\mathbb{P}^{n-1}_C,\mathcal{F}_\pi)\otimes_{\F_p} \cO_C/p\to H^i_\et(\mathbb{P}^{n-1}_C,\mathcal{F}_\pi\otimes \cO^+/p)
\]
is an almost isomorphism.
\end{cor}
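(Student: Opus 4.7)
The plan is to deduce all three assertions from their finite-level counterparts in Corollary \ref{FinitenessmodK}, by combining Proposition \ref{ResidualActionSites} with the Hochschild-Serre spectral sequence. Fix a compact open subgroup $K_0 \subset D^\times$; applying Proposition \ref{ResidualActionSites} with $H_0 = \{1\}$ yields, for any sheaf $\mathcal{G}$ on $(\mathbb{P}^{n-1}_C/K_0)_\et$,
\[
H^i_\et(\mathbb{P}^{n-1}_C, \mathcal{G}) = \varinjlim_{K \subset K_0} H^i((\mathbb{P}^{n-1}_C/K)_\et, \mathcal{G}),
\]
with $K$ ranging over open subgroups of $K_0$. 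Specializing to $\mathcal{G} = \mathcal{F}_\pi$ and $\mathcal{G} = \mathcal{F}_\pi \otimes \cO^+/p$ (the latter defined on the quotient site via Lemma \ref{DefOXmodp}) presents both sides of the desired almost isomorphism as filtered colimits of their finite-level analogues, and in particular exhibits $H^i_\et(\mathbb{P}^{n-1}_C, \mathcal{F}_\pi)$ as a smooth $D^\times$-representation.

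The almost isomorphism then comes essentially for free: Corollary \ref{FinitenessmodK} provides the almost isomorphism at each level $K$, and both the almost-isomorphism property and the tensor product with $\cO_C/p$ are preserved by filtered colimits. Similarly, independence of $C$ holds at each finite level (by the argument of Corollary \ref{FinitenessmodK}, where the finite-level cohomology is identified non-canonically with $\F_p^r$ via the Artin-Schreier sequence applied to $\cO^{\flat-}$, with $r$ a base-change invariant); it therefore passes to the colimit, giving independence of $C$ for $H^i_\et(\mathbb{P}^{n-1}_C, \mathcal{F}_\pi)$.

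The substantive point is admissibility, which I prove by induction on $i$. The case $i = 0$ is immediate, since $H^0_\et(\mathbb{P}^{n-1}_C, \mathcal{F}_\pi)^K = H^0((\mathbb{P}^{n-1}_C/K)_\et, \mathcal{F}_\pi)$ is finite by Corollary \ref{FinitenessmodK}. For the inductive step, assume $H^j_\et(\mathbb{P}^{n-1}_C, \mathcal{F}_\pi)$ is admissible for all $j < i$, and fix a compact open $K \subset D^\times$. The Hochschild-Serre spectral sequence of Lemma \ref{HochschildSerre}
\[
E_2^{p,q} = H^p_\cont(K, H^q_\et(\mathbb{P}^{n-1}_C, \mathcal{F}_\pi)) \Rightarrow H^{p+q}((\mathbb{P}^{n-1}_C/K)_\et, \mathcal{F}_\pi)
\]
has all $E_2^{p,q}$ with $q < i$ finite-dimensional, by the inductive hypothesis combined with Lemma \ref{AdmissibleFiniteCohom}, while its abutment is finite-dimensional by Corollary \ref{FinitenessmodK}; in particular $E_\infty^{0,i}$, a quotient of the abutment, is finite-dimensional. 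In the column $p = 0$ there are no incoming differentials, so $E_{r+1}^{0,i} \subset E_r^{0,i}$ and each successive quotient $E_r^{0,i}/E_{r+1}^{0,i}$ injects via $d_r$ into $E_r^{r, i-r+1}$, itself a subquotient of the finite-dimensional $E_2^{r, i-r+1}$. Hence the finite descending chain $E_2^{0,i} \supset E_3^{0,i} \supset \ldots \supset E_\infty^{0,i}$ has finite-dimensional quotients throughout, forcing $E_2^{0,i} = H^i_\et(\mathbb{P}^{n-1}_C, \mathcal{F}_\pi)^K$ to be finite-dimensional. The only delicate point in the whole argument is this spectral-sequence bookkeeping; the other two assertions are formal consequences of passing to the colimit.
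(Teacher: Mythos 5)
Your treatment of the almost isomorphism and of admissibility is correct and coincides with the paper's own argument: the almost isomorphism is obtained by passing to the filtered colimit over $K$ in Corollary \ref{FinitenessmodK} via Proposition \ref{ResidualActionSites}, and admissibility is proved by exactly the induction on $i$ that the paper uses, combining Lemma \ref{HochschildSerre}, Lemma \ref{AdmissibleFiniteCohom} for the terms with $q<i$, and the finiteness of the abutment from Corollary \ref{FinitenessmodK}; your spectral-sequence bookkeeping in the column $p=0$ is accurate.

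The gap is in the independence of $C$. The parenthetical ``with $r$ a base-change invariant'' is precisely the assertion that has to be proved; it is not contained in Corollary \ref{FinitenessmodK}. That corollary identifies $H^i((\mathbb{P}^{n-1}_C/K)_\et,\mathcal{F}_\pi)$ with $\F_p^r$ for a fixed $C$, but nothing in its proof compares the integers $r$ for $C$ and $C'$, and even equality of dimensions would not yield the actual statement, namely that the \emph{natural} map $H^i_\et(\mathbb{P}^{n-1}_C,\mathcal{F}_\pi)\to H^i_\et(\mathbb{P}^{n-1}_{C'},\mathcal{F}_\pi)$ is an isomorphism. The paper closes this by a separate geometric input: one computes $H^i_\et(\mathbb{P}^{n-1}_C,\mathcal{F}_\pi\otimes\cO^+/p)$ via the pro-\'etale site and a simplicial affinoid perfectoid cover over which $\mathcal{F}_\pi$ is free, and uses that for an affinoid perfectoid $\Spa(R,R^+)$ over $\Spa(C,\cO_C)$ with base change $\Spa(R',R'^{+})$ to $\Spa(C',\cO_{C'})$, the ring $R'^{+}/p$ is almost isomorphic to $R^+/p\otimes_{\cO_C/p}\cO_{C'}/p$ (cf. the proof of \cite[Proposition 6.18]{ScholzePerfectoid}). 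This shows that the $\cO^+/p$-cohomology satisfies almost base change along $\cO_C/p\to\cO_{C'}/p$, and only then does the almost isomorphism you have already established translate this into invariance of the $\F_p$-cohomology. You need to supply this step; as written, your argument for independence of $C$ is circular.
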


\begin{proof} The almost isomorphism follows by passing to the direct limit over $K$ in Corollary \ref{FinitenessmodK}, using Lemma \ref{ResidualActionSites}. E.g. by computing the right-hand side using the pro-\'etale site, and a simplicial affinoid perfectoid cover over which $\mathcal{F}_\pi$ is free, one sees that enlarging $C\hookrightarrow C^\prime$, the map
\[
H^i_\et(\mathbb{P}^{n-1}_C,\mathcal{F}_\pi\otimes \cO^+/p)\otimes_{\cO_C/p} \cO_{C^\prime}/p\to H^i_\et(\mathbb{P}^{n-1}_{C^\prime},\mathcal{F}_\pi\otimes \cO^+/p)
\]
is an almost isomorphism. Here, use that if $X=\Spa(R,R^+)$ is an affinoid perfectoid space over $\Spa(C,\cO_C)$ with base-change $X^\prime = \Spa(R^\prime,R^{\prime +})$ to $\Spa(C^\prime,\cO_{C^\prime})$, then $R^{\prime +}/p$ is almost isomorphic to $R^+/p\otimes_{\cO_C/p} \cO_{C^\prime}/p$, cf. proof of \cite[Proposition 6.18]{ScholzePerfectoid}. This implies that $H^i_\et(\mathbb{P}^{n-1}_C,\mathcal{F}_\pi)$ is invariant under change of $C$.

To check that $H^i_\et(\mathbb{P}^{n-1}_C,\mathcal{F}_\pi)$ is an admissible $D^\times$-representation, we argue by induction on $i$; thus, assume the result is known for $i^\prime<i$. We need to show that for any compact open subgroup $K\subset D^\times$, the space
\[
H^i_\et(\mathbb{P}^{n-1}_C,\mathcal{F}_\pi)^K
\]
is finite-dimensional. Consider the Hochschild-Serre spectral sequence
\[
H^{i_1}_\cont(K,H^{i_2}_\et(\mathbb{P}^{n-1}_C,\mathcal{F}_\pi))\Rightarrow H^{i_1+i_2}((\mathbb{P}^{n-1}_C/K)_\et,\mathcal{F}_\pi)
\]
from Lemma \ref{HochschildSerre}. In particular, consider the contributions on the diagonal $i_1+i_2=i$. For $i_2<i$, the group $H^{i_2}_\et(\mathbb{P}^{n-1}_C,\mathcal{F}_\pi)$ is admissible as $K$-representation by induction, which by Lemma \ref{AdmissibleFiniteCohom} implies that
\[
H^{i_1}_\cont(K,H^{i_2}_\et(\mathbb{P}^{n-1}_C,\mathcal{F}_\pi))
\]
is finite-dimensional if $i_2<i$. The only other contribution to the diagonal $i_1+i_2=i$ comes from
\[
H^i_\et(\mathbb{P}^{n-1}_C,\mathcal{F}_\pi)^K\ .
\]
Assume it was infinite-dimensional. In the spectral sequence, it only interacts with terms where $i_2<i$, and only finitely many such. This gives only a finite-dimensional space, so an infinite-dimensional space survives to the $E_\infty$-page, which contributes an infinite-dimensional space to $H^i((\mathbb{P}^{n-1}_C/K)_\et,\mathcal{F}_\pi)$. However, this space is finite by Corollary \ref{FinitenessmodK}. Thus, we see that $H^i_\et(\mathbb{P}^{n-1}_C,\mathcal{F}_\pi)$ is an admissible $D^\times$-representation, as desired.
\end{proof}

To complete the proof of Theorem \ref{Finiteness}, it remains to verify the vanishing statements in large degrees. We claim that
\[
H^i_\et(\mathbb{P}^{n-1}_C,\mathcal{F}_\pi\otimes \cO^+/p)
\]
is almost zero for $i>2(n-1)$ in general, and $i>n-1$ if $\pi$ is injective as $\GL_n(\cO)$-representation. As the cohomological dimension of $|\mathbb{P}^{n-1}_C|$ is $n-1$, it is enough to prove that under the projection $\lambda: (\mathbb{P}^{n-1}_C)_\et\to |\mathbb{P}^{n-1}_C|$,
\[
R^i\lambda_\ast (\mathcal{F}_\pi\otimes \cO^+/p)
\]
is almost for $i>n-1$, and for $i>0$ if $\pi$ is injective as $\GL_n(\cO)$-representation. It is enough to prove this after pullback to $\cM_{\LT,0,C}$, as $\cM_{\LT,0,C}\to \mathbb{P}^{n-1}_C$ admits local sections. After this pullback, $\mathcal{F}_\pi$ can be written as the inductive limit of the $\mathcal{F}_{\pi^H}$ over all open subgroups $H\subset \GL_n(\cO)$, all of which are $\F_p$-local systems. Thus, almost vanishing for $i>n-1$ follows from \cite[Lemma 5.6]{ScholzePAdicHodge}.

Now assume $\pi$ is injective as $\GL_n(\cO)$-representation. Then we may write $\pi$ as $\GL_n(\cO)$-representation as a direct summand of a power of $\pi^\reg$. Thus, we can reduce to the case that $\pi=\pi^\reg$. In that case, we have to compute
\[
R^i f_\ast \cO^+/p
\]
for the projection $f: (\cM_{\LT,\infty,C})_\et\to |\cM_{\LT,0,C}|$. But by \cite[Lemma 2.10.1]{Weinstein} (and its proof), the space $|\cM_{\LT,0,C}|$ is covered by open affinoid $U$ whose preimage $U_\infty\subset \cM_{\LT,\infty,C}$ is affinoid perfectoid, so that by \cite[Proposition 7.13]{ScholzePerfectoid}, the higher cohomology groups $H^i((U_\infty)_\et,\cO^+/p)$ are almost zero for $i>0$.

\section{Admissible representations: General base rings}

In this section, we want to extend the finiteness results from the previous section to admissible representations of $\GL_n(F)$ over more general base rings.

\begin{definition}[{\cite[\S 2]{EmertonOrd1}}] Let $(A,\mm)$ be a complete noetherian local ring with finite residue field of characteristic $p$, and $G$ a $p$-adic analytic group. An $A[G]$-module $V$ is called smooth if for all $v\in V$, there is some open subgroup $H\subset G$ and $i\geq 1$ such that $v$ is $H$-invariant and $\mm^i v = 0$.

A smooth $A[G]$-module $V$ is called admissible if for all $i\geq 1$ and $H\subset G$ open, the $A/\mm^i$-module $V^H[\mm^i]$ is finitely generated (equivalently, of finite length).
\end{definition}

\begin{rem} In case $A=\Z_p$, the representations live on $p$-torsion modules like $\Q_p/\Z_p$. In geometric settings, one gets such representations by considering the cohomology with $\Q_p/\Z_p$-coefficients (which carries essentially the same information as completed cohomology with $\Z_p$-coefficients).
\end{rem}

We recall that the category of admissible $A[G]$-modules is well-behaved.

\begin{thm}[{\cite[Proposition 2.2.13]{EmertonOrd1}}]\label{AdmSerreSub} The category of admissible $A[G]$-modules is abelian, and a Serre subcategory of the category of smooth $A[G]$-modules.
\end{thm}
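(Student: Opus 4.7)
The plan is to reduce the statement to the elementary fact that finitely generated modules over a noetherian ring form a Serre subcategory of all modules, via Pontryagin duality and Lazard's theorem.

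First I would observe that admissibility is a condition only on the action of compact open subgroups: if $V$ is admissible as $A[H]$-module for some compact open $H \subset G$, and $G$ acts smoothly, then $V$ is automatically admissible as $A[G]$-module (because for any $H' \subset G$ open, $V^{H'}[\mm^i] \subset V^{H \cap H'}[\mm^i]$, and a finitely generated module over the noetherian ring $A/\mm^i$ has all submodules finitely generated). Thus one can replace $G$ by any compact open subgroup, and may assume $G$ is profinite (indeed, compact $p$-adic analytic). Moreover, the ambient category of smooth $A[G]$-modules is clearly abelian: a subquotient of a smooth module is smooth, and an extension of smooth modules is smooth since every vector is annihilated by some $\mm^i$ and has stabilizer of finite index in any subgroup fixing its image. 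Hence it is enough to show that admissibility is preserved under subobjects, quotients, and extensions in smooth $A[G]$-modules.

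Next, I would invoke the Pontryagin duality $V \mapsto V^\vee := \Hom_A(V, E)$, where $E$ is the injective hull of $A/\mm$, between smooth $A[G]$-modules (with $G$ compact) and the category of profinite $A[[G]]$-modules. This duality is an anti-equivalence, exchanges subobjects and quotients, sends extensions to extensions, and under it the full subcategory of admissible smooth $A[G]$-modules corresponds precisely to the full subcategory of finitely generated $A[[G]]$-modules (the condition that $V^H[\mm^i]$ is a finitely generated $A/\mm^i$-module for all $(H,i)$ dualizes to finite generation of $V^\vee$ over $A[[G]]$). Consequently the Serre subcategory claim becomes: finitely generated $A[[G]]$-modules form a Serre subcategory of all $A[[G]]$-modules.

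Finally, by Lazard's theorem (applied with the hypothesis that $A$ is noetherian with finite residue field of characteristic $p$ and $G$ is compact $p$-adic analytic), the Iwasawa algebra $A[[G]]$ is noetherian. Over any noetherian ring, finitely generated modules are stable under submodules (by noetherianity), under quotients (trivially), and under extensions (from the five-lemma applied to the generating sets), so they form a Serre subcategory. This completes the proof.

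The only genuinely nontrivial input is Lazard's noetherianity of $A[[G]]$; the duality bookkeeping is the main place one must be careful, but it is well established and the rest is formal.
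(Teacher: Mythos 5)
Your argument is correct and is essentially the standard one: the paper itself gives no proof of this statement, citing it directly from Emerton (\cite[Proposition 2.2.13]{EmertonOrd1}), and Emerton's proof proceeds exactly as you do — reduce to a compact open subgroup, pass through the Pontryagin-type anti-equivalence with finitely generated modules over the Iwasawa algebra (his 2.2.11--2.2.12), and invoke noetherianity of $A[[G]]$ together with the fact that finitely generated modules over a noetherian ring form a Serre subcategory. Indeed, the paper uses precisely this duality-plus-Lazard mechanism elsewhere (e.g.\ in Lemma \ref{AdmissibleFiniteCohom}), so your route is the intended one.
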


In this section, we prove the following generalization of Theorem \ref{Finiteness}.

\begin{thm}\label{FinitenessBaseRing} Let $(A,\mm)$ be a complete noetherian local ring with finite residue field of characteristic $p$. Let $V$ be an admissible $A[\GL_n(F)]$-module, and let $\mathcal{F}_V$ be the corresponding sheaf on $(\mathbb{P}^{n-1}_C/D^\times)_\et$. For all $i\geq 0$, the $D^\times$-representation
\[
H^i_\et(\mathbb{P}^{n-1}_C,\mathcal{F}_V)
\]
is admissible, independent of $C$, and vanishes for $i>2(n-1)$. The natural map
\[
H^i_\et(\mathbb{P}^{n-1}_C,\mathcal{F}_V)\otimes_{\Z_p} \cO_C\to H^i_\et(\mathbb{P}^{n-1}_C,\mathcal{F}_V\otimes \cO^+)
\]
is an almost isomorphism.
\end{thm}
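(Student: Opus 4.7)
The strategy is a two-step devissage reducing Theorem \ref{FinitenessBaseRing} to Theorem \ref{Finiteness}. First, extend the construction of Proposition \ref{DefFPi} to the present setting: for any smooth $A[\GL_n(F)]$-module $V$, the same formula
\[
\mathcal{F}_V(U) = \Map_{\cont,\GL_n(F)\times D^\times}(|U\times_{\mathbb{P}^{n-1}_{\breve{F}}}\cM_{\LT,\infty}|,V)
\]
defines a Weil-equivariant sheaf on $(\mathbb{P}^{n-1}_{\breve{F}}/D^\times)_\et$; the assignment $V\mapsto\mathcal{F}_V$ is exact, commutes with filtered colimits, and has geometric stalks canonically identified with $V$ after fixing a lift of the geometric point to $\cM_{\LT,\infty}$. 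The proofs follow those in Proposition \ref{DefFPi} verbatim.

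Since any admissible $V$ is smooth, $V=\varinjlim_j V[\mm^j]$, and each $V[\mm^j]$ is an admissible $A/\mm^j[\GL_n(F)]$-module by Theorem \ref{AdmSerreSub}. Because the site $(\mathbb{P}^{n-1}_C/K)_\et$ is coherent for every compact open $K\subset D^\times$, cohomology commutes with this filtered colimit, yielding
\[
H^i((\mathbb{P}^{n-1}_C/K)_\et,\mathcal{F}_V) = \varinjlim_j H^i((\mathbb{P}^{n-1}_C/K)_\et,\mathcal{F}_{V[\mm^j]})\ .
\]
This reduces the problem to $V$ annihilated by $\mm^j$ for some fixed $j$. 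For such $V$, induction on $j$ using $0\to \mm V\to V\to V/\mm V\to 0$, together with the long exact cohomology sequence and Theorem \ref{AdmSerreSub}, reduces further to $j=1$, where $V$ is an admissible $\kappa[\GL_n(F)]$-module for $\kappa=A/\mm$ a finite extension of $\F_p$. Restriction of scalars yields an admissible $\F_p[\GL_n(F)]$-module, and Theorem \ref{Finiteness} delivers all asserted properties at this base level: admissibility of the $D^\times$-action, vanishing for $i>2(n-1)$, independence of $C$, and the almost isomorphism with $\cO^+$-cohomology. Each intermediate devissage step propagates these properties through five-lemma-type arguments on the long exact sequences.

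The principal obstacle is confirming that admissibility as an $A[D^\times]$-module survives the filtered colimit in the first reduction. Fix $K\subset D^\times$ compact open and $l\geq 1$; one must show that $H^i((\mathbb{P}^{n-1}_C/K)_\et,\mathcal{F}_V)[\mm^l]$ is finitely generated over $A/\mm^l$. Since $[\mm^l]$ commutes with filtered colimits, the point is to analyse the transition maps $H^i(\mathcal{F}_{V[\mm^j]})\to H^i(\mathcal{F}_{V[\mm^{j+1}]})$ via the long exact sequence associated to the $\mm$-torsion quotient $V[\mm^{j+1}]/V[\mm^j]$. By the base case already handled, that quotient gives cohomology groups admissible over $A/\mm$, with $K$-invariants of bounded $\F_q$-rank independent of $j$ (since these quotients are subquotients of $V/\mm V$, which is admissible). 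This uniform bound forces the $[\mm^l]$-torsion to stabilize under the colimit, hence to be finitely generated over $A/\mm^l$; admissibility of the $D^\times$-action, independence of $C$, and the almost isomorphism with $\cO^+$-cohomology then all pass through the colimit.
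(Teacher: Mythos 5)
Your reduction to $\mm^j$-torsion modules via coherence of the sites $(\mathbb{P}^{n-1}_C/K)_\et$, the induction on $j$ in the torsion case using $0\to V[\mm]\to V\to V/V[\mm]\to 0$ together with Theorem \ref{AdmSerreSub}, and the treatment of the almost isomorphism, the vanishing, and the independence of $C$ by passage to colimits all match the paper. The genuine gap is in your final paragraph, where you treat admissibility for $V$ not killed by any power of $\mm$. First, $V[\mm^{j+1}]/V[\mm^j]$ is not in general a subquotient of $V/\mm V$: for $A=\Z_p$ and $V=\Q_p/\Z_p$ one has $V/\mm V=0$ while the graded pieces are nonzero. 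What is true is that multiplication by a set of $m_j$ generators of $\mm^j$ embeds $V[\mm^{j+1}]/V[\mm^j]$ into $V[\mm]^{m_j}$; since $m_j$ grows with $j$, this gives no uniform bound, and indeed for $A=\F_p[[x,y]]$ and $V$ the Matlis dual of $A$ (with trivial group action, an admissible module) the lengths of the graded pieces are unbounded. Second, even granting a uniform bound on $\dim(V[\mm^{j+1}]/V[\mm^j])^H$, translating it into a uniform bound on the cohomology groups $H^i(\mathcal{F}_{V[\mm^{j+1}]/V[\mm^j]})$ requires an argument of the type given only in Section \ref{PatchingPrep} under an extra injectivity hypothesis. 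Third, and most seriously, bounded increments in a filtered colimit do not force stabilization: $\varinjlim_j\bigoplus_{k\le j}\F_p$ has one-dimensional increments and infinite-dimensional colimit. So the asserted finite generation of $\bigl(\varinjlim_j H^i(\mathcal{F}_{V[\mm^j]})\bigr)[\mm^l]$ does not follow from your argument.

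The paper closes this gap by a different device, namely an induction on the cohomological degree $i$ rather than on $j$. Choosing generators $(f_1,\dots,f_n)=\mm^j$, one has the exact sequence $0\to V[\mm^j]\to V\to V^n$ with admissible image $\overline{V}=V/V[\mm^j]$ and admissible cokernel $W=V^n/\overline{V}$, and
\[
H^i_\et(\mathbb{P}^{n-1}_C,\mathcal{F}_V)[\mm^j]=\ker\left(H^i_\et(\mathbb{P}^{n-1}_C,\mathcal{F}_V)\to H^i_\et(\mathbb{P}^{n-1}_C,\mathcal{F}_{V^n})\right)
\]
is then exhibited as an extension whose sub is a quotient of $H^i_\et(\mathbb{P}^{n-1}_C,\mathcal{F}_{V[\mm^j]})$ (admissible by the torsion case) and whose quotient embeds into a quotient of $H^{i-1}_\et(\mathbb{P}^{n-1}_C,\mathcal{F}_W)$ (admissible by the induction on $i$); Theorem \ref{AdmSerreSub} then gives admissibility of the middle term. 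You should replace your stabilization argument by this step.
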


\begin{rem} Emerton also introduces the notion of $p$-adically admissible representations in \cite[Definition 2.4.7]{EmertonOrd1}, making it possible to say that completed cohomology (which is a $p$-adically complete $\Z_p$-module) itself is admissible. An obvious variant holds for this notion of admissibility as well.
\end{rem}

\begin{proof} Note first that by Proposition \ref{ResidualActionSites}, we have
\[
H^i_\et(\mathbb{P}^{n-1}_C,\mathcal{F}_V) = \varinjlim_{K\subset D^\times} H^i((\mathbb{P}^{n-1}_C/K)_\et,\mathcal{F}_V)\ ,
\]
where $K\subset D^\times$ acts trivially on $H^i_\et((\mathbb{P}^{n-1}_C/K),\mathcal{F}_V)$. Moreover, each site $(\mathbb{P}^{n-1}_C/K)_\et$ is coherent, so as $V=\varinjlim V[\mm^j]$, we have
\[
H^i((\mathbb{P}^{n-1}_C/K)_\et,\mathcal{F}_V) = \varinjlim H^i((\mathbb{P}^{n-1}_C/K)_\et,\mathcal{F}_{V[\mm^j]})\ ,
\]
where $\mm^j$ annihilates $H^i((\mathbb{P}^{n-1}_C/K)_\et,\mathcal{F}_{V[\mm^j]})$. It follows that $H^i_\et(\mathbb{P}^{n-1}_C,\mathcal{F}_V)$ is smooth. To prove admissibility, we now have Theorem \ref{AdmSerreSub} available.

Assume first that $V$ is killed by $\mm^j$. We induct on the minimal such $j$; for $j=1$, the result is given by Theorem \ref{Finiteness}. Now look at the exact sequence
\[
0\to V[\mm]\to V\to \overline{V}\to 0\ ,
\]
where $\overline{V} = V/V[\mm]$. It induces a long exact sequence
\[
\ldots\to H^i_\et(\mathbb{P}^{n-1}_C,\mathcal{F}_{V[\mm]})\to H^i_\et(\mathbb{P}^{n-1}_C,\mathcal{F}_V)\to H^i_\et(\mathbb{P}^{n-1}_C,\mathcal{F}_{\overline{V}})\to \ldots\ .
\]
The outer two terms are admissible by induction. This implies, by Theorem \ref{AdmSerreSub}, that the middle term is admissible as well. Using the $5$-lemma, one also proves the almost isomorphism by induction.

In general, the almost isomorphism follows by writing
\[
H^i_\et(\mathbb{P}^{n-1}_C,\mathcal{F}_V) = \varinjlim H^i_\et(\mathbb{P}^{n-1}_C,\mathcal{F}_{V[\mm^j]})\ ,
\]
and similarly for the $\cO^+/p$-twisted cohomology groups. For admissibility, we induct on $i$, so assume that for all admissible $A[\GL_n(F)]$-modules $V$, $H^{i^\prime}_\et(\mathbb{P}^{n-1}_C,\mathcal{F}_V)$ is admissible for $i^\prime<i$. Fix some $j$ and $V$, and generators $(f_1,\ldots,f_n) = \mm^j$. There is an exact sequence
\[
0\to V[\mm^j]\to V\buildrel{(f_1,\ldots,f_n)}\over\longrightarrow V^n\ .
\]
Let $\overline{V} = V / V[\mm^j]$ and $W = \coker( V\buildrel{(f_1,\ldots,f_n)}\over\longrightarrow V^n) = V^n / \overline{V}$, both of which are admissible $A[\GL_n(F)]$-modules. Now
\[
H^i_\et(\mathbb{P}^{n-1}_C,\mathcal{F}_V)[\mm^j] = \ker\left(H^i_\et(\mathbb{P}^{n-1}_C,\mathcal{F}_V)\to H^i_\et(\mathbb{P}^{n-1}_C,\mathcal{F}_{V^n})\right)[\mm^j]\ ,
\]
and there is an exact sequence
\[\begin{aligned}
0&\to \ker\left(H^i_\et(\mathbb{P}^{n-1}_C,\mathcal{F}_V)\to H^i_\et(\mathbb{P}^{n-1}_C,\mathcal{F}_{\overline{V}})\right)[\mm^j]\\
&\to \ker\left(H^i_\et(\mathbb{P}^{n-1}_C,\mathcal{F}_V)\to H^i_\et(\mathbb{P}^{n-1}_C,\mathcal{F}_{V^n})\right)[\mm^j]\\
&\to \ker\left(H^i_\et(\mathbb{P}^{n-1}_C,\mathcal{F}_{\overline{V}})\to H^i_\et(\mathbb{P}^{n-1}_C,\mathcal{F}_{V^n})\right)[\mm^j]\ .
\end{aligned}\]
Therefore, it is enough to show that the two outer terms
\[
\ker\left(H^i_\et(\mathbb{P}^{n-1}_C,\mathcal{F}_V)\to H^i_\et(\mathbb{P}^{n-1}_C,\mathcal{F}_{\overline{V}})\right)\ ,\ \ker\left(H^i_\et(\mathbb{P}^{n-1}_C,\mathcal{F}_{\overline{V}})\to H^i_\et(\mathbb{P}^{n-1}_C,\mathcal{F}_{V^n})\right)
\]
are admissible $A[D^\times]$-modules. But the first admits a surjection from the admissible $A[D^\times]$-module
\[
H^i_\et(\mathbb{P}^{n-1}_C,\mathcal{F}_{V[\mm^j]})\ ,
\]
and the second from the $A[D^\times]$-module
\[
H^{i-1}_\et(\mathbb{P}^{n-1}_C,\mathcal{F}_W)\ ,
\]
which is admissible by induction.
\end{proof}

We end this section with two results of general nature. First, we observe that the Weil group action extends to a Galois group action. Let $I_F\subset W_F\subset \Gal_F$ be the inertia, Weil, and Galois group of $F$, respectively.

\begin{prop}\label{GalAction} Let $(A,\mm)$ be a complete noetherian local ring with finite residue field of characteristic $p$. Let $V$ be an admissible $A[\GL_n(F)]$-module, and let $\mathcal{F}_V$ be the corresponding sheaf on $(\mathbb{P}^{n-1}_C/D^\times)_\et$, where $C=\C_p$ (and $k=\bar{\F}_p$). Then the natural $W_F$-action on
\[
H^i_\et(\mathbb{P}^{n-1}_C,\mathcal{F}_V)\ ,
\]
coming from the $I_F$-action on $C=\C_p$ and the Weil descent datum, is continuous and extends (necessarily uniquely) to a continuous action of $\Gal_F$.
\end{prop}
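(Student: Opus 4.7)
The plan is to reduce, via admissibility, to continuity on finite pieces, then invoke a standard spreading-out argument. By Theorem~\ref{FinitenessBaseRing}, $H := H^i_\et(\mathbb{P}^{n-1}_C,\mathcal{F}_V)$ is an admissible $A[D^\times]$-module. Writing $V = \varinjlim_j V[\mm^j]$, we see that $H$ is the union of the finite sets
\[
M_{K,j} := H^i_\et(\mathbb{P}^{n-1}_C,\mathcal{F}_{V[\mm^j]})^K
\]
(finitely generated over the finite ring $A/\mm^j$, hence finite). The $W_F$-action is $A$-linear and commutes with $D^\times$ -- the Lubin-Tate Weil descent datum is $D^\times$-equivariant by construction, and $I_F$ acts only through the coefficient field $\C_p$ -- so each $M_{K,j}$ is $W_F$-stable. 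It therefore suffices to show that the $W_F$-action on each $M_{K,j}$ is continuous, for then it factors through a finite discrete quotient, extends uniquely and continuously to $\Gal_F$ by density of $W_F\subset \Gal_F$, and the compatible family of $\Gal_F$-actions assembles into a continuous action on $H$.

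Since $\mathrm{Aut}(M_{K,j})$ is finite, any Frobenius automatically has finite order on $M_{K,j}$, so the point is to show that each $v\in M_{K,j}$ has open $I_F$-stabilizer. For this, use admissibility and smoothness of $V[\mm^j]$ to write $V[\mm^j] = \varinjlim_{H\subset\GL_n(\cO)} V[\mm^j]^H$ as a filtered colimit of \emph{finite} $A/\mm^j$-modules. By exactness of $W\mapsto \mathcal{F}_W$ (Proposition~\ref{DefFPi}) and coherence of $\mathbb{P}^{n-1}_{C,\et}$, cohomology commutes with this colimit, so $v$ lies in $H^i_\et(\mathbb{P}^{n-1}_C,\mathcal{F}_{V_0})$ for some finite $V_0 = V[\mm^j]^H$ fixed by $1+\varpi^m M_n(\cO)$ for some $m$. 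The sheaf $\mathcal{F}_{V_0}$ is then a finite locally constant \'etale sheaf on $\mathbb{P}^{n-1}_C$, trivialized upon pullback along the finite \'etale cover $\cM_{\LT,m,C}\to \mathbb{P}^{n-1}_C$, and this whole package is the base change to $C$ of data already defined over $\breve{F}$. A standard spreading-out argument -- any finite Cech representative of $v$ involves only finitely many sections, defined over a finite extension $E/\breve{F}\subset\C_p$ -- then shows $v$ is fixed by the open subgroup $I_E\subset I_F$.

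Given this, the $W_F$-action on each $M_{K,j}$ is a continuous homomorphism into the finite discrete group $\mathrm{Aut}(M_{K,j})$, which extends uniquely to a continuous homomorphism from $\Gal_F$ since $W_F$ is dense in $\Gal_F$ and the target is Hausdorff. The resulting $\Gal_F$-actions on the $M_{K,j}$ are automatically compatible as $K$ shrinks and $j$ grows, by uniqueness of the extension, and assemble into the desired continuous $\Gal_F$-action on $H$, uniquely extending the original $W_F$-action. The main obstacle is the spreading-out step: while classical for \'etale cohomology of a proper rigid space with finite locally constant coefficients, one must take care to descend the cover $\cM_{\LT,m}$, the local system structure, and the representing cocycle simultaneously to a common finite extension of $\breve{F}$, which can be done via Huber's finiteness and base-change theorems for proper rigid-analytic spaces.
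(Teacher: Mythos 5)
Your proof is correct and takes essentially the same route as the paper: reduce to $V[\mm^j]$ with finite $K$-invariants, establish continuity of the $I_F$-action by writing $H^i_\et(\mathbb{P}^{n-1}_{\C_p},\mathcal{F}_V)=\varinjlim_{M/\breve F}H^i_\et(\mathbb{P}^{n-1}_M,\mathcal{F}_V)$ over finite extensions (your Cech spreading-out is just a more explicit version of this), and then extend the resulting continuous $W_F$-action on a finite set to $\Gal_F$ using that any Frobenius has finite order there. The one imprecision is that $V_0=V[\mm^j]^H$ is only $\GL_n(\cO)$-stable, so $\mathcal{F}_{V_0}$ is not a sheaf on $\mathbb{P}^{n-1}_C$ itself (only on $\cM_{\LT,0}$); this intermediate reduction is unnecessary, since $\mathcal{F}_{V[\mm^j]}$ is already defined over $\breve F$ and the colimit formula applies to it directly.
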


\begin{proof} Writing $V$ as the union of $V[\mm^j]$, we may assume that $\mm^j=0$ for some $j$, so that $A$ is finite. Continuity of the $W_F$-action reduces to continuity of the $I_F$-action, which follows from writing
\[
H^i_\et(\mathbb{P}^{n-1}_{\C_p},\mathcal{F}_V) = \varinjlim_{M/\breve{F}} H^i_\et(\mathbb{P}^{n-1}_M, \mathcal{F}_V)
\]
as a direct limit over finite extensions $M$ of $\breve{F}$ contained in $\C_p$. Now for all compact open subgroups $K\subset D^\times$, the group
\[
H^i_\et(\mathbb{P}^{n-1}_{\C_p},\mathcal{F}_V)^K
\]
is finite. But any continuous action of $W_F$ on a finite set extends continuously to $\Gal_F$. Namely, an open subgroup $I_0\subset I_F$ acts trivially, and it remains to extend the $W_F/I_0$-action to a $\Gal_F/I_0$-action. This follows by observing that some power of any fixed Frobenius element acts trivially, as any element of a finite group is of finite order.
\end{proof}

Moreover, one can always compute $H^0$.

\begin{prop}\label{ComputeH0} Let $(A,\mm)$ be a complete noetherian local ring with finite residue field of characteristic $p$. Let $V$ be an admissible $A[\GL_n(F)]$-module, and let $\mathcal{F}_V$ be the corresponding sheaf on $(\mathbb{P}^{n-1}_C/D^\times)_\et$. Then the natural map
\[
H^0_\et(\mathbb{P}^{n-1}_C,\mathcal{F}_{V^{\SL_n(F)}})\hookrightarrow H^0_\et(\mathbb{P}^{n-1}_C,\mathcal{F}_V)
\]
is an isomorphism. It acquires an action of $\GL_n(F) / \SL_n(F) = F^\times$ (via the determinant map), and the group $W_F\times D^\times$ acts via the map $W_F\times D^\times\to F^\times$ given by the inverse of the product of the Artin reciprocity map (sending geometric Frobenii to uniformizers) and the reduced norm.
\end{prop}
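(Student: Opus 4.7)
The plan is to compute $H^0$ directly by unwinding the definition of $\mathcal{F}_V$ and reducing to an analysis of $\pi_0(\cM_{\LT,\infty,C})$ as a $\GL_n(F) \times D^\times \times W_F$-set. Using Proposition~\ref{ResidualActionSites} together with the description of $\mathcal{F}_V$ from the proof of Proposition~\ref{DefFPi}, I would identify
\[
H^0_\et(\mathbb{P}^{n-1}_C, \mathcal{F}_V) = \varinjlim_{K \subset D^\times} \Map_{\cont, \GL_n(F) \times K}(|\cM_{\LT,\infty,C}|, V),
\]
the continuous $\GL_n(F)$-equivariant maps to $V$ (with trivial $D^\times$-action) which are $K$-invariant for some open $K \subset D^\times$. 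The $D^\times$-action on the colimit is the pullback action $(d \cdot \phi)(x) = \phi(d^{-1} x)$, and the $W_F$-action is induced analogously from the Weil descent datum on the tower together with the $I_F$-action on $C$. As $V$ is a filtered union of the finite submodules $V[\mm^j]$, which are discrete, every such $\phi$ is locally constant and hence factors through the set $\pi_0(\cM_{\LT,\infty,C})$.

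The crux of the proof is then the equivariant identification
\[
\pi_0(\cM_{\LT,\infty,\C_p}) \cong F^\times,
\]
on which $\GL_n(F)$ acts via $\det$, $D^\times$ acts via $\mathrm{Nrd}$, and $W_F$ acts via the Artin reciprocity map $\mathrm{Art}: W_F \to F^\times$ (normalized so that geometric Frobenii map to uniformizers). This is a classical consequence of Lubin-Tate theory: factoring the tower through the determinant yields a map to the height-one Lubin-Tate tower, whose infinite level, once base-changed to $\C_p$, is an $F^\times$-torsor on which $W_F$ acts through $\mathrm{Art}$ by local class field theory; connectedness of each fiber of this map reduces to the geometric connectedness of $\cM_{\LT,0}$ (an open ball over $\breve{F}$). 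The $D^\times$-action is read off from the isomorphism with the Drinfeld tower at infinite level \cite{ScholzeWeinstein} via the analogous reduced-norm structure. Verifying this equivariant description with the correct normalizations is the main technical point.

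Granting this, a $\GL_n(F)$-equivariant map $\phi: F^\times \to V$ is uniquely determined by $\phi(1)$, which by $\GL_n(F)$-equivariance must lie in $V^{\SL_n(F)}$; conversely, any $v \in V^{\SL_n(F)}$ defines such a $\phi$, and $\phi$ is automatically $D^\times$-smooth, since if $v$ is fixed by an open $K_0 \subset \GL_n(F)$, then $v$ is fixed by $\det(K_0) \subset F^\times$, and any $K \subset D^\times$ with $\mathrm{Nrd}(K) \subset \det(K_0)$ stabilizes $\phi$. Hence $H^0_\et(\mathbb{P}^{n-1}_C, \mathcal{F}_V) = V^{\SL_n(F)}$, and applying the same computation to $V^{\SL_n(F)}$ shows the natural map $H^0_\et(\mathbb{P}^{n-1}_C, \mathcal{F}_{V^{\SL_n(F)}}) \to H^0_\et(\mathbb{P}^{n-1}_C, \mathcal{F}_V)$ is an isomorphism. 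The $F^\times = \GL_n(F)/\SL_n(F)$-action on the result is automatic, while the $W_F \times D^\times$-action is computed by $(d \phi)(1) = \phi(\mathrm{Nrd}(d)^{-1}) = g \cdot \phi(1)$ for any $g \in \GL_n(F)$ with $\det g = \mathrm{Nrd}(d)^{-1}$, which modulo $\SL_n(F)$ is multiplication by $\mathrm{Nrd}(d)^{-1}$; the analogous computation for $w \in W_F$ gives $\mathrm{Art}(w)^{-1}$, producing the inverse of the product of reciprocity and reduced norm as stated.
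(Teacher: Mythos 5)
Your proposal is correct and follows essentially the same route as the paper: the entire content is the identification $\pi_0(\cM_{\LT,\infty,C})\cong F^\times$ together with the $\GL_n(F)\times W_F\times D^\times$-equivariant structure (determinant, Artin reciprocity, reduced norm), which the paper simply cites from Strauch's work on the connected components of the Lubin--Tate tower, the rest being the definitional unwinding you carry out. Your sketch of that identification via the determinant map to the height-one tower is the standard argument, and you rightly flag the normalization check as the only technical point.
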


\begin{proof} This follows from the identification of the geometric connected components
\[
\pi_0 \cM_{\LT,\infty,C} = F^\times
\]
and the identification of the $\GL_n(F)\times W_F\times D^\times$-action given by Strauch, \cite{StrauchConnComp}.
\end{proof}

\section{Shimura curves}\label{ShimuraCurves}

For the global situation, we change notation slightly. Let now $F$ be totally real number field with a fixed place $\frap$ above $p$ and fixed infinite place $\infty_F$, and $D_0/F$ a quaternion algebra which is definite at all infinite places, and split at $\frap$. Let $G=D_0^\times$ be the algebraic group of units of $D_0$, and let $D^\times/F$ be the nontrivial inner form of $G$ which is isomorphic to $G$ away from $\frap$ and $\infty_F$. Then, as notation suggests, $D^\times$ is the algebraic group of units of a quaternion algebra $D/F$; it is a division algebra at $\frap$, and split at $\infty_F$. Fix an identification
\[
G(\A_{F,f}^\frap)\cong D(\A_{F,f}^\frap)\ .
\]
Our previous local objects are given by $F_\frap$, $G(F_\frap)\cong \GL_2(F_\frap)$ and $D^\times_\frap = D^\times(F_\frap)$.

Associated with $D^\times/F$ (or rather $\Res_{F/\Q} D^\times$) and the conjugacy class of
\[
h: \bS=\Res_{\C/\R} \G_m\to (\Res_{F/\Q} D^\times)_\R = \prod_{\tau: F\hookrightarrow \R} D^\times\otimes_{F,\tau} \R
\]
which is trivial in all components different from $\infty_F$, and equal to
\[
a+ib\in \bS(\R) = \C^\times\mapsto \left(\begin{array}{cc} a & b\\ -b & a \end{array}\right)\in (D^\times\otimes_{F,\infty_F} \R)(\R)\cong \GL_2(\R)
\]
in the component of $\infty_F$, one has (a tower of) Shimura curves $\Sh_U / F$ parametrized by (sufficiently small) compact open $U\subset D^\times(\A_{F,f})$.

Fix some tame level, i.e. a compact open subgroup $U^\frap\subset G(\A_{F,f}^\frap)$. Then $U^\frap$ is of the form $U^\frap = U^\frap_S U^S$, where $S$ is a finite set of finite places of $F$ containing all places above $p$, $U^\frap_S\subset G(\A_{F,S}^\frap)$ is compact open, and $U^S = \prod_{v\not\in S} \GL_2(\cO_{F_v})\subset \GL_2(\A_{F,f}^S)\cong G(\A_{F,f}^S)$ is a product of hyperspecial maximal compact open subgroups. We consider the Hecke algebra
\[
\T = \T_S = \Z[\GL_2(\A_{F,f}^S)//U^S] = \prod_{v\not\in S} \T_v\ ,
\]
where
\[
\T_v = \Z[\GL_2(F_v)//\GL_2(\cO_{F_v})]\cong \Z[T_v,S_v^{\pm 1}]\ .
\]
Here, as usual, $T_v$ is the Hecke operator corresponding to the double coset
\[
\GL_2(\cO_{F_v})\left(\begin{array}{cc} \varpi_v & 0\\0 & 1\end{array}\right) \GL_2(\cO_{F_v})\ ,
\]
and $S_v$ is the one corresponding to
\[
\GL_2(\cO_{F_v})\left(\begin{array}{cc} \varpi_v & 0\\0 & \varpi_v\end{array}\right) \GL_2(\cO_{F_v})\ ,
\]
where $\varpi_v$ is a local uniformizer at $v$. Moreover, let us fix an absolutely irreducible representation
\[
\overline{\sigma}: \Gal_{F,S}\to \GL_2(\F_q)\ ,
\]
where $\Gal_{F,S}$ is the Galois group of the maximal extension of $F$ unramified outside $S$ and $q$ is a power of $p$. This gives rise to a maximal ideal $\mm = \mm_{\overline{\sigma}}$ of $\T$, given as the kernel of the map $\T\to \F_q$ sending $T_v$ to $\tr(\overline{\sigma}(\Frob_v))$ and $S_v$ to $q_v \det(\overline{\sigma}(\Frob_v))$ for $v\not\in S$, where $\Frob_v\in \Gal_{F,S}$ is a Frobenius element, and $q_v$ is the cardinality of the residue field at $v$.

The Hecke algebra $\T$ acts on $H^i(\Sh_{K U^\frap,\C},\Z_p)$ for all compact open $K\subset D^\times_\frap$. Observe that, as $\overline{\sigma}$ is absolutely irreducible, the localization
\[
H^i(\Sh_{K U^\frap,\C},\Z_p)_{\mm} = 0
\]
at $\mm$ vanishes for $i\neq 1$. Indeed, in degree $0$, the action of $D^\times(\A_{F,f})$ factors through the determinant (i.e., reduced norm) $\det: D^\times(\A_{F,f})\to \A_{F,f}^\times$, so that in particular the associated Galois representations are reducible. By Poincar\'e duality (with $\F_p$-coefficients), the same applies to $i=2$, leaving only $i=1$. Also note that this implies that
\[
H^1(\Sh_{K U^\frap,\C},\Z_p)_{\mm}
\]
is torsion-free. To avoid trivialities, we assume that it is nonzero, i.e. $\overline{\sigma}$ is modular.

Let $\T(K U^\frap)$ be the image of $\T$ in $\End(H^1(\Sh_{K U^\frap,\C},\Z))$, and let $\T(K U^\frap)_{\mm}$ be its $\mm$-adic completion. Then $\T(K U^\frap)_{\mm}$ acts faithfully on
\[
H^1(\Sh_{K U^\frap,\C},\Z_p)_{\mm}\ .
\]
There is an associated Galois representation.

\begin{thm}[\cite{CarayolHilbert}, \cite{CarayolGalRepr}]\label{ThmExGalRepr} There is a unique (up to conjugation) continuous $2$-dimensional Galois representation
\[
\sigma = \sigma_\mm: \Gal_{F,S}\to \GL_2(\T(K U^\frap)_\mm)
\]
unramified outside $S$, such that for all $v\not\in S$,
\[
\tr(\sigma(\Frob_v)) = T_v\ ,\ \det(\sigma(\Frob_v)) = q_v S_v\ .
\]
\end{thm}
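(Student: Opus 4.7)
The plan is a two-step construction: first produce $\sigma_\mm$ over the generic fiber $\T(KU^\frap)_\mm[1/p]$ by using Jacquet--Langlands to transfer to Hilbert modular forms and invoking Carayol's attached Galois representations, then descend to the integral Hecke algebra via pseudo-characters, exploiting the absolute irreducibility of $\overline{\sigma}$. Uniqueness is formal: any two candidates have the same trace and determinant on the dense subset $\{\Frob_v : v\notin S\}$, hence everywhere by Chebotarev; since the common residual representation is $\overline{\sigma}$, which is absolutely irreducible, a standard application of Carayol's lemma (or the pseudo-character uniqueness below) implies the two lifts are conjugate.

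For existence, I first work after inverting $p$. The Hecke algebra $\T(KU^\frap)_\mm\otimes_{\Z_p}\Q_p$ is a finite product of finite extensions $E_i/\Q_p$, parametrised by the minimal primes of $\T(KU^\frap)_\mm$, equivalently by the Hecke eigensystems $\lambda_i:\T\to E_i$ which occur in $H^1(\Sh_{KU^\frap,\C},\overline{\Q}_p)_\mm$. By Matsushima, each such eigensystem comes from a cuspidal automorphic representation $\pi_i$ of $D^\times(\A_F)$ of parallel weight $2$. Since $D$ is split at all but two places, Jacquet--Langlands transfers $\pi_i$ to a cuspidal automorphic representation $\pi_i^{\mathrm{JL}}$ of $\GL_2(\A_F)$ of parallel weight $2$, with local components matching those of $\pi_i$ outside $\{\frap,\infty_F\}$. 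By Carayol \cite{CarayolHilbert, CarayolGalRepr} (building on Ohta, Taylor and others), $\pi_i^{\mathrm{JL}}$ has an associated continuous Galois representation $\sigma_i:\Gal_{F,S}\to \GL_2(\overline{E}_i)$, unramified outside $S$, with $\tr\sigma_i(\Frob_v)=\lambda_i(T_v)$ and $\det\sigma_i(\Frob_v)=q_v\lambda_i(S_v)$ for $v\notin S$. Assembling yields $\sigma_{\Q_p}:\Gal_{F,S}\to \GL_2\bigl(\T(KU^\frap)_\mm\otimes_{\Z_p}\Q_p\bigr)$ with the required identities.

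It remains to descend $\sigma_{\Q_p}$ integrally. By Chebotarev the continuous functions $\tr\sigma_{\Q_p}$ and $\det\sigma_{\Q_p}$ take values in the closed subring $\T(KU^\frap)_\mm\subset \T(KU^\frap)_\mm\otimes_{\Z_p}\Q_p$, since they do so on Frobenius elements (where they equal $T_v$ and $q_v S_v$). This packages into a continuous $2$-dimensional pseudo-character (equivalently, a Chenevier determinant) on $\Gal_{F,S}$ valued in $\T(KU^\frap)_\mm$, whose reduction modulo $\mm$ is the trace of the absolutely irreducible $\overline{\sigma}$. By the theorem of Nyssen--Rouquier, an absolutely irreducible residual pseudo-character over a complete Noetherian local ring lifts uniquely to an honest representation; this yields $\sigma_\mm:\Gal_{F,S}\to \GL_2(\T(KU^\frap)_\mm)$ with $\sigma_\mm\otimes\Q_p\cong \sigma_{\Q_p}$, completing the proof. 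The substantive input is the Jacquet--Langlands transfer together with Carayol's construction of Galois representations attached to Hilbert modular forms; the integral descent via pseudo-characters is essentially formal once $\overline{\sigma}$ is known to be absolutely irreducible.
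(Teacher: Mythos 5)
Your argument is essentially the paper's: Carayol's construction gives the representation over $\T(KU^\frap)_\mm[1/p]$, and the integral gluing is done via trace/determinant data together with the absolute irreducibility of $\overline{\sigma}$. The only (minor) divergence is that you invoke Nyssen--Rouquier for pseudo-characters, which requires $p\neq 2$ since ``pseudo-character'' and ``determinant'' are only equivalent when $2$ is invertible; the paper uses Chenevier's determinants (\cite[Example 2.32, Theorem 2.22]{ChenevierDet}) precisely to cover all $p$, and notes the pseudo-representation variant for $p\neq 2$ in a footnote.
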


\begin{proof} From \cite{CarayolHilbert}, one gets the existence of Galois representations for the $\Q_p$-cohomology of $\Sh_{K U^\frap}$, which in particular (as $\T(K U^\frap)$ is reduced) gives a representation
\[
\Gal_{F,S}\to \GL_2(\T(K U^\frap)_\mm[1/p])\ .
\]
On the other hand, all characteristic polynomials of Frobenii take values in $\T(K U^\frap)_\mm$, so one gets a determinant with values in $\T(K U^\frap)_\mm$, cf. \cite[Example 2.32]{ChenevierDet}. But $\overline{\sigma}$ is absolutely irreducible by assumption, so there is a representation $\sigma$ as desired by \cite[Theorem 2.22]{ChenevierDet}.\footnote{Carayol in \cite{CarayolGalRepr} gives a different argument for this gluing. For $p\neq 2$, one can use the theory of pseudorepresentations in place of determinants.}
\end{proof}

Note that in particular, $\overline{\sigma} = \sigma\mod \mm$. In fact, one sees $\sigma$ in $H^1(\Sh_{K U^\frap,\overline{F}},\Z_p)_\mm$: We want to prove that
\[
H^1(\Sh_{K U^\frap,\overline{F}},\Z_p)_\mm = \sigma\otimes_{\T(K U^\frap)_\mm} \rho
\]
for some $\T(K U^\frap)_\mm$-module $\rho$ on which $\Gal_F$ acts trivially. It turns out that there are some useful general lemmas about such situations.

\begin{definition}\label{DefTypic} Let $(R,\mm_R)$ be a noetherian local ring, $G$ some group, and
\[
\sigma_R: G\to \GL_n(R)
\]
an $n$-dimensional representation such that $\overline{\sigma}_R = \sigma_R\mod \mm_R$ is absolutely irreducible. Let $M$ be an $R[G]$-module. Then $M$ is said to be $\sigma_R$-typic if one can write $M$ as a tensor product
\[
M = \sigma_R\otimes_R M_0\ ,
\]
where $M_0$ is an $R$-module, and $G$ acts only through its action on $\sigma_R$.
\end{definition}

\begin{prop}\label{TypicEquiv} In the situation of Definition \ref{DefTypic}, if $M$ is $\sigma_R$-typic, then
\[
M_0 = \Hom_{R[G]} (\sigma_R,M)\ .
\]
The functors $M_0\mapsto \sigma_R\otimes_R M_0$, $M\mapsto \Hom_{R[G]}(\sigma_R,M)$ induce an equivalence of categories between the category of $\sigma_R$-typic $R[G]$-modules and the category of $R$-modules.
\end{prop}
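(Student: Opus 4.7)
The plan is to reduce the proposition to the classical Morita equivalence
\[
R\text{-mod} \simeq M_n(R)\text{-mod},
\]
pulled back along a surjection $\alpha: R[G] \twoheadrightarrow M_n(R) = \End_R(\sigma_R)$. Granted this surjection, the category of $\sigma_R$-typic $R[G]$-modules is nothing other than the category of $R[G]$-modules whose action factors through $\alpha$, i.e., of $M_n(R)$-modules, and the functors in the statement become the standard Morita functors.

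The key preliminary is surjectivity of $\alpha$. Modulo $\mm_R$, the induced map $(R/\mm_R)[G] \to M_n(R/\mm_R)$ is surjective by Burnside's theorem, since $\overline{\sigma}_R$ is absolutely irreducible. Now $\im(\alpha) \subset M_n(R)$ is an $R$-submodule of a finitely generated $R$-module; as $R$ is noetherian, $\im(\alpha)$ is itself finitely generated, so Nakayama's lemma upgrades the surjection mod $\mm_R$ to surjectivity of $\alpha$. Taking centralizers in $M_n(R)$ gives the formal consequence $\End_{R[G]}(\sigma_R) = Z(M_n(R)) = R$.

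Next, I would identify $\sigma_R$-typic $R[G]$-modules with $M_n(R)$-modules. On one hand, any typic $M = \sigma_R \otimes_R M_0$ inherits an $M_n(R)$-action on the first factor, since $G$ acts on $\sigma_R$ through $\alpha$. Conversely, any $M_n(R)$-module admits the Morita decomposition
\[
M \cong (M_n(R)\, e_{11}) \otimes_R (e_{11} M) \cong \sigma_R \otimes_R (e_{11} M),
\]
using $M_n(R)\, e_{11} \cong R^n \cong \sigma_R$ as $M_n(R)$-modules, so $M$ is $\sigma_R$-typic with $M_0 = e_{11} M$. The Morita equivalence then furnishes the stated mutually inverse functors $M_0 \mapsto \sigma_R \otimes_R M_0$ and $M \mapsto \Hom_{M_n(R)}(\sigma_R, M) = e_{11} M$. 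Finally, $\Hom_{M_n(R)}(\sigma_R, M) = \Hom_{R[G]}(\sigma_R, M)$ for $\sigma_R$-typic $M$, because surjectivity of $\alpha$ forces any $R[G]$-linear map to automatically commute with the $M_n(R)$-action. Applying this to $M = \sigma_R \otimes_R M_0$ yields the first assertion $M_0 = \Hom_{R[G]}(\sigma_R, M)$.

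The only real step is surjectivity of $\alpha$, and its essential content is promoting Burnside's theorem over the residue field to a statement over $R$ via Nakayama; the noetherian hypothesis on $R$ enters precisely to ensure that $\im(\alpha)$ is a finitely generated $R$-module so that Nakayama applies. Everything else is formal Morita theory, valid uniformly for all $R$-modules $M_0$ without restriction on their size.
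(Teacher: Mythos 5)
Your proof is correct, but it takes a genuinely different route from the paper's. The paper reduces the statement to showing that the natural map $M_0\to \Hom_{R[G]}(\sigma_R,\sigma_R\otimes_R M_0)$ is an isomorphism, then performs a d\'evissage (commutation with filtered colimits, filtration by cyclic modules, base change to $R/I$) to reduce to $M_0=R$, where the claim becomes $\End_{R[G]}(\sigma_R)=R$, deduced from absolute irreducibility of $\overline{\sigma}_R$. You instead prove the stronger structural statement that $\alpha: R[G]\to \End_R(\sigma_R)=M_n(R)$ is surjective (Burnside over the residue field plus Nakayama — note that it is cleanest to apply Nakayama to the finitely generated quotient $M_n(R)/\im(\alpha)$, which sidesteps even the need for noetherianity), identify the $\sigma_R$-typic modules with the $M_n(R)$-modules, and conclude by Morita theory. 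Your key lemma in fact subsumes the paper's key input, since $\End_{R[G]}(\sigma_R)=R$ is exactly the centralizer computation that falls out of surjectivity of $\alpha$; what your approach buys is that all the d\'evissage becomes unnecessary (in particular one need not worry about whether $\Hom_{R[G]}(\sigma_R,-)$ commutes with the relevant colimits), and as a bonus the subsequent Proposition \ref{TypicSubmodule} becomes immediate, since a sub-$R[G]$-module of a module on which $\ker\alpha$ acts trivially is again an $M_n(R)$-module, hence typic. What the paper's argument buys is self-containedness: it only ever uses the single fact $\End_{R[G]}(\sigma_R)=R$ as a black box, without needing Burnside or the Morita formalism.
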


\begin{proof} It is enough to prove that for any $R$-module $M_0$, the natural map
\[
M_0\to \Hom_{R[G]}(\sigma_R,\sigma_R\otimes_R M_0)
\]
is an isomorphism. As both sides commute with filtered colimits, we may assume that $M_0$ is finitely generated. Filtering by modules generated by one element, one can reduce to the case that $M_0 = R/I$ for some ideal $I\subset R$. Replacing $R$ by $R/I$, we can assume that $M_0=R$. In that case, we have to prove that
\[
\End_{R[G]}(\sigma_R) = R\ .
\]
But this follows from the assumption that $\overline{\sigma}_R$ is absolutely irreducible.
\end{proof}

\begin{prop}\label{TypicSubmodule} In the situation of Definition \ref{DefTypic}, assume that $M$ is $\sigma_R$-typic, and $N\subset M$ is an $R[G]$-submodule. Then $N$ is $\sigma_R$-typic.
\end{prop}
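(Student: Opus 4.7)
The plan is to reduce everything to showing that the natural map $R[G] \to \End_R(\sigma_R)$, induced by $\sigma_R$, is surjective. Granting this for the moment, I set $N_0 := \Hom_{R[G]}(\sigma_R, N)$, which is a submodule of $\Hom_{R[G]}(\sigma_R, M) = M_0$ by Proposition \ref{TypicEquiv}. Since $\sigma_R$ is finite free over $R$, tensoring with $\sigma_R$ is exact, so the evaluation map gives an injection $\sigma_R \otimes_R N_0 \hookrightarrow N \subset M$. The task is then to show that every $v \in N$ lies in $\sigma_R \otimes_R N_0$.

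For this, I would fix an $R$-basis $e_1,\ldots,e_n$ of $\sigma_R$ and write $v = \sum_i e_i \otimes m_i$ uniquely, with $m_i \in M_0$. Using surjectivity, each matrix unit $E_{ij} \in \End_R(\sigma_R)$ can be realized as $\sum_g a_g \sigma_R(g)$ for a finite sum with $a_g \in R$. Since the $G$-action on $M = \sigma_R \otimes_R M_0$ only goes through $\sigma_R$, applying $\sum_g a_g g \in R[G]$ to $v$ produces $(E_{ij}\otimes 1)(v) = e_i \otimes m_j$, which lies in $N$ because $N$ is an $R[G]$-submodule. Varying $i$ for fixed $j$ shows that the $R[G]$-equivariant map $\phi_j \colon \sigma_R \to M$, $e \mapsto e \otimes m_j$, factors through $N$; under the identification $M_0 \cong \Hom_{R[G]}(\sigma_R, M)$ of Proposition \ref{TypicEquiv}, this means $m_j \in N_0$. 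Hence $v \in \sigma_R \otimes_R N_0$, as wanted.

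It remains to prove the surjectivity of $R[G] \to \End_R(\sigma_R) \cong M_n(R)$. Modulo $\mm_R$, this is the Jacobson density (or Burnside) theorem applied to the absolutely irreducible representation $\overline{\sigma}_R$. To lift to $R$, let $A \subset \End_R(\sigma_R)$ denote the image of $R[G]$. The mod-$\mm_R$ step yields $A + \mm_R \End_R(\sigma_R) = \End_R(\sigma_R)$, i.e.\ $\mm_R \cdot (\End_R(\sigma_R)/A) = \End_R(\sigma_R)/A$. Since $R$ is Noetherian and $\End_R(\sigma_R)$ is finite free, the quotient $\End_R(\sigma_R)/A$ is a finitely generated $R$-module, so Nakayama's lemma forces it to vanish, giving $A = \End_R(\sigma_R)$.

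The main obstacle is the Nakayama step: one needs to upgrade the classical density theorem from the residue field to the local ring, which is what makes the Noetherian hypothesis essential. Once the surjectivity is in hand, the rest of the argument is purely formal — the surjectivity says precisely that we can extract individual coordinates $e_i \otimes m_j$ of any $v \in N$ via the $R[G]$-action, which is exactly the content of the submodule being $\sigma_R$-typic.
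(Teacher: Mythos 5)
Your argument is correct, but it takes a genuinely different route from the one in the paper. You first establish the surjectivity of $R[G]\to \End_R(\sigma_R)\cong M_n(R)$ (a lemma of Carayol type: Burnside/Jacobson density for the absolutely irreducible residual representation, then Nakayama applied to the finitely generated quotient $M_n(R)/A$), and from this the proposition follows by a purely formal, element-wise extraction of matrix coefficients: for $v=\sum_i e_i\otimes m_i\in N$ one hits $v$ with a preimage of $E_{ij}$ to see that each $e_i\otimes m_j$ lies in $N$, hence each $m_j\in N_0=\Hom_{R[G]}(\sigma_R,N)$. The paper instead argues directly on the modules: it reduces to $M_0$ finitely generated by a filtered colimit argument, normalizes $M_0$ to be the image of $\sigma_R^\vee\otimes_R N\to M_0$, and then derives a contradiction from Nakayama on $M/N$ combined with the isotypic decomposition of $G$-stable subspaces of $\overline{\sigma}_R\otimes_{R/\mm_R}(M_0/\mm_R)$ over the residue field. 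Both proofs rest on the same two pillars (absolute irreducibility mod $\mm_R$ plus Nakayama), but yours concentrates the Nakayama step inside the endomorphism algebra once and for all, which lets you avoid any finiteness reduction on $M_0$ and yields the surjectivity statement as a reusable byproduct. One cosmetic remark: the Noetherian hypothesis is not what makes your Nakayama step work --- $\End_R(\sigma_R)/A$ is finitely generated simply as a quotient of the finite free module $\End_R(\sigma_R)$, and Nakayama for finitely generated modules holds over any local ring; in any case $R$ is Noetherian by hypothesis, so nothing is at stake.
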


\begin{proof} Write $M=\sigma_R\otimes_R M_0$ as usual. We may assume that $M_0$ is finitely generated, by writing $M_0$ as a filtered colimit of finitely generated modules $M_{0,i}$ and $N$ as the filtered colimit of $N\cap (\sigma_R\otimes_R M_{0,i})$ (noting that the category of $\sigma_R$-typic modules is closed under filtered colimits).

We may further replace $M_0$ by the image of
\[
\sigma_R^\vee\otimes_R N\hookrightarrow \sigma_R^\vee\otimes_R \sigma_R\otimes_R M_0\to M_0\ .
\]
After this reduction, we claim that $N=M$. If $N\neq M$, then by Nakayama $N\to M/\mm_R M$ is not surjective. The image of $N$ in $M/\mm_R M = \overline{\sigma}_R\otimes_{R/\mm_R} M_0/\mm_R$ is equal to $\overline{\sigma}_R\otimes_{R/\mm_R} \overline{N}$ for some $R/\mm_R$-vector space $\overline{N}$, as $\overline{\sigma}_R$ is absolutely irreducible, and $N$ is $G$-stable. Let $M_0^\prime$ be the kernel of the composite map $M_0\to M_0/\mm_R\to (M_0/\mm_R)/\overline{N}$, which is a proper submodule of $M_0$. On the other hand, the image of $\sigma_R^\vee\otimes_R N\to M_0$ is contained in $M_0^\prime$, as $N$ is contained in the $\sigma_R$-typic module $\sigma_R\otimes_R M_0^\prime$. This is a contradiction, finishing the proof.
\end{proof}

For later use, we record the following lemma.

\begin{lem}\label{TypicDeterminesRepr} Let $M$ be an $R[G]$-module which is faithful as $R$-module (i.e., the map $R\to \End(M)$ is injective). Assume that $M$ is $\sigma_R$-typic and $\sigma^\prime_R$-typic, for two representations $\sigma_R$, $\sigma^\prime_R$ as above. Moreover, assume that $R$ is henselian. Then $\sigma_R\cong \sigma^\prime_R$.
\end{lem}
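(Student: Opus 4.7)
The plan is to recognize $A := \im(R[G] \to \End_R(M))$ as an intrinsic $R$-subalgebra of $\End_R(M)$ which inherits two isomorphisms with $M_n(R)$, one from each typic structure on $M$; their composition is an $R$-algebra automorphism of $M_n(R)$, and Skolem--Noether then produces an element $u \in \GL_n(R)$ conjugating $\sigma_R$ to $\sigma_R'$.

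First I would prove $\sigma_R(R[G]) = M_n(R)$ (and likewise for $\sigma_R'$). Absolute irreducibility of $\overline{\sigma}_R$ combined with the Jacobson density theorem (Burnside) gives $\sigma_R(R[G]) + \mm_R M_n(R) = M_n(R)$, and Nakayama applied to the finitely generated $R$-module $M_n(R)/\sigma_R(R[G])$ upgrades this to equality. Next, using the typic decomposition $M \cong \sigma_R \otimes_R M_0$, the action of $R[G]$ on $M$ factors as
\[
R[G] \xrightarrow{\sigma_R} M_n(R) \xrightarrow{X \mapsto X \otimes \id_{M_0}} \End_R(M).
\]
The second arrow is injective because $M_0$ is $R$-faithful: if $X \otimes \id_{M_0} = 0$, then writing $\sigma_R \cong R^n$, each coordinate of $Xs$ lies in $\Ann_R(M_0) = 0$ for every $s \in \sigma_R$, forcing $X = 0$. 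Combined with the previous step, this yields an isomorphism $\phi_\sigma : A \xrightarrow{\sim} M_n(R)$ whose composition with the surjection $R[G] \twoheadrightarrow A$ is exactly $\sigma_R$; the analogous construction from the $\sigma_R'$-typic structure produces $\phi_{\sigma'}$ recovering $\sigma_R'$.

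Finally, $\alpha := \phi_{\sigma'} \circ \phi_\sigma^{-1}$ is an $R$-algebra automorphism of $M_n(R)$, and Skolem--Noether over the local ring $R$ implies $\alpha$ is inner (via Morita equivalence the $\alpha$-twist of the standard module $R^n$ corresponds to a rank-one projective, hence free, $R$-module, yielding the conjugating unit). Thus $\alpha(X) = u X u^{-1}$ for some $u \in \GL_n(R)$, and then $\sigma_R'(g) = u \sigma_R(g) u^{-1}$ for all $g \in G$, giving $\sigma_R \cong \sigma_R'$ as $R[G]$-modules. The main obstacle is the first step: lifting Burnside's theorem from the residue field to $R$ via Nakayama, and verifying that the resulting embedding $M_n(R) \hookrightarrow \End_R(M)$ is faithful using faithfulness of $M_0$; once $A$ is identified with $M_n(R)$ through both typic structures, Skolem--Noether finishes the argument at once, and the henselian hypothesis plays no essential role in this approach.
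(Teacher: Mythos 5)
Your proof is correct, but it takes a genuinely different route from the paper's. You identify the image $A$ of $R[G]$ in $\End_R(M)$ with a matrix algebra via each typic structure---Burnside plus Nakayama gives $\sigma_R(R[G])=M_n(R)$, and faithfulness of $M_0$ (which does follow from faithfulness of $M\cong M_0^{\oplus n}$) gives injectivity of $M_n(R)\to\End_R(M)$---and then apply Skolem--Noether over the local ring $R$ to see that the two resulting identifications of $A$ with $M_n(R)$ differ by conjugation. The paper instead reduces, via Chenevier's theorem that over a \emph{henselian} local ring a representation with absolutely irreducible reduction is determined by the characteristic polynomials of all $\sigma_R(g)$, to checking these polynomials modulo the annihilator ideals $I_m=\Ann(m)$ for $m\in M_0$ (whose intersection is trivial by faithfulness); the key input there is Proposition \ref{TypicSubmodule}, applied to the submodule $\sigma_R\otimes_R R/I_m\subset M$, together with the observation that a rank-one projective over the local ring $R/I_m$ is free. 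Your approach buys a slightly stronger statement---as you note, the henselian hypothesis is not needed for Skolem--Noether over a local ring, whereas the paper genuinely uses it to invoke Chenevier---while the paper's argument stays within the determinant/pseudorepresentation formalism used throughout that section. Two small points you should make explicit: first, the two typic structures could a priori have different dimensions $n$ and $n'$, and $n=n'$ must be extracted, e.g.\ by reducing the $R$-algebra isomorphism $M_n(R)\cong M_{n'}(R)$ modulo $\mm_R$ (the paper handles this by noting $\overline{\sigma}_R\cong\overline{\sigma^\prime}_R$); second, the Nakayama step should record that $M_n(R)/\sigma_R(R[G])$ is a finitely generated $R$-module, which is clear since it is a quotient of $M_n(R)$.
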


\begin{proof} By checking over $R/\mm_R$, one sees that $\overline{\sigma}_R\cong \overline{\sigma^\prime}_R$; in particular, $\sigma_R$ and $\sigma^\prime_R$ are of the same dimension. By \cite[Theorem 2.22]{ChenevierDet}, it is enough to prove that the determinants associated with $\sigma_R$ and $\sigma^\prime_R$ agree, i.e. for all $g\in G$, the characteristic polynomials of $\sigma(g)$ and $\sigma^\prime(g)$ agree. For this, it is enough to find ideals $I_i\subset R$ with empty intersection, such that the determinants agree modulo $I_i$ for all $i$. Write $M=\sigma_R\otimes_R M_0$ for some $R$-module $M_0$, which is necessarily faithful. For each element $m\in M_0$, one has the annihilator $I_m = \Ann(m)\subset R$. By faithfulness, the intersection of all $I_m$ is trivial. Thus, we may work modulo $I_m$. Note that $\sigma_R\otimes_R R/I_m\hookrightarrow M$ sending $x\otimes 1$ to $x\otimes m$ is an $R[G]$-submodule. By Proposition \ref{TypicSubmodule}, $\sigma_R\otimes_R R/I_m$ is still $\sigma^\prime_R$-typic, so
\[
\sigma_R\otimes_R R/I_m\cong (\sigma^\prime_R\otimes_R R/I_m)\otimes_{R/I_m} A
\]
for some $R/I_m$-module $A$. The isomorphism implies that $A^{\dim \sigma_R}\cong (R/I_m)^{\dim \sigma_R}$, which implies that $A$ is finite projective of rank $1$ as $R/I_m$-module, i.e. a line bundle. As $R/I_m$ is local, it follows that $A\cong R/I_m$ is free. Thus, $\sigma_R$ and $\sigma^\prime_R$ are isomorphic after reduction to $R/I_m$, which finishes the proof.
\end{proof}

Now one has the following theorem, due to Carayol: In \cite{CarayolHilbert}, he gives a description of the $\Q_p$-cohomology, and in \cite{CarayolGalRepr}, he explains how to get an integral statement.

\begin{thm}\label{H1Typic} The $\T(K U^\frap)_\mm[\Gal_{F,S}]$-module $H^1(\Sh_{K U^\frap,\overline{F}},\Z_p)_\mm$ is $\sigma$-typic.
\end{thm}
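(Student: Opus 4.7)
The plan is to show that the $R[\Gal_{F,S}]$-module $M := H^1(\Sh_{K U^\frap,\overline{F}},\Z_p)_\mm$, where $R := \T(K U^\frap)_\mm$, is in fact a module over the matrix algebra $M_2(R)$ via the ring homomorphism
\[
\rho: R[\Gal_{F,S}] \to M_2(R),\ g\mapsto \sigma(g).
\]
Once this is established, Morita equivalence produces an isomorphism $M \cong R^2 \otimes_R M_0$ of $M_2(R)$-modules with $M_0 := e_{11} M$; pulling back along $\rho$ equips the standard module $R^2$ with the Galois action $\sigma$, yielding the desired factorization $M \cong \sigma \otimes_R M_0$ as $R[\Gal_{F,S}]$-modules, with $\Gal_{F,S}$ acting trivially on $M_0$.

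First I would establish the rational version. By Carayol's description of the cohomology of Shimura curves \cite{CarayolHilbert}, the rationalization $M_\Q := M \otimes_{\Z_p} \Q_p$ decomposes (after extending scalars suitably) as $\bigoplus_\pi \sigma_\pi \otimes (\pi_f^{K U^\frap})_\mm$, summed over cuspidal automorphic representations $\pi$ of $D^\times$ for which the associated Galois representation $\sigma_\pi$ has residual representation $\overline{\sigma}$. Since each $\sigma_\pi$ is by construction (Theorem \ref{ThmExGalRepr}) the specialization of $\sigma$ along the Hecke-eigenvalue character $R[1/p] \to \overline{\Q}_p$ determined by $\pi$, a regrouping yields an $R[1/p][\Gal_{F,S}]$-equivariant isomorphism $M_\Q \cong \sigma_\Q \otimes_{R[1/p]} M_{0,\Q}$, where $\sigma_\Q := \sigma \otimes_R R[1/p]$. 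In particular, the ideal $\ker(\rho) \otimes_R R[1/p] \subset R[1/p][\Gal_{F,S}]$ annihilates $M_\Q$.

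Next I would establish surjectivity of $\rho$. Its image $A \subset M_2(R)$ is an $R$-subalgebra, in particular an $R$-submodule of the finitely generated $R$-module $M_2(R)$. Modulo $\mm$, the image of $A$ is the $(R/\mm)$-linear span of $\{\overline{\sigma}(g) : g \in \Gal_{F,S}\}$, which by Burnside's theorem (applied to the absolutely irreducible representation $\overline{\sigma}$) equals all of $M_2(R/\mm)$. Thus $A + \mm M_2(R) = M_2(R)$, and Nakayama's lemma gives $A = M_2(R)$. Since $M$ is $\Z_p$-torsion-free (as recorded just before Theorem \ref{ThmExGalRepr}), the inclusion $M \hookrightarrow M_\Q$ transfers the annihilation of $M_\Q$ by $\ker(\rho) \otimes R[1/p]$ to annihilation of $M$ by $\ker(\rho)$ itself. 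Combining surjectivity of $\rho$ with this annihilation, the $R[\Gal_{F,S}]$-action on $M$ factors uniquely through an $R$-algebra map $M_2(R) \to \End_R(M)$, and Morita equivalence concludes as described.

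The main obstacle is the rational step, which is really where the automorphic input enters: one must match Carayol's parameterization of the Hecke-eigensystems and their associated $2$-dimensional Galois representations with the universal representation $\sigma$ from Theorem \ref{ThmExGalRepr}, verifying compatibility at Frobenius elements for unramified places via the Eichler-Shimura relations. Everything after the rational step is essentially formal, using only the absolute irreducibility of $\overline{\sigma}$ (via Burnside-Nakayama) and the torsion-freeness of $M$ recorded earlier.
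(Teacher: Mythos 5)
Your argument is correct, and it reaches the conclusion by a different formal route than the paper. The paper's proof is two lines: since $M=H^1(\Sh_{K U^\frap,\overline{F}},\Z_p)_\mm$ is torsion-free it embeds into $M[1/p]$, which is $\sigma$-typic by Carayol's description of the $\Q_p$-cohomology, and then Proposition \ref{TypicSubmodule} (a submodule of a $\sigma$-typic module is $\sigma$-typic) finishes. You instead prove that $\rho\colon R[\Gal_{F,S}]\to M_2(R)$ is surjective (Burnside mod $\mm$ plus Nakayama, using that $R$ is noetherian so $M_2(R)/A$ is finitely generated) and that $\ker\rho$ kills $M$ (via torsion-freeness and the rational statement), then invoke Morita equivalence for $M_2(R)$. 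Both proofs run on exactly the same two inputs --- Carayol's rational decomposition matched against $\sigma$ from Theorem \ref{ThmExGalRepr}, and the torsion-freeness of $M_\mm$ --- and the same engine (absolute irreducibility of $\overline{\sigma}$ fed into Nakayama); the difference is only in where that engine is housed. Your packaging has the mild advantage that the rational input can be weakened: you only need that $\ker\rho[1/p]$ annihilates $M_\Q$, which can be checked one Hecke eigensystem at a time after extending scalars to $\overline{\Q}_p$, rather than needing to descend the full typic decomposition of $M_\Q$ to $R[1/p]$. The paper's packaging has the advantage that Proposition \ref{TypicSubmodule} is stated once and reused (e.g.\ in Lemma \ref{TypicDeterminesRepr} and Proposition \ref{CompH1Typic}), whereas the Morita argument would have to be redone for completed cohomology. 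One small point to make explicit in your write-up: the $R$-module structure on $M$ induced from the center of $M_2(R)$ must agree with the original one, which holds because $\rho$ is an $R$-algebra map and the factorization of the action through $M_2(R)$ is as $R$-algebras.
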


\begin{proof} By Proposition \ref{TypicSubmodule}, it is enough to prove that
\[
H^1(\Sh_{K U^\frap,\overline{F}},\Z_p)_\mm[1/p]
\]
is $\sigma$-typic. But this follows from the description of the $\Q_p$-cohomology of $\Sh_{K U^\frap,\overline{F}}$ by Carayol, \cite{CarayolHilbert}.
\end{proof}

At this point, we can also pass to completed cohomology. Let
\[
\widehat{H}^1(U^\frap,\Z_p) = \varprojlim_m \varinjlim_K H^1(\Sh_{K U^\frap,\overline{F}},\Z/p^m\Z)\ ,
\]
and
\[
\widehat{H}^1(U^\frap,\Z_p)_\mm = \varprojlim_m \varinjlim_K H^1(\Sh_{K U^\frap,\overline{F}},\Z/p^m\Z)_\mm\ ,
\]
Then the inverse limit
\[
\T(U^\frap)_\mm := \varprojlim_K \T(K U^\frap)_\mm
\]
acts faithfully and continuously on $\widehat{H}^1(U^\frap,\Z_p)_\mm$.

\begin{prop} There is a unique (up to conjugation) continuous $2$-dimensional Galois representation
\[
\sigma = \sigma_\mm: \Gal_{F,S}\to \GL_2(\T(U^\frap)_\mm)
\]
unramified outside $S$, such that for all $v\not\in S$,
\[
\tr(\sigma(\Frob_v)) = T_v\ ,\ \det(\sigma(\Frob_v)) = q_v S_v\ .
\]
The ring $\T(U^\frap)_\mm$ is a complete noetherian local ring with finite residue field.
\end{prop}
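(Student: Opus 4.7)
The plan is to construct $\sigma_\mm$ by assembling the finite-level representations $\sigma_K$ of Theorem~\ref{ThmExGalRepr}, and to deduce the noetherianity of $\T(U^\frap)_\mm$ by exhibiting it as a quotient of Mazur's universal deformation ring of $\overline{\sigma}$. First I would recall that, because $\overline{\sigma}$ is absolutely irreducible and $H^1(\Gal_{F,S},\mathrm{ad}(\overline{\sigma}))$ is finite-dimensional (since $\Gal_{F,S}$ has finite cohomology with finite coefficients), Mazur's theory produces a complete noetherian local $W(\F_q)$-algebra $R^S_{\overline{\sigma}}$ with residue field $\F_q$ carrying a universal deformation $\sigma^{\mathrm{univ}}: \Gal_{F,S}\to \GL_2(R^S_{\overline{\sigma}})$. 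Each $\T(K U^\frap)_\mm$ is finite over $\Z_p$ (it acts faithfully on the finitely generated torsion-free $\Z_p$-module $H^1(\Sh_{K U^\frap,\overline{F}},\Z_p)_\mm$), hence a complete noetherian local $W(\F_q)$-algebra with residue field $\F_q$, so $\sigma_K$ is a deformation of $\overline{\sigma}$ and the universal property supplies a local $W(\F_q)$-algebra map $\varphi_K: R^S_{\overline{\sigma}}\to \T(K U^\frap)_\mm$. Its image contains $T_v = \tr \sigma^{\mathrm{univ}}(\Frob_v)$ and $S_v = q_v^{-1}\det\sigma^{\mathrm{univ}}(\Frob_v)$ (using that $q_v\in \Z_p^\times$ for $v\notin S$), which generate $\T(K U^\frap)_\mm$ as a $\Z_p$-algebra, so $\varphi_K$ is surjective; strict equivalence of absolutely irreducible deformations further ensures that these maps are compatible as $K$ shrinks.

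The $\varphi_K$ therefore assemble into a continuous ring homomorphism $\varphi : R^S_{\overline{\sigma}} \to \T(U^\frap)_\mm$, and I claim $\varphi$ is surjective. The ring $R^S_{\overline{\sigma}}$ is profinite under its $\mathfrak{m}$-adic topology (an inverse limit of finite rings), $\T(U^\frap)_\mm$ is Hausdorff (an inverse limit of Hausdorff rings), and $\varphi$ is continuous, so the image $\varphi(R^S_{\overline{\sigma}})$ is compact and hence closed. On the other hand, for every $K$ the composition $R^S_{\overline{\sigma}}\xrightarrow{\varphi} \T(U^\frap)_\mm \twoheadrightarrow \T(K U^\frap)_\mm$ is the surjective map $\varphi_K$, so together with the definition of the inverse limit topology this forces $\varphi(R^S_{\overline{\sigma}})$ to be dense. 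Hence $\varphi$ is surjective, and $\T(U^\frap)_\mm = R^S_{\overline{\sigma}}/\ker\varphi$ is a complete noetherian local ring with finite residue field $\F_q$. The required representation is then the composition
\[
\sigma_\mm: \Gal_{F,S}\xrightarrow{\sigma^{\mathrm{univ}}} \GL_2(R^S_{\overline{\sigma}})\twoheadrightarrow \GL_2(\T(U^\frap)_\mm),
\]
automatically continuous, unramified outside $S$, and with the prescribed traces and determinants at Frobenius elements. Uniqueness up to conjugation follows verbatim from the Chebotarev-plus-Chenevier argument already used in Theorem~\ref{ThmExGalRepr}: the $T_v$ and $q_v S_v$ determine the associated Chenevier determinant on all of $\Gal_{F,S}$, which together with the absolute irreducibility of $\overline{\sigma}$ determines $\sigma_\mm$ up to conjugation via \cite[Theorem 2.22]{ChenevierDet}.

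The hard part is precisely the noetherianity of $\T(U^\frap)_\mm$: as an inverse limit of finite $\Z_p$-algebras of \emph{a priori} unbounded length it does not come for free, and one must trap all the finite quotients $\T(K U^\frap)_\mm$ inside a single ambient noetherian ring. Absolute irreducibility of $\overline{\sigma}$ is doing the heavy lifting here: it is what makes Mazur's deformation ring exist and be noetherian, and simultaneously what guarantees that each $\sigma_K$ is an honest representation (rather than merely a pseudo-representation), so that the universal property can be applied at each finite level to produce the needed surjection.
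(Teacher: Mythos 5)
Your argument is correct, but it constructs $\sigma_\mm$ by a different mechanism than the paper. The paper works directly at infinite level: it first glues the characteristic polynomials of Frobenii into a $2$-dimensional determinant valued in $\T(U^\frap)_\mm$ via \cite[Example 2.32]{ChenevierDet} (determinants make sense over an arbitrary ring, so no noetherianity is needed a priori), then invokes \cite[Theorem 2.22 (i)]{ChenevierDet} together with absolute irreducibility of $\overline{\sigma}$ to promote the determinant to an honest representation; only afterwards does it use the resulting map $R_{\overline{\sigma},S}\to \T(U^\frap)_\mm$ to deduce noetherianity, exactly as you do. You instead build the map out of the deformation ring at each finite level (applying Mazur's universal property to the $\sigma_K$ of Theorem \ref{ThmExGalRepr}), check compatibility under shrinking $K$ via rigidity of absolutely irreducible deformations, and pass to the limit, obtaining the surjection $R_{\overline{\sigma},S}\twoheadrightarrow \T(U^\frap)_\mm$ and the representation $\sigma_\mm$ simultaneously. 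Both routes are sound; the paper's buys a construction of $\sigma_\mm$ that never needs the finite-level maps to be assembled (the determinant formalism handles the inverse limit painlessly), while yours makes the noetherianity statement and the representation come from a single surjection and keeps Chenevier's machinery confined to the finite-level input and to uniqueness. Two points worth making explicit in your write-up: the compactness-plus-density argument for surjectivity of $\varphi$ uses that $R_{\overline{\sigma},S}$ is profinite, which holds because it is noetherian with finite residue field (so each $R/\mm_R^n$ is finite); and the coefficient-ring bookkeeping (each $\T(KU^\frap)_\mm$ is canonically a $W(k)$-algebra for $k=\T/\mm$, so the universal property applies) should be recorded, though it is standard.
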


\begin{proof} One gets a $2$-dimensional determinant with values in $\T(U^\frap)_\mm$ by \cite[Example 2.32]{ChenevierDet}. This gives rise to a representation as $\overline{\sigma}$ is absolutely irreducible, by \cite[Theorem 2.22 (i)]{ChenevierDet}.

For the final assertion, note that the existence of the Galois representation $\sigma$ gives a map from the Galois deformation ring $R_{\overline{\sigma},S}$ to $\T(U^\frap)_\mm$. This map is necessarily surjective, as $T_v$ and $S_v$ can be recovered from the image of Frobenius elements $\Frob_v$. As $R_{\overline{\sigma},S}$ is a complete noetherian local ring with finite residue field, so is $\T(U^\frap)_\mm$.
\end{proof}

\begin{prop}\label{CompH1Typic} The $\T(U^\frap)_\mm[\Gal_{F,S}]$-module
\[
\widehat{H}^1(\Sh_{U^\frap,\overline{F}},\Z_p)_\mm
\]
is $\sigma$-typic.
\end{prop}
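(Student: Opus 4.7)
The plan is to deduce Proposition \ref{CompH1Typic} from the finite-level integral statement Theorem \ref{H1Typic} by a two-step limit argument: reduce modulo $p^m$ and take the direct limit over $K$, then take the inverse limit over $m$. The key structural input is that $\sigma$ is a free $\T(U^\frap)_\mm$-module of rank $2$, so tensoring with $\sigma$ over $\T(U^\frap)_\mm$ amounts (on underlying modules) to taking two copies, and therefore commutes with all small limits and colimits on the second factor.

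First, I would upgrade Theorem \ref{H1Typic} from $\Z_p$-coefficients to $\Z/p^m$-coefficients at each finite level $K$. Since $H^2(\Sh_{K U^\frap,\overline{F}},\Z_p)_\mm = 0$ (as noted before Theorem \ref{ThmExGalRepr}), the long exact sequence attached to $0 \to \Z_p \to \Z_p \to \Z/p^m \to 0$ gives $H^1(\Sh_{K U^\frap,\overline{F}},\Z/p^m)_\mm = H^1(\Sh_{K U^\frap,\overline{F}},\Z_p)_\mm / p^m$. Reducing the typic decomposition of Theorem \ref{H1Typic} modulo $p^m$ (allowed because $\sigma_{K U^\frap}$ is free, so $-\otimes_{\T(K U^\frap)_\mm} M_0$ is $p^m$-exact on both variables) shows that $H^1(\Sh_{K U^\frap,\overline{F}},\Z/p^m)_\mm$ is $\sigma_{K U^\frap}$-typic. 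Via the surjection $\T(U^\frap)_\mm \twoheadrightarrow \T(K U^\frap)_\mm$, which sends $\sigma$ to $\sigma_{K U^\frap}$, this module is also $\sigma$-typic when viewed over $\T(U^\frap)_\mm$.

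Next, the filtered colimit over $K$: since $\sigma \otimes_{\T(U^\frap)_\mm} (-)$ commutes with filtered colimits, the module
\[
H^1_m := \varinjlim_K H^1(\Sh_{K U^\frap,\overline{F}},\Z/p^m)_\mm
\]
is $\sigma$-typic over $\T(U^\frap)_\mm/p^m$. Writing $H^1_m = \sigma \otimes_{\T(U^\frap)_\mm} N_m$, Proposition \ref{TypicEquiv} identifies $N_m = \Hom_{\T(U^\frap)_\mm[\Gal]}(\sigma, H^1_m)$, so the transition maps $H^1_{m+1} \to H^1_m$ coming from reduction of coefficients correspond to natural transition maps $N_{m+1} \to N_m$.

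Finally, take the inverse limit $\varprojlim_m$. Set $N = \varprojlim_m N_m$. Because $\sigma$ is free of rank $2$ over $\T(U^\frap)_\mm$, for every $m$ one has an isomorphism $\sigma \otimes_{\T(U^\frap)_\mm} N_m \cong N_m^{\oplus 2}$ as $\T(U^\frap)_\mm$-modules, compatibly in $m$; the Galois action is encoded via multiplication by the matrix $\sigma(g) \in \GL_2(\T(U^\frap)_\mm)$. Passing to the inverse limit, this gives
\[
\widehat{H}^1(U^\frap,\Z_p)_\mm = \varprojlim_m H^1_m \cong \varprojlim_m N_m^{\oplus 2} = N^{\oplus 2} = \sigma \otimes_{\T(U^\frap)_\mm} N
\]
as $\T(U^\frap)_\mm[\Gal_{F,S}]$-modules, establishing $\sigma$-typicity. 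The only delicate bookkeeping — and the step where one must check things carefully — is verifying that the identification $\sigma \otimes N_m \cong N_m^{\oplus 2}$ intertwines the Galois actions through the limit; but this is automatic from continuity of $\sigma: \Gal_{F,S} \to \GL_2(\T(U^\frap)_\mm) = \varprojlim_m \GL_2(\T(U^\frap)_\mm/p^m)$.
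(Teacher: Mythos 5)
Your proof is correct and is essentially the paper's argument: the paper simply asserts that all operations in the definition $\widehat{H}^1(U^\frap,\Z_p)_\mm = \varprojlim_m \varinjlim_K H^1(\Sh_{KU^\frap,\overline{F}},\Z/p^m\Z)_\mm$ preserve $\sigma$-typic modules (noting compatibility of the $\sigma$'s), and you have spelled out exactly those three steps (reduction mod $p^m$ using vanishing of $H^2_\mm$, the filtered colimit over $K$ via the surjections $\T(U^\frap)_\mm\twoheadrightarrow\T(KU^\frap)_\mm$, and the inverse limit over $m$ using freeness of $\sigma$).
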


\begin{proof} This follows from Theorem \ref{H1Typic} (noting that the $\sigma$'s are compatible), and the observation that all operations in the definition of
\[
\widehat{H}^1(\Sh_{U^\frap,\overline{F}},\Z_p)_\mm = \varprojlim_m \varinjlim_K H^1(\Sh_{K U^\frap,\overline{F}},\Z/p^m\Z)_\mm
\]
preserve $\sigma$-typic modules.
\end{proof}

\section{Local-global compatibility}

In this section, we prove a local-global compatibility result for the functor constructed above. This turns out to be mostly a straightforward consequence of $p$-adic uniformization, originally due to Cerednik, \cite{Cerednik}, and in moduli-theoretic terms to Drinfeld, \cite{DrinfeldUnif}, and generalized by Rapoport-Zink, \cite{RapoportZinkBook}, and Varshavsky, \cite{Varshavsky}.

We continue to consider the Shimura curves $\Sh_U$ associated to a division algebra $D$ over a totally real field $F$ as in the previous section.

\begin{definition} Let $\rho_{U^\frap}^i$ be the admissible $\Z_p$-representation of $\Gal_{F,S}\times D^\times_\frap$ given by
\[
\rho_{U^\frap}^i = H^i(U^\frap,\Q_p/\Z_p) = \varinjlim_K H^i(\Sh_{K U^\frap,\overline{F}},\Q_p/\Z_p)\ .
\]
Let $\pi_{U^\frap}$ be the admissible $\Z_p$-representation of $\GL_2(F_\frap)=G(F_\frap)$ given by the space of continuous functions
\[
\pi_{U^\frap} = C^0(G(F)\backslash G(\A_{F,f}) / U^\frap, \Q_p/\Z_p)\ .
\]
\end{definition}

We note that one would usually consider the space
\[
\pi_{U^\frap}^\comp = C^0(G(F)\backslash G(\A_{F,f}) / U^\frap, \Z_p)\ ,
\]
but this carries the same information as $\pi_{U^\frap}$: One can write $\pi_{U^\frap} = \pi_{U^\frap}^\comp\otimes_{\Z_p} \Q_p/\Z_p$ and conversely
\[
\pi_{U^\frap}^\comp = T_p \pi_{U^\frap} = \lim_p \pi_{U^\frap}[p^n]\ .
\]
A similar discussion applies to $\rho_{U^\frap}^i$ (at least if everything is interpreted in the derived sense, or if everything is concentrated in only one cohomological degree). We will be mostly interested in $\rho_{U^\frap} := \rho_{U^\frap}^1$.

\begin{thm}\label{LocalGlobal} There is a natural isomorphism of admissible $\Gal_{F_\frap}\times D^\times_\frap$-representations over $\Z_p$,
\[
H^i_\et(\mathbb{P}^1_{\C_p},\mathcal{F}_{\pi_{U^\frap}})\cong \rho_{U^\frap}^i\ .
\]
\end{thm}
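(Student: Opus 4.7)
The plan is to combine the Cerednik-Drinfeld $p$-adic uniformization of the Shimura curve with the duality isomorphism between the Lubin-Tate and Drinfeld towers. By Cerednik-Drinfeld (\cite{Cerednik}, \cite{DrinfeldUnif}, \cite{BoutotZink}, \cite{RapoportZinkBook}), for each compact open subgroup $K\subset D^\times_\frap$ there is a $D^\times_\frap$-equivariant isomorphism of rigid analytic spaces over $\breve F_\frap$, compatible with Weil descent data,
\[
\Sh_{K U^\frap, \breve F_\frap}^{an} \cong G(F)\backslash\bigl(\cM_{\Dr,K,\breve F_\frap}\times G(\A_{F,f}^\frap)/U^\frap\bigr) = \coprod_{\gamma} \Gamma_\gamma\backslash \cM_{\Dr,K,\breve F_\frap},
\]
with $\gamma$ running over the finite set $G(F)\backslash G(\A_{F,f}^\frap)/U^\frap$ and $\Gamma_\gamma = G(F)\cap \gamma U^\frap\gamma^{-1}\subset G(F_\frap)=\GL_2(F_\frap)$ acting on $\cM_{\Dr,K}$ via the Hecke action. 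Taking \'etale cohomology with $\Q_p/\Z_p$-coefficients, passing to $\C_p$ (invariant under change of algebraically closed base field for torsion coefficients on proper rigid spaces), passing to the direct limit over $K$, and applying Proposition~\ref{ResidualActionSites} together with Proposition~\ref{QuotientPerfectoid} to the Drinfeld tower, I obtain
\[
\rho^i_{U^\frap} = \bigoplus_\gamma H^i_\et\bigl((\cM_{\Dr,\infty,\C_p}/D^\times_\frap\times\Gamma_\gamma)_\et,\Q_p/\Z_p\bigr).
\]

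Next, I invoke the Lubin-Tate / Drinfeld isomorphism $\cM_{\LT,\infty}\cong\cM_{\Dr,\infty}$ from \cite{FaltingsTwoTowers}, \cite{FarguesTwoTowers}, \cite{ScholzeWeinstein}, which swaps the roles of $\GL_2(F_\frap)$ and $D^\times_\frap$ and is compatible with Weil descent data. This rewrites the previous expression as $\bigoplus_\gamma H^i_\et((\cM_{\LT,\infty,\C_p}/\Gamma_\gamma\times D^\times_\frap)_\et,\Q_p/\Z_p)$, with $\Gamma_\gamma$ now viewed as a discrete subgroup of the Gross-Hopkins torsor group $\GL_2(F_\frap)$. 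On the automorphic side, the orbit decomposition $G(F)\backslash G(\A_{F,f})/U^\frap = \coprod_\gamma \Gamma_\gamma\backslash \GL_2(F_\frap)$ under the left $\GL_2(F_\frap)$-action via the $\frap$-component gives an analogous decomposition of $\GL_2(F_\frap)$-modules
\[
\pi_{U^\frap} = \bigoplus_\gamma \mathrm{Ind}_{\Gamma_\gamma}^{\GL_2(F_\frap)}(\Q_p/\Z_p).
\]
Exactness of $\pi\mapsto\mathcal{F}_\pi$ (Proposition~\ref{DefFPi}) and compatibility with direct sums then reduces the theorem to the Shapiro-type identification
\[
H^i_\et(\mathbb{P}^1_{\C_p},\mathcal{F}_{\mathrm{Ind}_{\Gamma_\gamma}^{\GL_2(F_\frap)}(\Q_p/\Z_p)})\cong H^i_\et\bigl((\cM_{\LT,\infty,\C_p}/\Gamma_\gamma\times D^\times_\frap)_\et,\Q_p/\Z_p\bigr)
\]
for each $\gamma$. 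This in turn follows from the site equivalence $(\mathbb{P}^1_{\C_p}/D^\times_\frap)_\et\cong(\cM_{\LT,\infty,\C_p}/\GL_2(F_\frap)\times D^\times_\frap)_\et$ of Proposition~\ref{QuotientPerfectoid}, via the Frobenius-reciprocity observation that $\mathcal{F}_{\mathrm{Ind}_{\Gamma_\gamma}^{\GL_2(F_\frap)}(\Q_p/\Z_p)}$ is the pushforward of the constant sheaf $\Q_p/\Z_p$ along the morphism of sites $(\cM_{\LT,\infty,\C_p}/\Gamma_\gamma\times D^\times_\frap)_\et\to(\cM_{\LT,\infty,\C_p}/\GL_2(F_\frap)\times D^\times_\frap)_\et$. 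Summing over $\gamma$ produces the desired isomorphism of $D^\times_\frap$-representations.

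The $\Gal_{F_\frap}$-equivariance is obtained by tracking the Weil descent data through every step (Cerednik-Drinfeld and the LT-Dr duality are both formulated compatibly with Weil descent), combined with Proposition~\ref{GalAction} to extend the Weil action to the full Galois group on the admissible side. The main technical hurdle is the Shapiro step: when $\Gamma\subset\GL_2(F_\frap)$ is open, the identification is a direct consequence of Proposition~\ref{SliceTopoi}, but here the $\Gamma_\gamma$ are discrete (cocompact modulo centre). One must approximate them by their open neighborhoods $H\supset \Gamma_\gamma$ in $\GL_2(F_\frap)$, exploiting the fact that both sides admit filtered colimit descriptions over such $H$ (the left side since smoothness means every vector of $\mathrm{Ind}_{\Gamma_\gamma}^{\GL_2(F_\frap)}(\Q_p/\Z_p)$ is $H$-fixed for some such $H$, and the right side by Proposition~\ref{ResidualActionSites}).
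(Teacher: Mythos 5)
Your overall strategy --- Cerednik--Drinfeld uniformization, the Lubin--Tate/Drinfeld duality, and the identification of $\mathcal{F}_{\pi_{U^\frap}}$ as a pushforward of the constant sheaf along the period map --- is the same as the paper's; the paper's Proposition \ref{PiHTPush} is exactly your ``Frobenius reciprocity'' step, carried out for the full adelic double quotient at once rather than one $\gamma$ at a time. However, the way you propose to execute the key ``Shapiro'' step has a genuine gap. You want to approximate the discrete cocompact-mod-centre subgroups $\Gamma_\gamma\subset\GL_2(F_\frap)$ by open subgroups $H\supset\Gamma_\gamma$ and take filtered colimits. This fails: any open subgroup $H$ of $\GL_2(F_\frap)$ contains a compact open subgroup $K_0$, hence contains the subgroup generated by $\Gamma_\gamma$ and $K_0$, so the open subgroups containing $\Gamma_\gamma$ do not shrink to $\Gamma_\gamma$ --- their intersection is a much larger, non-discrete group. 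Correspondingly, Proposition \ref{ResidualActionSites} applies only to closed subgroups of a \emph{profinite} group, and a vector of $\mathrm{Ind}_{\Gamma_\gamma}^{\GL_2(F_\frap)}(\Q_p/\Z_p)$ is fixed by some compact open subgroup but essentially never by an open subgroup containing $\Gamma_\gamma$. So neither side of your Shapiro identity admits the filtered colimit description you invoke. A second, related gap: to pass from the sheaf-level statement that $\mathcal{F}_{\mathrm{Ind}_{\Gamma_\gamma}^{\GL_2(F_\frap)}(\Q_p/\Z_p)}$ is the pushforward of the constant sheaf to an isomorphism on $H^i$ for all $i$, you need the \emph{derived} pushforward to be concentrated in degree $0$, which you never address.

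Both gaps are closed simultaneously by the paper's actual argument: one works with the perfectoid infinite-level uniformized curve $\Sh_{U^\frap,\C_p}=G(F)\backslash[\cM_{\Dr,\infty,\C_p}\times G(\A_{F,f}^\frap)/U^\frap]$ and its Hodge--Tate period map to $\mathbb{P}^1_{\C_p}$ (the Gross--Hopkins map under duality), and computes the stalks of $R\pi_{\HT\et\ast}^{\Sh}\Q_p/\Z_p$ directly. The geometric fibre over a point $\bar x$ is an affinoid perfectoid space consisting of profinitely many copies of $\bar x$ indexed by $G(F)\backslash G(\A_{F,f})/U^\frap$; by \cite[Corollary 7.18]{ScholzePerfectoid} its higher cohomology vanishes and its $H^0$ is exactly $\pi_{U^\frap}$, which gives both the degree-$0$ concentration and the identification of the pushforward. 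If you want to keep your $\gamma$-by-$\gamma$ decomposition, the same stalk computation goes through with fibre $\Gamma_\gamma\backslash\GL_2(F_\frap)$ (still a profinite set), but the approximation of $\Gamma_\gamma$ by open subgroups must be discarded in favour of this direct computation.
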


In fact, the statement is true on the derived level. The key tool to proving Theorem \ref{LocalGlobal} is the $p$-adic uniformization theorem.

\begin{thm}[Cerednik] Let $U=K U^\frap\subset D^\times(\A_{F,f}) = D^\times_\frap\times G(\A_{F,f}^\frap)$. There is an isomorphism of adic spaces over $\C_p$,
\[
(\Sh_U\otimes_F \C_p)^\ad\cong G(F)\backslash [\cM_{\Dr,K,\C_p}\times G(\A_{F,f}^\frap)/U^\frap]\ ,
\]
compatible for varying $U$, and with the Weil descent datum to $F$.
\end{thm}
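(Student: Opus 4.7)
The plan is to follow the classical Cerednik--Drinfeld strategy, in the generalized form due to Rapoport--Zink (\cite{RapoportZinkBook}, Chapter 6) and Boutot--Zink. The first step is to realize $\Sh_U$ as (a connected component or a finite \'etale cover of) a moduli space of abelian varieties equipped with an action of a fixed maximal order $\cO_D\subset D$, a polarization, and a $U$-level structure. Strictly speaking $\Sh_U$ is not of PEL type, since the reflex field is $F$ and not $\Q$; the standard remedy is to bundle the quaternionic Shimura datum into an auxiliary PEL problem (Weil restriction, or passage through an imaginary quadratic field as in Carayol's construction), which yields a smooth integral model $\mathscr{Sh}_U$ over $\Spec \breve{\cO}$ whose special fiber classifies abelian varieties together with a rigidification of their $p$-divisible group at $\frap$.

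Second, because $D$ is ramified at $\frap$, the $p$-divisible group at $\frap$ attached to any such abelian variety is forced to be a special formal $\cO_{D_\frap}$-module of height $2$ in Drinfeld's sense. Over the algebraically closed residue field $k$, all such $p$-divisible groups are isomorphic (a single basic isocrystal of slope $1/2$, with a unique isomorphism class in its isogeny class). Consequently the entire special fiber of $\mathscr{Sh}_U$ lies in the basic Newton stratum; there is no ordinary or intermediate locus to analyze separately.

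Third, invoke the Rapoport--Zink uniformization theorem in the basic case: the $\frap$-adic completion of $\mathscr{Sh}_U$ along its special fiber is isomorphic to a quotient of the Rapoport--Zink space $\widehat{\cM}_{\Dr,K}$. Writing $I$ for the inner twist of $D^\times$ obtained by interchanging the invariants at $\frap$ and at $\infty_F$, one obtains
\[
\widehat{\mathscr{Sh}}_U \;\cong\; I(F)\big\backslash \bigl[\, \widehat{\cM}_{\Dr,K} \times I(\A_{F,f}^\frap)/U^\frap \,\bigr].
\]
In the situation of Section \ref{ShimuraCurves}, this inner form $I$ coincides with $G$: the group $G$ is anisotropic at $\infty_F$ (since $D_0$ is definite there) and split at $\frap$ (while $D^\times$ is ramified there), and at every other finite place $G$ and $D^\times$ have the same local form. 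Passing to adic generic fibers then yields the stated isomorphism, and crucially, since the whole special fiber is basic, the formal completion accounts for the entire analytification of $\Sh_U\otimes_F\C_p$ and not merely a tubular neighborhood of a substratum.

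Finally, compatibility for varying $U$ is tautological from the moduli description (the level structures on the two sides correspond), and the Weil descent datum down to $F$ comes from the canonical Weil descent datum on $\cM_{\Dr,K}$ together with the natural $F$-structure on $\Sh_U$; both are built into the Rapoport--Zink construction. The main obstacle in this outline is step one, the PEL packaging of a quaternionic Shimura datum over a totally real field, which is technical but entirely standard in the literature; the geometric core of the argument, namely that the whole reduction is basic, is an immediate consequence of $D_\frap$ being a division algebra.
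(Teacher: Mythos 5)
The paper offers no proof of this theorem: it is quoted as a known result, with the argument deferred to Boutot--Zink \cite{BoutotZink} (building on \cite{RapoportZinkBook}), and your outline is precisely that argument --- PEL packaging of the quaternionic datum via an auxiliary CM field, the observation that ramification of $D$ at $\frap$ forces the entire special fiber into the basic Newton stratum, and the Rapoport--Zink uniformization theorem with uniformizing inner form $I=G$. One small correction: special formal $\cO_{D_\frap}$-modules over $\overline{\F}_p$ are all isogenous (a single basic isocrystal) but by no means all isomorphic --- the special fiber of the Drinfeld space is not a point --- yet only the isogeny statement is needed, since it is the triviality of the Newton stratification, not of the isomorphism classification, that makes the formal completion capture all of $(\Sh_U\otimes_F\C_p)^\ad$ rather than a tube around a stratum.
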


A proof relying on Rapoport-Zink's book has been given by Boutot-Zink, \cite{BoutotZink}. Let
\[
\Sh_{U^\frap,\C_p} = G(F)\backslash [\cM_{\Dr,\infty,\C_p}\times G(\A_{F,f}^\frap)/U^\frap]\ ,
\]
which is a perfectoid space over $\C_p$ (equipped with a Weil descent datum to $F$), such that
\[
\Sh_{U^\frap,\C_p} \sim \varprojlim_K (\Sh_{K U^\frap}\otimes_F \C_p)^\ad\ .
\]
These properties follow from the similar properties of $\cM_{\Dr,\infty,\C_p}$, cf. \cite[Theorem 6.5.4]{ScholzeWeinstein}.

In particular, we find that
\begin{equation}\label{CohomInfLevel}
H^i(\Sh_{U^\frap,\C_p},\Q_p/\Z_p) = \varinjlim_K H^i(\Sh_{K U^\frap,\C_p},\Q_p/\Z_p) = H^i(U^\frap,\Q_p/\Z_p)
\end{equation}
as $W_{F_\frap}\times D^\times_\frap$-representations; here $W_{F_\frap}\subset \Gal_{F_\frap}\subset \Gal_F$ denotes the (local) Weil group of $F_\frap$.

On the other hand, by \cite[Proposition 7.1.1]{ScholzeWeinstein}, there is the Hodge-Tate period map
\[
\pi_\HT: \cM_{\Dr,\infty,\C_p}\to \mathbb{P}^1_{\C_p}\ ,
\]
compatible with Weil descent data, where the right-hand side is the Brauer-Severi variety for $D/F$. Under the duality isomorphism
\[
\cM_{\Dr,\infty,\C_p}\cong \cM_{\LT,\infty,\C_p}\ ,
\]
this is identified with the Grothendieck-Messing period map, cf. \cite[Theorem 7.2.3]{ScholzeWeinstein}. The $\GL_2(F_\frap)$-equivariance of the Hodge-Tate period map ensures that it gives a map
\[
\pi_\HT^\Sh: \Sh_{U^\frap,\C_p} = G(F)\backslash [\cM_{\Dr,\infty,\C_p}\times G(\A_{F,f}^\frap)/U^\frap]\to \mathbb{P}^1_{\C_p}\ .
\]
Note that $\pi_\HT$ is $W_{F_\frap}\times D^\times_\frap$-equivariant.

\begin{rem} Here, we construct a global Hodge-Tate period map directly from the local Hodge-Tate period map. As the Shimura curves under consideration are not of Hodge type, one cannot formally use the construction of a global Hodge-Tate period map in \cite{ScholzeTorsion} to get one in this setup. In cases of overlap, it is to be expected that these period maps are compatible, but we do not discuss this here.
\end{rem}

\begin{prop}\label{PiHTPush} There is a $W_{F_\frap}\times D^\times_\frap$-equivariant isomorphism of sheaves on the \'etale site of (the adic space) $\mathbb{P}^1_{\C_p}$,
\[
R\pi_{\HT\et\ast}^\Sh \Q_p/\Z_p\cong \mathcal{F}_{\pi_{U^\frap}}\ .
\]
\end{prop}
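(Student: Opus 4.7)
The plan is to construct a natural comparison map in degree $0$ coming directly from the $p$-adic uniformization and a currying of the adjunction between $\cM_{\LT,\infty,\C_p}$ and $\pi_{U^\frap}$, and then to verify that it is an isomorphism and extends to the derived level by a stalkwise computation at geometric points of $\mathbb{P}^1_{\C_p}$. A section of $\mathcal{F}_{\pi_{U^\frap}}$ over $V\in(\mathbb{P}^1_{\C_p}/D^\times)_\et$ is a continuous $\GL_2(F_\frap)\times D^\times_\frap$-equivariant map $f:|V\times_{\mathbb{P}^1}\cM_{\LT,\infty,\C_p}|\to\pi_{U^\frap}$. Writing $\pi_{U^\frap}=C^0(G(F)\backslash[G(F_\frap)\times G(\A_{F,f}^\frap)/U^\frap],\Q_p/\Z_p)$ and setting $\tilde f(x,z):=f(x)([1,z])$, the $\GL_2(F_\frap)$-equivariance of $f$ translates directly into $G(F)$-invariance of $\tilde f:|V\times_{\mathbb{P}^1}\cM_{\LT,\infty,\C_p}|\times G(\A_{F,f}^\frap)/U^\frap\to\Q_p/\Z_p$ for the diagonal embedding $G(F)\hookrightarrow G(F_\frap)\times G(\A_{F,f}^\frap)=\GL_2(F_\frap)\times G(\A_{F,f}^\frap)$. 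Combining the duality $\cM_{\LT,\infty}\cong\cM_{\Dr,\infty}$ (under which Hodge-Tate becomes Gross-Hopkins) with Cerednik's uniformization $\Sh_{U^\frap,\C_p}=G(F)\backslash[\cM_{\Dr,\infty,\C_p}\times G(\A_{F,f}^\frap)/U^\frap]$, quotienting by $G(F)$ identifies the source of $\tilde f$ with $|V\times_{\mathbb{P}^1}\Sh_{U^\frap,\C_p}|$, so $\tilde f$ descends to a section of $\pi^\Sh_{\HT,\et,\ast}\Q_p/\Z_p$ over $V$. The inverse on $H^0$ is given by $f(x)([g,z]):=F(gx,z)$.

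For the derived statement, by Lemma \ref{Conservativity} it suffices to check after pullback to $(\mathbb{P}^1_{\C_p})_\et$ and to verify the isomorphism on stalks at geometric points $\bar x$. By Proposition \ref{DefFPi}, once a lift $\tilde{\bar x}\in\cM_{\LT,\infty,\C_p}$ is fixed, the stalk of $\mathcal{F}_{\pi_{U^\frap}}$ at $\bar x$ is canonically $\pi_{U^\frap}$. On the other side, properness of each $\Sh_{KU^\frap,\C_p}^{\ad}$ together with $\Sh_{U^\frap,\C_p}\sim\varprojlim_K\Sh_{KU^\frap,\C_p}$ lets one apply proper base change for $\Z/p^m$-coefficients at each finite level and then pass to the limit in $K$ and the colimit defining $\Q_p/\Z_p=\varinjlim_m\Z/p^m$; this identifies the stalk of $R\pi^\Sh_{\HT,\et,\ast}\Q_p/\Z_p$ at $\bar x$ with $R\Gamma_\et((\pi^\Sh_{\HT})^{-1}(\bar x),\Q_p/\Z_p)$. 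Uniformization presents this fiber as $G(F)\backslash[\pi_\GH^{-1}(\bar x)\times G(\A_{F,f}^\frap)/U^\frap]$, and the lift $\tilde{\bar x}$ trivializes $\pi_\GH^{-1}(\bar x)=\varprojlim_K\GL_2(F_\frap)/K$ as a $\GL_2(F_\frap)$-set, so the fiber is the countable discrete set $G(F)\backslash G(\A_{F,f})/U^\frap$. Its étale cohomology with $\Q_p/\Z_p$ coefficients is $\pi_{U^\frap}$ in degree $0$ and vanishes in higher degrees, matching the target; unwinding shows this identification is precisely the comparison map above.

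Equivariance for $\GL_2(F_\frap)$, $D^\times_\frap$ and the Weil descent data is automatic from the functorial nature of the construction: the Hodge-Tate and Gross-Hopkins period maps and the uniformization all intertwine the relevant group actions and descent data. The main technical obstacle is the fiberwise identification, since the paper explicitly declines to make $\pi_\GH:\cM_{\LT,\infty}\to\mathbb{P}^{n-1}$ a $\GL_n(F)$-torsor; the careful route is to perform proper base change at the finite levels $\pi_{\GH,K}:\cM_{\LT,K}\to\mathbb{P}^1$ (genuinely étale with fibers $\GL_2(F_\frap)/K$), and then to interchange the inverse limit in $K$, étale cohomology, and the $\varinjlim_m$, relying on the tilde-limit descriptions of $\cM_{\LT,\infty,\C_p}$ and $\Sh_{U^\frap,\C_p}$ to ensure these manipulations are legitimate.
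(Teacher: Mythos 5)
Your degree-zero comparison map is correct and is essentially the one the paper writes down (the paper constructs the map in the other direction, from sections of $\pi^\Sh_{\HT\et\ast}\Q_p/\Z_p$ to equivariant maps into $\pi_{U^\frap}$, via the same currying through the uniformization), and your overall strategy --- reduce to stalks at geometric points and identify the fibre with the double coset space $G(F)\backslash G(\A_{F,f})/U^\frap$ --- is also the paper's. The gap is in how you justify the stalk computation of the derived pushforward. You propose ``proper base change for $\Z/p^m$-coefficients at each finite level,'' but there are no finite levels available for this map: the Hodge--Tate period map exists only at infinite level, so neither $\Sh_{KU^\frap,\C_p}$ nor $\cM_{\Dr,K}$ admits a morphism to $\mathbb{P}^1_{\C_p}$ to which proper base change could be applied; and your fallback $\pi_{\GH,K}\colon\cM_{\LT,K}\to\mathbb{P}^1$ is an \'etale covering map with infinite discrete fibres $\GL_2(F_\frap)/K$, hence not proper (and it lives on the Lubin--Tate tower, which is not the one entering Cerednik uniformization). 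The tool that actually does the job --- and the one the paper uses --- is the tilde-limit comparison \cite[Corollary 7.18]{ScholzePerfectoid}: choosing a cofinal system $U_i$ of \'etale neighborhoods of $\bar x$ with $\bar x\sim\varprojlim_i U_i$, the pullbacks $U_i^\Sh$ are perfectoid and admit a limit $U_{\bar x}^\Sh\sim\varprojlim_i U_i^\Sh$, so the stalk $\varinjlim_i H^j_\et(U_i^\Sh,\Q_p/\Z_p)$ equals $H^j_\et(U_{\bar x}^\Sh,\Q_p/\Z_p)$, with $U_{\bar x}^\Sh$ an explicit affinoid perfectoid space.

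A second, related slip: the fibre is not a ``countable discrete set.'' It is $\Spa$ of the ring of continuous $C$-valued functions on the \emph{profinite} set $G(F)\backslash G(\A_{F,f})/U^\frap$, i.e.\ profinitely many copies of $\bar x$. This is not a technicality: writing $U_{\bar x}^\Sh$ as an inverse limit of finitely many copies of $\bar x$ and applying the tilde-limit comparison once more gives $H^0=C^0(G(F)\backslash G(\A_{F,f})/U^\frap,\Q_p/\Z_p)=\pi_{U^\frap}$ and vanishing in higher degrees. If the fibre were literally an infinite discrete disjoint union of points, its $H^0$ would be the full product of copies of $\Q_p/\Z_p$, which is strictly larger than $\pi_{U^\frap}$ and not a smooth representation. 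So the profinite topology on the fibre is exactly what makes the stalk come out to be $\pi_{U^\frap}$, consistent with the description of $\mathcal{F}_{\pi_{U^\frap}}$ via \emph{continuous} equivariant maps.
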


\begin{proof} First, we check that the higher direct images vanish. It is enough to check this at stalks, so let $\bar{x}=\Spa(C,C^+)\to \mathbb{P}^1_{\C_p}$ be any geometric point, i.e. $C/\breve{F}$ is complete algebraically closed and $C^+\subset C$ is an open and bounded valuation subring. We may assume that $C$ is the completion of the algebraic closure of the residue field of $\mathbb{P}^1_{\C_p}$ at the image of $\bar{x}$. Let $\bar{x}\to U_i\to \mathbb{P}^1_{\C_p}$ be a cofinal system of \'etale neighborhoods of $\bar{x}$; then $\bar{x}\sim \varprojlim_i U_i$. Write
\[
U_i^\Sh\to \Sh_{U^\frap,\C_p}
\]
for the pullback of $U_i$, so that $U_i^\Sh$ is a perfectoid space \'etale over $\Sh_{U^\frap,\C_p}$. One can form the inverse limit $U_{\bar{x}}^\Sh = \varprojlim U_i^\Sh$ in the category of perfectoid spaces (over $\C_p$). Now
\[
(R^j \pi_{\HT\et\ast}^\Sh \Q_p/\Z_p)_{\bar{x}} = \varinjlim_i H^j_\et(U_i^\Sh,\Q_p/\Z_p) = H^j_\et(U_{\bar{x}}^\Sh,\Q_p/\Z_p)\ .
\]
On the other hand, the fibre $U_{\bar{x}}^\Sh$ is given by profinitely many copies of $\bar{x}$,
\[
U_{\bar{x}}^\Sh = \Spa(C^0(G(F)\backslash \GL_2(F_\frap)\times G(\A_{F,f}^\frap) / U^\frap,C),C^0(G(F)\backslash \GL_2(F_\frap)\times G(\A_{F,f}^\frap) / U^\frap,C^+))\ .
\]
This implies that
\[
H^j_\et(U_{\bar{x}}^\Sh,\Q_p/\Z_p)
\]
vanishes for $j>0$, and equals $C^0(G(F)\backslash \GL_2(F_\frap)\times G(\A_{F,f}^\frap) / U^\frap,\Q_p/\Z_p)$ in degree $0$, e.g. by writing $U_{\bar{x}}^\Sh$ as an inverse limit of finitely many copies of $\bar{x}$ and using \cite[Corollary 7.18]{ScholzePerfectoid}.

It remains to identify $\pi_{\HT\et\ast}^\Sh \Q_p/\Z_p$. The previous computation already showed that the fibres are isomorphic to $\pi_{U^\frap}$. Let $U\to \mathbb{P}^1_{\C_p}$ be any \'etale map. We have to construct a map
\[\begin{aligned}
&H^0(U\times_{\mathbb{P}^1_{\C_p}} G(F)\backslash [\cM_{\Dr,\infty,\C_p}\times G(\A_{F,f}^\frap)/U^\frap],\Q_p/\Z_p)\\
\to &\Map_{\cont,\GL_2(F_\frap)}(|U\times_{\mathbb{P}^1_{\C_p}} \cM_{\Dr,\infty,\C_p}|,C^0(G(F)\backslash \GL_2(F_\frap)\times G(\A_{F,f}^\frap) / U^\frap,\Q_p/\Z_p))\ .
\end{aligned}\]
But the left hand side is the same as
\[
C^0(|U\times_{\mathbb{P}^1_{\C_p}} G(F)\backslash [\cM_{\Dr,\infty,\C_p}\times G(\A_{F,f}^\frap)/U^\frap]|,\Q_p/\Z_p)
\]
and it remains to observe that there is a natural $\GL_2(F_\frap)$-equivariant map
\[\begin{aligned}
(U&\times_{\mathbb{P}^1_{\C_p}} \cM_{\Dr,\infty,\C_p})\times (G(F)\backslash \GL_2(F_\frap)\times G(\A_{F,f}^\frap) / U^\frap)\\
&\to U\times_{\mathbb{P}^1_{\C_p}} G(F)\backslash [\cM_{\Dr,\infty,\C_p}\times G(\A_{F,f}^\frap)/U^\frap]\ .
\end{aligned}\]
\end{proof}

From Proposition \ref{PiHTPush}, we see that
\[
H^i(\Sh_{U^\frap,\C_p},\Q_p/\Z_p) = H^i_\et(\mathbb{P}^1_{\C_p},\mathcal{F}_{\pi_{U^\frap}})\ .
\]
Together with \eqref{CohomInfLevel}, this gives Theorem \ref{LocalGlobal} (noting that $W_{F_\frap}$-equivariance implies $\Gal_{F_\frap}$-equivariance by continuity).

\section{Consequences}\label{Consequences}

In this section, we continue the setup of Section \ref{ShimuraCurves}. Again, we fix an absolutely irreducible (odd) $2$-dimensional representation $\overline{\sigma}$ of $\Gal_F$ over a finite extension $\F_q$ of $\F_p$. We assume that the associated maximal ideal $\mm$ of the abstract Hecke algebra $\T$ satisfies
\[
\pi_{U^\frap,\mm}\neq 0
\]
for some $U^\frap$. We fix a finite set $S$ of finite places containing all places above $p$, such that $\overline{\sigma}$ is unramified outside $S$, and $U^\frap = U_S^\frap\times U^S$, where $U^S=\prod_{v\not\in S}\GL_2(\cO_{F_v})\subset \GL_2(\A_{F,f}^S)\cong G(\A_{F,f}^S)$. We want to see that if $\pi_{U^\frap,\mm}\neq 0$, then also $\rho_{U^\frap,\mm}\neq 0$. Note that an automorphic representation of $G$ transfers to $D^\times$ if and only if it is discrete series at $\frap$, by the Jacquet-Langlands correspondence. We will construct some cuspidal types which will allow us to construct congruences to representations which are discrete series (even cuspidal) at $\frap$, and thus transfer all torsion classes from $G$ to $D^\times$.

\begin{prop}\label{BasicTypes} Let $m\geq 0$ be an integer. Consider the compact open subgroup
\[
U_m=\left\{\left(\begin{array}{cc}1+\varpi^{m+1}\cO_{F_\frap} & \varpi^m \cO_{F_\frap} \\ \varpi^{m+1} \cO_{F_\frap} & 1+\varpi^{m+1} \cO_{F_\frap}\end{array}\right)\right\}\subset \GL_2(F_\frap)\ .
\]
There is a homomorphism
\[
\alpha_m: U_m\to \cO_{F_\frap} / \varpi^m \cO_{F_\frap}: \left(\begin{array}{cc}1+\varpi^{m+1}a & \varpi^m b \\ \varpi^{m+1} c & 1+\varpi^{m+1} d\end{array}\right)\mapsto b+c\ .
\]
For each nontrivial character $\psi: \cO_{F_\frap} / \varpi^m \cO_{F_\frap}\to \C^\times$, if $\pi$ is an irreducible smooth representation of $\GL_2(F_\frap)$ such that $\pi|_{U_m}$ contains the character $\psi\circ \alpha_m$, then $\pi$ is cuspidal.
\end{prop}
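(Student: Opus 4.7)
I plan to prove the contrapositive via Mackey and Frobenius reciprocity: if $\pi$ is an irreducible smooth $\C$-representation of $G = \GL_2(F_\frap)$ containing the type $(U_m, \psi \circ \alpha_m)$ with $\psi$ nontrivial, then $\pi$ cannot be a Jordan-H\"older constituent of any parabolically induced $\mathrm{Ind}_B^G \chi$, and hence must be cuspidal. Since the functor $\Hom_{U_m}(\psi \circ \alpha_m, -)$ is exact on smooth representations of $G$ (as $U_m$ is compact and $\psi \circ \alpha_m$ is a one-dimensional $\C$-character, so isotypic projection is a direct summand), it suffices to show
\[
\Hom_{U_m}\!\bigl(\psi \circ \alpha_m,\; \mathrm{Ind}_B^G \chi\bigr) = 0
\]
for every smooth character $\chi$ of the Borel $B \subset G$.

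Applying the Mackey decomposition together with Frobenius reciprocity yields
\[
\Hom_{U_m}\!\bigl(\psi \circ \alpha_m,\, \mathrm{Ind}_B^G \chi\bigr) \;=\; \bigoplus_{g \,\in\, U_m \backslash G / B} \Hom_{U_m \cap gBg^{-1}}\!\bigl(\psi\circ\alpha_m|_{\cdot},\, \chi^g|_{\cdot}\bigr).
\]
Because $\chi$ is trivial on the unipotent radical $N \subset B$, its conjugate $\chi^g$ is trivial on $gNg^{-1}$, so each summand vanishes provided one can exhibit some $u \in U_m \cap gNg^{-1}$ with $\psi(\alpha_m(u)) \ne 1$. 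The double cosets are $U_m \backslash G/B = U_m \backslash \mathbb{P}^1(F_\frap)$ via $gB \leftrightarrow A/C$ for $g = \begin{pmatrix} A & * \\ C & * \end{pmatrix}$. I would take representatives from the orbit of $\infty$ (the cell $g \in B$) and from the orbits of $z_0 \in F_\frap$ (using $g = \begin{pmatrix} z_0 & 1 \\ 1 & 0 \end{pmatrix}$), compute $gNg^{-1}$ explicitly by conjugating a general element of $N$, intersect with $U_m$, and read off the restriction of $\alpha_m(u) = b + c$.

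A direct calculation then shows that $U_m \cap gNg^{-1}$ is a one-parameter subgroup parametrized by $y \in \cO_{F_\frap}$ on which $\alpha_m$ restricts to a linear form $y \mapsto c(g) \cdot y \bmod \varpi^m$; for $g = 1$ one has $c(g) = 1$ directly, and for $g = \begin{pmatrix} z_0 & 1 \\ 1 & 0 \end{pmatrix}$ one gets expressions of the shape $\varpi^k - u_0^2$ or $1 - z_0^2 \varpi^k$ (for some $k \geq 0$ depending on the valuation of $z_0$), each congruent to a unit modulo $\varpi$. The main obstacle will be verifying $c(g) \in \cO_{F_\frap}^\times$ uniformly across the infinitely many $U_m$-orbits on $\mathbb{P}^1(F_\frap)$. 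The structural reason it holds is the \emph{regularity} of the character $\alpha_m(u) = b + c$: it pairs nontrivially with both off-diagonal summands simultaneously, so no choice of $g$ can kill both contributions at once. Once the unit claim is verified, nontriviality of $\psi$ on $\cO_{F_\frap}/\varpi^m \cO_{F_\frap}$ produces the desired $u$, completing the proof.
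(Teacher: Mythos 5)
Your proposal is sound, but it proves the statement by a genuinely different route than the paper. The paper's proof is a short reduction to the Bushnell--Henniart classification machinery: after replacing $\psi$ by its restriction $\psi'$ to $\varpi^{k-1}\cO_{F_\frap}/\varpi^k\cO_{F_\frap}$ (where $k$ is minimal with $\psi$ trivial on $\varpi^k\cO_{F_\frap}$), the pair $(U_{m+k-1},\psi'\circ\alpha_{m+k-1})$ is recognized as a \emph{ramified simple stratum}, and cuspidality then follows from the cited results of \cite{BushnellHenniart} on strata contained in irreducible representations. Your argument is instead an elementary, self-contained Mackey/Frobenius computation showing directly that $\psi\circ\alpha_m$ cannot occur in any $\mathrm{Ind}_B^G\chi$; what it buys is independence from the type-theoretic classification, at the cost of an explicit orbit-by-orbit verification. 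That verification does go through: writing $g=\left(\begin{smallmatrix}A&\ast\\ C&\ast\end{smallmatrix}\right)$ with $\min(v(A),v(C))=0$, one computes $gNg^{-1}=\{1+t\left(\begin{smallmatrix}-AC&A^2\\-C^2&AC\end{smallmatrix}\right)\}$, the intersection with $U_m$ is the line $t\in\varpi^{m+1}\cO_{F_\frap}$ (if $C$ is a unit) or $t\in\varpi^m\cO_{F_\frap}$ (if $A$ is a unit), and on it $\alpha_m$ is multiplication by $(\varpi A^2-C^2)/\varpi^{m+1}$ resp.\ $(A^2-C^2/\varpi)/\varpi^m$, whose relevant factor is always a unit because $\varpi A^2-C^2$ cannot have both terms of equal valuation; so $\alpha_m$ surjects onto $\cO_{F_\frap}/\varpi^m$ on every such line and any nontrivial $\psi$ kills every Mackey summand. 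Two small corrections: $U_m\backslash G/B$ is \emph{finite}, not infinite, since $U_m$-orbits on the compact space $\mathbb{P}^1(F_\frap)$ are open (this also makes the Mackey decomposition unproblematic); and you should say explicitly that a non-cuspidal irreducible is a subquotient of $\mathrm{Ind}_B^G\chi$ for $\chi$ a character of the torus inflated to $B$, which combined with exactness of the $(U_m,\psi\circ\alpha_m)$-isotypic projection completes the contrapositive.
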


\begin{proof} Assume that $\psi$ is trivial on $\varpi^k \cO_{F_\frap}$ with $k$ minimal, so $0<k\leq m$. Let $\psi^\prime: \cO_{F_\frap} / \varpi \cO_{F_\frap}\to \C^\times$ be the restriction of $\psi$ to $\varpi^{k-1} \cO_{F_\frap} / \varpi^k \cO_{F_\frap}$. Then $\pi|_{U_{m+k-1}}$ contains the character $\psi^\prime\circ \alpha_{m+k-1}$. But this corresponds to a ramified simple stratum in the sense of \cite[Definition 13.1]{BushnellHenniart}, and so any representation $\pi$ containing the character $\psi^\prime\circ \alpha_{m+k-1}$ of $U_{m+k-1}$ is cuspidal. Namely, $\ell(\pi)>0$ by \cite[12.9 Theorem]{BushnellHenniart} and $\pi$ cannot contain an essentially scalar stratum by \cite[13.2 Proposition (1), 11.1 Proposition 1]{BushnellHenniart}, thus is cuspidal by \cite[14.5 Theorem, 13.3 Theorem]{BushnellHenniart}.
\end{proof}

Let $e$ be the ramification index of $[F_\frap:\Q_p]$, and fix a surjection $\beta_m: \cO_{F_\frap} / \varpi^{me}\to \Z/p^m\Z$. In particular, we get the following corollary.

\begin{cor}\label{ConstructionTypes} Let $A_m=\Z_p[T]/\left((T^{p^m}-1)/(T-1)\right)$. Define a character $\psi_m: U_{me}\to A_m^\times$ by composing
\[
\beta_m\circ \alpha_{me}: U_{me}\to \cO_{F_\frap} / \varpi^{me}\to \Z/p^m\Z
\]
with the map sending $1\in \Z/p^m\Z$ to $T\in A_m^\times$. Any automorphic representation of $G$ appearing in
\[
C^0(G(F)\backslash G(\A_{F,f}) / U_{me}\times U^\frap,\psi_m)[1/p]
\]
is cuspidal at $\frap$.$\hfill \Box$
\end{cor}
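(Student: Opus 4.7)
The plan is to reduce the statement directly to Proposition \ref{BasicTypes}, applied with the parameter ``$m$'' there replaced by $me$. The only content beyond this reduction is to verify that after extending scalars along a suitable embedding of $A_m[1/p]$ into $\C$, the character $\psi_m$ becomes a character of $U_{me}$ of the form $\psi\circ \alpha_{me}$ with $\psi: \cO_{F_\frap}/\varpi^{me}\cO_{F_\frap}\to \C^\times$ \emph{nontrivial}, so that the hypothesis of Proposition \ref{BasicTypes} is satisfied.

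For this, I would first unravel the ring $A_m[1/p]$. From the factorization $(T^{p^m}-1)/(T-1) = \prod_{k=1}^m \Phi_{p^k}(T)$ into cyclotomic polynomials over $\Q_p$, one obtains an isomorphism
\[
A_m[1/p] \cong \prod_{k=1}^m \Q_p(\zeta_{p^k})\ .
\]
Consequently, any ring embedding $\iota: A_m[1/p]\hookrightarrow \C$ factors through exactly one of these factors and sends $T$ to a primitive $p^k$-th root of unity for some $1\leq k\leq m$; in particular $\iota(T)\neq 1$.

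Now let $\pi$ be an irreducible automorphic representation of $G$ appearing in the given space, and fix an embedding $\iota$ through which $\pi$ (viewed over $\C$) occurs. Then $\pi$ embeds into $C^0(G(F)\backslash G(\A_{F,f})/U_{me}\times U^\frap,\iota\circ \psi_m)$, so the local component $\pi_\frap$ contains the character $\iota\circ \psi_m$ of $U_{me}$. By construction this character equals $\psi\circ \alpha_{me}$, where $\psi$ is the composition
\[
\cO_{F_\frap}/\varpi^{me}\cO_{F_\frap}\xrightarrow{\beta_m}\Z/p^m\Z\longrightarrow \C^\times,\quad 1\mapsto \iota(T)\ .
\]
Since $\iota(T)$ is a nontrivial $p$-power root of unity and $\beta_m$ is surjective, $\psi$ is a nontrivial character. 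Proposition \ref{BasicTypes} then shows that $\pi_\frap$ is cuspidal, which is the claim. I do not foresee any real obstacle: the argument is a direct reduction, the only subtle point being the bookkeeping around $A_m$ that ensures the character produced after base change to $\C$ is genuinely nontrivial on $\cO_{F_\frap}/\varpi^{me}$.
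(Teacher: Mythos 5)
Your argument is correct and is exactly the intended (and in the paper, omitted) justification: the whole point of taking $A_m=\Z_p[T]/\bigl((T^{p^m}-1)/(T-1)\bigr)$ is that every embedding $A_m[1/p]\hookrightarrow\C$ sends $T$ to a \emph{nontrivial} $p$-power root of unity, so the resulting character $\psi$ of $\cO_{F_\frap}/\varpi^{me}$ is nontrivial and Proposition \ref{BasicTypes} (with parameter $me$) applies. Nothing to add.
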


\begin{cor} Let $\T(U^\frap)_\mm$ be defined as in Section \ref{ShimuraCurves}, so that it acts faithfully on $H^1(U^\frap,\Q_p/\Z_p)_\mm$. The natural action of $\T$ on
\[
\pi_{U^\frap,\mm} = C^0(G(F)\backslash G(\A_{F,f}) / U^\frap,\Q_p/\Z_p)_\mm
\]
extends to a continuous action of $\T(U^\frap)_\mm$.
\end{cor}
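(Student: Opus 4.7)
The plan is to show that any $t\in\T$ annihilating $\rho_{U^\frap,\mm}$ also annihilates $\pi_{U^\frap,\mm}$; equivalently, that for every open compact $K\subset\GL_2(F_\frap)$ and every $n\geq 1$, the image of $\T$ in $\End(\pi_{U^\frap,\mm}^K[p^n])$ is a quotient of $\T(U^\frap)_\mm$. This reduction is justified by the admissibility of $\pi_{U^\frap,\mm}$, which presents it as a filtered colimit of the finite $\Z/p^n$-modules $\pi_{U^\frap,\mm}^K[p^n]$, together with the fact that $\T(U^\frap)_\mm$ acts faithfully on $\widehat{H}^1(U^\frap,\Z_p)_\mm$.

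The main input is the combination of Corollary \ref{ConstructionTypes} with the Jacquet--Langlands correspondence. For each $m\geq 1$ and each nontrivial character $\psi$ of $\Z/p^m\Z$, one obtains (after extending scalars to $\Z_p[\psi]$) a character $\psi\circ\beta_m\circ\alpha_{me}$ of $U_{me}$ and the corresponding $\T$-stable subquotient
\[
S^\psi_{U^\frap} := C^0\bigl(G(F)\backslash G(\A_{F,f})/U_{me}U^\frap,\ \psi\circ\beta_m\circ\alpha_{me}\otimes\Q_p/\Z_p\bigr)_\mm
\]
of $\pi_{U^\frap,\mm}\otimes_{\Z_p}\Z_p[\psi]$. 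By Corollary \ref{ConstructionTypes}, every automorphic representation contributing to $S^\psi_{U^\frap}[1/p]$ is cuspidal at $\frap$, hence transfers via Jacquet--Langlands to an automorphic representation of $D^\times$ with the same Hecke eigenvalues at unramified places. The eigensystems on $S^\psi_{U^\frap}$ thereby appear in $\rho_{U^\frap,\mm}$ after a finite coefficient extension, so the image of $\T$ acting on $S^\psi_{U^\frap}$ is a quotient of $\T(U^\frap)_\mm$.

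To conclude, one shows that as $m$ and $\psi$ vary (together with $\GL_2(F_\frap)$-conjugates of the groups $U_{me}$), the spaces $S^\psi_{U^\frap}$ $\T$-generate all of $\pi_{U^\frap,\mm}$. The ``complement'' of these type-isotypic pieces consists of functions $f$ whose stabilizer in $\GL_2(F_\frap)$ contains $\ker\alpha_{me}$ for all $m$ (and its conjugates); the closed subgroup generated by all such kernels contains $\SL_2(F_\frap)$, so any such $f$ factors through the determinant $G(\A_{F,f})\to\A_{F,f}^\times$. Hecke eigensystems on such Eisenstein contributions correspond to Hecke characters and thus give rise to reducible Galois pseudocharacters; since $\overline{\sigma}$ is absolutely irreducible, the maximal ideal $\mm=\mm_{\overline{\sigma}}$ does not lie in the support of the Eisenstein part, so the latter vanishes after $\mm$-localization.

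The main obstacle is making the final exhaustion step rigorous. The decomposition of $\pi_{U^\frap,\mm}^K[p^n]$ into $U_{me}$-isotypic components is not straightforward because $U_{me}$ is pro-$p$ and the coefficients are $p$-power torsion, so the category of representations is far from semisimple. A clean argument likely uses Pontryagin duality together with an inductive limit over $m$, reducing the claim to a density statement: the cuspidal-type submodules detect every non-Eisenstein irreducible constituent of the admissible $\GL_2(F_\frap)$-representation $\pi_{U^\frap,\mm}$, which in turn follows from the observation that any smooth irreducible admissible $\F_p$-representation of $\GL_2(F_\frap)$ not factoring through the determinant contains a vector on which some $U_{me}$ acts by a nontrivial character of the form $\psi_m\circ\alpha_{me}$.
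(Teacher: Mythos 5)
Your reduction to finite pieces, and your use of Corollary \ref{ConstructionTypes} together with Jacquet--Langlands to handle the cuspidal-type pieces, are in the spirit of the paper's argument. But the final exhaustion step --- that the $\psi$-isotypic subspaces for nontrivial characters $\psi$ of $\Z/p^m\Z$, together with an ``Eisenstein complement'' killed by localization at $\mm$, generate $\pi_{U^\frap,\mm}$ --- is a genuine gap, and the fix you gesture at cannot work. Any character of $U_{me}$ factoring through $\alpha_{me}$ has source a pro-$p$ group, so its reduction to $\overline{\F}_p^\times$ is trivial; hence there are no nontrivial characters of the form $\psi_m\circ\alpha_{me}$ on an $\F_p$-representation, and the density statement closing your proposal is vacuous. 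More generally, smooth representations of pro-$p$ groups on $p$-power-torsion modules always have nonzero invariants and admit no isotypic decomposition, which is exactly the obstruction you name but do not overcome. A secondary issue: with $\Q_p/\Z_p$-coefficients your $S^\psi_{U^\frap}$ is torsion, so $S^\psi_{U^\frap}[1/p]=0$ and the characteristic-zero transfer argument does not apply to it as written; one needs the integral ($\cO$-valued) version.

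The paper's resolution is a congruence trick that bypasses any decomposition. One works over $A_m=\Z_p[T]/\left((T^{p^m}-1)/(T-1)\right)$ with the character $\psi_m$ sending $1\in\Z/p^m\Z$ to $T\in A_m^\times$. This character is genuinely nontrivial after inverting $p$, so by Corollary \ref{ConstructionTypes} every automorphic constituent of $M=C^0(G(F)\backslash G(\A_{F,f})/U_{me}U^\frap,\psi_m)_\mm$ is cuspidal at $\frap$ and transfers to $D^\times$; the module $M$ is $p$-torsion free, so its Hecke action is detected in characteristic zero; and since $A_m/(T-1)\cong\Z/p^m\Z$ with $\psi_m\equiv 1\pmod{T-1}$, reduction modulo $(T-1)$ gives a $\T$-equivariant surjection from $M$ onto the \emph{entire} space $C^0(G(F)\backslash G(\A_{F,f})/U_{me}U^\frap,\Z/p^m\Z)_\mm$. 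As the $U_{me}$ are cofinal among open compacts, this handles every finite piece of $\pi_{U^\frap,\mm}$ at once, and no Eisenstein complement ever appears. Your argument needs this congruence between a nontrivial type and the trivial character (or an equivalent device) to close the gap.
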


\begin{rem} One can deduce from this Corollary the existence of Galois representations for Hilbert modular forms which are nowhere discrete series, assuming only its existence for forms which are discrete series at $\frap$. Thus, this provides an alternative argument for Taylor's construction of these Galois representations, \cite{TaylorHilbertGalRepr}, and it seems reasonable to expect that one could do a similar argument in the compact unitary case, providing an alternative to the construction of Galois representations of Shin, \cite{ShinGalRepr}, and Chenevier-Harris, \cite{ChenevierHarris}, by reducing directly to the representations constructed by Harris-Taylor, \cite{HarrisTaylor}.
\end{rem}

\begin{proof} It is enough to check this for each group
\[
C^0(G(F)\backslash[G(\A_{F,f}) / K^\prime U^\frap],\Z/p^m\Z)_\mm\ .
\]
We may assume that $K^\prime = U_{me}$ is of the form in Corollary \ref{ConstructionTypes}, as these groups are cofinal. (As we use only one $m$, we may have to increase simulateneously $m$ in the coefficients $\Z/p^m\Z$ for this.) In that case, $\Z/p^m\Z\cong A_m/(T-1)$, and $\psi_m\mod (T-1)$ is trivial. Thus, there is a $\T$-equivariant surjection
\[
C^0(G(F)\backslash[G(\A_{F,f}) / U_{me} U^\frap],\psi_m)_\mm\to C^0(G(F)\backslash[G(\A_{F,f}) / U_{me} U^\frap],\Z/p^m\Z)_\mm\ .
\]
We see that it suffices to show that the action of $\T$ on
\[
M=C^0(G(F)\backslash[G(\A_{F,f}) / U_{me} U^\frap],\psi_m)_\mm
\]
extends to a continuous action of $\T(U^\frap)_\mm$. But $M$ is $p$-torsion free, so it suffices to check in characteristic $0$. There, the result follows by observing that by Corollary \ref{ConstructionTypes}, all automorphic representations of $G$ appearing in $M[1/p]$ are cuspidal at $\frap$, and thus transfer to $D^\times$, where they show up in the cohomology group
\[
H^1(\Sh_{K_m U^\frap,\C},\Z_p)_\mm
\]
for $K_m$ sufficiently small. As $\T(U^\frap)_\mm\twoheadrightarrow \T(K_m U^\frap)_\mm$ acts by definition continuously on $H^1(\Sh_{K_m U^\frap,\C},\Z_p)_\mm$, the result follows.
\end{proof}

Recall that there is a $2$-dimensional Galois representation
\[
\sigma: \Gal_{F,S}\to \GL_2(\T(U^\frap)_\mm)\ ,
\]
and that by Proposition \ref{CompH1Typic}, $\rho_{U^\frap,\mm}$ is $\sigma$-typic, so
\[
\rho_{U^\frap,\mm} = \sigma\otimes_{\T(U^\frap)_\mm} \rho_{U^\frap}[\sigma]
\]
for some $\T(U^\frap)_\mm[D^\times_\frap]$-module $\rho_{U^\frap}[\sigma]$.

Summing up, we have the following result.

\begin{cor} There is a canonical $\T(U^\frap)_\mm[\Gal_{F_\frap}\times D^\times_\frap]$-equivariant isomorphism
\[
H^1_\et(\mathbb{P}^1_{\C_p},\mathcal{F}_{\pi_{U^\frap,\mm}}) \cong \sigma|_{\Gal_{F_\frap}}\otimes_{\T(U^\frap)_\mm} \rho_{U^\frap}[\sigma]\ .
\]
The $\T(U^\frap)_\mm$-module $\rho_{U^\frap}[\sigma]$ is faithful.$\hfill \Box$
\end{cor}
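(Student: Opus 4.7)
The plan is to $\mm$-localize Theorem~\ref{LocalGlobal}, combine this with a $\sigma$-typic decomposition of $\rho_{U^\frap,\mm}$ (obtained by reducing Proposition~\ref{CompH1Typic} from $\widehat{H}^1$ to $\rho$ via the universal coefficient sequence), and then deduce faithfulness of $\rho_{U^\frap}[\sigma]$ from the known faithful Hecke action on $\rho_{U^\frap,\mm}$.

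First I would verify that $\mm$-localization commutes with $\pi\mapsto \mathcal{F}_\pi$ and with $H^i_\et(\mathbb{P}^1_{\C_p},-)$. Exactness of $\pi\mapsto \mathcal{F}_\pi$ is Proposition~\ref{DefFPi}, and since $\mm$-localization of a $\T$-module is a filtered colimit while the site $(\mathbb{P}^1_{\C_p}/D^\times)_\et$ is algebraic and $\mathbb{P}^1_{\C_p}$ is quasicompact quasiseparated, \'etale cohomology commutes with this colimit. Applied to the $i=1$ case of Theorem~\ref{LocalGlobal}, this produces a canonical $\T(U^\frap)_\mm[\Gal_F\times D^\times_\frap]$-equivariant isomorphism $H^1_\et(\mathbb{P}^1_{\C_p},\mathcal{F}_{\pi_{U^\frap,\mm}})\cong \rho_{U^\frap,\mm}$.

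Next comes the $\sigma$-typicity: since $H^0_\mm=H^2_\mm=0$ (by absolute irreducibility of $\overline{\sigma}$ together with Poincar\'e duality), for each $K$ the universal coefficient sequence gives
\[
H^1(\Sh_{KU^\frap,\C},\Q_p/\Z_p)_\mm\cong H^1(\Sh_{KU^\frap,\C},\Z_p)_\mm\otimes_{\Z_p}\Q_p/\Z_p,
\]
which is $\sigma$-typic over $\T(KU^\frap)_\mm$ by Theorem~\ref{H1Typic} and the stability of typicity under tensoring by an $R$-module (formal from Proposition~\ref{TypicEquiv}). Passing to the colimit over $K$ (with the Galois representations compatible under $\T(U^\frap)_\mm\twoheadrightarrow \T(KU^\frap)_\mm$) shows that $\rho_{U^\frap,\mm}$ is $\sigma$-typic as a $\T(U^\frap)_\mm[\Gal_{F,S}]$-module. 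Setting
\[
\rho_{U^\frap}[\sigma]:=\Hom_{\T(U^\frap)_\mm[\Gal_{F,S}]}(\sigma,\rho_{U^\frap,\mm}),
\]
which carries a canonical $D^\times_\frap$-action commuting with the Hecke and Galois actions, Proposition~\ref{TypicEquiv} gives a canonical isomorphism $\rho_{U^\frap,\mm}\cong \sigma\otimes_{\T(U^\frap)_\mm}\rho_{U^\frap}[\sigma]$. Restricting the Galois action to $\Gal_{F_\frap}$ and combining with the previous paragraph yields the asserted formula.

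For faithfulness, since $\sigma$ is free of rank $2$ over $\T(U^\frap)_\mm$, one has $\rho_{U^\frap,\mm}\cong \rho_{U^\frap}[\sigma]^{\oplus 2}$ as $\T(U^\frap)_\mm$-modules, so it suffices that $\T(U^\frap)_\mm$ acts faithfully on $\rho_{U^\frap,\mm}$. If $t\in \T(U^\frap)_\mm$ kills $\rho_{U^\frap,\mm}$, then its image $t_K\in \T(KU^\frap)_\mm$ kills $M_K\otimes_{\Z_p}\Q_p/\Z_p$ with $M_K:=H^1(\Sh_{KU^\frap,\C},\Z_p)_\mm$; since $M_K$ is $p$-torsion free and $p$-adically separated (being finitely generated over the complete local ring $\T(KU^\frap)_\mm$, whose maximal ideal contains $p$), this forces $t_K M_K\subset \bigcap_n p^n M_K=0$, so $t_K=0$ by the definition of $\T(KU^\frap)_\mm$ acting faithfully on $M_K$, and thus $t=0$ in the inverse limit. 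The main subtlety of the whole argument is a clean execution of the colimit interchanges in the first two steps, but the structural inputs (exactness of $\pi\mapsto \mathcal{F}_\pi$, algebraicity of the equivariant site, and the typic formalism) are already in place.
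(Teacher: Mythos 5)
Your proposal is correct and follows essentially the route the paper intends: the corollary is stated as an immediate consequence of Theorem \ref{LocalGlobal} localized at $\mm$, the $\sigma$-typic decomposition of $\rho_{U^\frap,\mm}$ (Proposition \ref{CompH1Typic} together with Proposition \ref{TypicEquiv}), and the faithfulness of the Hecke action on cohomology, which is exactly what you assemble. The only step left implicit in your faithfulness argument is that $M_K\otimes_{\Z_p}\Q_p/\Z_p = H^1(\Sh_{KU^\frap,\C},\Q_p/\Z_p)_\mm$ injects into the colimit $\rho_{U^\frap,\mm}$ (so that $t$ killing the colimit forces $t_K$ to kill $M_K\otimes\Q_p/\Z_p$); this holds because the kernels of the transition maps are controlled by $H^0(\Sh_{K'U^\frap,\C},\Q_p/\Z_p)_\mm=0$ via Hochschild--Serre, using absolute irreducibility of $\overline{\sigma}$.
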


In particular this implies that the localization $\pi_{U^\frap,\mm}$ determines the representation
\[
\sigma|_{\Gal_{F_\frap}}: \Gal_{F_\frap}\to \GL_2(\T(U^\frap)_\mm)\ ,
\]
at least if $\overline{\sigma}|_{\Gal_{F_\frap}}$ is absolutely irreducible.

\begin{thm} Assume that $\overline{\sigma}|_{\Gal_{F_\frap}}$ is absolutely irreducible.\footnote{By Theorem \ref{MainThmModP} below, this can be determined in terms of $\pi_{U^\frap}[\mm]$.} Then
\[
\sigma|_{\Gal_{F_\frap}}: \Gal_{F_\frap}\to \GL_2(\T(U^\frap)_\mm)
\]
is determined by $\pi_{U^\frap,\mm}$. More precisely, the $\T(U^\frap)_\mm[\Gal_{F_\frap}]$-module
\[
H^1_\et(\mathbb{P}^1_{\C_p},\mathcal{F}_{\pi_{U^\frap,\mm}})
\]
is $\sigma|_{\Gal_{F_\frap}}$-typic, and faithful as $\T(U^\frap)_\mm$-module; this determines $\sigma|_{\Gal_{F_\frap}}$ by Lemma \ref{TypicDeterminesRepr} above.$\hfill \Box$
\end{thm}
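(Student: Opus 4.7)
The statement is essentially a direct packaging of the preceding corollary with Lemma \ref{TypicDeterminesRepr}, so the plan is short. The input data is the admissible smooth $\T(U^\frap)_\mm[\GL_2(F_\frap)]$-module $\pi_{U^\frap,\mm}$. From this data alone, one builds the sheaf $\mathcal{F}_{\pi_{U^\frap,\mm}}$ on $(\mathbb{P}^1_{\breve{F}}/D^\times)_\et$ using Proposition \ref{DefFPi} (applied in the generalized setup of Section~4), and hence its $\Gal_{F_\frap}\times D^\times_\frap$-representation on $H^1_\et(\mathbb{P}^1_{\C_p},\mathcal{F}_{\pi_{U^\frap,\mm}})$ via Theorems \ref{FinitenessBaseRing} and \ref{GalAction}. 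The $\T(U^\frap)_\mm$-action on this cohomology comes from functoriality in $\pi_{U^\frap,\mm}$. Thus the $\T(U^\frap)_\mm[\Gal_{F_\frap}]$-module $M := H^1_\et(\mathbb{P}^1_{\C_p},\mathcal{F}_{\pi_{U^\frap,\mm}})$ is intrinsic to $\pi_{U^\frap,\mm}$.

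Next, I would invoke the preceding corollary, which gives the identification
\[
M \;\cong\; \sigma|_{\Gal_{F_\frap}}\otimes_{\T(U^\frap)_\mm} \rho_{U^\frap}[\sigma]
\]
together with the statement that the coefficient module $\rho_{U^\frap}[\sigma]$ is faithful over $\T(U^\frap)_\mm$. In particular $M$ is $\sigma|_{\Gal_{F_\frap}}$-typic in the sense of Definition \ref{DefTypic}, with typical module $\rho_{U^\frap}[\sigma]$, and $M$ is faithful as a $\T(U^\frap)_\mm$-module (since tensoring a faithful module with any nonzero representation preserves faithfulness — alternatively, faithfulness of $\rho_{U^\frap}[\sigma]$ together with $\sigma$ being free of rank two on the typical factor shows $\Ann_{\T(U^\frap)_\mm}(M) = \Ann_{\T(U^\frap)_\mm}(\rho_{U^\frap}[\sigma]) = 0$).

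Finally, I apply Lemma \ref{TypicDeterminesRepr} with $R = \T(U^\frap)_\mm$ and $G = \Gal_{F_\frap}$: the ring $\T(U^\frap)_\mm$ is a complete noetherian local ring, hence henselian; the hypothesis that $\overline{\sigma}|_{\Gal_{F_\frap}}$ is absolutely irreducible ensures that the residual representation of $\sigma|_{\Gal_{F_\frap}}$ is absolutely irreducible, so Lemma \ref{TypicDeterminesRepr} applies to $M$; and the conclusion is precisely that any two representations $\sigma_R, \sigma_R'$ with absolutely irreducible reduction making $M$ typic must be isomorphic. Thus $\sigma|_{\Gal_{F_\frap}}$ is determined up to conjugation by the pair $(\T(U^\frap)_\mm,M)$, and hence by $\pi_{U^\frap,\mm}$. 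There is really no obstacle here once the preceding corollary is in hand; the only minor point worth spelling out is that faithfulness of $M$ as $\T(U^\frap)_\mm$-module follows from faithfulness of $\rho_{U^\frap}[\sigma]$, which was already recorded in the corollary.
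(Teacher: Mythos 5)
Your proposal is correct and follows exactly the route the paper intends: the theorem is stated with an immediate $\Box$ precisely because it is the conjunction of the preceding corollary (which supplies the $\sigma|_{\Gal_{F_\frap}}$-typic decomposition with faithful coefficient module $\rho_{U^\frap}[\sigma]$) and Lemma \ref{TypicDeterminesRepr}. Your parenthetical claim that tensoring a faithful module with any nonzero representation preserves faithfulness is false in that generality, but the alternative you give --- that $\sigma$ is free of rank two over $\T(U^\frap)_\mm$, so $M\cong \rho_{U^\frap}[\sigma]^{\oplus 2}$ as a $\T(U^\frap)_\mm$-module and the annihilators coincide --- is the right justification, so nothing is missing.
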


We want to pass from information about the localization $\pi_{U^\frap}$ at $\mm$ to the $\mm$-torsion $\pi_{U^\frap}[\mm]$. For this, observe the following.

\begin{prop} For any ideal $I\subset \T(U^\frap)_\mm$, the natural map
\[
H^1_\et(\mathbb{P}^1_{\C_p},\mathcal{F}_{\pi_{U^\frap,\mm}[I]})\to H^1_\et(\mathbb{P}^1_{\C_p},\mathcal{F}_{\pi_{U^\frap,\mm}})[I]
\]
is injective, and the action of $(\cO_D^\times)_1$ on the cokernel is trivial, where $(\cO_D^\times)_1\subset \cO_D^\times$ denotes the subgroup of elements of reduced norm $1$.
\end{prop}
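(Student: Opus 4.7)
The plan is to compare the two cohomology groups via the long exact sequence associated to the short exact sequence of admissible $\T(U^\frap)_\mm[\GL_2(F_\frap)]$-modules
\[
0\to \pi_{U^\frap,\mm}[I]\to \pi_{U^\frap,\mm}\to \pi_{U^\frap,\mm}/\pi_{U^\frap,\mm}[I]\to 0\ ,
\]
exploiting exactness of $V\mapsto \mathcal{F}_V$ from Proposition~\ref{DefFPi}. Write $V = \pi_{U^\frap,\mm}$ and $\overline{V} = V/V[I]$. Since any element in the image of $H^1(\mathcal{F}_{V[I]})\to H^1(\mathcal{F}_V)$ is automatically $I$-torsion, the kernel of the map of interest coincides with the kernel of $H^1(\mathcal{F}_{V[I]})\to H^1(\mathcal{F}_V)$, which by the long exact sequence is the cokernel of $H^0(\mathcal{F}_V)\to H^0(\mathcal{F}_{\overline{V}})$.

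For the cokernel of the map, fix generators $f_1,\ldots,f_r$ of $I$ (available since $\T(U^\frap)_\mm$ is noetherian) and form the short exact sequence $0\to \overline{V}\xrightarrow{(f_1,\ldots,f_r)} V^r\to W\to 0$. A diagram chase with its long exact sequence identifies the cokernel of $H^1(\mathcal{F}_{V[I]})\to H^1(\mathcal{F}_V)[I]$ with a subquotient of the image of $H^0(\mathcal{F}_W)$ in $H^1(\mathcal{F}_{\overline{V}})$. Proposition~\ref{ComputeH0} identifies $H^0(\mathcal{F}_W)\cong W^{\SL_2(F_\frap)}$ with the $D^\times$-action factoring through the reduced norm to $F_\frap^\times$; in particular $(\cO_D^\times)_1$ acts trivially. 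Since all relevant maps are $D^\times$-equivariant, the cokernel inherits trivial $(\cO_D^\times)_1$-action, proving the second assertion.

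For injectivity, it suffices to show $\overline{V}^{\SL_2(F_\frap)} = 0$. I first claim $V^{\SL_2(F_\frap)} = 0$. Any irreducible smooth $\F_p$-representation of $\GL_2(F_\frap)$ with an $\SL_2(F_\frap)$-fixed vector must factor through the determinant, so the $\GL_2(F_\frap)$-subrepresentation of $\pi_{U^\frap}$ generated by any $\SL_2(F_\frap)$-invariant vector is a sum of characters. By strong approximation for $G^1 = \ker\bigl(\det\colon G\to \mathbb{G}_m\bigr)$ at the non-compact place $\frap$, such characters correspond (via the determinant) to characters of $F^\times\backslash\A_{F,f}^\times$, and the associated $2$-dimensional Galois pseudo-representations are direct sums $\chi_1\oplus\chi_2$. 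By the absolute irreducibility of $\overline{\sigma}$, no such eigensystem lies at $\mm$, so $\pi_{U^\frap}^{\SL_2(F_\frap)}_\mm = 0$; since $\SL_2(F_\frap)$-invariants commute with the $\mm$-localization, $V^{\SL_2(F_\frap)} = 0$. Finally, one deduces $\overline{V}^{\SL_2(F_\frap)} = 0$: any $\bar v = v+V[I]\in \overline{V}^{\SL_2(F_\frap)}$ satisfies $gv - v\in V[I]$ for all $g\in \SL_2(F_\frap)$, and applying $f\in I$ and using $\T$-linearity of the $\GL_2(F_\frap)$-action gives $g(fv) = fv$, so $fv\in V^{\SL_2(F_\frap)} = 0$; hence $v\in V[I]$ and $\bar v = 0$.

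The main technical obstacle is the vanishing of $\pi_{U^\frap,\mm}^{\SL_2(F_\frap)}$, which combines strong approximation with the absolute irreducibility hypothesis on $\overline{\sigma}$. The remainder is a formal diagram chase that systematically exploits the description of $H^0$ furnished by Proposition~\ref{ComputeH0}.
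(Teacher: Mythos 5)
Your proof is correct, and for the cokernel statement it is the same as the paper's: both arguments run the long exact sequences attached to $0\to V[I]\to V\to \overline V\to 0$ and $0\to \overline V\to V^{r}\to W\to 0$ (with $V=\pi_{U^\frap,\mm}$), identify the cokernel with a subquotient of $H^0_\et(\mathbb{P}^1_{\C_p},\mathcal{F}_{W})$, and invoke Proposition \ref{ComputeH0} to see that $(\cO_D^\times)_1$ acts trivially there. The genuine difference is in how you establish the vanishing of $H^0_\et(\mathbb{P}^1_{\C_p},\mathcal{F}_{\overline V})$ needed for injectivity. The paper uses the local-global comparison: $H^0_\et(\mathbb{P}^1_{\C_p},\mathcal{F}_{\pi_{U^\frap,\mm}})=H^0(\Sh_{U^\frap,\C_p},\Q_p/\Z_p)_\mm$, which vanishes because the degree-zero cohomology of the Shimura curves only carries reducible (abelian) Hecke eigensystems, and then passes to $\overline V$ via the embedding $\overline V\hookrightarrow \prod_{i=1}^{r} V$ and left-exactness of $H^0$. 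You instead stay entirely on the definite side: Proposition \ref{ComputeH0} reduces the vanishing to $\overline V^{\SL_2(F_\frap)}=0$, which you deduce from $V^{\SL_2(F_\frap)}=0$ by the correct observation that $fv$ is $\SL_2(F_\frap)$-invariant for $f\in I$ whenever $v$ is invariant modulo $V[I]$, and $V^{\SL_2(F_\frap)}=0$ is the standard Eisenstein argument: by strong approximation for $G^1$ at $\frap$, such invariants factor through the reduced norm, so $T_v$ acts by $(q_v+1)\chi(\varpi_v)$ and $S_v$ by $\chi(\varpi_v)^2$, giving a reducible associated representation $\tilde\chi\oplus\tilde\chi\chi_\cycl$, incompatible with the absolute irreducibility of $\overline{\sigma}$ at $\mm$. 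Your route has the merit of not needing Theorem \ref{LocalGlobal} for the injectivity half and of proving the vanishing of $\overline V^{\SL_2(F_\frap)}$ directly rather than via the auxiliary embedding; the paper's is shorter only because the relevant vanishing was already recorded in Section \ref{ShimuraCurves}. The only point I would ask you to expand in a final write-up is the strong approximation step (the passage from $\SL_2(F_\frap)$-invariance to invariance under $G^1(\A_{F,f})$ uses continuity of the functions and density of $G^1(F)G^1(F_\frap)$ in $G^1(\A_{F,f})$), which you state but do not justify.
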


\begin{proof} The group
\[
H^0(\mathbb{P}^1_{\C_p},\mathcal{F}_{\pi_{U^\frap,\mm}}) = H^0(\Sh_{U^\frap,\C_p},\Q_p/\Z_p)_\mm
\]
is trivial, as $\overline{\sigma}$ is absolutely irreducible (as global $\Gal_F$-representation). Now note that if $I=(f_1,\ldots,f_m)$ is a sequence of generators, then they give an embedding
\[
\pi_{U^\frap,\mm} / \pi_{U^\frap,\mm}[I]\hookrightarrow \prod_{i=1}^m \pi_{U^\frap,\mm}\ ;
\]
let $\overline{\pi}$ be its cokernel. The displayed injection implies that
\[
H^0(\mathbb{P}^1_{\C_p},\mathcal{F}_{\pi_{U^\frap,\mm}/\pi_{U^\frap,\mm}[I]}) = 0\ ,
\]
from which one gets injectivity of the map in the proposition.

To see that $(\cO_D^\times)_1$ acts trivially on the cokernel, note that the cokernel injects into the kernel of
\[
H^1_\et(\mathbb{P}^1_{\C_p},\mathcal{F}_{\pi_{U^\frap,\mm}/\pi_{U^\frap,\mm}[I]})\to H^1_\et(\mathbb{P}^1_{\C_p},\mathcal{F}_{\prod_{i=1}^m \pi_{U^\frap,\mm}})\ .
\]
But this kernel admits a surjection from $H^0(\mathbb{P}^1_{\C_p},\mathcal{F}_{\overline{\pi}})$. On such groups, $(\cO_D^\times)_1$ acts trivially by Proposition \ref{ComputeH0}.
\end{proof}

\begin{thm}\label{MainThmModP} The $2$-dimensional $\Gal_{F_\frap}$-representation $\overline{\sigma}|_{\Gal_{F_\frap}}$ is determined (up to isomorphism) by the admissible $\GL_2(F_\frap)$-representation
\[
\pi_{U^\frap}[\mm] = C^0( G(F)\backslash G(\A_{F,f}) / U^\frap , \F_q)[\mm]\ .
\]
More precisely, $\overline{\sigma}|_{\Gal_{F_\frap}}$ can be read off from the $\Gal_{F_\frap}$-representation
\[
H^1_\et(\mathbb{P}^1_{\C_p},\mathcal{F}_{\pi_{U^\frap}[\mm]})\ ,
\]
which is an infinite-dimensional admissible $\Gal_{F_\frap}\times D^\times_\frap$-representation. Any indecomposable $\Gal_{F_\frap}$-subrepresentation of $H^1_\et(\mathbb{P}^1_{\C_p},\mathcal{F}_{\pi_{U^\frap}[\mm]})$ is of dimension $\leq 2$, and
$\overline{\sigma}|_{\Gal_{F_\frap}}$ is determined in the following way.
\begin{altenumerate}
\item[{\rm Case (i)}] If there is a $2$-dimensional indecomposable $\Gal_{F_\frap}$-representation
\[
\sigma^\prime\subset H^1_\et(\mathbb{P}^1_{\C_p},\mathcal{F}_{\pi_{U^\frap}[\mm]})\ ,
\]
then $\overline{\sigma}|_{\Gal_{F_\frap}} = \sigma^\prime$.
\item[{\rm Case (ii)}] Otherwise, $H^1_\et(\mathbb{P}^1_{\C_p},\mathcal{F}_{\pi_{U^\frap}[\mm]})$ is a direct sum of characters of $\Gal_{F_\frap}$, and at most two different characters $\chi_1,\chi_2$ of $\Gal_{F_\frap}$ appear; if only one appears, let $\chi_2=\chi_1$ be the only character appearing. Then $\overline{\sigma}|_{\Gal_{F_\frap}} = \chi_1\oplus \chi_2$.
\end{altenumerate}
\end{thm}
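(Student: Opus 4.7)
I plan to combine the $\sigma$-typic decomposition
\[
H^1_\et(\mathbb{P}^1_{\C_p}, \mathcal{F}_{\pi_{U^\frap,\mm}}) \cong \sigma|_{\Gal_{F_\frap}} \otimes_{\T} \rho_{U^\frap}[\sigma]
\]
(with $\T := \T(U^\frap)_\mm$ and $\rho_{U^\frap}[\sigma]$ a faithful $\T$-module) with the injection
\[
H^1_\et(\mathbb{P}^1_{\C_p}, \mathcal{F}_{\pi_{U^\frap}[\mm]}) \hookrightarrow H^1_\et(\mathbb{P}^1_{\C_p}, \mathcal{F}_{\pi_{U^\frap,\mm}})[\mm]
\]
from the preceding proposition, whose cokernel $Z$ has $(\cO_D^\times)_1$ acting trivially. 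Since $\sigma$ is free of rank $2$ over $\T$, the right-hand side identifies canonically with $\overline{\sigma}|_{\Gal_{F_\frap}} \otimes_{\F_q} V$ as an $\F_q[\Gal_{F_\frap}\times D^\times_\frap]$-module, where $V := \rho_{U^\frap}[\sigma][\mm]$, with $\Gal_{F_\frap}$ acting on the first factor and $D^\times_\frap$ on $V$. Admissibility and $\T$-faithfulness of $\rho_{U^\frap}[\sigma]$ together with completeness of the local ring $\T$ ensure $V\neq 0$, and $V$ is in fact infinite-dimensional since a finite-dimensional smooth admissible $D^\times_\frap$-representation is a sum of characters, on which $\T$ cannot act faithfully (the residue-class dimension of $\T$ is infinite for a generic modular $\overline{\sigma}$). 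This gives infinite-dimensionality of $X := H^1_\et(\mathbb{P}^1_{\C_p}, \mathcal{F}_{\pi_{U^\frap}[\mm]})$.

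\textbf{Structural observations.} The embedding $X \hookrightarrow \overline{\sigma}|_{\Gal_{F_\frap}} \otimes_{\F_q} V$ realizes $X$ as a $\Gal_{F_\frap}$-submodule of a direct sum of copies of $\overline{\sigma}|_{\Gal_{F_\frap}}$ indexed by an $\F_q$-basis of $V$. Hence any indecomposable $\Gal_{F_\frap}$-subrepresentation of $X$ has dimension at most $2$. Moreover, a standard socle/cosocle and cocycle computation shows that every $2$-dimensional indecomposable subrepresentation of $\overline{\sigma}|_{\Gal_{F_\frap}}^{\oplus I}$ is isomorphic to $\overline{\sigma}|_{\Gal_{F_\frap}}$ itself: the socle of the ambient sum is the socle of $\overline{\sigma}|_{\Gal_{F_\frap}}$ copied $I$ times, and the extension class of any $2$-dimensional indecomposable sub is forced to equal that of $\overline{\sigma}|_{\Gal_{F_\frap}}$. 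If $\overline{\sigma}|_{\Gal_{F_\frap}} = \chi_1 \oplus \chi_2$ is decomposable, the ambient space is semisimple, so $X$ is itself a direct sum of copies of $\chi_1$ and $\chi_2$; this produces case (ii).

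\textbf{Surviving copies in case (i).} If $\overline{\sigma}|_{\Gal_{F_\frap}}$ is irreducible, every $\Gal_{F_\frap}$-equivariant map from $\overline{\sigma}|_{\Gal_{F_\frap}} \otimes_{\F_q} V$ to a direct sum of characters vanishes, so $X$ equals the full ambient space and case (i) is immediate. The subtle case is a non-split extension $0\to \chi'\to \overline{\sigma}|_{\Gal_{F_\frap}}\to \chi''\to 0$: the socle $\chi'\otimes V$ lies automatically in $X$, and $X/(\chi'\otimes V) \hookrightarrow \chi''\otimes V$; the goal is to exhibit at least one full copy of $\overline{\sigma}|_{\Gal_{F_\frap}}$ inside $X$. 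I would exploit that the quotient map $\overline{\sigma}|_{\Gal_{F_\frap}}\otimes_{\F_q} V \to Z$ is $D^\times_\frap$-equivariant with $(\cO_D^\times)_1$ trivial on $Z$, so it factors through $\overline{\sigma}|_{\Gal_{F_\frap}}\otimes_{\F_q} V_{(\cO_D^\times)_1}$; by Proposition \ref{ComputeH0} the $\Gal_{F_\frap}$-action on $Z$ runs through a fixed character of $W_{F_\frap}$ determined by local Artin reciprocity and the reduced-norm central character. Decomposing $V$ into $F^\times$-central character eigenspaces shows that at most one eigenspace of $\chi''\otimes V$ can contribute to $Z$; any other eigenspace, together with the corresponding piece of $\chi'\otimes V$, provides a full copy of $\overline{\sigma}|_{\Gal_{F_\frap}}$ inside $X$. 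The same central-character argument, applied in case (ii), shows that both $\chi_1$ and $\chi_2$ actually appear when $\chi_1\neq \chi_2$, completing the determination.

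\textbf{Main obstacle.} The delicate point is the central-character bookkeeping of the previous paragraph: one must ensure that $V$ carries at least one $F^\times$-central character distinct from the single one prescribed by Proposition \ref{ComputeH0} for $Z$. This requires knowing that $V = \rho_{U^\frap}[\sigma][\mm]$ has nontrivial variation of central character, an input that ultimately comes from the richness of the $D^\times_\frap$-action on the completed cohomology of the Shimura curve (an automorphic fact), together with the infinite-dimensionality established in the setup. Once this is granted, the dichotomy becomes sharp: $X$ has a $2$-dimensional indecomposable subrepresentation precisely when $\overline{\sigma}|_{\Gal_{F_\frap}}$ is indecomposable, and in that case the subrepresentation recovers $\overline{\sigma}|_{\Gal_{F_\frap}}$; otherwise $X$ is semisimple with at most two characters, which together reconstitute $\overline{\sigma}|_{\Gal_{F_\frap}}$ as in case (ii).
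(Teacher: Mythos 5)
Your overall skeleton --- embedding $X=H^1_\et(\mathbb{P}^1_{\C_p},\mathcal{F}_{\pi_{U^\frap}[\mm]})$ into $\overline{\sigma}|_{\Gal_{F_\frap}}\otimes V$ with controlled cokernel, bounding indecomposable subrepresentations by subrepresentations of $\overline{\sigma}|_{\Gal_{F_\frap}}$, and splitting into the two cases --- is the paper's. But your step establishing infinite-dimensionality is broken. You argue that a finite-dimensional smooth admissible $D^\times_\frap$-representation is a sum of characters, on which $\T$ cannot act faithfully. Neither half is right: $D^\times_\frap$ is compact modulo center, so it has many irreducible smooth representations of dimension $>1$; and $V=\rho_{U^\frap}[\sigma][\mm]$ is killed by $\mm$, so $\T(U^\frap)_\mm$ acts on it through its residue field and faithfulness is simply not an available hypothesis there. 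The paper's proof of this point is genuinely automorphic and cannot be avoided: assuming $\rho_{U^\frap}[\mm]$ finite-dimensional, the $\varpi_L$-torsion of the divisible module $H^1(\Sh_{KU^\frap,\C},L/\cO_L)[\tilde{\mm}_L]$ sits inside $\rho_{U^\frap}[\mm]\otimes_{\F_q}\cO_L/\varpi_L$ for every minimal prime $\tilde{\mm}$, which bounds the $L$-dimension of the $\pi$-isotypic part of $H^1$ uniformly in $K$ and $\pi$, hence bounds the dimension of the $D^\times_\frap$-component of every contributing automorphic representation; the cuspidal types of Section 7, via Jacquet--Langlands and \cite[54.4 Proposition]{BushnellHenniart}, produce contributing representations whose component at $\frap$ is arbitrarily ramified, hence of arbitrarily large dimension --- contradiction. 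Your ``Main obstacle'' paragraph gestures at ``the richness of the $D^\times_\frap$-action'' but does not supply this argument.

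Second, your central-character bookkeeping is off. The center of $D^\times_\frap$ acts on all of $\rho_{U^\frap}[\mm]$, hence on $V$, through a single character determined by $\det\overline{\sigma}$; there is no ``variation of central character'' in $V$ to exploit, so the dichotomy you set up (one eigenspace hitting $Z$, the others avoiding it) does not get off the ground as stated. What actually makes the argument work is that the cokernel $Z$ is finite-dimensional: $(\cO_D^\times)_1$ acts trivially on $Z$, so $Z$ is an admissible representation of $D^\times_\frap/(\cO_D^\times)_1\cong F^\times$ whose restriction to the finite-index image of the center is a fixed character, hence $Z$ is finite-dimensional. Granting that and the infinite-dimensionality of $V$, the composite $\overline{\sigma}|_{\Gal_{F_\frap}}\otimes V\to Z$ corresponds to a linear map $V\to\Hom_{\Gal_{F_\frap}}(\overline{\sigma}|_{\Gal_{F_\frap}},Z)$ with finite-dimensional target, so it kills some nonzero $v\in V$, and $\overline{\sigma}|_{\Gal_{F_\frap}}\otimes v\subset X$ is the full copy you want in Case (i); the same finiteness shows both characters appear in Case (ii). With these two repairs your argument becomes the paper's proof.
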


\begin{proof} Recall that
\[
H^1_\et(\mathbb{P}^1_{\C_p},\mathcal{F}_{\pi_{U^\frap}[\mm]})\subset \overline{\sigma}|_{\Gal_{F_\frap}}\otimes \rho_{U^\frap}[\mm]\ ,
\]
with $(\cO_D^\times)_1$ acting trivially on the cokernel. Thus, the cokernel is an admissible representation of $D^\times / (\cO_D^\times)_1 = F^\times$; an argument identifying the central character of $\rho_{U^\frap}[\mm]$ in terms of the determinant of $\overline{\sigma}$ then shows that the cokernel is finite-dimensional. Thus, to prove that $H^1_\et(\mathbb{P}^1_{\C_p},\mathcal{F}_{\pi_{U^\frap}[\mm]})$ is infinite-dimensional, it is enough to prove that $\rho_{U^\frap}[\mm]$ is infinite-dimensional.

Assume it was finite-dimensional. Pick a minimal prime ideal $\tilde{\mm}\subset \T(K U^\frap)_\mm$, corresponding to some cuspidal automorphic representation $\pi$ contributing to $H^1(\Sh_{K U^\frap},\Z_p)_\mm$. Let $L/\Q_p$ be the finite extension which is the residue field of $\T(K U^\frap)_\mm$ at $\tilde{\mm}$, and let $\varpi_L\in \cO_L\subset L$ be a uniformizer and its ring of integers. Let $\tilde{\mm}_L\subset \cO_L\otimes_{\Z_q} \T(K U^\frap)_\mm$ be the kernel of the induced multiplication map to $\cO_L$. Then $H^1(\Sh_{K U^\frap,\C},L/\cO_L)[\tilde{\mm}_L]$ is a divisible torsion $\cO_L$-module whose $\varpi_L$-torsion is contained in $\rho_{U^\frap}[\mm]\otimes_{\F_q} \cO_L/\varpi_L$. It follows that the $\pi$-part of $H^1(\Sh_{K U^\frap,\C},L)$ is of bounded dimension over $L$, independently of $K$ and $\pi$. This implies that the $D^\times_\frap$-representation appearing in $\pi$ is of bounded dimension. On the other hand, by using suitable cuspidal types one can make this $D^\times_\frap$-representation arbitrarily ramified, which makes its dimension arbitrarily big, e.g. by \cite[54.4 Proposition]{BushnellHenniart}.

Any indecomposable $\Gal_{F_\frap}$-subrepresentation of
\[
H^1(\mathbb{P}^1_{\C_p},\mathcal{F}_{\pi_{U^\frap}[\mm]})\subset \overline{\sigma}|_{\Gal_{F_\frap}}\otimes \rho_{U^\frap}[\mm]
\]
is isomorphic to a subrepresentation of $\overline{\sigma}|_{\Gal_{F_\frap}}$, and in particular of dimension $\leq 2$. If $\overline{\sigma}|_{\Gal_{F_\frap}}$ is indecomposable, then it occurs as a subrepresentation of $H^1_\et(\mathbb{P}^1_{\C_p},\mathcal{F}_{\pi_{U^\frap}[\mm]})$, as the cokernel of the displayed inclusion is finite-dimensional. This deals with Case (i).

If $\overline{\sigma}|_{\Gal_{F_\frap}} =\chi_1\oplus \chi_2$ is decomposable, then the displayed inclusion shows that
\[
H^1_\et(\mathbb{P}^1_{\C_p},\mathcal{F}_{\pi_{U^\frap}[\mm]})
\]
is a direct sum of characters $\chi_1$ and $\chi_2$. Moreover, both of them appear (if they are distinct), by finiteness of the cokernel. This deals with Case (ii).
\end{proof}

\section{Patching: The key geometric input}\label{PatchingPrep}

In this section, we will prove a refinement of Theorem \ref{Finiteness} that will allow us to prove compatibility with patching. The argument is closely related to the notion of ultraproducts, but we will take a very algebraic approach.

Fix an infinite set $\{\pi_i\}_{i\in I}$ of admissible smooth $\F_p$-representations of $\GL_n(F)$. Assume that for all compact open $H\subset \GL_n(\cO)$, the dimension of $\pi_i^H$ (for varying $i$) is bounded.

Let $\Pi$ be the subset of smooth vectors in $\prod_{i\in I} \pi_i$, i.e.
\[
\Pi = \bigcup_H \prod_{i\in I} \pi_i^H\ .
\]
This is a representation of $\GL_n(F)$ on an $R=\prod_{i\in I} \F_p$-module. Before going on, it is helpful to recall some properties of $R$.

\begin{lem} The inclusion
\[
I\hookrightarrow \Spec R\ ,
\]
sending $i\in I$ to the the kernel of the projection $R\to \F_p$ to the $i$-th coordinate, identifies $\Spec R$ with the Stone-Cech compactification of $I$. For each $x\in \Spec R$, the local ring $R_x$ is $\F_p$. There is an identification
\[
R = C^0(\Spec R,\F_p)
\]
of $R$ with continuous maps $\Spec R\to \F_p$. The ring $R$ is coherent.
\end{lem}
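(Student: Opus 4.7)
The strategy is to exploit that $R = \prod_{i \in I} \F_p$ satisfies $a^p = a$ pointwise; in particular, for each $r = (r_i) \in R$, the element $e_r := r^{p-1}$ is an idempotent that agrees with the indicator of the support of $r$, and $r = r e_r$. This identifies the Boolean algebra of idempotents of $R$ with the powerset $2^I$ via $S \mapsto 1_S$, and makes $R$ von Neumann regular. All four assertions of the lemma will flow from this observation.

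First I would match the prime ideals of $R$ with ultrafilters on $I$ by $\mathfrak{p} \mapsto \mathcal{U}_\mathfrak{p} := \{S \subseteq I : 1_S \notin \mathfrak{p}\}$, where the ultrafilter property comes from $1_S \cdot 1_{S^c} = 0$ and $1_S + 1_{S^c} = 1$. Since $(f) = (e_f)$ for every $f \in R$ (because $f = f e_f$ and $e_f = f^{p-1}$), the basic opens $D(f) = D(e_f)$ of $\Spec R$ are precisely the standard clopens $\{\mathcal{U} : S \in \mathcal{U}\}$ of $\beta I$; this yields a homeomorphism $\Spec R \cong \beta I$ under which $I$ embeds as the dense set of principal ultrafilters. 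Next, using $e_r \in \mathfrak{p} \iff r \in \mathfrak{p}$ together with the identity $r \cdot (r^{p-2} e_r) = e_r$ and $1 - e_r \in \mathfrak{p}$, one sees that each prime $\mathfrak{p}$ is maximal with $r^{p-2} e_r$ inverting $r$ modulo $\mathfrak{p}$; moreover, $r \cdot (1-e_r) = 0$ forces the localisation map to factor through an isomorphism $R/\mathfrak{p} \xrightarrow{\sim} R_\mathfrak{p}$, and since the quotient $R/\mathfrak{p}$ is a field in which $a^p = a$, it equals $\F_p$.

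For the identification $R = C^0(\Spec R, \F_p)$, the evaluation map $r \mapsto (x \mapsto r \bmod \mathfrak{p}_x)$ is continuous because the preimage of $a \in \F_p$ is the clopen $V(r-a) = D(1 - e_{r-a})$, injective because evaluating at the principal ultrafilter at $i$ returns $r_i$, and surjective because any continuous $f : \beta I \to \F_p$ is determined by its restriction to the dense subset $I$, while the tuple $(f(i))_{i \in I} \in R$ already extends $f$ by continuity. Coherence of $R$ is then immediate: any finitely generated ideal $(a_1, \ldots, a_n)$ is principal, generated by the idempotent $e := 1 - \prod_k (1 - e_{a_k})$, and $(e) \cong R/(1-e)$ is visibly finitely presented. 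I do not expect a real obstacle; the only point requiring genuine care is verifying that the Zariski topology matches the Stone-\v{C}ech topology on the set of ultrafilters, which is exactly what the reduction $D(f) = D(e_f)$ accomplishes.
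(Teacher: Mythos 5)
Your proof is correct, and for most of the statement it runs parallel to the paper's: the paper likewise extracts everything from the identity $x^p=x$ (in the form $\prod_{a=0}^{p-1}(x-a)=0$, used to show $R/\mathfrak{p}=\F_p$ and hence that all points of $\Spec R$ are closed), and it simply cites the identification $\Spec R\cong\beta I$ as standard where you prove it via $D(f)=D(e_f)$. The genuine divergence is in the last two assertions. For $R=C^0(\Spec R,\F_p)$ the paper argues globally: all points being closed, $\Spec R$ is profinite and the structure sheaf is the constant sheaf $\F_p$, whence the identification; you instead verify directly that the evaluation map is a bijection using density of $I$. For coherence the paper exploits the profinite picture again, writing $R=C^0(\Spec R,\F_p)=\varinjlim_j R_j$ as a filtered colimit of finite products of $\F_p$ along flat transition maps between noetherian rings; your observation that every finitely generated ideal is principal, generated by the idempotent $1-\prod_k(1-e_{a_k})$, and hence isomorphic to $R/(1-e)$, is more elementary and in fact yields the stronger conclusion that $R$ is von Neumann regular with every finitely generated ideal generated by an idempotent. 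One reason the paper sets things up via the colimit is that the same description is reused a few lemmas later to prove coherence of $R\otimes_{\F_p}\cO_C/p$ (as a colimit of the coherent rings $R_j\otimes_{\F_p}\cO_C/p$ along flat maps); your idempotent argument does not transfer verbatim to that mixed situation, so if you adopt your route you would still want to record the colimit presentation of $R$ for later use.
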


\begin{proof} The identification of $\Spec R$ with the Stone-Cech compactification of $I$ is standard.\footnote{Contrary to some other statements of the lemma, it does not use that $\F_p$ is finite.} For each maximal ideal $\mathfrak{m}\subset R$, the corresponding ultrafilter $\mathfrak{F}_{\mathfrak{m}}$ on $I$ is given by those subsets $I^\prime\subset I$ such that the idempotent element $e_{I^\prime}$ which is $1$ at $i\in I^\prime$ and $0$ otherwise, is not in $\mathfrak{m}$.

First, we check that all local rings are isomorphic to $\F_p$. Take a prime ideal $\mathfrak{p}\subset R$. For any $x\in R$, the equation
\[
\prod_{a=0}^{p-1} (x-a)=0
\]
holds true (by checking in each factor). Modulo $\mathfrak{p}$, it follows that $x=a$ for some $a\in \F_p$, as desired. It follows that all points of $\Spec R$ are closed, and thus that $\Spec R$ is profinite. It follows that the structure sheaf on $\Spec R$ is the constant sheaf $\F_p$, which implies the equality $R=C^0(\Spec R, \F_p)$. As $\Spec R$ is profinite, this can be written as a filtered colimit of finite products of $\F_p$; this is a filtered colimit of noetherian algebras along flat transition maps, showing that $R$ is coherent.
\end{proof}

Fix a point $x\in \Spec R\setminus I$; this corresponds to a nonprincipal ultrafilter $\mathfrak{F}$ on $I$ under the identification with the Stone-Cech compactification. It follows that
\[
\pi^\patch := \Pi\otimes_R R_x
\]
is an admissible smooth $R_x = \F_p$-representation of $\GL_n(F)$, which we will call the patched representation. Here, the word ``patched" is used in the sense of Taylor-Wiles patching, where one builds a new object $X^\patch$ from an infinite set $\{X_i\}_{i\in I}$ of objects such that each ``finite piece" of $X$ looks like a corresponding ``finite piece" of $X_i$ for infinitely many $i$. In our setup, we have for instance the following simple observation.

\begin{lem} For each compact open normal subgroup $H\subset \GL_n(\cO)$, there are infinitely many $i\in I$ (more precisely, for all $i\in I^\prime$ with $I^\prime\in \mathfrak{F}$) such that
\[
(\pi^\patch)^H\cong \pi_i^H
\]
as $\GL_n(\cO)/H$-representations.
\end{lem}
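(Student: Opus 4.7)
The plan is to realize $(\pi^\patch)^H$ as a filtered colimit indexed by the ultrafilter $\mathfrak{F}$ corresponding to $x$, and then to pigeonhole the possible $\F_p[\GL_n(\cO)/H]$-module structures on the $\pi_i^H$ into finitely many isomorphism classes; exactly one class will lie in $\mathfrak{F}$, and it will coincide with $(\pi^\patch)^H$.

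\emph{Step 1.} Since all local rings of $R$ are $\F_p$, the localization $R_x = \F_p$ can be written as the filtered colimit $R_x = \varinjlim_{I^\prime \in \mathfrak{F}} R_{I^\prime}$, where $R_{I^\prime} := \prod_{i \in I^\prime} \F_p = e_{I^\prime} R$ corresponds to the idempotent on $I^\prime$. Since tensor product commutes with colimits,
\[
\pi^\patch = \Pi \otimes_R R_x = \varinjlim_{I^\prime \in \mathfrak{F}} \Pi_{I^\prime},
\]
where $\Pi_{I^\prime} = e_{I^\prime} \Pi$ is the space of smooth vectors in $\prod_{i \in I^\prime} \pi_i$. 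Because $H$-invariants commutes with filtered colimits of discrete $H$-modules and $\Pi_{I^\prime}^H = \prod_{i \in I^\prime} \pi_i^H$, we get
\[
(\pi^\patch)^H = \varinjlim_{I^\prime \in \mathfrak{F}} \prod_{i \in I^\prime} \pi_i^H.
\]

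\emph{Step 2.} Let $G_H := \GL_n(\cO)/H$, which is finite. The hypothesis that $\dim_{\F_p} \pi_i^H$ is bounded independently of $i$ implies that only finitely many isomorphism classes of $\F_p[G_H]$-modules occur among the $\pi_i^H$. Partition $I = \bigsqcup_\tau I_\tau$ by isomorphism class. Since $\mathfrak{F}$ is an ultrafilter and the partition is finite, exactly one class $\tau_0$ satisfies $I_{\tau_0} \in \mathfrak{F}$; as $\mathfrak{F}$ is nonprincipal, $I_{\tau_0}$ is moreover infinite.

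\emph{Step 3.} Restrict the colimit to the cofinal subsystem $\{I^\prime \in \mathfrak{F} : I^\prime \subset I_{\tau_0}\}$ and fix, once and for all via the axiom of choice, a family of $G_H$-equivariant isomorphisms $\phi_i : \pi_i^H \cong \tau_0$ indexed by $i \in I_{\tau_0}$. For every $I^\prime$ in the cofinal subsystem, the collection $(\phi_i)_{i \in I^\prime}$ yields an $R_{I^\prime}[G_H]$-linear isomorphism
\[
\prod_{i \in I^\prime} \pi_i^H \cong \tau_0 \otimes_{\F_p} R_{I^\prime},
\]
and these are compatible under the transition maps (identity on $\tau_0$, coordinate restriction on $R_{I^\prime}$). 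Passing to the colimit,
\[
(\pi^\patch)^H \cong \tau_0 \otimes_{\F_p} \varinjlim_{I^\prime \subset I_{\tau_0}} R_{I^\prime} = \tau_0 \otimes_{\F_p} R_x = \tau_0 \cong \pi_i^H
\]
as $G_H$-modules for any $i \in I_{\tau_0}$, proving the claim.

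The main subtle point is Step 3: the identification $\prod_{i \in I^\prime} \pi_i^H \cong \tau_0 \otimes_{\F_p} R_{I^\prime}$ is genuinely non-canonical, and it is only coherent under the transition maps once a uniform global choice of the $\phi_i$ has been made over the set $I_{\tau_0}$. Selecting the cofinal subsystem inside $I_{\tau_0}$ is precisely what converts the ultrafilter/pigeonhole bookkeeping into an actual isomorphism of $G_H$-representations.
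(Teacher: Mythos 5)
Your proof is correct and follows essentially the same route as the paper: bound the dimensions, pigeonhole the finitely many isomorphism classes of $\F_p[\GL_n(\cO)/H]$-modules against the ultrafilter to find $I_{\tau_0}\in\mathfrak{F}$, identify $\prod_{i\in I'}\pi_i^H$ with $\tau_0\otimes_{\F_p}R_{I'}$, and base change to $R_x=\F_p$. The paper performs the last step as a single tensor product $\Pi^H\otimes_R\prod_{i\in I'}\F_p\cong\pi_0\otimes_{\F_p}\prod_{i\in I'}\F_p$ rather than via your filtered-colimit presentation of $R_x$, but the content is identical.
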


\begin{proof} There are only finitely many isomorphism classes of $\GL_n(\cO)/H$-representations of bounded dimension; recall that the dimension of $\pi_i^H$ was assumed to be bounded. As $\mathfrak{F}$ is an ultrafilter, it follows that for $i\in I^\prime$ with $I^\prime\in \mathfrak{F}$, all $\pi_i^H\cong \pi_0$ are isomorphic. But then
\[
\Pi^H\otimes_R \prod_{i\in I^\prime} \F_p\cong \pi_0\otimes_{\F_p} \prod_{i\in I^\prime} \F_p\ ,
\]
and thus also $(\pi^\patch)^H\cong \pi_0$.
\end{proof}

In the patching construction of \cite{CEGGPS}, one chooses some representation of $\GL_n(\cO)/H$ which occurs infinitely often, and then chooses them compatibly for all $H$. After that, one wants to extend the resulting $\GL_n(\cO)$-representation to all of $\GL_n(F)$ by allowing extra Hecke operators. This is possible only if the previous choices were made carefully; in our setup, everything works automatically. We leave it to the reader to verify that the representation constructed in \cite{CEGGPS} can be obtained as $\pi^\patch$ for a suitably chosen $x\in \Spec R\setminus I$; this amounts to going through their construction, and with every choice made one has to shrink the filter accordingly.

As before, one can attach to $\Pi$ a sheaf $\mathcal{F}_\Pi$ of $R$-modules on $(\mathbb{P}^{n-1}_{\breve{F}}/D^\times)_\et$ by sending a $D^\times$-equivariant \'etale $U\to \mathbb{P}^{n-1}_{\breve{F}}$ to the set of $D^\times\times \GL_n(F)$-equivariant continuous maps
\[
|U\times_{\mathbb{P}^{n-1}_{\breve{F}}} \cM_{\LT,\infty}|\to \Pi\ .
\]

The result of this section is the following.

\begin{thm}\label{PatchingThm} Assume that all $\pi_i$ are injective as $H$-representations for some compact open subgroup $H\subset \GL_n(F)$ (independent of $i$).\footnote{It would be enough to assume that they have perfect resolutions by injective $H$-representations which are of ``bounded complexity" in a suitable sense. As in our application, they will actually be injective, we restrict to this simpler setup.} For all $j\geq 0$ and compact open $K\subset D^\times$, the cohomology group
\[
H^j((\mathbb{P}^{n-1}_C/K)_\et,\mathcal{F}_\Pi)
\]
is a finitely presented $R$-module.
\end{thm}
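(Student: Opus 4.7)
The plan is to follow the structure of the proof of Theorem \ref{Finiteness}, working throughout with the coefficient ring $R=\prod_i \F_p$ in place of $\F_p$. Two structural observations are essential. First, $\Pi$ will be ``admissible over $R$'' in a strong sense: for every compact open $K\subset \GL_n(F)$, the $R$-module $\Pi^K=\prod_i \pi_i^K$ is finitely presented, since the dimensions $\dim_{\F_p}\pi_i^K$ are uniformly bounded by some $N_K$ (giving a presentation $R^{N_K}\twoheadrightarrow \Pi^K$ whose kernel is itself a product of uniformly bounded-dimension $\F_p$-subspaces of $\F_p^{N_K}$, hence finitely generated over $R$). Second, the uniform $H$-injectivity hypothesis implies that $\Pi$ is injective as a smooth $R[H]$-module; more concretely, taking products over $i$ of $H$-equivariant splittings $\pi_i|_H\hookrightarrow C^0(H,\F_p)^{N_H}$ and restricting to smooth vectors yields an $R[H]$-module direct summand inclusion
\[
\Pi|_H\hookrightarrow C^0(H,\F_p)^{N_H}\otimes_{\F_p} R
\]
for some integer $N_H$.

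First I would establish an $R$-coefficient analog of Lemma \ref{LocalFiniteness}: for $V$ affinoid with good coordinates and a section $V\to \cM_{\LT,0}$, $U\subset V$ a strict rational subset, and $K\subset D^\times$ sufficiently small, the image of
\[
H^i((V_C/K)_\et,\mathcal{F}_\Pi\otimes \cO^+/p)\to H^i((U_C/K)_\et,\mathcal{F}_\Pi\otimes \cO^+/p)
\]
should be an almost finitely presented $R\otimes_{\F_p}\cO_C/p$-module. The direct summand relation above allows a clean reduction to the case $\Pi|_H = C^0(H,\F_p)^{N_H}\otimes R$, and in particular avoids the infinite noetherian resolution of Lemma \ref{LocalFiniteness} which relies on Lazard's theorem for $\F_p[[\GL_n(\cO)]]$ and would be unavailable over the non-noetherian $R$. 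In the reduced case, Proposition \ref{QuotientPerfectoid} and Lemma \ref{IdentOXmodp} identify the cohomology with $H^i((V_{K'})_\et,\cO^+/p)\otimes_{\F_p} R$ for an appropriate Drinfeld-level open $V_{K'}\subset \cM_{\Dr,K'}$, and Corollary \ref{AlmFinGenImage} together with flatness of $R$ over $\F_p$ delivers the required almost finite presentation. Globalizing via finite Cech covers of $\mathbb{P}^{n-1}$ as in Corollaries \ref{FirstGlobalFiniteness} and \ref{SecondGlobalFiniteness} then gives that $H^i((\mathbb{P}^{n-1}_C/K)_\et,\mathcal{F}_\Pi\otimes \cO^+/p)$ is almost finitely presented over $R\otimes \cO_C/p$ for every $i$ and every compact open $K\subset D^\times$.

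The final step will descend from $\cO^+/p$-coefficients to $R$-coefficients via the Artin-Schreier argument of Corollary \ref{FinitenessmodK}, using the sheaf $\cO^{\flat-}$ and the exact sequence $0\to \mathcal{F}_\Pi\to \mathcal{F}_\Pi\otimes \cO^{\flat-}\to \mathcal{F}_\Pi\otimes \cO^{\flat-}\to 0$. I expect the main obstacle to lie here: one needs an $R$-coefficient analog of \cite[Lemma 2.12]{ScholzePAdicHodge}, stating that an almost finitely presented $R\otimes \cO_C/p$-module equipped with compatible Frobenius structure is almost isomorphic to $M\otimes_R (R\otimes \cO_C/p)$ for some finitely presented $R$-module $M$, whose $\varphi$-fixed points will then recover $H^i((\mathbb{P}^{n-1}_C/K)_\et,\mathcal{F}_\Pi)$ as the finitely presented $R$-module $M$. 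Proving this structure theorem should exploit that $R$ is coherent and von Neumann regular---so that finitely presented $R$-modules are projective, in fact of the form $\bigoplus_a e_a R^{n_a}$ for finitely many orthogonal idempotents $e_a\in R$---together with flatness of $R$ over $\F_p$ to reduce, fiber-by-fiber over $\Spec R$, to the known $\F_p$-coefficient structure theorem.
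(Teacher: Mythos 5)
Your proposal matches the paper's proof for the first two stages. The paper likewise observes that, with $H$ pro-$p$ and normal in $\GL_n(\cO)$, locality of $\F_p[[H]]$ forces $\pi_i|_H\cong(\pi^\reg_H)^{\dim\pi_i^H}$, so that $\Pi|_H$ is a direct summand of $(\pi^\reg_H)^n\otimes_{\F_p}R$; this is exactly your mechanism for bypassing the Lazard-type resolution of Lemma \ref{LocalFiniteness}, which is indeed unavailable over the non-noetherian $R$. The paper then passes from level $H$ back to level $0$ by a Hochschild--Serre spectral sequence for the finite group $\GL_n(\cO)/H$ (a step you leave implicit but which is needed, since $\mathcal{F}_\Pi$ on $V$ sees $\Pi$ as a $\GL_n(\cO)$-representation), identifies the reduced case with $\cO^+/p$-cohomology of opens in the Drinfeld tower via Propositions \ref{QuotientPerfectoid} and \ref{IdentOXmodp}, and invokes Corollary \ref{AlmFinGenImage} together with almost coherence of $R\otimes_{\F_p}\cO_C/p$; globalization by Cech covers is as you describe.

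Where you genuinely diverge is the final descent from $\cO^+/p$- to $R$-coefficients. You propose to rerun the Artin--Schreier/Frobenius argument of Corollary \ref{FinitenessmodK} over $R$, which requires an $R$-linear analogue of the structure theorem for $\varphi$-modules over $\cO_C/p$. This is the hard point, and your suggestion to prove it ``fiber-by-fiber over $\Spec R$'' glosses over real difficulties: taking $\varphi$-invariants need not commute with passage to fibers $R\to R_y$, and assembling the fiberwise trivializations $(\cO_C^a/p)^{r_y}$ into a global statement $M^a\cong N\otimes_R(R\otimes\cO_C^a/p)$ requires both local constancy of $r_y$ and a gluing argument that is not automatic. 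The paper sidesteps this entirely by a two-step shortcut: (1) the almost isomorphism $H^j(\mathcal{F}_\Pi)\otimes\cO_C/p\to H^j(\mathcal{F}_\Pi\otimes\cO^+/p)$ is checked after each localization $R\to R_y=\F_p$, where it is precisely Theorem \ref{Finiteness} applied to the admissible $\F_p$-representation $\Pi\otimes_RR_y$ (using that cohomology on these coherent sites commutes with the filtered colimit defining localization, and a small lemma that a module almost zero at every $y\in\Spec R$ is almost zero); (2) a purely algebraic lemma shows that an $R$-module $M$ with $M\otimes_{\F_p}\cO_C/p$ almost finitely presented over $R\otimes_{\F_p}\cO_C/p$ is itself finitely presented, via continuity of elementary divisors on $\Spec R$ and a clopen decomposition. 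So the Frobenius untwisting is only ever performed over $\F_p$, where it is already available. Your route is likely completable, but you would need to supply the missing commutation and gluing arguments for the $\varphi$-structure theorem over $R$; the paper's argument buys you the conclusion from results already proved, at the cost only of the elementary-divisor lemma.
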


To explain the meaning of this result, we need the following classification of finitely presented $R$-modules.

\begin{lem}\label{FinitelyPresentedModules} Let $V_i$, $i\in I$, be a sequence of $\F_p$-vector spaces of bounded dimension. Then
\[
M=\prod_{i\in I} V_i
\]
is a finitely presented $R$-module. Conversely, if $M$ is a finitely presented $R$-module with specialization $V_i$ over $\F_p$ at $i\in I\subset \Spec R$, then the $V_i$ are of bounded dimension, and the natural map
\[
M\to \prod_{i\in I} V_i
\]
is an isomorphism.
\end{lem}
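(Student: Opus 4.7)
For the first assertion, my plan is to stratify by the fiberwise dimension. Fix $N$ with $\dim_{\F_p} V_i \leq N$ for all $i$, and partition $I = \bigsqcup_{k=0}^N I_k$ where $I_k = \{i \in I : \dim V_i = k\}$. The indicator function of each $I_k$ is an idempotent $e_k \in R = \prod_{i\in I} \F_p$, and the $e_k$ are orthogonal with $\sum_k e_k = 1$. Each $R_k := R e_k \cong R/(1-e_k)R$ is a quotient of $R$ by a principal ideal, hence a finitely presented $R$-module, and $R_k \cong \prod_{i \in I_k} \F_p$. Choosing an $\F_p$-basis of each $V_i$ for $i \in I_k$ gives an isomorphism $\prod_{i \in I_k} V_i \cong R_k^k$, and therefore
\[
M = \prod_{i \in I} V_i \;\cong\; \bigoplus_{k=0}^N R_k^k,
\]
a finite direct sum of finitely presented $R$-modules, hence itself finitely presented.

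For the converse, I would choose a presentation by a map $\phi\colon R^a \to R^b$ with $\coker(\phi) = M$. Tensoring with the residue field $\F_p$ at $i \in I$ gives a presentation of the specialization, namely $\phi_i\colon \F_p^a \to \F_p^b$ with $\coker(\phi_i) = V_i$, and in particular $\dim V_i \leq b$, giving the bounded-dimension claim. To identify $M$ with $\prod_i V_i$, the key observation is that under the canonical identification $R^n = \prod_i \F_p^n$, the $R$-linear map $\phi$ necessarily acts coordinate-wise, i.e.\ $\phi = \prod_i \phi_i$: this is immediate from writing out $\phi$ as a matrix with entries in $R = \prod_i \F_p$. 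Since products are exact in the category of abelian groups, applying $\prod_i$ to the fibrewise presentations of the $V_i$ recovers $\coker(\phi)$, yielding the desired isomorphism $M \cong \prod_i V_i$.

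The main obstacle here is genuinely slight: both directions are formal manipulations once one uses that, because $R = \prod_I \F_p$ literally is the ring of all functions $I \to \F_p$, every subset of $I$ has its indicator available as an idempotent of $R$ (which enables the dimension-stratification of the first direction), and every $R$-linear map between finite free $R$-modules is coordinate-wise (which lets $\prod_i$ commute with cokernels in the second direction). No topological analysis on $\Spec R$ beyond the description of $R$ as a function ring on $I$ is needed, and the only invocation of choice is to pick bases of the $V_i$ in the first direction.
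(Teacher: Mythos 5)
Your proof is correct. The first direction is the same as the paper's: stratify $I$ by the fibre dimension into finitely many pieces $I_k$, use the corresponding idempotents of $R$, and write $M$ as a finite direct sum of finite free modules over the factor rings $R_k=\prod_{i\in I_k}\F_p$. For the converse the paper argues slightly differently: it observes that the presentation matrix $A=(A_i)_{i\in I}$ takes only finitely many values in $M_{N\times D}(\F_p)$, so after a further finite clopen decomposition of $\Spec R$ one may assume $A$ is constant, whence $M\cong R^d$ and the claim is clear. You instead skip this reduction entirely: the matrix acts coordinatewise on $R^b=\prod_i\F_p^b$, and since the cokernel of a product of maps is the product of the cokernels (exactness of products, plus the identification of the fibrewise presentation $\coker(A_i)=V_i$ obtained by right-exactness of $\otimes_R R/\mathfrak{m}_i$), one gets $M\cong\prod_i V_i$ directly, and this identification is visibly the natural map. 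Your version of the converse is marginally more general, as it does not use the finiteness of $\F_p$ (only the bounded-dimension hypothesis in the forward direction uses a finite stratification), while the paper's reduction to a constant matrix gives the slightly stronger conclusion that $M$ is locally free on a finite clopen cover. Both arguments are complete.
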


\begin{proof} For the first part, we may find a finite decomposition $I=\bigsqcup_{d=0}^D I_d$ such that $V_i\cong \F_p^d$ for $i\in I_d$. Then $M_d = \prod_{i\in I_d} \F_p^d\cong R_d^d$ is a finite free $R_d = \prod_{i\in I_d} \F_p$-module, and
\[
M = \prod_{d=0}^D M_d
\]
is a finitely presented $R=\prod_{d=0}^D R_d$-module.

Assume now that $M$ is finitely presented. Then we may find a presentation
\[
R^N\to R^D\to M\to 0\ .
\]
The map $R^N\to R^D$ is given by a matrix $A=(A_i)_{i\in I}\in M_{N\times D}(R) = \prod_{i\in I} M_{N\times D}(\F_p)$. There are only finitely many possibilities for each $A_i$, so after a finite decomposition of $I$ (corresponding to a clopen decomposition of $\Spec R$), we may assume that $A$ is constant. In that case, $M\cong R^d$ is constant, so that the claim is clear.
\end{proof}

In particular, the following follows directly from Theorem \ref{PatchingThm}, using that taking cohomology commutes with localization on $R$.

\begin{cor}\label{PatchingCor} Assume that all $\pi_i$ are injective as $H$-representations for some compact open subgroup $H\subset \GL_n(F)$. For all $j\geq 0$ and compact open $K\subset D^\times$, one has
\[
H^j((\mathbb{P}^{n-1}_C/K)_\et,\mathcal{F}_\Pi) = \prod_{i\in I} H^j((\mathbb{P}^{n-1}_C/K)_\et,\mathcal{F}_{\pi_i})\ ,
\]
and thus
\[
H^j((\mathbb{P}^{n-1}_C/K)_\et,\mathcal{F}_{\pi^\patch}) = \left(\prod_{i\in I} H^j((\mathbb{P}^{n-1}_C/K)_\et,\mathcal{F}_{\pi_i})\right)\otimes_R R_x\ .
\]
\end{cor}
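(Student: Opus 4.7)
My approach is to adapt the proof of Theorem~\ref{Finiteness}, carefully tracking an $R$-module structure throughout. Since $R=\prod_{i\in I}\F_p$ is $\F_p$-flat and coherent, most of the homological machinery from Section~3 carries over, and the $H$-injectivity hypothesis provides the key leverage via a direct-summand reduction to a representation of ``product form.''

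\emph{Reduction step.} By Pontryagin duality, each $\pi_i^\vee$ is a finitely generated projective $\F_p[[H]]$-module. Fixing an open pro-$p$ subgroup $H'\subset H$ (so that $\F_p[[H']]$ is local by Lazard), the minimal number of $\F_p[[H]]$-generators of $\pi_i^\vee$ is bounded above by the minimal number of $\F_p[[H']]$-generators, which equals $\dim_{\F_p}(\pi_i^\vee)_{H'}=\dim_{\F_p}\pi_i^{H'}$, uniformly bounded in $i$ by hypothesis. This produces an integer $M$ independent of $i$ and an $H$-equivariant splitting
\[
\pi_i|_H\oplus \tau_i\cong (\pi^\reg_H)^M,\qquad \pi^\reg_H=C^\infty(H,\F_p),
\]
for all $i\in I$. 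Taking products over $i$ and passing to $H$-smooth vectors, and using that $(\prod_i V)^{\mathrm{sm}}\cong V\otimes_{\F_p}R$ for any admissible smooth $H$-representation $V$ (since each $V^{H'}$ is finite-dimensional, $\prod_i V^{H'}\cong V^{H'}\otimes R$, and taking the filtered colimit over $H'$ commutes with $-\otimes R$), one obtains a splitting $\Pi|_H\oplus T\cong (\pi^\reg_H\otimes_{\F_p}R)^M$ of smooth $R[H]$-modules.

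\emph{Local finiteness and globalization.} On any open affinoid $V\subset\mathbb{P}^{n-1}_{\breve{F}}$ admitting a section $V\to\cM_{\LT,0}$, the sheaf $\mathcal{F}_\Pi$ depends only on $\Pi|_{\GL_n(\cO)}$, and after pulling back to the finite level $V_H\subset\cM_{\LT,H}$, only on $\Pi|_H$. The splitting from the previous step exhibits $\mathcal{F}_\Pi\otimes\cO^+/p$ as a direct summand of $(\mathcal{F}_{\pi^\reg_H}\otimes_{\F_p}R\otimes\cO^+/p)^M$ on the relevant site. The calculation inside the proof of Lemma~\ref{LocalFiniteness} identifies the cohomology of $\mathcal{F}_{\pi^\reg_H}\otimes\cO^+/p$ with $\cO^+/p$-cohomology on an open of the Drinfeld tower at finite level; since $R$ is $\F_p$-flat and all sites in question are qcqs, cohomology commutes with $-\otimes_{\F_p}R$. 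Corollary~\ref{AlmFinGenImage} then shows that the image of the restriction map to a strict rational subset is almost finitely presented as $R\otimes\cO_C/p$-module, and the same conclusion passes to the direct summand $\mathcal{F}_\Pi\otimes\cO^+/p$. The Cech-spectral-sequence arguments of Corollaries~\ref{FirstGlobalFiniteness} and~\ref{SecondGlobalFiniteness} globalize this to yield that $H^i((\mathbb{P}^{n-1}_C/K)_\et,\mathcal{F}_\Pi\otimes\cO^+/p)$ is almost finitely presented as $R\otimes\cO_C/p$-module.

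\emph{Artin--Schreier and the main obstacle.} To descend from $\cO^+/p$-coefficients to $\F_p$-coefficients one runs the argument of Corollary~\ref{FinitenessmodK} with $R$-coefficients: the sheaf $\cO^{\flat-}$ is built from $\cO^+/p$ by iterated Frobenius twists, and the Artin--Schreier long exact sequence identifies $H^i((\mathbb{P}^{n-1}_C/K)_\et,\mathcal{F}_\Pi)$ as the $(\varphi-1)$-kernel on $H^i(\cdot,\mathcal{F}_\Pi\otimes\cO^{\flat-})$. The principal obstacle is the $R$-coefficient analog of \cite[Lemma 2.12]{ScholzePAdicHodge}: one must show that an almost finitely presented $R\otimes\cO_C^a/p$-module equipped with a Frobenius isomorphism is of the form $M\otimes_{\F_p}\cO_C^a/p$ for a finitely presented $R$-module $M$. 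I expect this to follow by arguing fiberwise over $\Spec R$---each point $i\in I$ recovers the $\F_p$-case of Scholze's lemma, with rank $r_i$ bounded uniformly by the almost-finite-presentation hypothesis---together with Lemma~\ref{FinitelyPresentedModules} identifying finitely presented $R$-modules as products of bounded-dimensional $\F_p$-vector spaces. Care is required because $\Spec R$ is the Stone--Cech compactification of $I$ and the rank need not be locally constant; the correct formulation is that the finitely presented $R$-module $M$ captures all the varying ranks simultaneously. Granting this step, the Artin--Schreier sequence yields $H^i((\mathbb{P}^{n-1}_C/K)_\et,\mathcal{F}_\Pi)$ as a finitely presented $R$-module, as required.
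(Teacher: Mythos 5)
There is a genuine gap, and you have flagged it yourself: the ``$R$-coefficient analog of \cite[Lemma 2.12]{ScholzePAdicHodge}'' that your Artin--Schreier step rests on is exactly the hard point, and ``I expect this to follow by arguing fiberwise \dots Granting this step'' is not a proof. Proving that an almost finitely presented $R\otimes_{\F_p}\cO_C^a/p$-module with a Frobenius isomorphism descends to a finitely presented $R$-module requires gluing the pointwise trivializations over the Stone--Cech compactification $\Spec R$, and nothing in your argument controls how the fiberwise isomorphisms vary. The paper avoids this entirely: it never reruns the Frobenius-descent argument with $R$-coefficients. Instead it (a) proves that the comparison map $H^j((\mathbb{P}^{n-1}_C/K)_\et,\mathcal{F}_\Pi)\otimes\cO_C/p\to H^j((\mathbb{P}^{n-1}_C/K)_\et,\mathcal{F}_\Pi\otimes\cO^+/p)$ is an almost isomorphism by checking after each localization $R\to R_y$, $y\in\Spec R$ --- legitimate because $R_y=\F_p$, localization is exact and commutes with cohomology on these coherent sites, and $\Pi\otimes_R R_y$ is an admissible smooth $\F_p$-representation to which Theorem \ref{Finiteness} (hence the already-proved Artin--Schreier argument) applies verbatim --- together with the elementary observation that a module almost zero at every point of $\Spec R$ is almost zero; and (b) proves a purely module-theoretic lemma that if $M\otimes_{\F_p}\cO_C/p$ is almost finitely presented over $R\otimes_{\F_p}\cO_C/p$ then $M$ is finitely presented over $R$, using that the elementary divisors assemble into a continuous map on $\Spec R$, which forces $y\mapsto\dim M_y$ to be locally constant (so your worry about non-locally-constant ranks is resolved there, not left open). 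Your first two steps (the direct-summand reduction of $\Pi|_H$ to $(\pi^\reg_H\otimes_{\F_p}R)^M$ via injectivity and bounded $\dim\pi_i^{H'}$, and the local-to-global almost-finite-presentation over $R\otimes_{\F_p}\cO_C/p$) do match the paper's Lemma \ref{LocalFinitenessPatching} and Corollary \ref{FirstGlobalFinitenessPatch}.

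A second, smaller omission: even granting your descent step, you only reach the conclusion of Theorem \ref{PatchingThm} (finite presentation over $R$), not the statement of the corollary. You still need to identify the finitely presented $R$-module $H^j((\mathbb{P}^{n-1}_C/K)_\et,\mathcal{F}_\Pi)$ with $\prod_{i\in I}H^j((\mathbb{P}^{n-1}_C/K)_\et,\mathcal{F}_{\pi_i})$; this follows from Lemma \ref{FinitelyPresentedModules} once one notes that the specialization at $i\in I\subset\Spec R$ of the left side is $H^j((\mathbb{P}^{n-1}_C/K)_\et,\mathcal{F}_{\pi_i})$, again because cohomology commutes with localization on $R$. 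The second displayed identity is then just the specialization at $x$. Neither step appears in your write-up.
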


Intuitively, the last statement says that patching commutes with the functor $\pi\mapsto H^j((\mathbb{P}^{n-1}_C/K)_\et,\mathcal{F}_\pi)$.

To prove Theorem \ref{PatchingThm}, we follow the proof of Theorem \ref{Finiteness}. In doing so, we need to establish some properties of $R\otimes_{\F_p} \cO_C/p$ first.

\begin{lem} The ring $R\otimes_{\F_p} \cO_C/p$ is coherent.
\end{lem}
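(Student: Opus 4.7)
The plan is to exhibit $R\otimes_{\F_p} \cO_C/p$ as a filtered colimit of coherent rings along flat transition maps, and then apply the classical fact that such colimits are coherent.

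First I will use the structure of $R$ recorded in the previous lemma: $\Spec R$ is the Stone--\v{C}ech compactification of $I$, hence profinite, and $R=C^0(\Spec R,\F_p)$. Writing $\Spec R=\varprojlim_T T$ as an inverse limit over its finite discrete quotients $T$ yields
\[
R=\varinjlim_T R_T,\qquad R_T:=C^0(T,\F_p)=\prod_{t\in T}\F_p.
\]
Each $R_T$ is a finite product of copies of the field $\F_p$, hence noetherian and absolutely flat; in particular every transition map $R_T\to R_{T'}$ is flat.

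Next, since $\cO_C/p$ is flat over the field $\F_p$ and tensor products commute with filtered colimits, I obtain
\[
R\otimes_{\F_p}\cO_C/p=\varinjlim_T(\cO_C/p)^{|T|}
\]
with flat transition maps. So it remains to verify that each finite product $(\cO_C/p)^{|T|}$ is coherent, which reduces to coherence of $\cO_C/p$ itself, and then to invoke the colimit stability of coherence.

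For coherence of $\cO_C/p$, I will use that $\cO_C$ is a B\'ezout ring (being a valuation ring), so every finitely generated ideal in $\cO_C/p$ is principal; it remains to check that the annihilator of each element is finitely generated. For $\bar a\in\cO_C/p$ represented by $a\in\cO_C$ with $v(a)<v(p)$, the annihilator is the principal ideal generated by the image of any $b\in\cO_C$ with $v(b)=v(p)-v(a)$, which exists because $C$ is algebraically closed and in particular has divisible value group. Finally, for the general colimit statement: any finitely generated ideal $J\subset A=\varinjlim A_i$ descends to a finitely generated $J_i\subset A_i$ for some $i$ with $J=J_i\otimes_{A_i}A$ (using flatness); coherence of $A_i$ provides a finite presentation of $J_i$, which tensors up by flatness to a finite presentation of $J$.

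The only substantive point is the coherence of $\cO_C/p$; everything else is formal manipulation with filtered colimits, flat base change, and the explicit description of $R$ provided by the preceding lemma. I do not anticipate any further technical obstacles.
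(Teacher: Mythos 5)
Your proof is correct and follows essentially the same route as the paper's: write $R=C^0(\Spec R,\F_p)$ as a filtered colimit of finite products of $\F_p$'s, tensor with $\cO_C/p$ to get a filtered colimit of coherent rings along flat maps, and reduce to coherence of $\cO_C/p$ via the observation that finitely generated ideals there are principal with principal annihilator $(p/x)$. The extra detail you supply (divisibility of the value group to produce the annihilator's generator, and the descent argument for coherence of flat filtered colimits) is exactly what the paper leaves implicit.
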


\begin{proof} Recall that $R=C^0(\Spec R,\F_p)$ can be written as $R=\varinjlim R_j$ where each $R_j$ is a finite product of $\F_p$'s. Then also $R\otimes_{\F_p} \cO_C/p = \varinjlim R_j\otimes_{\F_p} \cO_C/p$ can be written as a filtered colimit of coherent algebras along flat transition maps, and thus is coherent itself. Here, we use that $\cO_C/p$ is coherent, namely any finitely generated ideal $J\subset \cO_C/p$ is in fact principal, $J=\cO_C/p\cdot x$, and those are finitely presented, $J\cong \cO_C/(p/x)$.
\end{proof}

\begin{cor}\label{AlmostCoherence} The ring $R\otimes_{\F_p} \cO_C/p$ is almost coherent in the sense that the category of almost finitely presented $R\otimes_{\F_p} \cO_C/p$ is abelian, and closed under kernels, cokernels and extensions.
\end{cor}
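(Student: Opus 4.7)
The plan is to deduce almost coherence of $A := R\otimes_{\F_p}\cO_C/p$ from the coherence just established, by appealing to the standard framework of almost mathematics relative to the ideal $\mathfrak{m}\subset A$ generated by the image of $\mathfrak{m}_C$. Since $\mathfrak{m}_C^2=\mathfrak{m}_C$ and $\mathfrak{m}_C$ is $\cO_C$-flat, the tensored ideal $\mathfrak{m}$ inherits both properties (tensoring over $\F_p$ preserves flatness, and idempotence survives passage to $\cO_C/p$ and tensoring), so the almost setting on $A$ is well-behaved. I would then unwind the definition of an almost finitely presented $A$-module---for each $\epsilon\in\mathfrak{m}$, an approximation by a finitely presented $A$-module $N_\epsilon$ via a map $N_\epsilon\to M$ whose kernel and cokernel are annihilated by $\epsilon$---and verify the three required stability properties.

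Closure under cokernels and extensions is formal: given $f\colon M\to N$ between almost finitely presented modules and an $\epsilon$-approximation by a map $f_0\colon M_0\to N_0$ of finitely presented modules, one checks that the (finitely presented) module $\coker(f_0)$ approximates $\coker(f)$ up to a bounded power of $\epsilon$; similarly, an extension of almost finitely presented modules is approximated by lifting the extension class modulo $\epsilon$ to an extension of finitely presented modules. The point where coherence is genuinely used is closure under kernels: for $f_0\colon M_0\to N_0$ between finitely presented modules, the image $\im(f_0)\subset N_0$ is a finitely generated submodule of a finitely presented module, hence itself finitely presented by coherence of $A$; then $\ker(f_0)\subset M_0$ is finitely generated, and by coherence again finitely presented. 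A standard $\epsilon$-diagram chase then identifies $\ker(f_0)$ with an almost approximation of $\ker(f)$.

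The main---and essentially only---obstacle is the careful bookkeeping of how the approximation errors propagate through these constructions, which is routine in almost mathematics. The content of this corollary is in fact essentially Proposition 2.4.19 of Gabber--Ramero's \emph{Almost Ring Theory} applied to the coherent ring $A$, and I would simply cite that result to close the argument.
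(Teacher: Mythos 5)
Your proposal is correct and takes essentially the same route as the paper: the paper's proof is simply that the previous lemma gives coherence, so the category of finitely presented $R\otimes_{\F_p}\cO_C/p$-modules has the required stability properties, and one transfers this to almost finitely presented modules by approximating them (and maps between them) by finitely presented ones. Your extra bookkeeping of the $\epsilon$-approximations and the reference to Gabber--Ramero just make explicit what the paper leaves as a one-line remark.
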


\begin{proof} By the previous lemma, the category of finitely presented $R\otimes_{\F_p} \cO_C/p$ has these properties. The corollary follows by approximating almost finitely presented modules (and maps between them) by finitely presented modules.
\end{proof}

Now, we first prove the analogue of the local finiteness result. As in the previous section, choose some affinoid $V\subset \mathbb{P}^{n-1}_{\breve{F}}$ which lifts to $\cM_{\LT,0}$, and fix such a lift; moreover fix some strict quasicompact open subset $U\subset V$.

\begin{lem}\label{LocalFinitenessPatching} Assume that all $\pi_i$ are injective as $H$-representations for some compact open subgroup $H\subset \GL_n(F)$. For any $m\geq 0$, there is a compact open $K_0\subset D^\times$ stabilizing $V$, $U$ and the section $V\to \cM_{\LT,0,C}$ such that for all $K\subset K_0$, the image of the natural map
\[
H^j((V/K)_\et,\mathcal{F}_\Pi\otimes \cO^+/p)\to H^j((U/K)_\et,\mathcal{F}_\Pi\otimes \cO^+/p)
\]
is an almost finitely presented $R\otimes_{\F_p} \cO_C/p$-module for all $j=0,\ldots,m$.
\end{lem}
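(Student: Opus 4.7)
My plan is to adapt the proof of Lemma \ref{LocalFiniteness}, the key extra input being a uniformization that reduces to the case where $\Pi|_H$ takes the explicit form $R \otimes_{\F_p}(\pi^{\reg,H})^r$; after this, the Drinfeld-side computation parallels the original one, and Corollary \ref{AlmFinGenImage} yields the conclusion after a flat tensor with $R$ and an appeal to Corollary \ref{AlmostCoherence}.

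First (uniformization): restriction from a larger to a smaller compact open subgroup has an exact left adjoint (induction), hence preserves injectives, so after shrinking $H$ I may assume $H \subset \GL_n(\cO)$ is pro-$p$, making $\F_p[[H]]$ noetherian and local by Lazard's theorem. Pontryagin-dualizing, injectivity of $\pi_i|_H$ means $\pi_i|_H^\vee$ is a finitely generated projective, hence free, $\F_p[[H]]$-module of some rank $r_i$; equivalently, $\pi_i|_H \cong (\pi^{\reg,H})^{r_i}$ with $\pi^{\reg,H} = C^0(H,\F_p)$. The boundedness hypothesis forces $r_i \leq R_0$ uniformly. The clopen decomposition $I = \bigsqcup_{r=0}^{R_0} \{i : r_i = r\}$ gives a product decomposition $R = \prod_r R_r$ and accordingly $\Pi = \prod_r \Pi_r$; since the class of almost finitely presented $R \otimes_{\F_p} \cO_C/p$-modules is closed under finite products (Corollary \ref{AlmostCoherence}), it suffices to treat each piece separately, so I assume $r_i = r$ is constant.

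Next (descent to Drinfeld side): after shrinking $V$ I may assume the section $V \to \cM_{\LT,0,C}$ lifts along the finite \'etale map $\cM_{\LT,H,C} \to \cM_{\LT,0,C}$ to a section $s: V \to \cM_{\LT,H,C}$, and I choose $K_0 \subset D^\times$ stabilizing $V$, $U$ and $s$. For $K \subset K_0$ let $V_\infty = V \times_{\cM_{\LT,H}, s} \cM_{\LT,\infty}$ (an affinoid perfectoid $H$-torsor over $V$) and let $U_\infty$ be defined analogously. As in the proof of Lemma \ref{LocalFiniteness}, Drinfeld duality and Proposition \ref{QuotientPerfectoid} yield a qcqs smooth adic space $V_K$ over $\Spa(C,\cO_C)$ on the Drinfeld side with $(V_\infty/K)_\et \cong (V_K)_\et$, together with a strict quasicompact open subset $U_K \subset V_K$, and by Lemma \ref{IdentOXmodp} the sheaves $\cO^+_{V_\infty/K}/p$ and $\cO^+_{V_K}/p$ correspond. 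Unwinding definitions, $\Pi|_H = \bigcup_{H'} \prod_i \pi_i^{H'}$ is naturally isomorphic as $R[H]$-module to $R \otimes_{\F_p}(\pi^{\reg,H})^r$, since after our reduction $\pi_i^{H'}$ is a fixed finite-dimensional $\F_p$-vector space independent of $i$, so that $\prod_i \pi_i^{H'} \cong R \otimes_{\F_p} ((\pi^{\reg,H})^r)^{H'}$. Combined with the identification $R\alpha_\ast \cO^+_{V_\infty/K}/p \cong \mathcal{F}_{\pi^{\reg,H}} \otimes \cO^+_{V/K}/p$ (coming from the proof of Lemma \ref{LocalFiniteness}, generalized from $\GL_n(\cO)$ to the pro-$p$ subgroup $H$), where $\alpha: (V_\infty/K)_\et \to (V/K)_\et$, and with the commutation of $R\alpha_\ast$ with filtered colimits, this yields
\[
\mathcal{F}_\Pi \otimes \cO^+_{V/K}/p \cong R \otimes_{\F_p}(\mathcal{F}_{\pi^{\reg,H}} \otimes \cO^+_{V/K}/p)^r.
\]

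Finally (conclusion): writing $R = \varinjlim_J R_J$ as a filtered colimit of finite products of copies of $\F_p$, cohomology commutes with this colimit on the coherent site $(V_{K,C})_\et$, and Leray for $\alpha$ gives
\[
H^j((V_C/K)_\et, \mathcal{F}_\Pi \otimes \cO^+/p) \cong R \otimes_{\F_p} H^j((V_{K,C})_\et, \cO^+/p)^r,
\]
and similarly for $U$; the natural map between them is $R \otimes_{\F_p}(\mathrm{res})^r$, where $\mathrm{res}$ is the restriction on the Drinfeld side. By Corollary \ref{AlmFinGenImage} the image of $\mathrm{res}$ is almost finitely generated over $\cO_C/p$; since $\cO_C/p$ is coherent (finitely generated ideals are principal, hence finitely presented), this image is in fact almost finitely presented. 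Tensoring with the flat $\F_p$-algebra $R$ and taking $r$-fold powers preserves almost finite presentation over $R \otimes_{\F_p} \cO_C/p$, yielding the claim. The main obstacle is the sheaf-level identification in the middle step: the isomorphism $\Pi|_H \cong R \otimes_{\F_p}(\pi^{\reg,H})^r$ of $R[H]$-modules is non-canonical in $i$, and one must carefully track the $K$-equivariance coming from the geometry of $\cM_{\LT,\infty}$ through the equivalence $(V_\infty/K)_\et \cong (V_K)_\et$ to ensure the stated formula at the level of sheaves.
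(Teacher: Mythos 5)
Your proof is correct in substance but takes a genuinely different route from the paper's. The paper does not decompose $\Spec R$ at all: it observes that each $\pi_i|_H\cong (\pi^\reg_H)^{\dim\pi_i^H}$ and, using only the boundedness of $\dim\pi_i^H$, writes $\Pi|_H$ as a \emph{direct summand} of $(\pi^\reg_H)^n\otimes_{\F_p}R$ for a single $n$; it then keeps $V$ and $U$ fixed, passes between $V$ and its preimage $V_H\subset\cM_{\LT,H}$ via a Hochschild--Serre spectral sequence for $\GL_n(\cO)/H$ (together with the filtration of $U\subset V$ by strict rational subsets and the almost-finitely-presented analogue of \cite[Lemma 5.4]{ScholzePAdicHodge} from Corollary \ref{AlmostCoherence}), and finally quotes Lemma \ref{LocalFiniteness} as a black box for $\pi^\reg_H$. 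You instead cut $I$ into clopen pieces on which the rank $r_i$ is constant, obtain the exact identification $\Pi|_H\cong R\otimes_{\F_p}(\pi^\reg_H)^r$, lift the section to level $H$ after shrinking $V$ so as to avoid Hochschild--Serre, and re-run the Drinfeld-side computation to land on the explicit formula $R\otimes_{\F_p}H^j((V_{K,C})_\et,\cO^+/p)^r$. Your version yields a cleaner, fully explicit answer (the cohomology is literally the flat base change $\F_p\to R$ of the constant-coefficient computation), at the price of shrinking $V$ — which technically proves a weaker statement than the lemma as posed for the fixed $(U,V)$, though this is harmless for the only application, Corollary \ref{FirstGlobalFinitenessPatch}, where one is free to choose the covering. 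The paper's version is more economical and requires no shrinking.

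Two small corrections. First, the upgrade from ``image almost finitely generated'' to ``almost finitely presented'' over $\cO_C/p$ requires that $\cO_C/p$ is \emph{almost noetherian} (\cite[Proposition 2.6]{ScholzePAdicHodge}), which is what the paper invokes; coherence alone does not make a finitely generated submodule of an arbitrary module finitely presented. Second, the ``main obstacle'' you flag at the end is not actually one: since $K$ acts trivially on the coefficient module $\Pi$, any isomorphism of $R[H]$-modules $\Pi|_H\cong R\otimes_{\F_p}(\pi^\reg_H)^r$ — however non-canonical in $i$ — induces an isomorphism of the corresponding sheaves on $(V/K)_\et$ by functoriality of $\pi_0\mapsto\mathcal{F}_{\pi_0}$ and its compatibility with finite direct sums and filtered colimits; no further tracking of $K$-equivariance is needed.
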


\begin{proof} As before, this statement depends only on $\Pi$ as a $\GL_n(\cO)$-representation. We assumed that all $\pi_i$ are injective as $H$-representations for some open subgroup $H\subset \GL_n(\cO)$; fix such an $H$ which is pro-$p$ and normal in $\GL_n(\cO)$. As $\F_p[[H]]$ is local, it follows that $\pi_i$ is isomorphic to $\dim \pi_i^H$ many copies of the regular representation $\pi^\reg_H$ of $H$. As $\dim \pi_i^H$ is bounded, it follows that $\Pi|_H$ can be written as a direct summand of $(\pi^\reg_H)^n\otimes_{\F_p} R$ for some $n$.

Let $V_H, U_H\subset \cM_{\LT,H}$ be the preimages of $V,U\subset \cM_{\LT,0}$. There is a Hochschild-Serre spectral sequence
\[
H^{j_1}(\GL_n(\cO)/H,H^{j_2}((V_H/K)_\et,\mathcal{F}_\Pi\otimes \cO^+/p))\Rightarrow H^{j_1+j_2}((V/K)_\et,\mathcal{F}_\Pi\otimes \cO^+/p)\ ,
\]
and similarly for $U$. Filtering the inclusion $U\subset V$ by sufficiently many strict rational subsets and using the obvious analogue of \cite[Lemma 5.4]{ScholzePAdicHodge} which holds for almost finitely presented $R\otimes_{\F_p}\cO_C/p$ by using Corollary \ref{AlmostCoherence}, this reduces us to proving that the image of
\[
H^j((V_H/K)_\et,\mathcal{F}_\Pi\otimes \cO^+/p)\to H^j((U_H/K)_\et,\mathcal{F}_\Pi\otimes \cO^+/p)
\]
is an almost finitely presented $R\otimes_{\F_p} \cO/p$-module. As $\Pi|_H$ is a direct summand of $(\pi^\reg_H)^n\otimes_{\F_p} R$, this image is a direct summand of $n$ copies of the base extension $\F_p\to R$ of the image of
\[
H^j((V_H/K)_\et,\mathcal{F}_{\pi^\reg_H}\otimes \cO^+/p)\to H^j((U_H/K)_\et,\mathcal{F}_{\pi^\reg_H}\otimes \cO^+/p)\ .
\]
But the latter is almost finitely generated over $\cO_C/p$ by Lemma \ref{LocalFiniteness}, thus almost finitely presented as $\cO_C/p$ is almost noetherian, cf. \cite[Proposition 2.6]{ScholzePAdicHodge}.
\end{proof}

\begin{cor}\label{FirstGlobalFinitenessPatch} Assume that all $\pi_i$ are injective as $H$-representations for some compact open subgroup $H\subset \GL_n(F)$. For any $j\geq 0$ and any compact open $K\subset D^\times$, the $R\otimes_{\F_p} \cO_C/p$-module
\[
H^j((\mathbb{P}^{n-1}_C/K)_\et,\mathcal{F}_\Pi\otimes \cO^+/p)
\]
is almost finitely presented.
\end{cor}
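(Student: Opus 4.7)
The plan is to mimic the proof of Corollary \ref{FirstGlobalFiniteness} essentially verbatim, replacing the condition ``almost finitely generated over $\cO_C/p$'' by ``almost finitely presented over $R\otimes_{\F_p} \cO_C/p$'' throughout, and replacing the appeal to Lemma \ref{LocalFiniteness} by an appeal to Lemma \ref{LocalFinitenessPatching}. First I would reduce to the case where $K$ is contained in a sufficiently small compact open subgroup $K_0\subset D^\times$: the Hochschild-Serre spectral sequence as in Lemma \ref{HochschildSerre}, applied with $K^\prime\subset K$ open normal of finite index, exhibits the $K$-level cohomology as a finite iterated extension of subquotients of the $K^\prime$-level cohomology (twisted by finite discrete $K/K^\prime$-modules). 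Since Corollary \ref{AlmostCoherence} shows that the full subcategory of almost finitely presented $R\otimes_{\F_p}\cO_C/p$-modules is closed under kernels, cokernels, and extensions, it is enough to prove the result for any single cofinal collection of sufficiently small $K$.

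Having made this reduction, I would fix $j\geq 0$ and choose $N\geq j+2$. Cover $\mathbb{P}^{n-1}_C$ by finitely many pairs $(U_l,V_l)_{l\in J}$, where $V_l\subset \mathbb{P}^{n-1}_{\breve{F}}$ is an affinoid over which the Gross-Hopkins map $\cM_{\LT,0}\to \mathbb{P}^{n-1}_{\breve{F}}$ admits a section (a fixed such section), $U_l\subset V_l$ is a strict rational subset, and $\mathbb{P}^{n-1}_C = \bigcup_l U_l$. For each $l$, interpolate by strict rational subsets $U_l = U_l^{(N)}\subset \cdots\subset U_l^{(1)} = V_l$. For any nonempty $S\subset J$, set $U_S^{(k)} = \bigcap_{l\in S} U_l^{(k)}$; this is a rational subset of $V_l$ for each $l\in S$, and $U_S^{(k+1)}\subset U_S^{(k)}$ is a strict inclusion of rational subsets. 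Shrinking $K$ if necessary so that it stabilizes all the $V_l$, $U_l^{(k)}$ and all chosen sections $V_l\to \cM_{\LT,0,C}$, Lemma \ref{LocalFinitenessPatching} applies and yields that the image of
\[
H^{j^\prime}((U_S^{(k)}/K)_\et,\mathcal{F}_\Pi\otimes \cO^+/p)\to H^{j^\prime}((U_S^{(k+1)}/K)_\et,\mathcal{F}_\Pi\otimes \cO^+/p)
\]
is almost finitely presented over $R\otimes_{\F_p}\cO_C/p$ for all $j^\prime\leq j$.

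Finally, for each $k=1,\ldots,N$ consider the \v{C}ech-to-cohomology spectral sequence associated to the cover $\mathbb{P}^{n-1}_C = \bigcup_l U_l^{(k)}$, together with the maps of spectral sequences induced by the inclusions $U_l^{(k+1)}\subset U_l^{(k)}$. Applying the analogue of \cite[Lemma 5.4]{ScholzePAdicHodge} in the category of almost finitely presented $R\otimes_{\F_p}\cO_C/p$-modules shows that the image of $H^j((\mathbb{P}^{n-1}_C/K)_\et,\mathcal{F}_\Pi\otimes \cO^+/p)$ in the $U^{(N)}$-version is almost finitely presented; but that image is the whole group since one can always refine the cover. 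The main (and essentially only) obstacle is ensuring that \cite[Lemma 5.4]{ScholzePAdicHodge}, which is purely formal, goes through for almost finitely presented $R\otimes_{\F_p}\cO_C/p$-modules rather than for almost finitely generated $\cO_C/p$-modules. The only algebraic input is the abelian-subcategory and closure-under-extensions property, which is exactly what Corollary \ref{AlmostCoherence} supplies; the choice $N\geq j+2$ provides enough intermediate steps to absorb all the differentials on the way from the $E_1$- to the $E_\infty$-page.
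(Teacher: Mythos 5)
Your proposal is correct and follows essentially the same route as the paper: reduce to small $K$ via Hochschild--Serre (using Corollary \ref{AlmostCoherence} to keep almost finite presentation under subquotients and extensions), then run the covering/\v{C}ech argument of Corollary \ref{FirstGlobalFiniteness} with Lemma \ref{LocalFinitenessPatching} in place of Lemma \ref{LocalFiniteness} and the almost-finitely-presented analogue of \cite[Lemma 5.4]{ScholzePAdicHodge}. The paper merely states this in two lines by reference; you have unpacked the same argument.
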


\begin{proof} By a Hochschild-Serre spectral sequence (cf. proof of Corollary \ref{SecondGlobalFiniteness}), we may assume that $K$ is sufficiently small (depending on $j$). In that case, the same argument as for Corollary \ref{FirstGlobalFiniteness} applies, noting as before that the analogue of \cite[Lemma 5.4]{ScholzePAdicHodge} holds for almost finitely presented $R\otimes_{\F_p}\cO_C/p$-modules.
\end{proof}

Now we can finish the proof of Theorem \ref{PatchingThm}. Note that one has an almost isomorphism
\[
H^j((\mathbb{P}^{n-1}_C/K)_\et,\mathcal{F}_\Pi)\otimes \cO_C/p\to H^j((\mathbb{P}^{n-1}_C/K)_\et,\mathcal{F}_\Pi\otimes \cO^+/p)\ .
\]
Indeed, after each localization $R\to R_y= \F_p$ for $y\in \Spec R$, this follows by applying Theorem \ref{Finiteness} to the admissible smooth $R_y=\F_p$-representation $\Pi\otimes_R R_y$. Globally, the result follows from the following simple observation applied to the kernel and cokernel of the displayed map.

\begin{lem} Let $M$ be an $R\otimes_{\F_p} \cO_C/p$-module. Assume that for all $y\in \Spec R$, $M\otimes_R R_y$ is almost zero. Then $M$ is almost zero.
\end{lem}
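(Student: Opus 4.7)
The plan is to reduce to the classical local-global principle that a module element which localizes to zero at every prime of a commutative ring must itself be zero. First I would fix an arbitrary element $m\in M$ together with an almost zero scalar $a\in \mathfrak{m}_C$, and aim to show $am=0$; once this is done for all such pairs, $M$ is almost zero by definition of almost mathematics.

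For this, the crucial input is the earlier observation that $R_y=\F_p$ for every point $y\in \Spec R$, which means that the tensor product $M\otimes_R R_y$ coincides with the localization $M_\mathfrak{p}$ of $M$ at the prime $\mathfrak{p}\subset R$ corresponding to $y$. The $\cO_C/p$-module structure is preserved by localization, so the hypothesis that $M\otimes_R R_y$ is almost zero forces the image of $am$ in $M_\mathfrak{p}$ to vanish. By the very definition of localization, there must then exist some $s\in R\setminus \mathfrak{p}$ with $s\cdot(am)=0$, so the annihilator ideal $J=\Ann_R(am)\subset R$ fails to be contained in $\mathfrak{p}$.

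Letting $\mathfrak{p}$ range over all primes of $R$, I conclude that $J$ lies in no prime of $R$; Zorn's lemma then forces $J=R$, i.e. $am=0$, as desired. I do not foresee any real obstacle here. The entire argument is a routine application of the support-detection principle for modules over an arbitrary commutative ring, combined with the already-verified identification of all local rings of $R$ with $\F_p$. The only minor point to keep in mind is that almost-zeroness of a module may be tested element by element (and scalar by scalar), which is immediate from the definition, so that the hypothesis really does give us information about each $am$ individually.
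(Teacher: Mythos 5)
Your proof is correct and is essentially the paper's argument: the paper also fixes $m\in M$ and a topologically nilpotent scalar, observes that its product vanishes in every $M\otimes_R R_y=M_{\mathfrak p}$, and concludes $\epsilon m=0$ in $M$ by the same support-detection principle (which you simply spell out via the annihilator ideal). No issues.
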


\begin{proof} Take any $m\in M$ and nilpotent $\epsilon\in \cO_C/p$. Then $\epsilon m=0\in M\otimes_R R_y$ for all $y\in \Spec R$, as $M\otimes_R R_y$ is almost zero. But then $\epsilon m=0\in M$, showing that $m$ is almost zero.
\end{proof}

Finally, Theorem \ref{PatchingThm} follows from Corollary \ref{FirstGlobalFinitenessPatch} and the following lemma.

\begin{lem} Let $M$ be an $R$-module such that $M\otimes_{\F_p} \cO_C/p$ is an almost finitely presented $R\otimes_{\F_p} \cO_C/p$-module. Then $M$ is finitely presented.
\end{lem}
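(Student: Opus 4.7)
My plan is to prove finite presentation in two stages: first establish finite generation of $M$ over $R$, and then bootstrap to finite presentation using the almost coherence of $R\otimes_{\F_p}\cO_C/p$ from Corollary \ref{AlmostCoherence}.

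\emph{Step 1 (finite generation).} Almost finite presentation implies almost finite generation, so for each $\epsilon\in\mm_C$ there is a finitely generated $R\otimes_{\F_p}\cO_C/p$-submodule $N\subset M\otimes_{\F_p}\cO_C/p$ with $\epsilon\cdot(M\otimes_{\F_p}\cO_C/p)\subset N$. I will fix $\epsilon=p^{1/2}$; any element of $\mm_C$ whose image in $\cO_C/p$ is nonzero would serve equally well. Expanding a finite generating set of $N$ as a sum of simple tensors $m_{l,j}\otimes c_{l,j}$ with $m_{l,j}\in M$ and $c_{l,j}\in\cO_C/p$, I let $M'\subset M$ denote the $R$-submodule generated by the (finitely many) $m_{l,j}$. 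Then $N\subset M'\otimes_{\F_p}\cO_C/p$, so $\epsilon$ annihilates $(M/M')\otimes_{\F_p}\cO_C/p$. Since $\cO_C/p$ is a nonzero $\F_p$-vector space, hence faithfully flat over $\F_p$, the module $(M/M')\otimes_{\F_p}\cO_C/p$ is a direct sum of copies of $\cO_C/p$ indexed by a basis of $M/M'$; as $\epsilon$ acts nontrivially on $\cO_C/p$ by our choice, this forces $M/M'=0$, so $M=M'$ is finitely generated over $R$.

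\emph{Step 2 (finite presentation).} I choose a surjection $\phi\colon R^s\twoheadrightarrow M$ afforded by Step 1, and set $K=\ker\phi$. Flatness of $\cO_C/p$ over $\F_p$ yields an exact sequence
\[
0\to K\otimes_{\F_p}\cO_C/p\to (R\otimes_{\F_p}\cO_C/p)^s\to M\otimes_{\F_p}\cO_C/p\to 0.
\]
The middle term is finitely presented (hence almost finitely presented) and the right term is almost finitely presented by hypothesis. By Corollary \ref{AlmostCoherence}, the category of almost finitely presented $R\otimes_{\F_p}\cO_C/p$-modules is closed under kernels, so $K\otimes_{\F_p}\cO_C/p$ is almost finitely presented, and in particular almost finitely generated. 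Applying the Step 1 argument verbatim with $K$ in place of $M$ shows that $K$ is finitely generated over $R$, and therefore $M$ admits a finite presentation $R^t\to R^s\to M\to 0$.

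The only delicate point, rather than a true obstacle, is the invocation of Corollary \ref{AlmostCoherence} in Step 2: over a general ring one cannot guarantee that the kernel of a surjection from a finite free module onto an almost finitely presented module is itself almost finitely generated, and the bootstrap from finite generation to finite presentation would then fail. Almost coherence of $R\otimes_{\F_p}\cO_C/p$ is precisely what rules out this pathology.
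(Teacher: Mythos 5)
Your proof is correct, but it takes a genuinely different route from the paper's. The paper deduces finite presentation by first attaching to the almost finitely presented module $M\otimes_{\F_p}\cO_C/p$ its system of elementary divisors (in the sense of \cite[Proposition 2.11]{ScholzePAdicHodge}) at each point of $\Spec R$, showing that these assemble into a continuous function on $\Spec R$, concluding that $y\mapsto \dim M_y$ is locally constant, and then running a Nakayama-type spreading-out argument to show that $M$ is in fact locally free of finite rank after a clopen decomposition of $\Spec R$. Your argument instead isolates the purely formal content: a single nonzero $\epsilon\in\mm_C$ together with faithful flatness of $\cO_C/p$ over $\F_p$ upgrades almost finite generation of $M\otimes_{\F_p}\cO_C/p$ to honest finite generation of $M$, and almost coherence (Corollary \ref{AlmostCoherence}) lets you apply the same step to the kernel of a presentation. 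Your route is shorter, avoids the elementary-divisor machinery entirely, and uses nothing about $R$ beyond the coherence statement, so it proves a more general assertion; the paper's route yields the stronger structural conclusion that $M$ is locally free with locally constant rank, which dovetails with Lemma \ref{FinitelyPresentedModules}, though only finite presentation is actually needed where the lemma is invoked. Both arguments are complete; the one point worth flagging in yours is that Corollary \ref{AlmostCoherence} must be read as closure under kernels computed in the ambient module category, which is indeed how it is stated and used elsewhere in the paper.
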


\begin{proof} Any almost finitely presented $R\otimes_{\F_p} \cO_C/p$-module $N$ has elementary divisors in the sense of \cite[Proposition 2.11]{ScholzePAdicHodge} at each $y\in \Spec R$. By approximating $N$ with finitely presented modules, one checks that these assemble into a continuous map
\[
\gamma_N: \Spec R\to \ell^\infty_\geq(\N)_0
\]
using notation employed there. Applying this to $N=M\otimes_{\F_p} \cO_C/p$ shows that the function sending $y\in \Spec R$ to the dimension of $M_y$ is locally constant. Thus, after passing to a clopen decomposition of $\Spec R$, we can assume that for all $y\in \Spec R$, $M_y$ is of dimension $d$. We claim that in this case, $M$ is locally free of rank $d$. Pick any $y\in \Spec R$, and choose a map $R^d\to M$ that becomes an isomorphism after localization at $y$, and let $M^\prime$ be the cokernel. This induces a map
\[
(R\otimes_{\F_p} \cO_C/p)^d\to M\otimes_{\F_p} \cO_C/p
\]
that becomes an isomorphism after localization at $y$. Its cokernel $M^\prime\otimes_{\F_p} \cO_C/p$ is then an almost finitely presented $R\otimes_{\F_p} \cO_C/p$-module whose localization at $y$ is almost zero. Repeating for $M^\prime$ what we know about $M$ then shows that after replacing $\Spec R$ by an open neighborhood of $y$, we have $M^\prime=0$. This means that $R^d\to M$ is surjective. Checking at all local rings implies that $R^d\to M$ is an isomorphism, as all localizations of $M$ are of rank $d$. Thus, $M\cong R^d$ is free, as desired.
\end{proof}

\begin{rem} The results of this section imply similar results over a finite base ring $A$ over $\Z/p^n\Z$ if all $\pi_i^H$ are free $A$-modules.
\end{rem}

\section{Patching}

In this section, we do the analogue of the patching construction from \cite{CEGGPS}, in the simplest possible situation. Our setup here is more restrictive than it should be (in particular, it forces $[F_\frap:\Q_p]$ to be even), but we hope that the simplicity of the discussion gives some justification.

We assume that $\frap$ is the only place above $p$ in $F$, and that $G$ is split at all finite places.\footnote{The latter hypothesis is only imposed to be able to use the references below without further justification.} Let
\[
\overline{\sigma}: \Gal_F\to \GL_2(\F_q)
\]
be absolutely irreducible, and unramified outside $\frap$. Let $U^\frap = \prod_{v\neq \frap} \GL_2(\cO_{F_v})\subset \GL_2(\A_{F,f}^\frap)\cong G(\A_{F,f}^\frap)$. Let $\mm\subset \T$ be the maximal ideal corresponding to $\overline{\sigma}$. Moreover, we fix a character $\psi: \Gal_{F,\frap}\to \cO_L^\times$ unramified outside $\frap$, for some finite extension $L$ of $\Q_p$ with residue field $\F_q$ and uniformizer $\varpi_L$, such that $\det\overline{\sigma} = \psi \chi_\cycl$ mod $\varpi_L$. We assume that
\[
\pi_{U^\frap,\mm} = C^0(G(F)\backslash[G(\A_{F,f})/U^\frap],\Q_p/\Z_p)_\mm\neq 0\ .
\]
Comparing central characters and determinants of associated Galois representations, we see that this implies that also
\[
C^0(G(F)\backslash[G(\A_{F,f})/U^\frap],L/\cO_L)[\psi]_\mm\neq 0\ .
\]

There are framed and unframed Galois deformation rings $R_{\overline{\sigma}}^{\square,\psi}$ and $R_{\overline{\sigma}}^\psi$, parametrizing (framed) deformations of $\overline{\sigma}$ unramified outside $\frap$ and with determinant $\psi\chi_\cycl$, and a local framed Galois deformation ring $R_\frap^{\square,\psi}$ parametrizing framed deformations of $\overline{\sigma}|_{\Gal_F}$ with determinant $\psi\chi_\cycl$. There is a natural map $R_\frap^{\square,\psi}\to R^{\square,\psi}_{\overline{\sigma}}$.

To apply the Taylor-Wiles patching technique \cite{TaylorWiles}, adapted to the case of totally real fields in \cite{TaylorMeromorphic}, we impose some usual hypothesis, cf. \cite[\S 2.2]{KisinFontaineMazur}.

\begin{hypothesis} In this section, assume that the following conditions are satisfied.
\begin{enumerate}
\item[{\rm (i)}] The prime $p\geq 5$.
\item[{\rm (ii)}] The representation $\overline{\sigma}|_{\Gal_{F(\zeta_p)}}$ is absolutely irreducible.
\item[{\rm (iii)}] If $p=5$ and $\overline{\sigma}$ has projective image $\PGL_2(\F_5)$, then the kernel of $\proj \overline{\sigma}$ does not fix $F(\zeta_5)$.
\end{enumerate}
\end{hypothesis}

Under these hypothesis, we have the existence of Taylor-Wiles primes.

\begin{prop}[{\cite[Proposition 2.2.4]{KisinFontaineMazur}}]\label{TWPrimes} The integer $g=\dim_{\F_q} H^1(\Gal_{F,\frap},\ad^0 \overline{\sigma}(1)) - [F:\Q]$ is nonnegative. For each positive integer $n$, there exists a finite set $Q_n$ of $g+[F:\Q]$ primes of $F$ such that $q_v\equiv 1\mod p^n$ for all $v\in Q_n$ and $\overline{\Frob_v}$ has distinct eigenvalues, and with the following property. The framed deformation ring $R_{\overline{\sigma},Q_n}^{\square,\psi}$ parametrizing framed deformations of $\overline{\sigma}$ unramified outside $\frap$ and $Q_n$ and with determinant $\psi\chi_\cycl$ is topologically generated by $g$ elements over $R_\frap^{\square,\psi}$.
\end{prop}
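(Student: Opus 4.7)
The plan is to follow the classical Taylor--Wiles argument, essentially as executed in \cite[Proposition 2.2.4]{KisinFontaineMazur}. There are two steps: proving $g \geq 0$ and constructing the auxiliary primes $Q_n$.

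The starting point is a Galois cohomology computation identifying the relative tangent space of $R^{\square,\psi}_{\overline{\sigma}}$ over $R^{\square,\psi}_\frap$ with a Selmer group $H^1_{\mathcal{L}}(\Gal_{F,\{\frap,\infty\}}, \ad^0 \overline{\sigma})$, where the Selmer condition $\mathcal{L}$ is \emph{unrestricted} at $\frap$ and zero at archimedean places (encoding oddness). Applying the Greenberg--Wiles formula, which packages Poitou--Tate duality with the global Euler characteristic formula, together with hypothesis (ii) (killing global $H^0$-terms for both $\ad^0 \overline{\sigma}$ and $\ad^0 \overline{\sigma}(1)$), the oddness of $\overline{\sigma}$, and local Tate duality at $\frap$, one obtains
\[
h^1_{\mathcal{L}}(\Gal_F, \ad^0 \overline{\sigma}) - h^1_{\mathcal{L}^\perp}(\Gal_F, \ad^0 \overline{\sigma}(1)) \;=\; h^1(\Gal_{F_\frap}, \ad^0 \overline{\sigma}(1)) - [F:\Q],
\]
where $\mathcal{L}^\perp$ imposes triviality at $\frap$ and the unramified condition elsewhere. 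In particular, the right-hand side equals $g$, which is thus a difference of nonnegative dimensions; nonnegativity of $g$ follows since the dual Selmer sits inside $H^1(\Gal_{F_\frap}, \ad^0 \overline{\sigma}(1))$ of dimension $g + [F:\Q]$.

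For the construction of $Q_n$, I proceed inductively to kill the dual Selmer group. Given a nonzero class $[c] \in H^1_{\mathcal{L}^\perp}(\Gal_F, \ad^0 \overline{\sigma}(1))$, I seek a prime $v \nmid p$ with $q_v \equiv 1 \pmod{p^n}$, $\overline{\Frob_v}$ having distinct eigenvalues on $\overline{\sigma}$, and $[c]|_{\Gal_{F_v}} \neq 0$. Such $v$ is produced by Chebotarev applied to the extension of $F$ cut out by $\zeta_{p^n}$, by $\overline{\sigma}$, and by the cocycle $c$: one needs an element $\tau \in \Gal_F$ acting trivially on $\mu_{p^n}$, with regular semisimple image in $\GL_2(\F_q)$ under $\overline{\sigma}$, and with $c(\tau) \neq 0$. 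Hypotheses (i)--(iii) ensure that $\overline{\sigma}|_{\Gal_{F(\zeta_{p^n})}}$ is absolutely irreducible with image large enough to realize $\tau$ as a Chebotarev conjugacy class. Adjoining such a $v$ and updating the Selmer condition at $v$ to allow a one-dimensional ramified subspace strictly decreases $\dim H^1_{\mathcal{L}^\perp}$; iterating $g$ times kills it. An additional $[F:\Q]$ primes chosen analogously absorb framing contributions at the archimedean places, yielding $|Q_n| = g + [F:\Q]$. Once the dual Selmer vanishes, the Greenberg--Wiles identity above gives a relative tangent space of exact dimension $g$, and Nakayama over the complete local noetherian ring $R^{\square,\psi}_\frap$ yields topological generation by $g$ elements.

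The main obstacle is the Chebotarev step: for every nonzero cocycle $c$, one must simultaneously impose the cyclotomic condition, the distinct-eigenvalue condition, and the $c(\tau) \neq 0$ condition. This is exactly where the three hypotheses enter. Hypothesis (ii) forces $\overline{\sigma}|_{\Gal_{F(\zeta_p)}}$ to be absolutely irreducible, so that $\ad^0 \overline{\sigma}(1)$ has no $\Gal_{F(\zeta_p)}$-invariants and a separating Frobenius can be found; hypothesis (iii) excludes the exceptional case $p=5$ with $\PGL_2(\F_5) \simeq S_5$ projective image, in which $F(\zeta_5)$ could be contained in the fixed field of $\ker(\proj \overline{\sigma})$---a collapse that would prevent the independent choice of the cyclotomic and eigenvalue conditions; hypothesis (i) ensures $p \geq 5$ so that the semisimplification arguments and standard tangent-space dimension counts apply without exceptional behavior.
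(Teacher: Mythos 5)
The paper gives no proof of this proposition: it is quoted from \cite[Proposition 2.2.4]{KisinFontaineMazur}, so you are supplying the argument that the paper delegates to its reference. Your skeleton --- Greenberg--Wiles to identify the relative tangent space of $R^{\square,\psi}_{\overline{\sigma},Q_n}$ over $R^{\square,\psi}_\frap$ with a Selmer group, Chebotarev (using hypotheses (i)--(iii) exactly as you describe) to find primes $v$ with $q_v\equiv 1\bmod p^n$ and $\overline{\Frob_v}$ regular semisimple killing the dual Selmer group, then Nakayama --- is indeed the standard Taylor--Wiles argument and is the right route.

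The bookkeeping, however, is off in three places. First, you conflate $\Gal_{F,\frap}$ (the global Galois group unramified outside $\frap$, which is what appears in the definition of $g$) with the local group $\Gal_{F_\frap}$. The relevant dual Selmer group is the global $H^1(\Gal_{F,\frap},\ad^0\overline{\sigma}(1))$ --- unramified outside $\frap$, with \emph{no} condition at $\frap$, since the Selmer condition at $\frap$ for the problem relative to $R^{\square,\psi}_\frap$ is the zero subspace and its annihilator is everything --- and it has dimension $g+[F:\Q]$ by the very definition of $g$. Consequently your count of primes is mispackaged: one needs one Taylor--Wiles prime per dimension of this $(g+[F:\Q])$-dimensional group, so all $g+[F:\Q]$ primes go into killing the dual Selmer group; there is no separate batch of $[F:\Q]$ primes ``absorbing framing contributions at the archimedean places'' (the framing is only at $\frap$, and the $[F:\Q]$ enters through the archimedean terms $\dim\mathcal{L}_v-h^0(\Gal_{F_v},\ad^0\overline{\sigma})=0-1$ in Greenberg--Wiles, i.e.\ through oddness). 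Second, your justification of $g\geq 0$ is circular: that the dual Selmer group ``sits inside'' a space of dimension $g+[F:\Q]$ proves nothing, and a difference of two nonnegative dimensions need not be nonnegative. The correct argument is the global Euler characteristic formula: complex conjugation fixes a $2$-dimensional subspace of the $3$-dimensional module $\ad^0\overline{\sigma}(1)$, and $H^0(\Gal_{F,\frap},\ad^0\overline{\sigma}(1))=0$ by hypothesis (ii), so $\dim H^1(\Gal_{F,\frap},\ad^0\overline{\sigma}(1))=\dim H^2(\Gal_{F,\frap},\ad^0\overline{\sigma}(1))+[F:\Q]$, whence $g=\dim H^2\geq 0$ (equivalently, $g$ is the dimension of the relative tangent space once the dual Selmer group is killed, hence nonnegative). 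Third, the displayed Greenberg--Wiles identity is not correct as written (wrong right-hand side, again with the local group in place of the global one); with it repaired, the conclusion that $R^{\square,\psi}_{\overline{\sigma},Q_n}$ is topologically generated by $g$ elements over $R^{\square,\psi}_\frap$ does follow once $H^1_{\mathcal{L}_{Q_n}^\perp}=0$.
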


In the following, we fix such a set $Q_n$ for each $n\geq 1$, as well as a non-principal ultrafilter $\mathfrak{F}$ on $\{n\geq 1\}$. This choice accounts for all choices needed to make the patching construction, and in a precise sense it amounts to the choice of $g+[F:\Q]$ ``infinite primes" $v$ of $F$ such that $q_v\equiv 1\mod p^\infty$.

We continue to follow the discussion in \cite[\S 2.2.5]{KisinFontaineMazur}. For each $n\geq 1$, let $U_{Q_n}(1)\subset U_{Q_n}(0)\subset G(\A_{F,f}^\frap)\cong \GL_2(\A_{F,f}^\frap)$ be the compact open subgroups given by
\[
U_{Q_n}(1) = \prod_{v\not\in Q_n} \GL_2(\cO_{F_v})\times \prod_{v\in Q_n} U_v(1)\subset U_{Q_n}(0) = \prod_{v\not\in Q_n} \GL_2(\cO_{F_v})\times \prod_{v\in Q_n} U_v(0)\ ,
\]
where
\[\begin{aligned}
U_v(1) &= \{\left(\begin{array}{cc} a & b \\ c & d\end{array}\right)\mid c\equiv 0\mod v, a/d\mapsto 1\in \Delta_v\}\\
\subset U_v(0) &= \{\left(\begin{array}{cc} a & b \\ c & d\end{array}\right)\mid c\equiv 0\mod v\}\subset \GL_2(\cO_{F_v})\ ,
\end{aligned}\]
where $\Delta_v\cong \Z/p^n\Z$ is the unique quotient of order $p^n$ of the units $k_v^\times$ of the residue field $k_v$ at $v$. Thus, $U_{Q_n}(1)\subset U_{Q_n}(0)$ is a normal subgroup with quotient $\Delta_{Q_n} := U_{Q_n}(0)/U_{Q_n}(1)\cong (\Z/p^n\Z)^{g+[F:\Q]}$.

If necessary, we replace once $\F_q$ by $\F_{q^2}$ in the following step. Doing so, we can fix a root $\alpha_v$ of the polynomial $X^2 - T_v X + q_v S_v$ in $\F_q$ for all $v\in Q_n$. For each sufficiently small compact open subgroup $K\subset \GL_2(F)$, let
\[
S_\psi(K U_{Q_n}(i),\cO_L) = C^0(G(F)\backslash G(\A_{F,f}) / K U_{Q_n}(i),\cO_L)[\psi]
\]
for $i=0,1$ be the space of functions with central character $\psi$. On these spaces, there is an action by the Hecke algebra $\T(U_{Q_n}(i))$ generated by the usual elements $T_v$ and $S_v$ for $v\not\in Q_n$, $v\neq \frap$, as well as operators $U_v$ for $v\in Q_n$ given by the action of the $U_v(i)$-double coset of $\diag(\pi_v,1)$. Let $\mm_{Q_n}(i)\subset \T(U_{Q_n}(i))$ denote the (maximal) ideal generated by $\mm\cap \T(U_{Q_n}(i))$ and $U_v - \alpha_v$ for $v\in Q_n$. By \cite[Lemma 2.1.7]{KisinFontaineMazur}, the natural map
\[
S_\psi(K,\cO_L)_\mm\to S_\psi(K U_{Q_n}(0),\cO_L)_{\mm_{Q_n}(0)}
\]
is an isomorphism. Moreover, \cite[Lemma 2.1.4]{KisinFontaineMazur} implies that
\[
S_\psi(K U_{Q_n}(1),\cO_L)_{\mm_{Q_n}(1)}
\]
is a finite free $\cO_L[\Delta_{Q_n}]$-module with
\[
S_\psi(K U_{Q_n}(1),\cO_L)_{\mm_{Q_n}(1)}\otimes_{\cO_L[\Delta_{Q_n}]} \cO_L\cong S_\psi(K U_{Q_n}(0),\cO_L)_{\mm_{Q_n}(0)}\ .
\]

By the existence of Galois representations, there is an action of the unframed deformation ring $R_{\overline{\sigma},Q_n}^\psi$ on $S_\psi(K U_{Q_n}(1),\cO_L)_{\mm_{Q_n}(1)}$. Moreover, using local deformation rings at places $v\in Q_n$, there is a map $\cO_L[[y_1,\ldots,y_{g+[F:\Q]}]]\to R_{\overline{\sigma},Q_n}^\psi$ such that the action of $\cO_L[[y_1,\ldots,y_{g+[F:\Q]}]]$ on $S_\psi(K U_{Q_n}(1),\cO_L)_{\mm_{Q_n}(1)}$ comes from the $\Delta_{Q_n}$-action via the fixed surjection
\[
\cO_L[[y_1,\ldots,y_{g+[F:\Q]}]]\to \cO_L[(\Z/p^n\Z)^{g+[F:\Q]}]\cong \cO_L[\Delta_{Q_n}]\ .
\]
The map $R_{\overline{\sigma},Q_n}^\psi\to R_{\overline{\sigma},Q_n}^{\square,\psi}$ is formally smooth of dimension $3$, so we can fix
\[
y_{g+[F:\Q]+1},\ldots,y_{g+[F:\Q]+3}
\]
such that
\[
R_{\overline{\sigma},Q_n}^{\square,\psi}\cong R_{\overline{\sigma},Q_n}^\psi[[y_{g+[F:\Q]+1},\ldots,y_{g+[F:\Q]+3}]]\ .
\]
Finally, we fix surjections
\[
R_{\frap}^{\square,\psi}[[x_1,\ldots,x_g]]\to R_{\overline{\sigma},Q_n}^{\square,\psi}
\]
and a lifting
\[
\cO_L[[y_i]]\to R_\frap^{\square,\psi}[[x_1,\ldots,x_g]]\ ,
\]
where we abbreviate $\cO_L[[y_i]] = \cO_L[[y_1,\ldots,y_{g+[F:\Q]+3}]]$ here and in the following.

Set
\[
S_n(K) = R_{\overline{\sigma},Q_n}^{\square,\psi}\otimes_{R_{\overline{\sigma},Q_n}^\psi} S_\psi(K U_{Q_n}(1),\cO_L)_{\mm_{Q_n}(1)}\ ,
\]
which becomes a $R_\frap^{\square,\psi}[[x_1,\ldots,x_g]]$-module via the chosen surjection
\[
R_{\frap}^{\square,\psi}[[x_1,\ldots,x_g]]\to R_{\overline{\sigma},Q_n}^{\square,\psi}\ .
\]

Finally, we can do the patching. Fix an open ideal $I\subset \cO_L[[y_i]]$. Let
\[
\pi_n(I) = \varinjlim_K S_n(K)\otimes_{\cO_L[[y_i]]} \cO_L[[y_i]]/I\ .
\]
Then, for all sufficiently large $n$ so that $I$ contains the kernel of
\[
\cO_L[[y_i]]\to \cO_L[\Delta_{Q_n}][[y_{g+[F:\Q]+1},\ldots,y_{g+[F:\Q]+3}]]\ ,
\]
$\pi_n(I)$ is an admissible $\GL_2(F_\frap)$-representation over the finite ring $\cO_L[[y_i]]/I$ such that $\pi_n(I)^K$ is finite free for all sufficiently small compact open subgroups $K\subset \GL_2(F)$. Moreover,
\[
\pi_n(I)\otimes_{\cO_L[[y_i]]/I} \cO_L/\varpi_L = C^0(G(F)\backslash G(\A_{\F,f}^\frap) / \prod_{v\neq \frap} \GL_n(\cO_{F_v}),\cO_L/\varpi_L)
\]
is independent of $n$, so that in particular the ranks of $\pi_n(I)^K$ are bounded uniformly in $n$. Thus, we may take an ultraproduct as in Section \ref{PatchingPrep}:

For any $I$, we have the map
\[
\prod_{n\geq 1} \cO_L[[y_i]]/I\to \cO_L[[y_i]]/I\ ,
\]
which is the localization at the maximal ideal of the product corresponding to the fixed non-principal ultrafilter $\mathfrak{F}$. Define
\[
\pi_\infty(I) = \varinjlim_K (\prod_{n\geq 1} \pi_n(I)^K)\otimes_{\prod_{n\geq 1} \cO_L[[y_i]]/I} \cO_L[[y_i]]/I\ .
\]
Then $\pi(I)$ is an admissible $\GL_2(F_\frap)$-representation over $\cO_L[[y_i]]/I$ such that
\[
\pi_\infty(I)^K = (\prod_{n\geq 1} \pi_n(I)^K)\otimes_{\prod_{n\geq 1} \cO_L[[y_i]]/I} \cO_L[[y_i]]/I\ .
\]
is finite free. Finally, we can pass to the inverse limit
\[
\pi^\comp_\infty = \varprojlim_I \pi_\infty(I)
\]
to get what one may call a $(\varpi_L,y_1,\ldots,y_{g+[F:\Q]+3})$-adically admissible $\cO_L[[y_i]][\GL_2(F_\frap)]$-representation. Using the freeness properties of the situation, one may also pass to
\[
\pi_\infty = \pi_\infty^\comp\otimes_{\cO_L[[y_i]]} \omega\ ,
\]
where $\omega$ is the injective hull of $\cO_L/\varpi_L$ as $\cO_L[[y_i]]$-module. This is an admissible $\GL_2(F_\frap)$-representation over $\cO_L[[y_i]]$.

Note that $R_\frap^{\square,\psi}[[x_1,\ldots,x_g]]$ acts on all objects considered, in particular on $\pi_\infty^\comp$ and $\pi_\infty$. Using the exact same arguments (and the same ultrafilter $\mathfrak{F}$), one also produces a patched admissible $D^\times_\frap$-representation $\rho_\infty$ over $\cO_L[[y_i]]$ from the cohomology groups
\[
H^1(\Sh_{K U_{Q_n}(1)},\cO_L)[\psi]_{\mm_{Q_n}(1)}\ .
\]
As all these groups carry continuous $\Gal_{F_\frap}$-actions, so does $\rho_\infty$. Actually, $\rho_\infty$ is also an $R_\frap^{\square,\psi}[[x_1,\ldots,x_g]]$-module, and if $\overline{\sigma}|_{\Gal_{F_\frap}}$ is absolutely irreducible, then $\rho_\infty$ is $\sigma$-typic, where $\sigma$ denotes the universal (framed) deformation of $\overline{\sigma}|_{\Gal_{F_\frap}}$.

\begin{cor}\label{PatchingMainThm} There is a canonical $\Gal_{F_\frap}\times D^\times_\frap$-equivariant isomorphism
\[
H^1_\et(\mathbb{P}^1_{\C_p},\mathcal{F}_{\pi_\infty})\cong \rho_\infty
\]
of $R_\frap^{\square,\psi}[[x_1,\ldots,x_g]]$-modules.
\end{cor}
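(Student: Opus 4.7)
The plan is to propagate the local-global compatibility of Theorem~\ref{LocalGlobal} through every step of the Taylor--Wiles patching construction, and then use Corollary~\ref{PatchingCor} (in the finite-coefficient variant from the remark at the end of Section~\ref{PatchingPrep}) to commute the functor $\pi\mapsto H^1_\et(\mathbb{P}^1_{\C_p},\mathcal{F}_\pi)$ with the ultraproduct along $\mathfrak{F}$. First, for each $n\geq 1$ and each sufficiently small open ideal $I\subset \cO_L[[y_i]]$, let $\rho_n(I)$ denote the analogue of $\pi_n(I)$ built from $H^1(\Sh_{K U_{Q_n}(1),\overline{F}},\cO_L)[\psi]_{\mm_{Q_n}(1)}$ via the same base change along $R_{\overline{\sigma},Q_n}^\psi\to R_{\overline{\sigma},Q_n}^{\square,\psi}$ and reduction modulo $I$. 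I want a canonical $\Gal_{F_\frap}\times D^\times_\frap$-equivariant identification
\[
H^1_\et(\mathbb{P}^1_{\C_p},\mathcal{F}_{\pi_n(I)})\cong \rho_n(I),
\]
compatible with the $R_\frap^{\square,\psi}[[x_1,\ldots,x_g]]$-action. This follows from Theorem~\ref{LocalGlobal} applied at tame level $U_{Q_n}(1)$, with exactness of $\pi\mapsto \mathcal{F}_\pi$ (Proposition~\ref{DefFPi}) and naturality letting us commute with the $\psi$-twist, the $\mm_{Q_n}(1)$-localization, and the framing base change; Theorem~\ref{FinitenessBaseRing} provides the input over the base ring $A=\cO_L[[y_i]]/I$, and Proposition~\ref{GalAction} extends Weil equivariance to the full Galois equivariance required.

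Second, for such $I$ the ring $A=\cO_L[[y_i]]/I$ is finite, and by construction $\pi_n(I)^K$ is finite free over $A$ of rank uniformly bounded in $n$ (since the reduction modulo $\varpi_L$ is independent of $n$). This is exactly the hypothesis for the $A$-coefficient variant of Corollary~\ref{PatchingCor} recorded at the end of Section~\ref{PatchingPrep}, so taking the ultraproduct along $\mathfrak{F}$ yields
\[
H^1_\et(\mathbb{P}^1_{\C_p},\mathcal{F}_{\pi_\infty(I)})\cong \rho_\infty(I),
\]
functorial in $I$, where $\rho_\infty(I)$ is the analogous ultraproduct of the $\rho_n(I)$, and with $R_\frap^{\square,\psi}[[x_1,\ldots,x_g]]$ acting through its action on the individual $\pi_n(I)$ (which descends through $\prod_n A\to A$ by construction of the ultraproduct). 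Passing to the inverse limit over open $I$, uniform finite-freeness of $\pi_\infty(I)^K$ over $A$ ensures Mittag--Leffler vanishing of $R^1\varprojlim_I$ on $K$-invariants of cohomology, so $H^1_\et$ commutes with the inverse limit, producing $H^1_\et(\mathbb{P}^1_{\C_p},\mathcal{F}_{\pi_\infty^\comp})\cong \rho_\infty^\comp$. Finally, tensoring with $\omega$ over $\cO_L[[y_i]]$---which is a filtered colimit of Pontryagin duals of finite modules and hence commutes with both $\mathcal{F}$ (by exactness, Proposition~\ref{DefFPi}) and with cohomology (by admissibility, Theorem~\ref{FinitenessBaseRing})---recovers $\pi_\infty$ on the left and $\rho_\infty$ on the right, giving the stated isomorphism.

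The main obstacle is verifying the injectivity hypothesis of Theorem~\ref{PatchingThm} in this setting: one needs each $\pi_n(I)$ to be injective as a representation of some common open pro-$p$ subgroup $H\subset \GL_2(\cO_{F_\frap})$ independent of $n$. Over the finite coefficient ring $A$, finite-freeness of $\pi_n(I)^H$ together with the local structure of the completed group algebra $A[[H]]$ (via Pontryagin/Iwasawa duality) delivers this injectivity, and uniform boundedness of the rank of $\pi_n(I)^H$ in $n$ allows $H$ to be fixed once and for all. Once this is in place, the bookkeeping of $\Gal_{F_\frap}$-, $D^\times_\frap$-, and $R_\frap^{\square,\psi}[[x_1,\ldots,x_g]]$-equivariance through the tower of constructions is routine, as each intermediate operation (localization at a maximal ideal, extension of scalars along a map of deformation rings, cutting out a central character) is a natural transformation of exact functors.
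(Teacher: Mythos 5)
Your core strategy coincides with the paper's: establish the isomorphism $H^1_\et(\mathbb{P}^1_{\C_p},\mathcal{F}_{\pi_n(I)})\cong \rho_n(I)$ for each $n$ and each open ideal $I$ from Theorem \ref{LocalGlobal} (using exactness of $\pi\mapsto\mathcal{F}_\pi$ and vanishing of the relevant $H^0$ to commute with localization, the $\psi$-twist and the framing base change), and then invoke the finite-base-ring variant of Corollary \ref{PatchingCor} applied to $\Pi=\bigcup_H\prod_n\pi_n(I)^H$ to identify $H^1((\mathbb{P}^1_{\C_p}/K)_\et,\mathcal{F}_{\pi_\infty(I)})$ with $\rho_\infty(I)^K$ after base change along $\prod_n\cO_L[[y_i]]/I\to\cO_L[[y_i]]/I$. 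Your discussion of the injectivity/freeness hypothesis is also in line with what the construction provides ($\pi_n(I)^K$ finite free over $\cO_L[[y_i]]/I$ of rank bounded in $n$).

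Where you diverge is the final assembly over $I$, and this is where your write-up has a real problem. You pass to $\varprojlim_I$ to produce $H^1_\et(\mathbb{P}^1_{\C_p},\mathcal{F}_{\pi^\comp_\infty})\cong\rho^\comp_\infty$ and then tensor with $\omega$. But $\pi^\comp_\infty$ is not a smooth representation (it is only $(\varpi_L,y_i)$-adically admissible), so the sheaf $\mathcal{F}_{\pi^\comp_\infty}$ is not defined in the framework of the paper; moreover, commuting \'etale cohomology with an inverse limit of sheaves is an $R\varprojlim$ statement on the site, not merely a Mittag--Leffler condition on $K$-invariants of the cohomology groups, so your justification does not suffice as stated. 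The paper avoids this entirely by going the other way: since $\omega=\varinjlim_I\omega[I]$ with each $\omega[I]$ a finite $\cO_L[[y_i]]/I$-module, one has $\pi_\infty=\varinjlim_I\bigl(\pi_\infty(I)\otimes_{\cO_L[[y_i]]/I}\omega[I]\bigr)$ as a filtered colimit of admissible smooth representations over finite rings, and cohomology on the coherent sites $(\mathbb{P}^1_{\C_p}/K)_\et$ commutes with filtered colimits. So the statement for $\pi_\infty$ follows directly from the statement for each $\pi_\infty(I)$, which you have already proved; replace your last step by this colimit argument and the proof is complete.
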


\begin{proof} By passing to a colimit afterwards, it is enough to prove that for an open ideal $I\subset \cO_L[[y_i]]$,
\[
H^1_\et(\mathbb{P}^1_{\C_p},\mathcal{F}_{\pi_\infty(I)})\cong \rho_\infty(I)\ .
\]
By Theorem \ref{LocalGlobal}, we know that for each big enough $n$,
\[
H^1_\et(\mathbb{P}^1_{\C_p},\mathcal{F}_{\pi_n(I)})\cong \rho_n(I)\ ,
\]
and the relevant $H^0$ vanishes. In particular, we get
\[
H^1((\mathbb{P}^1_{\C_p}/K)_\et,\mathcal{F}_{\pi_n(I)})\cong \rho_n(I)^K
\]
for any compact open subgroup $K\subset D^\times_\frap$. Let $\Pi = \bigcup_{H\subset \GL_2(F)} \prod_n \pi_n(I)^H$, the product running over sufficiently big $n$. We have the natural map
\[
H^1((\mathbb{P}^1_{\C_p}/K)_\et,\mathcal{F}_\Pi)\to \prod_n \rho_n(I)^K\ .
\]
This map is an isomorphism by Corollary \ref{PatchingCor} (or its version for finite base rings). Base extension along the fixed map
\[
\prod_n \cO_L[[y_i]]/I\to \cO_L[[y_i]]/I
\]
corresponding to $\mathfrak{F}$ shows that
\[
H^1((\mathbb{P}^1_{\C_p}/K)_\et,\mathcal{F}_{\pi_\infty(I)})\cong \rho_\infty(I)^K\ .
\]
Finally, passage to the direct limit over $K$ gives the result.
\end{proof}

\addtocontents{toc}{\vspace{0.5in}Appendix: Accessible and weakly accessible period domains, by M.~Rapoport\hfill 40\vspace{-0.68in}}

\bibliographystyle{abbrv}
\bibliography{pAdicCohomLT}

\newcommand{\sA}{\ensuremath{\mathscr{A}}\xspace}
\newcommand{\sB}{\ensuremath{\mathscr{B}}\xspace}
\newcommand{\sC}{\ensuremath{\mathscr{C}}\xspace}
\newcommand{\sD}{\ensuremath{\mathscr{D}}\xspace}
\newcommand{\sE}{\ensuremath{\mathscr{E}}\xspace}
\newcommand{\sF}{\ensuremath{\mathscr{F}}\xspace}
\newcommand{\sG}{\ensuremath{\mathscr{G}}\xspace}
\newcommand{\sH}{\ensuremath{\mathscr{H}}\xspace}
\newcommand{\sI}{\ensuremath{\mathscr{I}}\xspace}
\newcommand{\sJ}{\ensuremath{\mathscr{J}}\xspace}
\newcommand{\sK}{\ensuremath{\mathscr{K}}\xspace}
\newcommand{\sL}{\ensuremath{\mathscr{L}}\xspace}
\newcommand{\sM}{\ensuremath{\mathscr{M}}\xspace}
\newcommand{\sN}{\ensuremath{\mathscr{N}}\xspace}
\newcommand{\sO}{\ensuremath{\mathscr{O}}\xspace}
\newcommand{\sP}{\ensuremath{\mathscr{P}}\xspace}
\newcommand{\sQ}{\ensuremath{\mathscr{Q}}\xspace}
\newcommand{\sR}{\ensuremath{\mathscr{R}}\xspace}
\newcommand{\sS}{\ensuremath{\mathscr{S}}\xspace}
\newcommand{\sT}{\ensuremath{\mathscr{T}}\xspace}
\newcommand{\sU}{\ensuremath{\mathscr{U}}\xspace}
\newcommand{\sV}{\ensuremath{\mathscr{V}}\xspace}
\newcommand{\sW}{\ensuremath{\mathscr{W}}\xspace}
\newcommand{\sX}{\ensuremath{\mathscr{X}}\xspace}
\newcommand{\sY}{\ensuremath{\mathscr{Y}}\xspace}
\newcommand{\sZ}{\ensuremath{\mathscr{Z}}\xspace}

\newcommand{\fka}{\ensuremath{\mathfrak{a}}\xspace}
\newcommand{\fkb}{\ensuremath{\mathfrak{b}}\xspace}
\newcommand{\fkc}{\ensuremath{\mathfrak{c}}\xspace}
\newcommand{\fkd}{\ensuremath{\mathfrak{d}}\xspace}
\newcommand{\fke}{\ensuremath{\mathfrak{e}}\xspace}
\newcommand{\fkf}{\ensuremath{\mathfrak{f}}\xspace}
\newcommand{\fkg}{\ensuremath{\mathfrak{g}}\xspace}
\newcommand{\fkh}{\ensuremath{\mathfrak{h}}\xspace}
\newcommand{\fki}{\ensuremath{\mathfrak{i}}\xspace}
\newcommand{\fkj}{\ensuremath{\mathfrak{j}}\xspace}
\newcommand{\fkk}{\ensuremath{\mathfrak{k}}\xspace}
\newcommand{\fkl}{\ensuremath{\mathfrak{l}}\xspace}
\newcommand{\fkm}{\ensuremath{\mathfrak{m}}\xspace}
\newcommand{\fkn}{\ensuremath{\mathfrak{n}}\xspace}
\newcommand{\fko}{\ensuremath{\mathfrak{o}}\xspace}
\newcommand{\fkp}{\ensuremath{\mathfrak{p}}\xspace}
\newcommand{\fkq}{\ensuremath{\mathfrak{q}}\xspace}
\newcommand{\fkr}{\ensuremath{\mathfrak{r}}\xspace}
\newcommand{\fks}{\ensuremath{\mathfrak{s}}\xspace}
\newcommand{\fkt}{\ensuremath{\mathfrak{t}}\xspace}
\newcommand{\fku}{\ensuremath{\mathfrak{u}}\xspace}
\newcommand{\fkv}{\ensuremath{\mathfrak{v}}\xspace}
\newcommand{\fkw}{\ensuremath{\mathfrak{w}}\xspace}
\newcommand{\fkx}{\ensuremath{\mathfrak{x}}\xspace}
\newcommand{\fky}{\ensuremath{\mathfrak{y}}\xspace}
\newcommand{\fkz}{\ensuremath{\mathfrak{z}}\xspace}

\newcommand{\fkA}{\ensuremath{\mathfrak{A}}\xspace}
\newcommand{\fkB}{\ensuremath{\mathfrak{B}}\xspace}
\newcommand{\fkC}{\ensuremath{\mathfrak{C}}\xspace}
\newcommand{\fkD}{\ensuremath{\mathfrak{D}}\xspace}
\newcommand{\fkE}{\ensuremath{\mathfrak{E}}\xspace}
\newcommand{\fkF}{\ensuremath{\mathfrak{F}}\xspace}
\newcommand{\fkG}{\ensuremath{\mathfrak{G}}\xspace}
\newcommand{\fkH}{\ensuremath{\mathfrak{H}}\xspace}
\newcommand{\fkI}{\ensuremath{\mathfrak{I}}\xspace}
\newcommand{\fkJ}{\ensuremath{\mathfrak{J}}\xspace}
\newcommand{\fkK}{\ensuremath{\mathfrak{K}}\xspace}
\newcommand{\fkL}{\ensuremath{\mathfrak{L}}\xspace}
\newcommand{\fkM}{\ensuremath{\mathfrak{M}}\xspace}
\newcommand{\fkN}{\ensuremath{\mathfrak{N}}\xspace}
\newcommand{\fkO}{\ensuremath{\mathfrak{O}}\xspace}
\newcommand{\fkP}{\ensuremath{\mathfrak{P}}\xspace}
\newcommand{\fkQ}{\ensuremath{\mathfrak{Q}}\xspace}
\newcommand{\fkR}{\ensuremath{\mathfrak{R}}\xspace}
\newcommand{\fkS}{\ensuremath{\mathfrak{S}}\xspace}
\newcommand{\fkT}{\ensuremath{\mathfrak{T}}\xspace}
\newcommand{\fkU}{\ensuremath{\mathfrak{U}}\xspace}
\newcommand{\fkV}{\ensuremath{\mathfrak{V}}\xspace}
\newcommand{\fkW}{\ensuremath{\mathfrak{W}}\xspace}
\newcommand{\fkX}{\ensuremath{\mathfrak{X}}\xspace}
\newcommand{\fkY}{\ensuremath{\mathfrak{Y}}\xspace}
\newcommand{\fkZ}{\ensuremath{\mathfrak{Z}}\xspace}

\newcommand{\heart}{{\heartsuit}}

\newcommand{\club}{{\clubsuit}}
\newcommand{\nat}{{\natural}}

\newcommand{\shp}{{\sharp}}
\newcommand{\diam}{{\Diamond}}

\newcommand{\spade}{{\spadesuit}}

\newcommand{\bA}{\mathbf A}
\newcommand{\bE}{\mathbf E}
\newcommand{\bG}{\mathbf G}
\newcommand{\bK}{\mathbf K}
\newcommand{\bM}{\mathbf M}
\newcommand{\bQ}{\mathbf Q}

\newcommand{\BA}{\ensuremath{\mathbb {A}}\xspace}
\newcommand{\BB}{\ensuremath{\mathbb {B}}\xspace}
\newcommand{\BC}{\ensuremath{\mathbb {C}}\xspace}
\newcommand{\BD}{\ensuremath{\mathbb {D}}\xspace}
\newcommand{\BE}{\ensuremath{\mathbb {E}}\xspace}
\newcommand{\BF}{\ensuremath{\mathbb {F}}\xspace}
\newcommand{{\BG}}{\ensuremath{\mathbb {G}}\xspace}
\newcommand{\BH}{\ensuremath{\mathbb {H}}\xspace}
\newcommand{\BI}{\ensuremath{\mathbb {I}}\xspace}
\newcommand{\BJ}{\ensuremath{\mathbb {J}}\xspace}
\newcommand{{\BK}}{\ensuremath{\mathbb {K}}\xspace}
\newcommand{\BL}{\ensuremath{\mathbb {L}}\xspace}
\newcommand{\BM}{\ensuremath{\mathbb {M}}\xspace}
\newcommand{\BN}{\ensuremath{\mathbb {N}}\xspace}
\newcommand{\BO}{\ensuremath{\mathbb {O}}\xspace}
\newcommand{\BP}{\ensuremath{\mathbb {P}}\xspace}
\newcommand{\BQ}{\ensuremath{\mathbb {Q}}\xspace}
\newcommand{\BR}{\ensuremath{\mathbb {R}}\xspace}
\newcommand{\BS}{\ensuremath{\mathbb {S}}\xspace}
\newcommand{\BT}{\ensuremath{\mathbb {T}}\xspace}
\newcommand{\BU}{\ensuremath{\mathbb {U}}\xspace}
\newcommand{\BV}{\ensuremath{\mathbb {V}}\xspace}
\newcommand{\BW}{\ensuremath{\mathbb {W}}\xspace}
\newcommand{\BX}{\ensuremath{\mathbb {X}}\xspace}
\newcommand{\BY}{\ensuremath{\mathbb {Y}}\xspace}
\newcommand{\BZ}{\ensuremath{\mathbb {Z}}\xspace}

\newcommand{\CA}{\ensuremath{\mathcal {A}}\xspace}
\newcommand{\CB}{\ensuremath{\mathcal {B}}\xspace}
\newcommand{\CC}{\ensuremath{\mathcal {C}}\xspace}
\newcommand{\CD}{\ensuremath{\mathcal {D}}\xspace}
\newcommand{\CE}{\ensuremath{\mathcal {E}}\xspace}
\newcommand{\CF}{\ensuremath{\mathcal {F}}\xspace}
\newcommand{\CG}{\ensuremath{\mathcal {G}}\xspace}
\newcommand{\CH}{\ensuremath{\mathcal {H}}\xspace}
\newcommand{\CI}{\ensuremath{\mathcal {I}}\xspace}
\newcommand{\CJ}{\ensuremath{\mathcal {J}}\xspace}
\newcommand{\CK}{\ensuremath{\mathcal {K}}\xspace}
\newcommand{\CL}{\ensuremath{\mathcal {L}}\xspace}
\newcommand{\CM}{\ensuremath{\mathcal {M}}\xspace}
\newcommand{\CN}{\ensuremath{\mathcal {N}}\xspace}
\newcommand{\CO}{\ensuremath{\mathcal {O}}\xspace}
\newcommand{\CP}{\ensuremath{\mathcal {P}}\xspace}
\newcommand{\CQ}{\ensuremath{\mathcal {Q}}\xspace}
\newcommand{\CR}{\ensuremath{\mathcal {R}}\xspace}
\newcommand{\CS}{\ensuremath{\mathcal {S}}\xspace}
\newcommand{\CT}{\ensuremath{\mathcal {T}}\xspace}
\newcommand{\CU}{\ensuremath{\mathcal {U}}\xspace}
\newcommand{\CV}{\ensuremath{\mathcal {V}}\xspace}
\newcommand{\CW}{\ensuremath{\mathcal {W}}\xspace}
\newcommand{\CX}{\ensuremath{\mathcal {X}}\xspace}
\newcommand{\CY}{\ensuremath{\mathcal {Y}}\xspace}
\newcommand{\CZ}{\ensuremath{\mathcal {Z}}\xspace}

\newcommand{\RA}{\ensuremath{\mathrm {A}}\xspace}
\newcommand{\RB}{\ensuremath{\mathrm {B}}\xspace}
\newcommand{\RC}{\ensuremath{\mathrm {C}}\xspace}
\newcommand{\RD}{\ensuremath{\mathrm {D}}\xspace}
\newcommand{\RE}{\ensuremath{\mathrm {E}}\xspace}
\newcommand{\RF}{\ensuremath{\mathrm {F}}\xspace}
\newcommand{\RG}{\ensuremath{\mathrm {G}}\xspace}
\newcommand{\RH}{\ensuremath{\mathrm {H}}\xspace}
\newcommand{\RI}{\ensuremath{\mathrm {I}}\xspace}
\newcommand{\RJ}{\ensuremath{\mathrm {J}}\xspace}
\newcommand{\RK}{\ensuremath{\mathrm {K}}\xspace}
\newcommand{\RL}{\ensuremath{\mathrm {L}}\xspace}
\newcommand{\RM}{\ensuremath{\mathrm {M}}\xspace}
\newcommand{\RN}{\ensuremath{\mathrm {N}}\xspace}
\newcommand{\RO}{\ensuremath{\mathrm {O}}\xspace}
\newcommand{\RP}{\ensuremath{\mathrm {P}}\xspace}
\newcommand{\RQ}{\ensuremath{\mathrm {Q}}\xspace}
\newcommand{\RR}{\ensuremath{\mathrm {R}}\xspace}
\newcommand{\RS}{\ensuremath{\mathrm {S}}\xspace}
\newcommand{\RT}{\ensuremath{\mathrm {T}}\xspace}
\newcommand{\RU}{\ensuremath{\mathrm {U}}\xspace}
\newcommand{\RV}{\ensuremath{\mathrm {V}}\xspace}
\newcommand{\RW}{\ensuremath{\mathrm {W}}\xspace}
\newcommand{\RX}{\ensuremath{\mathrm {X}}\xspace}
\newcommand{\RY}{\ensuremath{\mathrm {Y}}\xspace}
\newcommand{\RZ}{\ensuremath{\mathrm {Z}}\xspace}

\newcommand{\ab}{{\mathrm{ab}}}
\newcommand{\Ad}{{\mathrm{Ad}}}
\newcommand{\alb}{{\mathrm{alb}}}

\newcommand{\Br}{{\mathrm{Br}}}

\newcommand{\cay}{\ensuremath{\operatorname{\fkc_\xi}}\xspace}
\newcommand{\Ch}{{\mathrm{Ch}}}
\newcommand{\cod}{{\mathrm{cod}}}
\newcommand{\cl}{{\mathrm{cl}}}
\newcommand{\Cl}{{\mathrm{Cl}}}
\newcommand{\cm}{{\mathrm {cm}}}
\newcommand{\corr}{\mathrm{corr}}

\newcommand{\del}{\operatorname{\partial Orb}}
\newcommand{\disc}{{\mathrm{disc}}}
\newcommand{\Div}{{\mathrm{Div}}}
\renewcommand{\div}{{\mathrm{div}}}
\newcommand{\DR}{\mathrm{DR}}

\newcommand{\wt}{\widetilde}
\newcommand{\wh}{\widehat}
\newcommand{\pp}{\frac{\partial\ov\partial}{\pi i}}
\newcommand{\pair}[1]{\langle {#1} \rangle}
\newcommand{\wpair}[1]{\left\{{#1}\right\}}
\newcommand{\intn}[1]{\left( {#1} \right)}
\newcommand{\norm}[1]{\|{#1}\|}
\newcommand{\sfrac}[2]{\left( \frac {#1}{#2}\right)}
\newcommand{\ds}{\displaystyle}
\newcommand{\ov}{\overline}
\newcommand{\incl}{\hookrightarrow}
\newcommand{\lra}{\longrightarrow}
\newcommand{\imp}{\Longrightarrow}
\newcommand{\lto}{\longmapsto}
\newcommand{\bs}{\backslash}

\newcommand{\uF}{\underline{F}}
\newcommand{\ep}{\varepsilon}


\newcommand{\nass}{\noalign{\smallskip}}
\newcommand{\htt}{h}
\newcommand{\cutter}{\medskip\medskip \hrule \medskip\medskip}


\setcounter{footnote}{0}
\setcounter{equation}{0}

\pagestyle{plain}
\appendix
\section{\large{Accessible and weakly accessible period domains}\\
\vspace{0.3in}
\small{By Michael Rapoport}}
\vspace{0.5in}

\subsection{Introduction}

The goal of this appendix is to investigate in which situations the period maps from RZ spaces towards partial flag varieties are surjective. This question can be posed in two variants: One can either ask that the map is surjective on classical points, or surjective on {\it all} (adic, or equivalently, Berkovich) points. These questions can be translated into the question whether the weakly admissible, resp. admissible, locus inside the partial flag variety is the whole partial flag variety. We answer both of these questions below. It turns out that asking surjectivity for all points is significantly more restrictive, and occurs essentially only in the Lubin-Tate case.

Most of the material presented in this appendix was explained to the author by P.~Scholze. Moreover, we thank S.~Orlik for helpful conversations.

\subsection{Recollections on period domains}
Let $(G, b, \{\mu\})$ be a {\it PD-triple}\footnote{ In \cite[Ex. 9.1.22]{DOR}, to $(G, b)$ is associated an {\it augmented affine group scheme} $\BG$ over the category of $F$-isocrystals, and in \cite[Def. 9.5.1]{DOR} one considers the {\it PD-pair} associated to $(\BG, \{\mu\})$, rather than the triple $(G, b, \{\mu\})$.} over the $p$-adic field $F$. This means that $G$ is a reductive algebraic group over $F$, that $b\in G(\breve F)$, and that $\{\mu\}$ is a conjugacy class of cocharacters of $G$. We will assume throughout that $\{\mu\}$ is minuscule. Two PD-triples $(G, b, \{\mu\})$ and $(G', b', \{\mu'\})$ are called equivalent if there is an isomorphism $G\simeq G'$ which takes $ \{\mu\}$ into $ \{\mu'\}$ and $b$ into a $\sigma$-conjugate of $b'$. All concepts below depend only on the equivalence class of PD-triples. 
Let $E=E(G, \{\mu\})$ be the corresponding reflex field. We denote by $\CF(G, \{\mu\})$ the corresponding partial flag variety defined over $E$, and by $\breve \CF(G, \{\mu\})$ its base change to $\breve E$. We denote by $ \CF(G, \{\mu\})^{\rm wa}$ the {\it period domain} associated to the PD-triple $(G, b, \{\mu\})$, i.e., the {\it weakly admissible} subset of $\breve \CF(G, \{\mu\})$, which we consider as an open adic subset. It is defined by the weak admissibility condition of Fontaine on the Lie algebra of $G$ ({\it semi-stability}, cf. \cite[Def. 9.2.14]{DOR}) and the triviality of the {\it degree} in $\pi_1(G)_{\Gamma, \BQ}$.

\begin{definition}
A PD-triple $(G, b, \{\mu\})$ is {\it weakly accessible} if $\CF(G, b, \{\mu\})^{\rm wa}=\breve \CF(G, \{\mu\})$, i.e., the period domain associated to $(G, b, \{\mu\})$  is the whole partial flag variety. 
\end{definition}

\subsection{The admissible set}
Let $X_F$ be the Fargues-Fontaine curve relative to $F$ (and some fixed algebraically closed perfectoid field of characteristic $p$). By Fargues, \cite{FarguesGBun}, there is a bijection 
\begin{equation}\label{BversusGb}
B(G)\to \big\{\text{$G$-bundles on $X_F$}\big\}/\simeq , \quad b\mapsto \CE_b.
\end{equation}
Restricted to basic elements, this yields even an equivalence of groupoids,
$$
G(\breve F)_{\rm basic}\to \big\{\text{semi-stable $G$-bundles on $X_F$}\big\} .
$$
Here the LHS becomes a groupoid via the action by $\sigma$-conjugacy of $G(\breve F)$. Also, a $G$-bundle $\CE$ is called {\it semi-stable} if for all $\rho\in {\rm Rep}_G$ mapping the center of $G$ into the center of $\GL_n$, the vector bundle $\rho_*(\CE)$ on $X_F$ is semi-stable in the sense of Mumford (recall that $\deg$ and ${\rm rank}$ are well-defined for vector bundles on $X_F$). It is enough to check this for $\rho$ the adjoint representation of $G$.
\begin{definition} Fix a PD-triple $(G, b, \{\mu\})$ over $F$. Let $C$ be an algebraically  closed non-archimedean field extension of $\breve F$, and use the tilt $C^\flat$ of $C$ to build $X_F$; denote by $\infty\in X_F(C)$ the corresponding distinguished point of $X_F$.

To any point $x\in \CF(G, \{\mu\})(C)$, there is associated a $G$-bundle $\CE_{b, x}$ on $X_F$ which is called the {\it modification of $\CE_b$ at $\infty$ along $x$}.

\end{definition}
\begin{remark} 
 If $\CE$ is a vector bundle of rank $n$ on $X_F$, and $\{\mu\}$ is a minuscule cocharacter class of $\GL_n$, then it is clear how to define the modification $\CE_x$ for $x\in \CF(\GL_n, \{\mu\})(C)$. On the other hand, for non-minuscule $\{\mu\}$, or general $G$ (and then even for minuscule cocharacters), it is nontrivial  to define the modification $\CE_{b, x}$. Indeed, the definition involves the $B_{\rm dR}$-Grassmannian ${\rm Gr}_{G}^{B^{+}_{\rm dR}}$. One uses the {\it Bialynicki-Birula morphism}, valid for any $\{\mu\}$,
$$
{\rm Gr}_{G, \{\mu\}}^{B^{+}_{\rm dR}}\to \breve\CF(G, \{\mu\}) ,
$$
which is an isomorphism if $\{\mu\}$ is minuscule. We refer to \cite{CaraianiScholze} for a precise discussion of this point. We note however that on points defined over a finite extension of $\breve{F}$, the Bialynicki-Birula morphism is a bijection (for all $\{\mu\}$).
\end{remark}

\begin{definition}
 A point $x\in \CF(G, \{\mu\})(C)$ is called {\it admissible} with respect to $b$ if the  associated $G$-bundle $\CE_{b, x}$ is semi-stable. Equivalently, the image of $\CE_{b, x}$ under the map in Corollary \ref{modbasic} is the unique basic class $[b^*]$ with $\kappa([b^*])=\kappa([b])-\mu^\natural$. 
\end{definition}
\begin{remarks}\label{rem-a}
(i) An admissible point $x\in \CF(G, \{\mu\})(C)$ is automatically weakly admissible. If $x$ is defined over a finite extension of $\breve F$, the converse is true. For points defined over finite extensions of $\breve{F}$, these assertions can be reduced to the case of $\GL_n$ by using the adjoint representation, for which see \cite{ColmezFontaine}. Now the admissible locus is an open subset of $\CF(G,\{mu\})$ (cf. below) which on classical points agrees with the weakly admissible locus. As the weakly admissible locus is maximal among open subsets with given classical points, it follows that the admissible locus is contained in the weakly admissible locus.

(ii) Assume  that $(G, \{\mu\})\subset (\GL_n, \{\mu_{(1^{(r)}, 0^{(n-r)})}\})$, i.e., the PD-triple $(G, b, \{\mu\})$ is of Hodge type. Then Faltings and Hartl have defined the notion of admissibility of a point in $\CF(G, \{\mu\})(C)$, cf. \cite[ch. XI, \S4]{DOR} (Faltings' definition uses base change to $B_{\rm cris}(C)$; Hartl's definition uses the Robba ring $\tilde B_{\rm rig}^\dagger(C)$; Hartl has shown that these definitions coincide, comp. \cite[Thm. 11.4.11]{DOR}). The  definition of admissibility above specializes in this case to their definition. 
\end{remarks}

\begin{definition} Fix a PD-triple $(G, b, \{\mu\})$ over $F$. The {\it admissible locus} $\CF(G, b, \{\mu\})^{\rm a}$ is the unique open adic subset of $\breve\CF(G, \{\mu\})$ whose $C$-valued points are the admissible points of $\CF(G, \{\mu\})(C)$, for any algebraically  closed non-archimedean field extension of $\breve F$. 
\end{definition}
It follows from \cite{KedlayaLiu} that the admissible set is indeed an open adic subset of $\breve\CF(G, \{\mu\})$, again using the adjoint representation of $G$ to reduce to the case $G=\GL_n$.
\begin{remarks}\label{LTDr}
Whereas we have a fairly accurate picture of what the {\it weakly admissible locus} looks like (and one of the main attractions of the corresponding theory is to determine explicitly this locus in specific cases, cf. \cite[Ch. I]{RZ}), the {\it admissible locus} seems quite amorphous, and is explicitly known in only very few cases. Here are two examples.

(i) Let $(G, b, \{\mu\})=(\GL_n, b, \{\mu_{(1^{(1)}, 0^{(n-1)})}\})$, where $[b]$ is the unique basic element of $B(G, \{\mu\})$. This case is called the {\it Lubin-Tate case}. In this case, all points of $\breve\CF(G, \{\mu\})$ are admissible. This follows by Gross/Hopkins \cite{GH} from  Theorem \ref{imofper} below. Another, more direct, proof is due to Hartl, comp. \cite[Prop. 11.4.14]{DOR}. The same holds  for $(\GL_n, b, \{\mu_{(1^{(n-1)}, 0^{(1)})}\})$, where again $[b]$ is the unique basic element of $B(G, \{\mu\})$. 

(ii) Let $(G, b, \{\mu\})=(D_{\frac{1}{n}}, b, \{\mu_{(1^{(1)}, 0^{(n-1)})}\})$, where $[b]$ is the unique basic element of $B(G, \{\mu\})$. This case is called the {\it Drinfeld case}. In this case, all weakly admissible points of $\breve\CF(G, \{\mu\})$ are admissible.  They form the Drinfeld halfspace inside $\BP^{n-1}$. This follows by Faltings' theorem  \cite[ch. 5]{RZ} from  Theorem \ref{imofper} below, but has also been shown by Hartl, comp. \cite[Prop. 11.4.14]{DOR}. The same holds  for $(D_{-\frac{1}{n}}, b, \{\mu_{(1^{(n-1)}, 0^{(1)})}\})$, where again $[b]$ is the unique basic element of $B(G, \{\mu\})$. 
\end{remarks}

\begin{definition}
A PD-triple $(G, b, \{\mu\})$ is {\it accessible} if $\CF(G, b, \{\mu\})^{\rm a}=\breve\CF(G, b, \{\mu\})$, i.e., the admissible set associated to $(G, b, \{\mu\})$  is the whole partial flag variety. 
\end{definition}

From Remarks \ref{rem-a}, (i)  it follows that an accessible PD-triple is  weakly accessible. 

\begin{proposition}\label{modtriv}
 Associating to a $G$-bundle its isomorphism class, we obtain from \eqref{BversusGb} a bijection
\begin{equation*}
\big\{\text{ iso-classes of $G$-bundles of the form $\CE_{1, x}\mid x\in \CF(G, \{\mu^{-1}\})$}\big\} \to B(G, \{\mu\}) .  
\end{equation*} 
\end{proposition}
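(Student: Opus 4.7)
The plan is to interpret the map $x \mapsto \CE_{1,x}$ as taking the isomorphism class of a modification of the trivial $G$-bundle at $\infty$, and then use Fargues' classification of $G$-bundles on $X_F$ to pin down the image. Since $\{\mu^{-1}\}$ is minuscule, the Bialynicki-Birula morphism furnishes a bijection between $\CF(G,\{\mu^{-1}\})(C)$ and the Schubert cell ${\rm Gr}_{G,\{\mu^{-1}\}}^{B_{\rm dR}^+}(C)$ in the $B_{\rm dR}$-affine Grassmannian; the assignment $x \mapsto \CE_{1,x}$ is then exactly ``modify $\CE_1$ at $\infty$ by the relative position encoded by $x$.'' Thus the proposition reduces to identifying the image of the map
\[
{\rm Gr}_{G,\{\mu^{-1}\}}^{B_{\rm dR}^+}(C) \longrightarrow B(G), \qquad x \longmapsto [\CE_{1,x}],
\]
with $B(G,\{\mu\})$.

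The containment ``$\subset B(G,\{\mu\})$'' breaks into two checks. First, by functoriality of the Kottwitz map and the fact that $\CE_1$ has trivial Kottwitz invariant, one has $\kappa([\CE_{1,x}]) = \mu^\natural$ (the sign being absorbed into the choice $\{\mu^{-1}\}$ for the modification datum, because the Kottwitz invariant picks up the negative of the cocharacter used to describe the lattice change). Second, one needs the Newton-polygon inequality $\nu([\CE_{1,x}]) \leq \mu^\diamond$. Via a faithful algebraic representation $G \to \GL_n$ and the translation between $G$-bundles on $X_F$ and $G$-isocrystals provided by \eqref{BversusGb}, this is a special case of the Mazur-type inequality of Rapoport-Richartz; alternatively, it can be read off directly from the fact that a modification of type $\{\mu^{-1}\}$ can change Harder-Narasimhan slopes only within the polygon bounded by $\mu^\diamond$.

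For the reverse inclusion, which is the main point, one must show every class $[b'] \in B(G,\{\mu\})$ arises as $[\CE_{1,x}]$ for some $x$. By Fargues' classification, $\CE_{b'}$ is determined up to isomorphism by $[b']$. The key input is that $\CE_1$ and $\CE_{b'}$ admit an isomorphism on $X_F \setminus \{\infty\}$: this is a consequence of the triviality of $G$-bundles on $\Spec B_e$ (where $B_e = H^0(X_F\setminus\{\infty\},\CO)$), which for semisimple simply connected $G$ is a theorem of Fargues and in general is handled by descent along the $Z(G)^\circ$-quotient to the semisimple case. Any such isomorphism, completed at $\infty$, produces a point in $G(B_{\rm dR}^+)\backslash G(B_{\rm dR}) / G(B_{\rm dR}^+)$; the constraints $\kappa([b']) = \mu^\natural$ and $\nu([b']) \leq \mu^\diamond$, together with the Cartan decomposition of the $B_{\rm dR}$-Grassmannian, force this double coset to lie in the $\{\mu^{-1}\}$-stratum. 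Hitting every $[b']$ rather than a subset then requires injectivity of the map from double cosets modulo isomorphisms of the two bundles, which follows from Fargues' theorem that the Hecke correspondence of type $\{\mu^{-1}\}$ realizes all of $B(G,\{\mu\})$.

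The main obstacle is the surjectivity step, which is genuinely equivalent to (and essentially contained in) the full statement of Fargues' theorem \cite{FarguesGBun} identifying the set of isomorphism classes of modifications of a fixed basic bundle by a fixed minuscule cocharacter with an explicit subset of $B(G)$. One could also bypass the existence of a global isomorphism on $X_F \setminus\{\infty\}$ by arguing on the Hecke stack, where both sides organize into a correspondence whose image is known to be exactly $B(G,\{\mu\})$.
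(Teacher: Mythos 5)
Your forward inclusion is fine and matches the spirit of the paper's computation of the Kottwitz invariant, but the surjectivity step has a genuine gap and is in fact circular. The intermediate claim that the constraints $\kappa([b'])=\mu^\natural$ and $\nu([b'])\leq\mu^\diamond$ ``force'' the double coset attached to an arbitrary meromorphic isomorphism $\mathcal{E}_1\dashrightarrow\mathcal{E}_{b'}$ to lie in the $\{\mu^{-1}\}$-stratum is false: the Kottwitz invariant only pins down the image of the relative position in $\pi_1(G)_\Gamma$, and the Newton condition is a condition on the bundle $\mathcal{E}_{b'}$, not on the relative position of a randomly chosen trivialization away from $\infty$, which can be arbitrarily large. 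What is needed is the existence of \emph{some} modification of type exactly $\{\mu^{-1}\}$, and for this you ultimately appeal to ``Fargues' theorem that the Hecke correspondence of type $\{\mu^{-1}\}$ realizes all of $B(G,\{\mu\})$'' --- but that is precisely the statement being proved; the result \eqref{BversusGb} actually available is only the classification of $G$-bundles on $X_F$, not the image of Hecke modifications of the trivial bundle. (A correct version of your route does exist --- non-emptiness of the Newton strata of the minuscule Schubert cell in the $B_{\mathrm{dR}}$-Grassmannian --- but that is a separate nontrivial theorem, not a formal consequence of the inputs you cite.)

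The paper sidesteps this entirely by a dualization trick: $\mathcal{E}_{b}\cong\mathcal{E}_{1,x}$ for some $x\in\mathcal{F}(G,\{\mu^{-1}\})$ if and only if some modification $\mathcal{E}_{b,x^*}$ with $x^*\in\mathcal{F}(G,\{\mu\})$ is trivial, equivalently (given the Kottwitz invariant computation) semi-stable; that is, if and only if the admissible locus $\mathcal{F}(G,b,\{\mu\})^{\mathrm a}$ is non-empty. Since the admissible and weakly admissible loci are open subsets with the same classical points, this is equivalent to non-emptiness of the period domain $\mathcal{F}(G,b,\{\mu\})^{\mathrm{wa}}$, which by the known combinatorial criterion of Dat--Orlik--Rapoport (Thm.~9.5.10 of their book) holds exactly when $[b]\in A(G,\{\mu\})$; combined with $\kappa([b])=\mu^\natural$ this is the definition of $[b]\in B(G,\{\mu\})$. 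To repair your argument, replace the Hecke-stack appeal by this reduction to the non-emptiness criterion for period domains.
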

\begin{proof}
 Let  $b\in G(\breve F)$. If $[b]$ lies in the image of the map, it follows from the construction of $\CE_{1, x}$ that $\kappa([b])=\mu^\natural$ in $\pi_1(G)_\Gamma$. Now $b$ represents an element of the image of the map if and only if $\CE_b$ is of the form $\CE_{1, x}$; equivalently, if and only if $\CE_{b, x^*}$ is the trivial $G$-bundle for some $x^*\in \CF(G, \{\mu\})$. In other words, this holds if and only if there exists $x^*$ such that $\CE_{b, x^*}$ is a semi-stable $G$-bundle. Hence this is equivalent to $\CF(G, b, \{\mu\})^{\rm a}\neq \emptyset$. This in turn is equivalent to the condition that $\CF(G, b, \{\mu\})^{\rm wa}\neq\emptyset$, as these are two open sets with the same classical points.  By \cite[Thm. 9.5.10]{DOR} this is equivalent to $[b]\in A(G, \{\mu\})$. Since we saw already the equality $\kappa(b)=\mu^{ \natural}$, this  is equivalent to $[b]\in B(G, \{\mu\})$.
\end{proof}

\begin{corollary}\label{modbasic} Let $b\in G(\breve F)$ be basic. Then there is a bijection 
\begin{equation*}
\big\{\text{ iso-classes of $G$-bundles of the form $\CE_{b, x}\mid x\in \CF(G, \{\mu^{-1}\})$}\big\} \to B(J_b, \{\mu\}+\nu_b) .  
\end{equation*} 
Here $\nu_b$ is the central cocharacter associated to the basic element $b$.
\end{corollary}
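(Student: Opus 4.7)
The plan is to reduce the basic case to the already-established case $b=1$ of Proposition \ref{modtriv}, applied to the inner form $J_b$ rather than to $G$. The key mechanism is that, when $b$ is basic, the $G$-bundle $\CE_b$ on $X_F$ has automorphism group $J_b(F)$, so it carries a canonical structure of $(G,J_b)$-bitorsor; tensoring against $\CE_b$ then gives an equivalence of groupoids
\[
\{\text{$J_b$-bundles on $X_F$}\}\ \xrightarrow{\ \sim\ }\ \{\text{$G$-bundles on $X_F$}\},\qquad \CF\ \mapsto\ \CE_b\wedge^{J_b}\CF,
\]
sending the trivial $J_b$-bundle to $\CE_b$. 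Combined with Fargues' classification \eqref{BversusGb} applied to $J_b$, this reproduces Kottwitz's well-known bijection $B(J_b)\xrightarrow{\sim} B(G)$.

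First I would check that this equivalence intertwines modifications at $\infty$. For this one uses that modifications at $\infty$ are encoded by the $B_{\mathrm{dR}}^+$-Grassmannian, and the inner twist identifies $(\mathrm{Gr}_G^{B_{\mathrm{dR}}^+})_{\breve F}$ with $(\mathrm{Gr}_{J_b}^{B_{\mathrm{dR}}^+})_{\breve F}$. Concretely, since $b$ is basic, the inner twist provided by $b$ gives a canonical isomorphism of rigid-analytic partial flag varieties
\[
\breve{\CF}(G,\{\mu^{-1}\})\ \cong\ \breve{\CF}(J_b,\{\mu^{-1}\}),
\]
and under the equivalence $\CF\mapsto \CE_b\wedge^{J_b}\CF$, the modification $\CE_{b,x}$ of $\CE_b$ at a $C$-point $x$ corresponds exactly to the modification $\CE_{1,x'}$ of the trivial $J_b$-bundle at the image $x'$ of $x$. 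This is the step I expect to be the main technical point: one must verify this compatibility using the construction of modifications via the Beauville-Laszlo lemma at $\infty$, and the fact that the $(G,J_b)$-bitorsor structure on $\CE_b$ is preserved after restriction to a formal punctured neighborhood of $\infty$.

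Granting this compatibility, Proposition \ref{modtriv} applied to the PD-triple $(J_b,1,\{\mu\}+\nu_b)$ yields a bijection between isomorphism classes of $J_b$-bundles of the form $\CE_{1,x'}$, $x'\in\CF(J_b,\{\mu^{-1}\})$, and the Kottwitz set $B(J_b,\{\mu\}+\nu_b)$. The shift by the central (hence rational, defined over $F$) cocharacter $\nu_b$ is forced by bookkeeping of Kottwitz invariants and Newton points: under the bijection $B(J_b)\xrightarrow{\sim}B(G)$, $\kappa_G$ shifts by $\kappa_G(b)=\nu_b^{\natural}$ and Newton points shift by $\nu_b$, so that the unique basic class $[b^*]\in B(G)$ with $\kappa([b^*])=\kappa([b])-\mu^{\natural}$ (the image of $\CE_{b,x}$ when $x$ is admissible) corresponds under the equivalence to the unique basic class in $B(J_b,\{\mu\}+\nu_b)$, and analogously for all non-semistable modifications.

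Translating back along the equivalence $\CF\mapsto\CE_b\wedge^{J_b}\CF$ gives the desired bijection between isomorphism classes of $G$-bundles $\CE_{b,x}$, $x\in\CF(G,\{\mu^{-1}\})$, and $B(J_b,\{\mu\}+\nu_b)$, completing the proof.
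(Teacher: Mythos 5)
Your argument is exactly the paper's (second) proof: the paper reduces to Proposition \ref{modtriv} either by ``translation with $b$'' in the sense of Kottwitz [Ko2, 4.18] or, equivalently, by applying $\mathcal{H}om(\CE_b,-)$ to pass to $J_b$-bundles, which is precisely your twisting equivalence $\CF\mapsto\CE_b\wedge^{J_b}\CF$ together with its compatibility with modifications at $\infty$. Your elaboration of the bitorsor structure and the $\nu_b$-bookkeeping just spells out what the paper leaves implicit, so the approach is the same.
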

\begin{proof}
This follows by translation with $b$ from the previous proposition, cf.  \cite[4.18]{Ko2}. Alternatively, one can apply the functor $\mathcal{H}om(\CE_b, \, )$ to the assertion of the corollary, to reduce to the previous proposition. 
\end{proof}

\subsection{Weakly accessible PD-triples}
Our first aim is to determine all weakly accessible PD-Pairs. The following lemma reduces this problem to the core cases. We always make the assumption that the period domain associated to any PD-triple considered below is non-empty. 
\begin{lemma}\label{redlemma}
\begin{altenumerate}
\item $(G, b, \{\mu\})$ is weakly accessible if and only if $(G_\ad, b_\ad, \{\mu_\ad\})$ is weakly accessible.
\item $\big(G_1\times G_2, (b_1, b_2), \{(\mu_1, \mu_2\}\big)$ is weakly accessible if and only if $(G_1, b_1, \{\mu_1\})$  and $(G_2, b_2, \{\mu_2\})$ are both weakly accessible.
\item If $\{\mu\}$ is central, then $(G, b, \{\mu\})$ is weakly accessible. 
\end{altenumerate}
\end{lemma}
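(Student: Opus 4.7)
The plan is to dispatch each of the three parts by unwinding the definition of weak admissibility and observing how its three ingredients — the partial flag variety, semi-stability of the filtered isocrystal on $\Lie G$, and the triviality of the degree in $\pi_1(G)_{\Gamma,\BQ}$ — behave under passage to the adjoint quotient, products, and the central case. I would handle (iii) first as a warm-up, then (ii), and finish with (i); in that order the preceding parts make the structural content of the later ones transparent.

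For (iii), since $\{\mu\}$ is central, the associated parabolic $P_\mu$ equals $G$, so $\CF(G,\{\mu\})$ is a single (reflex-field-valued) point. Under the standing hypothesis that the period domain is non-empty, this single point must itself be weakly admissible, so $\CF(G,b,\{\mu\})^{\rm wa}=\breve\CF(G,\{\mu\})$.

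For (ii), I would use the canonical decompositions $\CF(G_1\times G_2,\{(\mu_1,\mu_2)\})=\CF(G_1,\{\mu_1\})\times\CF(G_2,\{\mu_2\})$, $\Lie(G_1\times G_2)=\Lie G_1\oplus\Lie G_2$ as a representation, and $\pi_1(G_1\times G_2)_{\Gamma,\BQ}=\pi_1(G_1)_{\Gamma,\BQ}\oplus\pi_1(G_2)_{\Gamma,\BQ}$, with $(b,\mu)$ splitting componentwise. Semi-stability of a direct sum of filtered isocrystals forces both summands to have equal slopes; combined with the vanishing of the total degree, both individual degrees must vanish, so each summand is weakly admissible separately. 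Hence $(x_1,x_2)$ is weakly admissible iff $x_1$ and $x_2$ are, giving (ii).

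For (i), the starting point is that parabolic subgroups of $G$ contain the center $Z(G)$, so $P_\mu$ corresponds to $P_{\mu_\ad}\subset G_\ad$ and $\CF(G,\{\mu\})\cong\CF(G_\ad,\{\mu_\ad\})$ canonically. The semi-stability condition is tested on the adjoint representation, which factors through $G_\ad$ and only detects $(G_\ad,b_\ad,\mu_\ad)$. The residual degree condition in $\pi_1(G)_{\Gamma,\BQ}$ depends only on $(b,\mu)$, not on the point of the flag variety, and is automatically satisfied under the non-emptiness hypothesis (the same remark applies to $\pi_1(G_\ad)_{\Gamma,\BQ}$ via the natural map). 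Thus the weakly admissible loci for $(G,b,\{\mu\})$ and $(G_\ad,b_\ad,\{\mu_\ad\})$ match under the canonical isomorphism of flag varieties, which yields (i). The only point that requires any care is the interaction in (ii) between semi-stability and degree triviality — semi-stability of $V_1\oplus V_2$ alone gives only equality of the two slopes, and it is precisely the degree-zero condition that then pins each slope to $0$; everything else is formal.
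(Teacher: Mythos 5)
Your proof is correct and follows essentially the same route as the paper, which for (i) simply asserts the identity $\CF(G,\{\mu\})^{\rm wa}=\pi^{-1}\bigl(\CF(G_\ad,\{\mu_\ad\})^{\rm wa}\bigr)$ for the natural morphism $\pi$ of flag varieties (recalling that both period domains are assumed non-empty) and declares (ii) and (iii) obvious, so you are supplying exactly the justifications the paper omits. One small imprecision in (ii): the summands $\Lie G_i$ automatically have degree zero (the adjoint representation kills both the Hodge and Newton degrees), so semi-stability of the direct sum splits componentwise without any appeal to the separate degree condition in $\pi_1(G_i)_{\Gamma,\BQ}$, which depends only on the PD-triples and is taken care of by the standing non-emptiness assumption.
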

\begin{proof} (i) Let $\pi: \breve \CF(G, \{\mu\}) \to\breve \CF(G_\ad, \{\mu_\ad\}) $ denote the natural morphism. Then the assertion follows from 
$$
 \CF(G, \{\mu\})^{\rm wa}=\pi^{-1}\big(\CF(G_\ad, \{\mu_\ad\})^{\rm wa}\big)
$$
(recall that we are assuming both period domains to be non-empty).

Finally, (ii) and (iii) are obvious. 
\end{proof}

After the previous reduction steps, the following proposition gives the complete classification of all weakly accessible PD-triples. 
\begin{proposition}\label{waisall}
Let $(G, b, \{\mu\})$ be a PD-triple defining a non-empty period domain, where $G$ is $F$-simple adjoint and $\{\mu\}$ is non-trivial. Then the PD-triple  $(G, b, \{\mu\})$ is weakly accessible  if and only if  the $F$-group $J_b$ is anisotropic, in which case $[b]$ is basic. 
\end{proposition}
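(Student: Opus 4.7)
The plan is to combine the standard Totaro-style criterion for weak admissibility of filtered isocrystals with $G$-structure (cf.~DOR Ch.~9) with the bijection between $(\mathrm{Ad}(b)\sigma)$-stable $\breve F$-rational parabolic subgroups of $G_{\breve F}$ and $F$-rational parabolic subgroups of $J_b$. Under this correspondence, weak admissibility of $(b,\mu_x)$ (granted that $\kappa(b)=\mu^\natural$, which holds by non-emptiness of the period domain) amounts to requiring that for every proper $F$-parabolic $P'\subset J_b$, with corresponding stable parabolic $P\subset G_{\breve F}$, and every $P'$-dominant $F$-character $\chi$, the slope inequality $\langle \mu_{x,P},\chi\rangle \le \langle \nu_b,\chi\rangle$ holds at the induced reduction of $(G,b)$ to $P$.

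For the direction $J_b$ anisotropic $\Rightarrow$ weakly accessible, first observe that $b$ must be basic: otherwise $\nu_b$ is a non-trivial $F$-rational cocharacter of $G$, central in $M_b=Z_G(\nu_b)$, and since $J_b$ is an inner form of $M_b$, the torus $\nu_b(\mathbb{G}_m)$ would lift to a non-trivial $F$-split torus of $J_b$, contradicting anisotropicity. Since anisotropic $J_b$ admits no proper $F$-parabolic, there are no proper $(\mathrm{Ad}(b)\sigma)$-stable parabolics of $G_{\breve F}$, and the slope criterion is vacuously satisfied for every $x\in \CF(G,\{\mu\})$. Hence $\CF(G,b,\{\mu\})^{\mathrm{wa}}=\breve\CF(G,\{\mu\})$.

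For the converse (contrapositive), assume $J_b$ is not $F$-anisotropic. If $b$ is not basic, take $P$ to be the standard $F$-parabolic of $G$ attached to $\nu_b$, which is automatically $(\mathrm{Ad}(b)\sigma)$-stable and proper, and a generic choice of $x\in \CF(G,\{\mu\})$ with respect to $P$ gives a destabilizing reduction. If $b$ is basic (so $\nu_b=0$, as $G$ is adjoint) but $J_b$ has a proper $F$-parabolic $P'$ with non-trivial dominant $F$-character $\chi$, let $P\subset G_{\breve F}$ be the associated stable parabolic. Since $\{\mu\}$ is non-trivial and $G$ is adjoint, $\{\mu\}$ is non-central, so the Weyl orbit $W\mu$ is non-trivial; choosing $x\in \CF(G,\{\mu\})$ realizing a reduction to $P$ with $\mu_x$ the $W$-translate maximizing $\langle\,\cdot\,,\chi\rangle$ produces $\langle\mu_{x,P},\chi\rangle > 0 = \langle\nu_b,\chi\rangle$, violating weak admissibility.

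The main obstacle is justifying the slope-inequality criterion together with the bijection between stable parabolics of $G_{\breve F}$ and $F$-parabolics of $J_b$; both are essentially standard but require some care to set up cleanly in the generality of arbitrary reductive $G$ and arbitrary $b$. A secondary point, in the converse direction, is to ensure that the destabilizing reduction can be realized by a filtration in the prescribed conjugacy class $\{\mu\}$, which reduces to a Bruhat-cell computation on $\CF(G,\{\mu\})$ relative to $P$ using the non-triviality of $W\mu$.
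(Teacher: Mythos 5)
Your overall strategy is the same as the paper's: both directions are run through the Hilbert--Mumford/semistability criterion, testing $\mu_x-\nu_b$ against $F$-rational one-parameter subgroups (equivalently, $F$-rational parabolics) of $J_b$. The forward direction ($J_b$ anisotropic $\Rightarrow$ weakly accessible, with $[b]$ automatically basic because $\nu_b$ would otherwise give a non-trivial $F$-split torus in $J_b$) is correct and is exactly the paper's argument.

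There is, however, a genuine gap in the converse, at the assertion that taking $\mu_x$ to be the $W$-translate maximizing $\langle\,\cdot\,,\chi\rangle$ yields $\langle\mu_{x,P},\chi\rangle>0$. Non-centrality of $\{\mu\}$ only gives $\max_{w}\langle w\mu,\chi\rangle\ge 0$ (the $W$-average of $w\mu$ vanishes since $G$ is semisimple), with equality precisely when $\langle w\mu,\chi\rangle=0$ for \emph{every} $w\in W$. This degenerate case can occur for a non-zero $\chi$ when $G$ is $F$-simple but not absolutely simple: writing $G=\Res_{F'/F}(G')$ with $G'$ absolutely simple, so that $X_*(T)_{\BQ}$ decomposes as a product of copies of $X_*(T')_{\BQ}$ indexed by the embeddings of $F'$, the pairing is identically zero on the orbit whenever the components of $\chi$ and of $\mu$ are supported on disjoint sets of factors. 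Ruling this out requires using that $\chi$ is $F$-rational and that $\Gal(\bar F/F)$ permutes the factors transitively, so a non-zero invariant $\chi$ has \emph{all} components non-zero, and then invoking the irreducibility of the action of the absolute Weyl group of $G'$ on $X_*(T')_{\BQ}$ to see that $\langle w\mu,\chi\rangle$ cannot be constant in $w$ once some component of $\mu$ is non-zero. This restriction-of-scalars analysis is precisely the second half of the paper's proof and is the essential technical content of the proposition; as written, your argument only covers absolutely simple $G$. (Your case distinction basic/non-basic is harmless but unnecessary: the paper treats both at once by pairing against $\mu_x-\nu_b$ and using that $\lambda\mapsto-\lambda$ preserves $F$-rationality.)
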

\begin{proof} We note that, $G$ being of adjoint type, weak admissibility is equivalent to semi-stability in the sense of \cite{DOR}, i.e, $ \CF(G, b, \{\mu\})^{\rm wa}= \CF(G, b, \{\mu\})^{\rm ss}$, cf. \cite[top of p.272]{DOR}. We also note that the last sentence follows because if  $J$ is anisotropic, then $b$ is basic. Indeed, if $b$ is not basic, then the slope vector $\nu_b$ is a non-trivial cocharacter of $J$ defined over $F$, cf. \cite[after (3.4.1)]{Ko2}. 

Assume that there exists a point $x\in  \CF(G, \{\mu\})\setminus  \CF(G, b, \{\mu\})^{\rm ss}$. Then, applying \cite[Thm. 9.7.3]{DOR}, we obtain a 1-PS $\lambda$ of $J_b$ defined over $F$ which violates the Hilbert-Mumford inequality. In particular, $\lambda$ is non-trivial, and $J_b$ is not anisotropic.

Conversely, assume that $ \CF(G, b, \{\mu\})^{\rm ss}= \CF(G, \{\mu\})$. We claim that then $J_b$ is anisotropic. To prove this, we may change $b$ within its $\sigma$-conjugacy class $[b]$, since this leaves the isomorphism class of $J_b$ unchanged. We argue by contradiction. So, let us assume that $T$ is a maximal torus of $J_b$ such that $X_*(T)^\Gamma\neq (0)$. Here $\Gamma=\Gal (\bar F/F)$. Then $T\otimes_F\breve F$ is also a maximal torus of $G\otimes_F\breve F$. By assumption, for any $\mu\in X_*(T)$ defining an element $x\in\CF(G, \{\mu\})$, the pair $(b, \CF_x)$ is semi-stable. To apply the Hilbert-Mumford inequality, we fix an invariant inner product $(\, ,\, )$ on $G$, cf. \cite[Def. 6.2.1]{DOR}. Hence by the Hilbert-Mumford inequality \cite[Thm. 9.7.3]{DOR}, we obtain
$$
( \lambda, \mu-\nu_b)\geq 0 ,\quad \forall \lambda\in X_*(T)^\Gamma ,
$$
where $\nu_b\in X_*(T)_\BQ$ denotes the slope vector of $b$. Indeed, the LHS is equal to $\mu^\CL(x, \lambda)$, by \cite[Lemma 11.1.3]{DOR} (in loc.~cit., the situation over a finite field is considered; but the lemma holds in the present situation {\it mutatis mutandum}). Replacing $\lambda$ by its negative, we see that 
$(\lambda, \mu-\nu_b )=0$. Hence $(\lambda, \mu )$ is independent of $\mu\in X_*(T)$ in its geometric conjugacy class. It follows that for any $w, w'$ in the geometric Weyl group $W$ of $T$ in $G$,
\begin{equation}\label{lambdaid}
(\lambda, w\mu-w'\mu) =0 .
\end{equation}
We wish to show that this implies that $\lambda=0$, which would yield the desired contradiction. We write $G=\Res_{F'/F}(G')$, where $G'$ is an absolutely simple adjoint group over the extension field $F'$ of $F$. Let $F'_0$ be the maximal unramified subextension of $F'/F$. Then
\begin{equation}\label{extprod}
G(\breve F)=\prod_{i\in \BZ/f\BZ} G'(\breve F') ,
\end{equation}
where $\BZ/f\BZ$ denotes the Galois group of $F'_0/F$, and where $\breve F$, resp. $\breve F'$, denotes the completion of the maximal unramified extension of $F$, resp. $F'$. Furthermore, it is easy to see that any $b\in G(\breve F)$ is $\sigma$-conjugate to an element in the product on the RHS of \eqref{extprod} of the form $(b'_0, 1,\ldots, 1)$, and that then
\begin{equation*}
J_b=\Res_{F'/F} J'_{b'_0} .
\end{equation*}
Correspondingly, $T=\Res_{F'/F}(T')$, where $T'$ is a maximal torus of $J'_{b'_0}$ defined over $F'$. Hence  
\begin{equation}\label{prodtor}
X_*(T)_\BQ=\prod_{\tau\in\Hom_F(F', \bar F)} X_*(T')_\BQ ,
\end{equation}
with its action by $\Gamma$ induced by the action of $\Gamma'=\Gal(\bar F/F')$ on $X_*(T')_\BQ$. Since $0\neq \lambda\in X_*(T)^\Gamma$, all components $\lambda_\tau$  of $\lambda$ in the product decomposition \eqref{prodtor} are non-zero, and are determined by any one of them. Now $T'\otimes_{F'} \breve F'$ is a maximal torus of $G'\otimes_{F'} \breve F'$ and, since $G'$ is absolutely simple,  its geometric Weyl group  $W'$ acts irreducibly on $X_*(T')_\BQ$,  cf. \cite[ Cor. of Prop. 5 in VI, \S 1.2]{Bourb}. Furthermore, the geometric Weyl group of $T$ is the product of copies of $W'$ over the same index set as in \eqref{prodtor}. Hence the identity \eqref{lambdaid} implies that any time the component $\mu_\tau$ of $\mu$ is non-trivial, the component $\lambda_\tau$ is zero. Hence the assumption $\lambda\in X_*(T)^\Gamma$ implies $\lambda=0$, since the assumption  $\mu\neq 0$ implies that $\mu_\tau\neq 0$ for some $\tau$. This yields the  desired contradiction. 
\end{proof}
\begin{corollary} In Proposition \ref{waisall}, assume that   $G$ is absolutely simple adjoint and that $\{\mu\}$ is non-trivial. Then $(G, b, \{\mu\})$ satisfies the condition of Proposition  \ref{waisall} if and only if $G$ is the algebraic group associated to a simple central algebra $D$ of some rank $n^2$ over $F$,  $[b]$ is basic, and the difference between the  Hasse invariant of $D$ in $\BZ/n\BZ\simeq \pi_1(G)_\Gamma$ and the class  $\kappa([b])$ lies in $(\BZ/n)^\times$. \qed
\end{corollary}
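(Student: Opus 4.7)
The plan is to combine Proposition \ref{waisall}, which reduces the statement to classifying pairs $(G,[b])$ with $G$ absolutely simple adjoint over $F$ and $J_b$ anisotropic (with $[b]$ then automatically basic), with Kneser's classification of anisotropic semisimple groups over $p$-adic fields and Kottwitz's description of the inner twist $J_b$ attached to a basic $[b]$.

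First I would invoke the classical fact (due to Kneser, see also the Bruhat--Tits/Tits classification) that over a $p$-adic field $F$, the only absolutely simple adjoint anisotropic algebraic groups are those of the form $\PGL_1(D^*)$ with $D^*/F$ a central division algebra; every other absolute type admits an $F$-split torus in each of its inner forms. Since $J_b$ is an inner form of $G$ and hence has the same absolute type, if $J_b$ is anisotropic then $G$ is of inner type $A_{n-1}$ for some $n\ge 2$. In particular $G\cong \PGL_1(D)$ for some central simple algebra $D$ over $F$ of rank $n^2$, whose Hasse invariant $\mathrm{inv}(D)\in \BZ/n\BZ$ classifies $G$ among inner forms of the split group $\PGL_n$ via $H^1(F,\PGL_n)=\Br(F)[n]=\BZ/n\BZ$.

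Next I would use Kottwitz's theorem $B(G)_{\mathrm{basic}}\xrightarrow{\sim}\pi_1(G)_\Gamma=\BZ/n\BZ$ and the functoriality of the Kottwitz map with respect to inner twisting: the twist $J_b$ is again an inner form of $\PGL_n$, and its Brauer invariant in $\BZ/n\BZ$ is $\mathrm{inv}(D)-\kappa([b])$ (up to a sign convention). Thus $J_b\cong \PGL_1(D'')$ with $\mathrm{inv}(D'')=\mathrm{inv}(D)-\kappa([b])$. Since $\PGL_1(D'')$ is anisotropic if and only if $D''$ is a division algebra, which in turn holds if and only if $\mathrm{inv}(D'')$ has exact order $n$, i.e.\ lies in $(\BZ/n)^\times$, this delivers the stated criterion. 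The main obstacle is verifying the precise Kottwitz formula $\mathrm{inv}(J_b)=\mathrm{inv}(D)-\kappa([b])$ when $G$ is itself an inner form rather than quasi-split; the cleanest route is to work relative to the split form $G^*=\PGL_n$, exhibiting both $G$ and $J_b$ as inner forms of $G^*$ whose classes in $H^1(F,G^*_\ad)=\BZ/n\BZ$ differ by the image of $[b]$ under the Kottwitz map for $G^*$.
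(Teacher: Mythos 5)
Your argument is correct and is precisely the (omitted) argument the paper intends: the corollary is stated with an immediate \qed, and the standard route is exactly the one you take, namely Kneser's classification of anisotropic absolutely simple adjoint groups over $p$-adic fields as the groups $\PGL_1(D'')$ with $D''$ a central division algebra, combined with Kottwitz's identification $B(G)_{\mathrm{basic}}\cong\pi_1(G)_\Gamma=\BZ/n\BZ$ and the formula expressing the invariant of the inner twist $J_b$ as the difference of the invariant of $D$ and $\kappa([b])$. The only cosmetic point is that for types other than $A_{n-1}$ one should say that \emph{all} forms (inner and outer) are isotropic over a $p$-adic field, which is what you in fact need and which holds; the conclusion that $G$ is of inner type $A_{n-1}$ is then correct.
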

\begin{remark} Note that the class $\{\mu\}$ does not intervene in Proposition  \ref{waisall}. It does, however, enter in the condition that the period domain $ \CF(G, b, \{\mu\})^{\rm wa}$ be non-empty. Indeed, this condition is equivalent to the condition that   $[b]\in A(G, \{\mu\})$, cf. \cite[Thm. 9.5.10]{DOR}, i.e., that $[b]$ be acceptable with respect to $\{\mu\}$ in the sense of \cite{RV}.

\end{remark}
\subsection{Accessible  PD-triples}
Here the classification is much more narrow.
\begin{proposition}
A PD-triple $(G, b, \{\mu\})$ is accessible if and only if $b$ is basic, and the pair $(J_b, \{\mu\})$ is uniform in the sense of \cite[\S 6]{Ko2}, i.e. $B(J_b,\{\mu\})$ contains precisely one element.
\end{proposition}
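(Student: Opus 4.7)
\emph{Plan.} My strategy is to translate accessibility into the statement that the map $\Phi\colon x\mapsto[\CE_{b,x}]$ from $C$-points of $\CF(G,\{\mu\})$ to isomorphism classes of $G$-bundles on $X_F$ takes only the value $[\CE_{b^*}]$ corresponding to the unique semi-stable class. Once $b$ is shown to be basic, the image of $\Phi$ is computed explicitly by Corollary \ref{modbasic}, and the proposition becomes a cardinality count.

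For the forward direction I first reduce to the case where $G$ is $F$-simple adjoint. The morphism $\CF(G,\{\mu\})\to\CF(G_\ad,\{\mu_\ad\})$ is an isomorphism, and semi-stability of $\CE_{b,x}$ can be tested on its $G_\ad$-pushforward, so $(G,b,\{\mu\})$ is accessible iff $(G_\ad,b_\ad,\{\mu_\ad\})$ is; combined with the product decomposition of Lemma \ref{redlemma}(ii), this reduces us to the $F$-simple adjoint case. If $\{\mu\}$ is non-trivial on such a factor, accessibility implies weak accessibility, and Proposition \ref{waisall} yields that $J_b$ is $F$-anisotropic, forcing $b$ to be basic. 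If $\{\mu\}$ is trivial on a factor, then $\CF(G,\{\mu\})$ is a single point and admissibility of the unique point amounts to semi-stability of $\CE_b$ itself, i.e. to $b$ being basic on that factor. In all cases $b$ is globally basic.

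With $b$ basic, I apply Corollary \ref{modbasic} with $\{\mu\}$ replaced by $\{\mu^{-1}\}$: the set of iso-classes $\{[\CE_{b,x}] : x\in\CF(G,\{\mu\})(C)\}$ is in bijection with $B(J_b,\{\mu^{-1}\}+\nu_b)$. Since $\nu_b$ is central, twisting yields a bijection $B(J_b,\{\mu^{-1}\}+\nu_b)\cong B(J_b,\{\mu^{-1}\})$, and the involution $[b']\mapsto[(b')^{-1}]$ gives a further bijection $B(J_b,\{\mu^{-1}\})\cong B(J_b,\{\mu\})$. Hence $|\mathrm{image}\,\Phi|=|B(J_b,\{\mu\})|$. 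For the forward direction, accessibility means $|\mathrm{image}\,\Phi|=1$, which gives $|B(J_b,\{\mu\})|=1$. For the converse, if $|B(J_b,\{\mu\})|=1$, then $\Phi$ is constant; the hypothesis that the period domain is non-empty, together with the coincidence of admissibility and weak admissibility on classical points (Remark \ref{rem-a}(i)), forces the constant value to be $[\CE_{b^*}]$, and accessibility follows.

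\emph{Main obstacle.} The delicate point is the forward-direction proof that $b$ must be basic: it requires verifying that Lemma \ref{redlemma}(i) transfers from weak accessibility to accessibility (routine, via the adjoint criterion for semi-stability), applying Proposition \ref{waisall} to handle non-trivial $\{\mu\}$, and separately disposing of factors with trivial $\{\mu\}$ by a direct semi-stability computation. The subsequent translation and duality identification of $\mathrm{image}\,\Phi$ with $B(J_b,\{\mu\})$ is standard bookkeeping once one tracks the sign conventions in Corollary \ref{modbasic}.
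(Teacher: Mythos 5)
Your proof is correct and follows essentially the same route as the paper's: deduce that $b$ is basic from weak accessibility via Proposition \ref{waisall}, translate accessibility through Corollary \ref{modbasic} into the statement that $B(J_b,\{\mu^{-1}\}+\nu_b)$ is a singleton, and then transfer uniformity from $\{\mu^{-1}\}+\nu_b$ to $\{\mu\}$. The only difference is that you spell out the reduction to the $F$-simple adjoint case (and the trivial-$\{\mu\}$ factors) needed to legitimately invoke Proposition \ref{waisall}, a step the paper leaves implicit.
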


\begin{proof}
The accessibility of $(G, b, \{\mu\})$ implies its weak accessibility, cf. Remark \ref{rem-a}, (i); hence $b$ is basic by Proposition \ref{waisall}. The assumption that $(G, b, \{\mu\})$ is accessible is equivalent to saying that any modification $\CE_{b, x}$ for $x\in \CF(G, \{\mu\})$ is semi-stable. Hence, by Corollary \ref{modbasic}, the set $B( J_b, \{\mu^{-1}\}+\nu_b)$ contains only one element, i.e., $( J_b, \{\mu^{-1}\}+\nu_b)$ is uniform.  The assertion follows since $( J_b, \{\mu^{-1}\}+\nu_b)$ is uniform if and only if $( J_b, \{\mu^{-1}\})$ is uniform, if and only if $(J_b,\{\mu\})$ is uniform.
\end{proof}

Kottwitz \cite[\S6]{Ko2} has given a complete classification of uniform pairs $(G, \{\mu\})$. Applying his result, we obtain the following corollary.
\begin{corollary} Let $(G, b, \{\mu\})$ be a PD-triple. 
Assume that   $G$ is absolutely simple adjoint, that $\{\mu\}$ is non-trivial, and that  $[b]\in B(G, \{\mu\})$. Then $(G, b, \{\mu\})$ is accessible if and only if $G\simeq\PGL_n$, and $\{\mu\}$ corresponds to $(1, 0,\ldots, 0)$ or $(1, 1,\ldots,1, 0)$.  \qed
\end{corollary}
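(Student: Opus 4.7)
The plan is to combine the preceding proposition with Kottwitz's classification of uniform pairs from \cite[\S 6]{Ko2}, and essentially just verify that the combination gives exactly the stated list.

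First, by the preceding proposition, accessibility of $(G, b, \{\mu\})$ is equivalent to the conjunction of two conditions: that $b$ is basic, and that the pair $(J_b, \{\mu\})$ is uniform in the sense that $B(J_b, \{\mu\})$ consists of a single element. Since $b$ is basic and $G$ is absolutely simple adjoint, the inner form $J_b$ of $G$ is again absolutely simple adjoint over $F$, of the same absolute type as $G$. Thus the classification problem reduces to: for which absolutely simple adjoint groups $H$ over $F$ and which non-trivial minuscule cocharacter classes $\{\mu\}$ of $H$ is the pair $(H, \{\mu\})$ uniform?

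Next, I would invoke Kottwitz's classification of uniform pairs. For $\{\mu\}$ minuscule, $B(H, \{\mu\})$ is parameterised by the Newton points lying above the Hodge polygon determined by $\{\mu\}$ and sharing its endpoints, and uniformity is equivalent to the Hodge polygon being itself a straight line between these endpoints. For $H$ of type $A_{n-1}$ (so an inner form of $\PGL_n$) and $\{\mu\}$ corresponding to $(1^{(r)}, 0^{(n-r)})$, this happens precisely when $r = 1$ or $r = n-1$, yielding the two cocharacters $(1,0,\ldots,0)$ and $(1,\ldots,1,0)$ in the statement. For $H$ absolutely simple adjoint of any other type, Kottwitz's classification shows that no non-trivial minuscule $\{\mu\}$ gives a uniform pair.

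Since $G$ and $J_b$ have the same absolute type, this forces $G$ to be of type $A_{n-1}$ and hence $G \cong \PGL_n$, with $\{\mu\}$ as claimed. For the converse, one checks in each of the two cases that $[b]$ basic with $\kappa([b]) = \mu^\natural$ exists (which is automatic given $[b] \in B(G, \{\mu\})$ is non-empty), and that uniformity of $(J_b, \{\mu\})$ then follows from the same Kottwitz classification applied to the inner form $J_b$. The only substantive input -- taken as a black box here -- is Kottwitz's classification of uniform pairs; the rest is essentially a translation via the preceding proposition.
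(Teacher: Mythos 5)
Your overall strategy is exactly the paper's: the corollary carries no written proof and is meant as an immediate combination of the preceding proposition (accessible $\iff$ $b$ basic and $(J_b,\{\mu\})$ uniform) with Kottwitz's classification of uniform pairs in \cite[\S 6]{Ko2}. However, two points in the way you flesh this out are genuinely off.

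First, your stated criterion for uniformity --- ``the Hodge polygon being itself a straight line between these endpoints'' --- is not correct, and taken literally it would \emph{exclude} the two cases in the statement, since the Hodge polygon of $(1,0,\ldots,0)$ is not a straight line for $n\geq 2$. The point is that uniformity is sensitive to the inner form, not just the absolute type: the split pair $(\PGL_n,\varpi_1)$ is \emph{not} uniform (this is visible in the Drinfeld case of Remarks \ref{LTDr}(ii), where $J_b\cong \GL_n(F)$ and the admissible locus is the Drinfeld half-space, a proper subset), whereas $(PD^\times_{\pm 1/n},\varpi_1)$ \emph{is} uniform, because the integrality constraints on Newton points of $B(J_b)$ for $J_b$ the adjoint group of a degree-$n$ division algebra eliminate every class in $B(J_b,\{\mu\})$ except the basic one. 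Equivalently, on the Fargues--Fontaine curve, a minuscule degree-$(-1)$ modification $\CE\subset\CO(1/n)$ cannot have a destabilizing subbundle, since any semistable subbundle of positive slope and rank $s<n$ would have slope $\geq 1/s>1/n$.

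Second, and relatedly, your concluding step ``$G$ and $J_b$ have the same absolute type, hence $G\cong\PGL_n$'' does not follow: having the same absolute type as $J_b$ only makes $G$ an inner form of $\PGL_n$, i.e.\ possibly $PD^\times$ for a nonsplit $D$. To get the split form as in the statement, you must use the finer output of Kottwitz's classification --- that uniformity of $(J_b,\varpi_1)$ (resp.\ $(J_b,\varpi_{n-1})$) pins down the invariant of $J_b$ to be $\pm 1/n$ --- together with the relation between $\mathrm{inv}(J_b)$, $\mathrm{inv}(G)$ and $\kappa([b])=\mu^\natural$ coming from twisting by the basic element $b$ (cf.\ \cite[4.18]{Ko2} and the corollary to Proposition \ref{waisall}, where for weak accessibility all invariants with the appropriate coprimality occur). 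This is the step that forces $\mathrm{inv}(G)=0$.
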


\subsection{An application to the crystalline period map}
Let $(G, b, \{\mu\})$ be a {\it local Shimura datum} over $F$, cf.  \cite{RV}, i.e., a PD-triple such that $\{\mu\}$ is  {\it minuscule}  and such that $[b]\in B(G, \{\mu\})$. Conjecturally, there is an associated local Shimura variety, i.e., a tower of rigid-analytic spaces over $\breve E$, with members enumerated by the open compact subgroups of $G(\BQ_p)$, 
\begin{equation}
\{\BM_K\}_K=\{\BM(G, b, \{\mu\})_K\}_K ,
\end{equation}
on which $G(\BQ_p)$ acts as Hecke correspondences. The tower comes with a compatible system of morphisms
\begin{equation}
\phi_K\colon \BM_K\to \breve \CF(G, \{\mu\}) .
\end{equation}
The morphism $\phi_K$ is called the {\it crystalline period morphism} at level $K$ of the local Shimura variety attached to $(G, b, \{\mu\})$.  
\begin{theorem}\label{imofper}
Assume that the local Shimura variety associated to $(G, b, \{\mu\})$ comes from an RZ-space of type EL or PEL, in which case the local Shimura variety exists. Then the  image of the crystalline period morphisms coincides with the admissible locus $\CF(G, b, \{\mu\})^{\rm a}$. 
\end{theorem}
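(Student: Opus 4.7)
The strategy is to reinterpret both sides in terms of $G$-bundles on the Fargues-Fontaine curve and to apply the Scholze-Weinstein classification of $p$-divisible groups over $\cO_C$. Since the transition maps in the tower $(\BM_K)_K$ are surjective, the image of $\phi_K$ equals the image of the infinite-level period morphism $\phi_\infty\colon \BM_\infty \to \breve\CF(G,\{\mu\})$, and it suffices to compute the latter on geometric points $\Spa(C,\cO_C)$ for algebraically closed perfectoid $C/\breve F$. The inclusion of the image in the weakly admissible locus is classical (Rapoport-Zink). Since weakly admissible and admissible points agree on rigid-analytic points by Colmez-Fontaine (cf.\ Remark \ref{rem-a}(i)), and both loci are open, the image is automatically contained in $\CF(G,b,\{\mu\})^{\rm a}$; the real content is the reverse inclusion at the level of all geometric points.

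For this, I would first spell out the moduli interpretation of $\BM_\infty$ in the EL/PEL setting: a point of $\BM_\infty(C,\cO_C)$ consists of a $p$-divisible group $H$ over $\cO_C$ with EL/PEL structure, a quasi-isogeny $\rho:\BH\otimes k \to H\otimes_{\cO_C} k$ compatible with the structure, together with a $G(\BQ_p)$-trivialization of the Tate module of $H$. By the main classification theorem of Scholze-Weinstein (\cite{ScholzeWeinstein}, Theorem 5.1.4/6.5.4 applied functorially to the $G$-structure via the choice of a faithful representation as in the EL/PEL formalism), such data are equivalent to the data of a trivial $G$-bundle $\CE_{\rm triv}$ on $X_F$, together with a modification of $\CE_b$ into $\CE_{\rm triv}$ at $\infty$ of type $\{\mu\}$. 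The modification datum is, via the Bialynicki-Birula isomorphism for minuscule $\{\mu\}$, precisely a $C$-point of $\breve\CF(G,\{\mu\})$, and $\phi_\infty$ sends the above datum to this point $x$. Thus a point $x\in\breve\CF(G,\{\mu\})(C)$ lies in the image of $\phi_\infty$ if and only if $\CE_{b,x}$ admits a trivialization as $G$-bundle, i.e.\ is the (up to the central twist) unique semistable $G$-bundle in its class.

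By Corollary \ref{modbasic}, the isomorphism classes of $\CE_{b,x}$ as $x$ varies sweep out $B(J_b,\{\mu\}+\nu_b)$; the admissible locus is exactly the locus where this class is the unique basic element, which by the discussion above coincides with the condition that $\CE_{b,x}$ be a trivial (equivalently, semistable) $G$-bundle after twisting by $\CE_b$. Combining these steps identifies the image of $\phi_\infty$ with $\CF(G,b,\{\mu\})^{\rm a}$, proving the theorem.

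The main obstacle is the faithful bookkeeping of the EL/PEL structures under the Scholze-Weinstein equivalence: one must check that the functor from $p$-divisible groups with $G$-structure to modifications of $G$-bundles is both essentially surjective (to produce a $p$-divisible group from any admissible $x$) and respects the polarization/endomorphism data. In the EL case this is essentially formal from the functoriality of the $p$-divisible group $\leftrightarrow$ BKF-module dictionary; in the PEL case one needs to verify that the polarization descends to give the correct $G$-structure on the trivial modification, which reduces, via the unitary/symplectic pairings, to the analogous statement for $\GL_n$ applied to the tensor construction. Once this descent is in place, the identification of the image with the admissible locus is formal.
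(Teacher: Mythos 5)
The paper does not actually prove this theorem: the proof given there is the two-word citation ``See \cite{Hartl} (which uses \cite{Falt}) and \cite{SW}.'' Your proposal reconstructs the argument of \cite{SW}: pass to infinite level, identify $(C,\cO_C)$-points of $\BM_\infty$ with modifications of $\CE_b$ into the trivial $G$-bundle of type $\{\mu\}$ via the classification of $p$-divisible groups over $\cO_C$, and conclude that $x$ lies in the image if and only if $\CE_{b,x}$ is trivial, which (since $\kappa(\CE_{b,x})=\kappa([b])-\mu^\natural=0$ for $[b]\in B(G,\{\mu\})$) is exactly the semistability condition defining $\CF(G,b,\{\mu\})^{\rm a}$. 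This is the correct modern route, and the point you isolate as the main obstacle --- transporting the EL/PEL structures through the Scholze--Weinstein equivalence, and essential surjectivity (producing a $p$-divisible group with the right extra structure from an admissible filtration) --- is indeed where the substance lies; Hartl's proof takes a different, pre-Fargues--Fontaine route through $\sigma$-bundles on the Robba ring, which is what the appendix actually cites alongside \cite{SW}.

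One step in your first paragraph is not valid as stated: you claim the inclusion of the image in $\CF(G,b,\{\mu\})^{\rm a}$ is ``automatic'' because the image lies in the weakly admissible locus, the two loci agree on classical points, and both are open. But an open subset whose classical points are all admissible need not be contained in the admissible locus; by Remark \ref{rem-a}(i) it is the \emph{weakly admissible} locus that is maximal among open subsets with the given classical points, and the admissible locus is in general a strictly smaller open subset with the same classical points --- this gap between the two is precisely the phenomenon the appendix is studying. The flaw is harmless for your overall argument, since the moduli-theoretic equivalence in your second paragraph yields both inclusions at once: a point in the image carries a $p$-divisible group over $\cO_C$ whose Tate module trivializes $\CE_{b,x}$, so the image lands in the admissible locus directly. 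You should simply delete the shortcut and let the equivalence do the work in both directions. A minor further point worth a sentence in a polished write-up: both the image of the (\'etale, partially proper) period morphism and $\CF(G,b,\{\mu\})^{\rm a}$ are open adic subsets determined by their rank-one geometric points, which is what licenses checking the equality on $\Spa(C,\cO_C)$-points only.
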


\begin{proof}
See \cite{Hartl} (which uses \cite{Falt}) and \cite{SW}. 
\end{proof}

\begin{example} (i) In the Lubin-Tate case (see Remarks \ref{LTDr}, (i)), Gross and Hopkins \cite{GH} have shown that the image of the crystalline period morphism is the whole projective space $\CF(G, \{\mu\})$. 

(ii) In the Drinfeld case  (see Remarks \ref{LTDr}, (ii)), the image of the crystalline period map is the Drinfeld half-space, cf. \cite[ch. 5]{RZ}. 

\end{example}

\begin{corollary}
Assume that the local Shimura variety associated to $(G, b, \{\mu\})$ comes from an RZ-space of type EL or PEL, in which case the local Shimura variety exists. Also, assume that $G$ is absolutely simple. Then the crystalline period morphisms are surjective if and only if the local Shimura variety is of Lubin-Tate type.\qed 
\end{corollary}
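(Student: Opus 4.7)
By Theorem \ref{imofper}, the image of the crystalline period morphism equals the admissible locus $\CF(G, b, \{\mu\})^{\rm a}$, so surjectivity of the crystalline period morphisms is equivalent to $(G, b, \{\mu\})$ being accessible. The plan is thus to combine the classification of accessible PD-triples with a reduction to the adjoint case, and then read off the EL/PEL datum.

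First I would pass to the adjoint quotient. Since $Z(G)$ lies in every parabolic subgroup of $G$, the natural map $\pi \colon \breve\CF(G, \{\mu\}) \to \breve\CF(G_{\rm ad}, \{\mu_{\rm ad}\})$ is an isomorphism; moreover, semi-stability of a $G$-bundle is tested on the adjoint representation (as recalled after \eqref{BversusGb}), so $\pi$ identifies the two admissible loci. Hence $(G, b, \{\mu\})$ is accessible if and only if $(G_{\rm ad}, b_{\rm ad}, \{\mu_{\rm ad}\})$ is. Assuming $\{\mu\}$ is non-central (otherwise the period domain is a point and the conclusion is vacuous, the local Shimura variety being $0$-dimensional), the Corollary classifying accessible absolutely simple adjoint PD-triples then forces $G_{\rm ad} \simeq \PGL_n$ as a \emph{split} $F$-group and $\{\mu_{\rm ad}\}$ to correspond to $(1, 0, \ldots, 0)$ or $(1, \ldots, 1, 0)$.

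Second, I would translate this back to $G$ itself using the hypothesis that the datum is of EL or PEL type. In the EL case $G = \GL_m(D)$ for a central division $F$-algebra $D$ of degree $d$ with $md = n$, and the adjoint quotient $\PGL_m(D)$ is split (i.e.\ isomorphic to $\PGL_n$) if and only if $D = F$, forcing $G = \GL_n$. The PEL similitude groups of absolute type $A$ have adjoint quotient a non-split unitary form, so they are excluded. Combined with $\{\mu\}$ of the stated shape and $b$ basic (which is automatic by the accessibility proposition), the underlying RZ-datum is $(\GL_n, b, (1, 0, \ldots, 0))$ or $(\GL_n, b, (1, \ldots, 1, 0))$, both of which give the Lubin-Tate tower (the two cocharacters being exchanged by Cartier duality of $p$-divisible groups). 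The converse is the classical Gross-Hopkins theorem recalled in Remarks \ref{LTDr}(i).

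The main obstacle, in my view, is the second step: carefully running through the classification of EL- and PEL-type Rapoport-Zink data with absolutely simple group to confirm that the combined constraints ``$G_{\rm ad}$ split of type $\PGL_n$'' and ``$\{\mu\}$ minuscule of the asymmetric shape $(1, 0, \ldots, 0)$ (or its mirror)'' really do single out the Lubin-Tate datum within the zoo of EL/PEL data, leaving no room for an exotic surviving cousin.
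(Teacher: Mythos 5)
Your proposal is correct and follows exactly the route the paper intends: the paper states this corollary with no proof at all, treating it as immediate from Theorem \ref{imofper} (image of the period map $=$ admissible locus) together with the classification of accessible PD-triples, which is precisely the chain of deductions you spell out. The one step you flag as the ``main obstacle'' --- checking that among EL/PEL data the constraints $G_{\ad}\simeq\PGL_n$ split and $\{\mu_{\ad}\}=(1,0,\ldots,0)$ or its dual leave only the Lubin--Tate datum (in particular ruling out types C and D, where the relevant minuscule coweights do not have this shape even under the low-rank coincidences $D_3=A_3$, $C_1=A_1$) --- is indeed the only content beyond what the paper records, and the paper leaves it implicit as well.
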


\subsection{Open questions}

Here we list some open questions. 
\begin{question}
When is $\CF(G, b, \{\mu\})^{\rm a}=\CF(G, b, \{\mu\})^{\rm wa}$?
\end{question}
This question was answered by Hartl in the case when $G=\GL_n$. Besides the Lubin-Tate case and the Drinfeld case, there is one essentially new case related to $\GL_4$. B. Gross asks whether the PD-triples formed by an adjoint orthogonal group $G$, its natural minuscule coweight $\{\mu\}$ (the one attached to a Shimura variety for $\mathrm{SO}(n-2, 2)$) and the unique basic element in $B(G, \{\mu\})$ give further examples.

\smallskip

For the next question, recall that for any standard parabolic $P^*$ in the quasi-split form $G^*$ of $G$, there is a subset $B(G)_{P^*}$ defined in terms of the Newton map on $B(G)$. If  $P^*=G^*$, then $B(G)_{G^*}=B(G)_{\rm basic}$. We call the inverse image of $B(G)_{P^*}$ under the map in Corollary \ref{modbasic} the HN-stratum  $\CF(G, b, \{\mu\})_{P^*}$ attached to $P^*$. Hence for $P^*=G^*$ the corresponding HN-stratum is the admissible set. 

\begin{question}
For which $P^*$ is the HN-stratum non-empty? Does the decomposition into disjoint sets $\CF(G, b, \{\mu\})_{P^*}$ of $\breve \CF(G, \{\mu\})$ have the stratification property? Which strata $\CF(G, b, \{\mu\})_{P^*}$ have classical points?
\end{question}
The first question is non-empty, as is shown by the Lubin-Tate case, in which only $\CF(G, b, \{\mu\})_{G^*}$ is non-empty. There are examples of strata $\CF(G, b, \{\mu\})_{P^*}$ without classical points: One gets these by looking at cases of weakly accessible, but non-accessible, PD-triples, in which case all strata with $P^*\neq G^*$ have no classical points, but some of them are nonempty.

\smallskip

There is also a HN-decomposition of $\breve\CF(G, \{\mu\})$ in the sense of \cite{DOR}. It does not have the stratification property. Here we have an understanding of  the structure of the individual strata, in terms of period domains of PD-triples of smaller dimension. However, even for these simpler strata, the question of the non-emptiness of strata is only partially solved (by Orlik).  
\begin{question}
What is the relation between the two stratifications? 
\end{question}

\end{document}